\def \no{\nonumber}
\def\p{\partial}
\def\ve{\varepsilon}
\def\f{\frac}
\def\na{\nabla}
\def\al{\alpha}
\def\t{\tilde}
\def\vp{\varphi}
\def\O{\Omega}
\def\th{\theta}
\def\g{\gamma}
\def\G{\Gamma}
\def\dl{\delta}
\def\p{\partial}
\def\ve{\varepsilon}
\def\f{\frac}
\def\k{\kappa}
\def\na{\nabla}
\def\al{\alpha}
\def\t{\tilde}
\def\o{\omega}
\def\O{\Omega}
\def\vp{\varphi}
\def\th{\theta}
\def\Th{\Theta}
\def\g{\gamma}
\def\G{\Gamma}
\def\dl{\delta}
\def\b{\beta}
\def\ds{\displaystyle}
\begin{document}
	\footskip=0pt
	\footnotesep=2pt
	\let\oldsection\section
	\renewcommand\section{\setcounter{equation}{0}\oldsection}
	\renewcommand\thesection{\arabic{section}}
	\renewcommand\theequation{\thesection.\arabic{equation}}
	\newtheorem{claim}{\noindent Claim}[section]
	\newtheorem{theorem}{\noindent Theorem}[section]
	\newtheorem{lemma}{\noindent Lemma}[section]
	\newtheorem{proposition}{\noindent Proposition}[section]
	\newtheorem{definition}{\noindent Definition}[section]
	\newtheorem{remark}{\noindent Remark}[section]
	\newtheorem{corollary}{\noindent Corollary}[section]
	\newtheorem{example}{\noindent Example}[section]

\title{On global smooth solutions to the
2D isentropic and irrotational Chaplygin gases with short pulse data}
\author{Ding
Bingbing$^{1,*}$, \quad Xin Zhouping$^{2,*}$, \quad Yin
Huicheng$^{1,}$\footnote{Ding Bingbing (13851929236@163.com, bbding@njnu.edu.cn) and Yin Huicheng (huicheng@nju.edu.cn, 05407@njnu.edu.cn) were
supported by the NSFC (No.12331007, No.12071223). Xin Zhouping(zpxin@ims.cuhk.edu.hk)
is partially supported by the Zheng Ge Ru Foundation, Hong Kong RGC Earmarked Research Grants
CUHK-14301421, CUHK-14300819, CUHK-14302819, CUHK-14300917, Basic and Applied Basic Research Foundations of Guangdong Province 2020131515310002, and the Key Projects of National Nature Science Foundation of China (No.12131010, No.11931013).}\vspace{0.5cm}\\
\small 1.  School of Mathematical Sciences and Mathematical Institute, Nanjing Normal University,\\
\small  Nanjing, 210023, China.
\\
\vspace{0.5cm}
\small 2. Institute of Mathematical Sciences, The Chinese University of Hong Kong, Shatin, NT, Hong Kong.}
\date{}
\maketitle

\centerline {\bf Abstract} \vskip 0.3 true cm

This paper establishes the global existence of smooth solutions to the 2D isentropic and irrotational Euler equations for Chaplygin gases with a general class of short pulse initial data, which, in particular, resolves in this special case, the Majda's conjecture on the non-formation of shock waves of solutions from smooth initial data for multi-dimensional nonlinear symmetric systems which are totally linearly degenerate. Comparing to the 4D case,
the major difficulties in this paper are caused by
the slower time decay and the largeness of the solutions to the 2D quasilinear wave equation, some new auxiliary energies and multipliers are introduced to overcome these difficulties.

\vskip 0.2 true cm
{\bf Keywords:} 2D quasilinear wave equation, Chaplygin gases, short pulse initial data,
null condition,

\qquad\qquad \quad  inverse foliation density, Goursat problem

\vskip 0.2 true cm {\bf Mathematical Subject Classification:} 35L05, 35L72

\vskip 0.4 true cm

\tableofcontents

\section{Introduction}\label{in}
\subsection{The formulation of the problem and main results}\label{2}

Consider a general 2D quasilinear wave equation
\begin{equation}\label{quasi-X0}
\sum_{\al,\beta=0}^2g^{\al\beta}(\p\phi)\p_{\al\beta}^2\phi=0.
\end{equation}
The general short pulse initial data for \eqref{quasi-X0} are given as
\begin{align}\label{i0}
\phi|_{t=1}=\delta^{2-\varepsilon_0}\phi_0(\f{r-1}{\delta},\omega),\
\p_t\phi|_{t=1}=\delta^{1-\varepsilon_0}\phi_1(\f{r-1}{\delta},\omega),
\end{align}
where $\dl>0$ is a small constant, $0<\ve_0<1$ is a fixed constant, $\o=\f{x}r\in\mathbb S^1$, and $(\phi_0,\phi_1)(s,\o)$ are smooth functions defined in $\mathbb R\times\mathbb S^1$ with compact support in the variable $s$. For small $\p\phi$, set
\begin{equation}\label{H-01X}
\ds g^{\al\beta}(\p\phi)=m^{\al\beta}+\sum_{\gamma=0}^{2} g^{\al\beta,\gamma}\p_\gamma\phi+\sum_{\gamma,\nu=0}^2h^{\al\beta,\gamma\nu}\p_\gamma\phi\p_\nu\phi+O(|\p\phi|^3),
\end{equation}
where $m^{00}=-1$, $m^{ii}=1$ for $1\le i\le 2$, $m^{\al\beta}=0$ for $\al\neq\beta$,
$g^{\al\beta,\gamma}$ and $h^{\al\beta,\gamma\nu}$ are constants. Then the first and second null conditions are defined as that for $\xi_0=-1$ and $(\xi_1,\xi_2)\in\mathbb S^1$,
\begin{equation}\label{Euler-0}
\begin{array}{l}
\ds\sum_{\al,\beta,\gamma=0}^2g^{\al\beta,\gamma}\xi_\al\xi_\beta\xi_\gamma\equiv0,\\
\ds\sum_{\al,\beta,\gamma,\nu=0}^2 h^{\al\beta,\gamma\nu}\xi_\al\xi_\beta\xi_\gamma\xi_\nu\equiv0,
\end{array}
\end{equation}
respectively. It is emphasized that \eqref{Euler-0} is indispensable for the global existence of smooth solutions to \eqref{quasi-X0}
even for the Cauchy problem with small initial data. Indeed, if \eqref{quasi-X0} is equipped with the initial data
\begin{equation}\label{Y1-0X}
\phi(1,x)=\dl \vp_0(x),\ \p_t\phi(1,x)=\dl \vp_1(x),
\end{equation}
where $(\vp_0, \vp_1)(x)\in C_0^{\infty}(\mathbb R^2)$, then
it follows from \cite{A} that
\eqref{quasi-X0} has a global solution $\phi$ when \eqref{Euler-0} holds.
 Otherwise, when the  first null condition or the second null condition is violated,
 the smooth solution $\phi$ can blow up in finite time as long as $(\vp_0, \vp_1)\not\equiv 0$ (see \cite{A1}, \cite{A2},
\cite{CM}, \cite{0-Speck} and \cite{S2}).

We are interested in studying global solutions to \eqref{quasi-X0} with the short pulse data \eqref{i0} under suitable structural conditions, which turns out to be an important but difficult problem. To this end, we will focus in this paper on
the 2D compressible isentropic
irrotational Euler equations for Chaplygin gases
since the resulting second order quasilinear wave equation fulfills \eqref{Euler-0}.
The 2D compressible isentropic Euler system for Chaplygin gases is
\begin{equation}\label{Euler}
\left\{
\begin{aligned}
&\p_t\rho+div (\rho v)=0,\\
&\p_t(\rho v)+div (\rho v \otimes v)+\nabla p=0,\\
\end{aligned}
\right.
\end{equation}
where $\nabla=(\p_1,\p_2)=(\p_{x^1},\p_{x^2})$, $(t,x)\in [1,+\infty)\times\mathbb{R}^2$, $v=(v_1,v_2)$, $\rho$, $p$
stand for the velocity, density, pressure
respectively, and the equation of state is given by
\begin{equation}\label{pr}
p(\rho)=P_0-\frac{B}{\rho}
\end{equation}
with $P_0$ and $B$ being positive constants.
Supplement \eqref{Euler} with the irrotational initial data
\[
\big(\rho,v)(1,x)=(\bar\rho+\rho_0(x),v_0(x)\big)=(\bar\rho+\rho_0(x),v_1^0(x),v_2^0(x)\big)
\]
such that $\bar\rho+\rho_0(x)>0$ and $\text{rot}\ v_0(x)=\p_2v_1^0-\p_1v_2^0\equiv0$ with $\bar\rho$ being
a positive constant which can be normalized so that $c(\bar\rho)=1$. Then the irrotationality, $\text{rot}\ v(t,x)=(\p_2v_1-\p_1v_2)(t,x)\equiv0$, holds true as long as the solution remains smooth. Hence there
exists a potential function $\phi$ such that $v=\nabla\phi$, and the Bernoulli's law, $\p_t\phi+\f12|\nabla\phi|^2+h(\rho)=0$,
holds with the enthalpy $h(\rho)$ satisfying $h'(\rho)=\f{c^2(\rho)}\rho$ and $h(\bar\rho)=0$. In this case, for smooth
irrotational flows, \eqref{Euler} is equivalent to
\begin{equation}\label{1.9}
\begin{split}
&\ds\sum_{\al,\beta=0}^2g^{\al\beta}(\p \phi)\p_{\al\beta}^2\phi=-\p_t^2\phi+\triangle\phi+g^{\al\beta,\g}\p_\g\phi\p_{\al\beta}^2\phi
+h^{\al\beta,\nu\g}\p_\nu\phi\p_{\g}\phi\p_{\al\beta}^2\phi\\
\equiv& -\p_t^2\phi+\triangle\phi-2\sum_{i=1}^2\p_i\phi\p_t\p_i\phi
+2\p_t\phi\triangle\phi-\sum_{i,j=1}^2\p_i\phi\p_j\phi\p_{ij}^2\phi
+|\nabla\phi|^2\triangle\phi=0
\end{split}
\end{equation}
with $x^0=t$, $\p_0=\p_t$ and $\Delta=\p_1^2+\p_2^2$, where $g^{\al\beta,\g}$ and $h^{\al\beta,\nu\g}$ are corresponding constants. It is easy to check that \eqref{1.9}
satisfies both the null conditions in \eqref{Euler-0}. Our main goal in this paper is to study the global smooth solution to \eqref{1.9} with the short pulse initial data \eqref{i0}.

Note that in order to guarantee the strict hyperbolicity of \eqref{quasi-X0} or \eqref{1.9},
the smallness of $\p\phi$ should be required in general. On the other hand, as pointed out in \cite{Ding4},
even for the 2D linear wave equation, the solution may have large first order derivatives  if the initial first or
second order derivatives are large.
 Therefore, for the
short pulse initial data \eqref{i0}, due to the largeness of $(\p_x^2\phi, \p_{xt}^2\phi)|_{t=1}$
with small $\dl>0$, then it is difficult to keep the smallness of $\p\phi$ to \eqref{quasi-X0} or \eqref{1.9} in general.
To maintain the smallness of $\p\phi$, motivated by \cite{MPY} and \cite{Ding4}, one can impose such an extra outgoing constraint
condition
\begin{equation}\label{i1}
(\p_t+\p_r)\phi|_{t=1}= O(\delta^{2-\varepsilon_0}).
\end{equation}
In addition, from the forms of \eqref{i0} and \eqref{i1}, it naturally holds
\begin{align}\label{Y-0}
(\p_t+\p_r)^k\O^j\p^q\phi|_{t=1}=O(\delta^{2-\varepsilon_0-|q|}),\quad 0\leq k\leq 1,
\end{align}
where $\O=x^1\p_2-x^2\p_1$.
Here we point out that \eqref{i1}-\eqref{Y-0} are much weaker than the corresponding restrictions
in \cite{Ding4} for $0\le k\le 3$ and in \cite{MPY} for large $k$.
By suitable choices of $(\phi_0,\phi_1)$, \eqref{i1} can be easily fulfilled
(see Remark \ref{R1.1} below).
It follows from \eqref{i0} and \eqref{i1}-\eqref{Y-0} that
the short pulse data are some extensions of a class of large symmetric data, for
which the smallness restrictions are imposed on angular directions and along the ``good" direction tangent
to outgoing light conic surface $\{t=r\}$, but the largeness is kept at least for the second order ``bad" directional derivatives
$\p_t-\p_r$. This provides a powerful framework to study the blowup or the global existence of
smooth solutions to multi-dimensional hyperbolic systems or second order quasilinear wave equations
with short pulse data, see \cite{C3,Ding4,Ding6,K-R,Lu1,MPY,MY,Wang} and the references therein.

Note that although the short pulse
initial data $(\phi,\p_t\phi)|_{t=1}$ in \eqref{i0} do not have the uniform boundedness (smallness)
in $H^{\f{11}{4}+}(\mathbb R^2)\times H^{\f{7}{4}+}(\mathbb R^2)$
-norm (independent of $\delta$)
required for the well-posedness of regular solutions for 2D quasilinear wave equations
in \cite{A1, A, A2, H,Smith} (see Remark \ref{R1.1}), yet the conditions \eqref{i0} and \eqref{i1}
imply the suitably stronger smallness of the directional
derivative $\p_t+\p_r$ of $\phi$. This is essential for our global existence
of smooth solutions to the Cauchy problem \eqref{1.9} with \eqref{i0}.

The main result of this paper can be stated as follows:

\begin{theorem}\label{main}
Let $\varepsilon_0\in(0,\f12)$ hold and
$(\phi_0, \phi_1)(s,\o)\in C^{\infty}(\mathbb R\times\mathbb S^1)$ be any fixed smooth functions with compact
supports in $(-1,0)$ for the variable $s$.	Under the assumption \eqref{i1}, there exists a
suitably small positive constant $\delta_0$ such that for all $\delta\in(0,\delta_0]$, the Cauchy problem,
\eqref{1.9} with \eqref{i0}, admits a global smooth solution
	$\phi\in C^\infty([1,+\infty)\times\mathbb R^2)$.
	Furthermore, it holds that for all time $t\ge 1$ and $x\in\mathbb R^2$,
	\begin{equation}\label{np}
	|\p\phi(t,x)|\le C\delta^{1-\varepsilon_0}t^{-1/2},\ |\phi(t,x)|\lesssim\dl^{3/2-\ve_0}t^{1/2},
	\end{equation}
	where $C>0$ is a constant independent of $\dl$ and $\ve_0$.
\end{theorem}

Some comments are in order.

\begin{remark}\label{R1.0}
	Theorem \ref{main} generalizes our previous global existence of smooth solution to general 4D quasilinear wave equation (satisfying the first null condition) with a class of short pulse initial data \cite{Ding4} to the 2D compressible isentropic irrotational Euler
equations for Chaplygin gases.
\end{remark}

\begin{remark}\label{R1.0-1}
	The results in Theorem \ref{main} are motivated by the corresponding ones for the 2D homogeneous wave equation. Indeed, consider the Cauchy problem for the 2D wave equation $\Box\phi=0$ with the short pulse initial data \eqref{i0}. By using the explicit solution formula, one can check easily that $\p\phi$  may be large in general due to the largeness of $\p_x^2\phi|_{t=1}$ and $\p_{xt}^2\phi|_{t=1}$. However, as will be shown in Appendix, the smallness of $\p\phi$ with $|\p\phi(t,x)|\lesssim\dl^{1-\ve_0}t^{-1/2}$ for $t\geq 1$ can be obtained provided that the condition \eqref{i1} is satisfied.
\end{remark}

\begin{remark}\label{R1.1}
The initial data \eqref{i0} with \eqref{i1} are essentially the ``short pulse data"  introduced by D. Christodoulou
in \cite{C3}, where it is shown that the formation of black holes in vacuum spacetime is due to the condensation of the
gravitational waves for the 3D Einstein equations in general relativity (see also \cite{K-R}). Note that
properties \eqref{i1}-\eqref{Y-0} hold true for any given smooth function $\phi_0$ and the choice of $\phi_1(\f{r-1}\delta,\o)=-\p_s\phi_0(\f{r-1}\delta,\o)+\delta \psi(\f{r-1}{\delta}, \o)$
with any smooth fixed function $\psi(s, \o)$ and $supp_{s}\psi\subset (-1,0)$.
In addition, a large class of short pulse initial data with the property \eqref{i1}-\eqref{Y-0} with $0\le k\le 3$
can be found for general second order quasilinear wave equations (see Section 2 of \cite{Ding4}).
It is noted that for short pulse data \eqref{i0} and sufficiently small $\delta>0$, although both $||\phi||_{L^\infty}$
and $||\p\phi||_{L^\infty}$ are small at $t=1$, yet the initial data are still regarded as ``large" in the sense
that $|\p^2\phi|$ may be large, and meanwhile,
\begin{equation}\label{p1}
||\phi(1,\cdot)||_{H^s(\mathbb R^2)}=O(\delta^{\f{5-\varepsilon_0}2-s})\rightarrow+\infty\ \text{as}\ \delta\rightarrow 0^+\ \text{for}\ s\geq\f{11}4.
\end{equation}
\end{remark}

\begin{remark}\label{R1.2}
	Theorem \ref{main} implies in particular the uniform (independent of $\delta$) local in time well-posedness
for the Cauchy problem \eqref{1.9} with \eqref{i0} and  \eqref{i1}, which does not follow from the known results \cite{H,J2,Smith}.
Indeed, for the Cauchy problem of general 2D quasilinear wave equations with smooth coefficients,
	\begin{equation}\label{X-0}
	\left\{
	\begin{aligned}
		&\ds\sum_{\al,\beta=0}^2g^{\al\beta}(w,\p w)\p_{\al\beta}^2w=0,\\
		&(w(1,x), \p_tw(1,x))=(w_0(x), w_1(x))\in (H^s(\mathbb R^2),H^{s-1}(\mathbb R^2)),\\
	\end{aligned}
	\right.
    \end{equation}
    the local in time well-posedness of solution $w\in C([1,T],H^s(\mathbb R^2))\cap C^1([1,T],H^{s-1}(\mathbb R^2))$
    with $s>\f{11}4$ has been established in \cite{Smith}. However, such a theory cannot be applied to \eqref{1.9}
    with \eqref{i0}
    to yield the local well-posedness of smooth solution with time interval independent of $\delta$ due to \eqref{p1}.
\end{remark}

\begin{remark}
In \cite{Majda}, A. Majda proposed the following {\bf conjecture}: For any given $n$-dimensional nonlinear symmetric hyperbolic system with totally linearly
degenerate structures, a smooth solution, say in $C([0,T],H_{ul}^s(\mathbb R^n))\cap C^1([0,T],H_{ul}^{s-1}(\mathbb R^n))$, $T>0$,
$s>\f n2+1$, will exist globally in time in general when the solution runs out of the domain of definition of the
Cauchy problem. In particular, the shock wave formation does not occur for any smooth initial data.

This conjecture is physically plausible and resolving it would provide insights into the nonlinear nature of the condition that requires linear degeneracy of each characteristic field. It would also help one understand how shock wave formation arises in quasilinear hyperbolic systems.

It is worth noting that our Theorem \ref{main} solves Majda's conjecture for \eqref{Euler} with irrotational short pulse initial data. However, the conjecture remains open for general small data of the form $(\rho,v)(1,x)=(\bar\rho+\varepsilon\rho_0(x),\varepsilon v_0(x))$ with $\varepsilon>0$ being small, unless certain symmetries are assumed. For more information, refer to \cite{Ding,Godin07,Hou,Hou-Yin} and the references therein.
\end{remark}

\begin{remark} Note that for the short pulse data \eqref{i0}, \eqref{i1} ensures the smallness of $|\nabla\phi|$ for small $\delta>0$ so that \eqref{1.9} is hyperbolic. This is different from the case in \cite{MPY}, where a global smooth solution was established for the 3D semi-linear wave system with certain short pulse data. In \cite{MPY}, the short pulse data are required to satisfy $|(\p_t+\p_r)^k\O^q\p^m\phi|_{t=1}\lesssim\delta^{1/2-m}$ ($\forall k\leq N_0$ for large $N_0$) instead of \eqref{Y-0}.
Note that in general, as pointed out in \cite{Ding4}, the validity of \eqref{Y-0} cannot be expected for large $k$ since $\p_{t,x}^q\phi|_{t=1}=O(\delta^{2-\varepsilon_0/2-|q|})$ holds and \eqref{Y-0} is over-determined for large $k\geq 3$. Additionally, it has been noted that due to the largeness of the integer $N_0$, the problem in \cite{MPY} can be transformed into a small data one within the outermost outgoing cone. Recently, the authors in \cite{Wang} proved the global existence of the relativistic membrane equation using a similar approach to \cite{MPY} with short pulse initial data. It is worth mentioning that the relativistic membrane equation exhibits a special divergence structure and doesn't contain the quadratic terms $g^{\al\beta,\gamma}\p_\g\phi\p^2_{\al\beta}\phi$ whose time decay rate and the power of $\dl$ are worse than that for the cubic terms $h^{\al\beta,\gamma\nu}\p_\g\phi\p_\nu\phi\p^2_{\al\beta}\phi$. However, in this paper, we have to overcome the difficulties of large values of $\phi$ throughout the entire time-space $\mathbb R_+\times\mathbb R^2$ and the appearance of $g^{\al\beta,\g}\p_\g\phi\p_{\al\beta}^2\phi$, thus it seems difficult to modify the methods
 in \cite{MPY} and \cite{Wang} to obtain the global solution for \eqref{quasi-X0} or \eqref{1.9} with more general lower order short pulse initial data \eqref{i0}.
\end{remark}

\begin{remark}
There have been extensive recent works on global well-posedness or finite time blowup of smooth solutions to various nonlinear multi-dimensional wave equations with short pulse initial data. In particular, the authors studied the 3D problem described by equation $-(1+3G''(0)(\p_t\phi)^2)\p_t^2\phi+\Delta \phi =0$ in \cite{MY}. They found shock formation in finite time due to the genuinely nonlinear structure of the equation, which leads to the compression of incoming characteristic conic surfaces. This work serves as another main motivation for this work. Additionally, systematic results for general quasilinear wave equations of the form $-(1+(\p_t\phi)^p)\p_t^2\phi+\Delta \phi =0$ ($p\in\mathbb N$) with short pulse initial data have been obtained in \cite{Ding6,Lu1}, which show that there exists a critical power $p_c=\f{1}{1-\ve_0}$
such that when $p>p_c$, the global solution $\phi$ exists; when $1\le p\le p_c$,
the outgoing shock can be formed before the time $t=2$. On the other hand, by the similar analyses
in this paper together with \cite{Ding6,Lu1}, for the 2D nonlinear
 equation $-(1+(\p_t\phi)^p)\p_t^2\phi+\Delta \phi =0$ ($p\in\mathbb N$ and $p\ge 2$) with short pulse initial data,
 one can derive that  when $p>\t p_c=1+\f{1}{1-\ve_0}$, the global solution $\phi$ exists, otherwise, the solution
 $\phi$ blows up in finite time for $2\le p\le \t p_c$.
\end{remark}

\begin{remark}
Note that for the short pulse data \eqref{i0}, the corresponding initial data $(\rho,v)$ for \eqref{Euler} are small
perturbations of the non-vacuum uniform state $(\bar\rho,0)$. For general large initial data, one cannot expect the
global existence of smooth solution to \eqref{Euler} in general. Indeed, even in 1D case, such a global existence of
smooth solution fails for large data as shown by the following example. Consider the following Cauchy problem of
the 1D compressible isentropic Euler equation for Chaplygin gases
\begin{equation}\label{U-0}
\left\{
\begin{aligned}
&\p_t\rho+\p_x(\rho v)=0,\\
&\p_t(\rho v)+\p_x(\rho v^2+p(\rho))=0,\\
&(\rho,v)(0,x)=\big(1,v_0(x)\big),
\end{aligned}
\right.
\end{equation}
where $p(\rho)=2-\f{1}{\rho}$, $v_0(x)\in C_0^{\infty}(-1,1)$ and $v_0(x)\not\equiv 0$.
In terms of Lagrange coordinate $(s, m)$:
$$s=t, \quad m=\int_{\eta(t,0)}^x\rho(t,y)dy,$$
where $\eta(t,y)$ is the particular path through $y$, \eqref{U-0} becomes
\begin{equation*}
\left\{
\begin{aligned}
&\p_s\rho+\rho^2\p_m v=0,\\
&\p_s v+\p_m p(\rho)=0,\\
&\rho(0,m)=1,\quad v(0,m)=v_0(m).
\end{aligned}
\right.
\end{equation*}
Then the special volume $V=\f1\rho$ solves the following problem
\begin{equation*}
\left\{
\begin{aligned}
&\Box V=(\p_s^2-\p^2_m)V=0,\\
&(V,\p_sV)(0,m)=(1,v'_0(m)),
\end{aligned}
\right.
\end{equation*}
whose unique solution is
\begin{equation}\label{V}
V(s,m)=1+\f12\big(v_0(m+s)-v_0(m-s)\big).
\end{equation}
It follows from \eqref{V} that one can choose $v_0$ such that $v_0(m_2)-v_0(m_1)=-2$ for $m_1,m_2\in(-1,1)$ with $m_1<0<m_2$. Hence for such initial data, there exists a $s^*\leq\f{m_2-m_1}2$ such that
\begin{equation*}
\text{$0<V(s,m)$ for $s<s^*$}, \min V(s^*,m)=0,
\end{equation*}
which implies $\max\rho(s,m)\rightarrow\infty$ as $s\rightarrow s_-^*$, so concentration occurs at $s^*$. Note that even for such
 an example, the Majda's conjecture in \cite{Majda} still holds true since the singularity is the density concentration, not formation of shocks.
 For more results on finite time blow up of smooth solutions to $n$-dimensional quasilinear wave equations with general large
 data (not the short pulse data), we refer to \cite{Sid} and the references therein.
\end{remark}

\begin{remark}\label{1.10}
We now make some brief comments on the analysis of the proof of Theorem \ref{main}, the details are given in the next subsection. Although as for the 4D case in \cite{Ding4}, the main analysis consists of three parts: local estimates, global estimates near and inside the outermost outgoing cone respectively, yet the slower time decay in 2D makes the global estimates extremely  difficult and much more involved for \eqref{1.9} besides the difficulties due to largeness of the solution compared with the case of 4D in \cite{Ding4}. It turns out that both the first and second null conditions are necessary for the global existence results here, which is contrast to 4D case where only the first null condition is required.
The key point in this paper is how to overcome the difficulties resulting from the slow time decay and largeness of the solution simultaneously, and thus new strategies applicable to \eqref{1.9} (including new auxiliary energies) are introduced in this paper.
\end{remark}

\begin{remark}\label{1.10-00}
For the general 2D quasilinear wave equation \eqref{quasi-X0} satisfying the first and second null conditions and the short pulse initial data \eqref{i0} satisfying \eqref{i1}-\eqref{Y-0}, it is expected that the analogous results in Theorem \ref{main} can be proved by combining the ideas developed here for \eqref{1.9} and the framework and estimates for the general 4D quasilinear wave equations in \cite{Ding4}. This will involve non-trivial and lengthy calculations and will be left for future.
\end{remark}

\subsection{Sketch for the proof of Theorem \ref{main}}\label{6}

Let $C_0=\{(t,x):t\geq 1+2\delta,t=r\}$
be the outermost outgoing conic surface; $A_{2\delta}=\{(t,x):t\geq 1+2\delta,0\leq t-r\leq 2\delta\}$ be a domain
containing $C_0$; and $B_{2\delta}=\{(t,x):t\geq 1+\delta,t-r\geq 2\delta\}$ be a conic domain inside $C_0$ with the
lateral boundary given by $\tilde C_{2\delta}=\{(t,x):t\geq 1+2\delta,t-r=2\delta\}$.
At first, as in \cite{Ding4}, one can show the local existence of smooth solution $\phi$ to
\eqref{1.9} with \eqref{i0} and \eqref{i1}-\eqref{Y-0} for $t\in [1, 1+2\delta]$ and derive some basic properties for
the time $t=1+2\delta$ on the domains  $r\in[1-2\delta, 1+2\delta]$ and $r\in [1-3\delta, 1+\delta]$, respectively.
Then the main part of the analysis
consists of two parts: the global estimates of
the solution in $A_{2\delta}$, and the global estimates in $B_{2\delta}$.

\subsubsection{Global estimates of the solution in $A_{2\dl}$}\label{6-2}

As in \cite{Ding4}, motivated by the strategy of D. Christodoulou in \cite{C2},
we start to construct the solution $\phi$ of \eqref{1.9} in $A_{2\dl}$.
However, due to the slower time decay of solutions to the 2-D wave equation,
we need to  introduce some
new auxiliary energies and carry out very elaborate analysis on the related nonlinear forms
by utilizing the distinguished characters of the resulting new
2-D quasilinear wave equations satisfying the first and second null conditions.

As in \cite{C2} or \cite{J}, for a given smooth solution $\phi$ to \eqref{1.9},
one can study the corresponding eikonal equation $\ds\sum_{\al,\beta=0}^2g^{\al\beta}(\p
\phi)\p_\al{u}\p_\beta{u}=0$ with the initial data $u(1+2\delta,x)=1+2\delta-r$, and the inverse foliation density $\mu=
-(\sum_{\al,\beta=0}^2g^{\al\beta}\p_\al u\p_\beta t)^{-1}$.
Under the suitable bootstrap assumptions on the smallness and time decay rate of $\p\phi$ (see $(\star)$ in Section \ref{BA})
and with the help of the null conditions for \eqref{1.9},
$\mu$ satisfies  $\mathring L\mu=O(\delta^{1-\varepsilon_0}t^{-3/2}\mu)$, where  $\mathring L=-\mu \ds\sum_{\al,\beta=0}^2g^{\al\beta}\p_\al u\p_\beta$ is a vector field approximating $\p_t+\p_r$.
Thus $\mu\sim 1$ can be derived.
The positivity of $\mu$ means that the outgoing characteristic conic surfaces never intersect as long as the smooth solution $\phi$
exists. Set $\varphi=(\varphi_0, \varphi_1, \varphi_2):=\p\phi=(\p_0\phi, \p_1\phi, \p_2\phi)$. Then it follows from  \eqref{1.9} that
\begin{equation}\label{Y-1}
\mu\Box_{g}\varphi_\gamma=F_\gamma(\varphi, \p\varphi),\quad\gamma=0,1,2,
\end{equation}
where $g=g_{\al\beta}(\vp)dx^\al dx^\beta$ is the Lorentzian metric,
$(g_{\al\beta}(\vp))$ is the inverse matrix of $(g^{\al\beta}(\vp))$, $\Box_g=\f{1}{\sqrt{|\det g|}}\ds\sum_{\al,\beta=0}^2\p_\al(\sqrt{|\det g|}g^{\al\beta}\p_\beta)$, and $F_\gamma$ are smooth functions
in their arguments. To study the quasilinear wave system \eqref{Y-1}, we first focus on its linearization
\begin{equation}\label{Y-2}
\mu\Box_g\Psi=\Phi.
\end{equation}
As in \cite{Ding4},
it is crucial to derive the global time decay rate of $\Psi$ where $\Psi=\Psi_k=Z^k\varphi$ will be chosen in \eqref{Y-2},
here $Z$ stands for one of some first order vector fields. In this case, by computing
the commutator $[\mu\Box_{g}, Z^k]$, there will appear the quantities containing the $(k+2)$-th order derivatives of $\varphi$
in the expression $\Phi$ of \eqref{Y-2}. For examples, $\slashed\nabla Z^{k-1}\chi$ and $\slashed\nabla^2Z^{k-1}\mu$ will occur in $\Phi$,
where $\chi=g(\mathscr D_{X} \mathring L, X)$ is the second fundamental form
with $\mathscr D$ being the Levi-Civita connection of $g$ and $X=\f{\p}{\p\vartheta}$
($\vartheta$ is the extended local coordinate of $\th\in\mathbb S^1$ which is described by
$\mathring L\vartheta=0$ and $\vartheta|_{t=1+2\dl}=\th$). Following the analogous procedures
in Section 3-Section 11 of \cite{Ding4} and by a much involved analysis for the 2D problem \eqref{1.9}
with \eqref{i0} and \eqref{i1}-\eqref{Y-0}, we can eventually obtain $|\varphi|\lesssim\delta^{1-\varepsilon_0} t^{-1/2}$
and further close the basic bootstrap assumptions. Note that compared with the corresponding  treatments for the
global 4D problem in \cite{Ding4} and the 3D
problem before the shock formation in  \cite{C2,J}, it is more
difficult and complicated to derive the  global
weighted energy estimates for the 2D system \eqref{Y-2} with the suitable time decay rates due to the lower
space dimensions.
Indeed, to obtain the global weighted energy of $\Psi$, one can proceed as usual to
compute $\int_{D^{{\mathfrak t}, u}}\mu\Box_g\Psi(V\Psi)$ through appropriate choices of some first order vector field $V$,
where $D^{{\mathfrak t}, u}=\{(\mathfrak t', u',\vartheta): 1\leq \mathfrak t'<\mathfrak t, 0\leq u'\leq u, 0\le \vartheta\le 2\pi\}$.
We will choose the vector field $V$ as, respectively,
\begin{align*}
&J_1=-\varrho^{2m}\ds\sum_{\al,\kappa,\beta=0}^2g^{\al\kappa}Q_{\kappa\beta}\mathring L^\beta\p_\al,\ J_2=-\ds\sum_{\al,\kappa,\beta=0}^2g^{\al\kappa}Q_{\kappa\beta}\mathring{\underline L}^\beta\p_\al,\\
&J_3=\ds\sum_{\al=0}^2\big(\f12\varrho^{2m-1}\Psi\mathscr D^\al\Psi-\f14\Psi^2\mathscr D^\al(\varrho^{2m-1})\big)\p_\al,
\end{align*}
where $m\in (\f12, \f34)$ is a fixed constant,  $\varrho=\mathfrak t-u$, $\mathring{\underline L}=-\mu(\mathring L
+2\ds\sum_{\nu=0}^2 g^{\nu 0}\p_\nu)$, and $Q$ is
the {\it{energy-momentum tensor field}} of $\Psi$ given as
\begin{equation*}
\begin{split}
Q_{\al\beta}&=(\p_\al\Psi)(\p_\beta\Psi)-\f12 g_{\al\beta}\ds\sum_{\nu,\chi=0}^2g^{\nu\chi}(\p_\nu\Psi)(\p_\chi\Psi).
\end{split}
\end{equation*}

It should be pointed out that the weight $\varrho$ acts as the time ${\mathfrak t}$ in $J_1$ and $J_3$ because of
$\rho\sim {\mathfrak t}$, and
the requirement of $m>\f12$ in $J_1$ is due to the slow time decay rate of solutions to the 2D wave equation
such that some related integrals are
convergent (see \eqref{D22} in Section \ref{ert}, where $\int_{t_0}^{\mathfrak t}\tau^{-1/2-m}d\tau$ will be
uniformly bounded for ${\mathfrak t}$). However,
once $m>\f12$ is chosen, then $\int_{D^{{\mathfrak t}, u}}\mu\Box_g\Psi(V\Psi)$ with $V=J_1$ and $J_2$ will contain the non-negative integral $(m-\f12)\int_{D^{{\mathfrak t},u}}\mu\varrho^{2m-1}|\slashed d\Psi|^2$ which cannot be controlled by
the corresponding energies and fluxes on the left hand side of the resulting inequality. To overcome this crucial difficulty,
different from that in \cite{Ding4}, we will introduce
a new vector field $J_3$ (which is inspired by \cite[(10.18c)]{J}). Thanks to the special structure of $J_3$ and by some technical manipulations,
we can eventually obtain a new term $(m-1)\int_{D^{{\mathfrak t},u}}\mu\varrho^{2m-1}|\slashed d\Psi|^2$
instead of $(m-\f12)\int_{D^{{\mathfrak t},u}}\mu\varrho^{2m-1}|\slashed d\Psi|^2$  on the right hand side of the related energy
inequality, which is non-positive for $m<1$ and thus can be absorbed during the energy estimates. Meanwhile, in the estimate of
$\int_{D^{{\mathfrak t}, u}}\mu\Box_g\Psi(J_3\Psi)$, it is necessary to restrict $m<\f34$ since the resulting integral $\int_{t_0}^{\mathfrak t}\tau^{2m-5/2}d\tau$ is required to be
uniformly bounded (see \eqref{EF1}).

\subsubsection{Global estimates of the solution in $B_{2\delta}$}\label{7}

Inspired by the proof in \cite{Ding4}, the local existence result of equation \eqref{1.9} with \eqref{i0} and \eqref{i1}-\eqref{Y-0} (see Theorem \ref{Th2.1}) implies that for $r\in [1-3\delta, 1+\delta]$,
$\p^{\al}\phi$ on the time $\{t=1+2\delta\}$ admit the smallness of the higher order $\delta^{2-|\al|-\varepsilon_0}$.
In addition, note that the outgoing characteristic cones of $\eqref{1.9}$ starting from $\{t=1+2\delta, 1-2\delta\le r\le 1+\delta\}$
are almost straight and contain $\t C_{2\delta}$. By these properties, we can prove that on $\t C_{2\delta}$, the solution $\phi$ and its derivatives satisfy $|\p^\al\phi|\lesssim\delta^{2-\varepsilon_0}t^{-1/2}$ with the better smallness $O(\delta^{2-\varepsilon_0})$.
Based on such ``good" smallness of $\phi$ on $\t C_{2\delta}$, we will study the
global Goursat problem of \eqref{1.9} in the conic domain $B_{2\dl}$.
To this end, we intend to establish the global weighted spacetime energy estimates for the solution $\phi$
in $B_{2\dl}$ and make use of the modified Klainerman-Sobolev inequality to get the space-time decay rates for $\p^{\al}\phi$,
 which are different from that  in \cite{Ding4} only for the time decay rate.
 In addition, since the spacetime decay is slow in 2D, the energies in \cite{Ding4} are not enough, we then make use of the ghost weight $W=e^{2(1+t-r)^{-1/2}}$ introduced in \cite{A} to establish weighted spacetime energy estimates with the bootstrap assumptions. Based on this, we can obtain
the large space-time behaviors of
the solution $\phi$ up to the forth
order derivatives (see Proposition \ref{P4.1}), for examples, $\p\p^k\phi=O(\delta^{3/2-\varepsilon_0-k})t^{-1/2}$
 and $\p\p^k\phi=O(\delta^{3/2-\varepsilon_0-k})t^{-3/2}(1+t-r)$ with $k=0,1,2,3$ (which imply that $\p^3\phi$ may be large in $B_{2\dl}$).

It is worth mentioning that the slow space-time decay of solutions to the 2D wave equation make it difficult to close the bootstrap assumptions directly since the corresponding higher order
 energies of $\phi$ may grow in time as $t^{2\iota}$ (here $\iota>0$ is some positive constant, see Theorem \ref{T4.1}).
  To overcome this difficulty, we turn to studying the nonlinear equation for the error $\dot\phi=\phi-\phi_a$,
  where $\phi_a$ is the solution to the 2D free wave equation $\square \phi_a=0$ with the initial data $(\phi(1,x), \p_t\phi(1,x))$,
  and obtain the uniformly controllable weighted spacetime energy estimates for $\phi$ by a delicate analysis. In this case, the bootstrap energy
  assumptions of $\phi$ are closed, and then the global estimates of $\phi$
with $|\p\phi|\le C\delta^{3/2-\varepsilon_0}t^{-1/2}$ inside $B_{2\dl}$ are established.

\subsection{Notations}\label{X-1}

Through the whole paper, unless stated otherwise, Greek indices $\{\al, \beta, \cdots\}$, corresponding to the
spacetime coordinates, are chosen in $\{0, 1, 2\}$; Latin indices $\{i, j, k, \cdots\}$, corresponding to the spatial
coordinates, are $\{1, 2\}$; and the Einstein summation convention will be used.
In addition, the convention $f\lesssim g$ means that there exists a generic positive constant $C$ such that $f\le Cg$.

Since \eqref{1.9} is a nonlinear wave equation,
it is natural to introduce the inverse spacetime metric $(g^{\al\beta})$ as follows:
\begin{equation}\label{g-1}
g^{00}=-1,\quad g^{i0}=g^{0i}=-\p_i\phi,\quad g^{ij}=c\delta_{ij}-\p_i\phi\p_j\phi,
\end{equation}
while $(g_{\al\beta})$ represents the corresponding metric:
\begin{equation}\label{g}
g_{00}=-1+c^{-1}|\nabla\phi|^2,\quad g_{i0}=g_{0i}=-c^{-1}\p_i\phi,\quad g_{ij}=c^{-1}\delta_{ij},
\end{equation}
where $c=1+2\p_t\phi+|\nabla\phi|^2$,  and $\delta_{ij}$ is the usual Kronecker symbol.

Finally, the following notations will be used throughout the paper:
\begin{align*}
&\o^i:=\o_i=\f{x^i}{r},\quad i=1,2,\ \o=(\o_1, \o_2),\ t_0=1+2\dl,\\
&L=\p_t+\p_r,\ \underline L=\p_t-\p_r,\ \O=\epsilon_i^jx^i\p_j,\\
&S=t\p_t+r\p_r=\f{t-r}{2}\underline L+\f{t+r}{2}L,\\
&H_i=t\p_i+x^i\p_t=\o^i\big(\f{r-t}{2}\underline L+\f{t+r}{2}L\big)+\f{t\o^i_\perp}{r}\O,\\
&\Sigma_t=\{(t',x):t'=t,x\in\mathbb R^2\},
\end{align*}
where $\epsilon_{1}^2=1$, $\epsilon_{2}^1=-1$, $\epsilon_i^i=0$ and $\o_\perp=(-\o^2, \o^1)$.

\section{Some preliminaries}\label{p-0}

\subsection{Local existence of the smooth solution $\phi$}\label{LE}

In this subsection, we establish the local existence of the smooth solution $\phi$ to \eqref{1.9}
with \eqref{i0} and \eqref{i1}-\eqref{Y-0} for $1\leq t\leq t_0$. Meanwhile, some
key estimates of $\phi(1+2\dl, x)$ on some special spatial domains are derived.
\begin{theorem}\label{Th2.1}
Under the assumptions \eqref{i1} and \eqref{Y-0} on $(\phi_0,\phi_1)$, when $\delta>0$ is suitably small, the
Cauchy problem, \eqref{1.9} with \eqref{i0}, admits a local smooth solution $\phi\in C^\infty([1, t_0]\times\mathbb R^2)$.
Moreover, for $a\in\mathbb N_0$,
$b\in \mathbb N_0$, $q\in\mathbb N_0^3$ and $k\in \mathbb N_0$, it holds that
\begin{enumerate}[(i)]
\item
\begin{align}
&|L^a\p^q\O^k\phi(t_0,x)|\lesssim\delta^{2-|q|-\varepsilon_0},\quad r \in[1-2\delta, 1+2\delta],\label{local1-2}\\
&|\underline L^a\p^q\O^k\phi(t_0,x)|\lesssim\delta^{2-|q|-\varepsilon_0},\quad r\in [1-3\delta, 1+\delta].\label{local2-2}
\end{align}
\item
\begin{equation}\label{local3-2}
|\p^q\O^k\phi(t_0,x)|\lesssim\dl^{3-\ve_0-|q|},
\quad r\in [1-3\delta, 1+\delta].
\end{equation}
\item
\begin{equation}\label{local3-3}
|\underline L^aL^b\O^k\phi(t_0,x)|\lesssim\delta^{2-\varepsilon_0},\quad r\in[1-2\delta,1+\delta].
\end{equation}
\end{enumerate}

\end{theorem}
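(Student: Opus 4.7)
My plan is to establish local existence on the time slab $[1, t_0]$ via classical theory for quasilinear wave equations, and then to derive the restricted estimates (i)--(iii) at $t = t_0$ by combining finite speed of propagation with a null-frame rewriting of \eqref{1.9}. For local existence, \eqref{i0} and \eqref{Y-0} give $\|\partial\phi(1,\cdot)\|_{L^\infty}=O(\delta^{1-\varepsilon_0})$, so \eqref{1.9} is uniformly strictly hyperbolic (since $c = 1 + 2\partial_t\phi + |\nabla\phi|^2 > 0$) for $\delta$ small. Standard Sobolev-based existence in $H^3(\mathbb R^2)$ gives a lifespan of order $\|\phi(1,\cdot)\|_{H^3}^{-1}\sim\delta^{1/2+\varepsilon_0/2}$ by \eqref{p1}, which strictly exceeds $2\delta$ for small $\delta$. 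Equivalently, one can rescale via $\psi(\tau,y,\omega) := \delta^{-(2-\varepsilon_0/2)}\phi(1+\delta\tau, (1+\delta y)\omega)$, under which the data are order $O(1)$ and existence on the fixed $\tau$-interval $[0,2]$ is classical.

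The estimates (i)--(iii) are then obtained by commuting \eqref{1.9} with $L$, $\underline L$, $\Omega$, and $\partial$, together with finite propagation speed. Since $\phi(1,\cdot)$ is supported in $r\in(1-\delta,1)$, $\phi(t_0,\cdot)$ is supported in $r\in(1-3\delta,1+2\delta)$, so all assertions are trivial outside. Rewriting \eqref{1.9} in the null frame $(L,\underline L,r^{-1}\Omega)$ produces the schematic identity
\begin{equation*}
L\underline L\phi = \frac{L\phi - \underline L\phi}{2r} + \frac{\slashed\triangle\phi}{r^{2}} + \mathcal N(\partial\phi,\partial^{2}\phi),
\end{equation*}
where $\mathcal N$ collects the nonlinear terms of \eqref{1.9}, which respect both the first and second null conditions.

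For (i)(a), the initial slice $t=1$ enjoys $|L^{a}\partial^{q}\Omega^{k}\phi|\lesssim\delta^{2-|q|-\varepsilon_0}$ for $a\leq 2$ by \eqref{Y-0}, and \eqref{i1} sharpens $|L^{l}\phi(1,\cdot)|\lesssim\delta^{2-l\varepsilon_0/2}$ for $l\leq 2$; for $a\geq 3$ at $t=1$, I would iteratively substitute $\partial_t^2\phi = \Delta\phi + \mathcal N$ from \eqref{1.9} to obtain the bounds on $L^a\phi$ on the initial slice inductively in $a$. To pass from $t=1$ to $t=t_0$ on $r\in[1-2\delta,1+2\delta]$, I commute the null-frame equation with $L^{a}\Omega^{k}\partial^{q}$ and integrate the resulting transport equation for $L^{a+1}(\partial^{q}\Omega^{k}\phi)$ along integral curves of $L$; since each such curve traverses a time span of at most $2\delta$, the integration gains a factor of $\delta$ that balances the $\delta^{-1}$ cost per $\partial$-derivative appearing in the source. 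An induction on $a$ closes (i)(a). Part (i)(b) follows analogously by integrating along $\underline L$-curves from the initial support $r\in(1-\delta,1)$, which reach $r\in(1-3\delta,1-\delta)$ at $t=t_0$; combined with the outgoing-side information this produces the bound on $[1-3\delta,1+\delta]$. Statement (ii) follows by writing $\partial_t=(L+\underline L)/2$, $\partial_r=(L-\underline L)/2$ on the overlap $r\in[1-2\delta,1+\delta]$ where both (i)(a) and (i)(b) apply, together with the equation for higher $|q|$. Statement (iii) is a consequence of (i)(b) and the commutation identities $[L,\underline L]=0$, $[\underline L,\Omega]=0$.

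The principal obstacle is closing the induction in (i)(a) for arbitrarily large $a$: each application of $L$ to the null-frame equation must produce only terms already controlled by the induction hypothesis, with no loss in $\delta$-scaling. This hinges crucially on the totally linearly degenerate structure of \eqref{1.9}---absent in the genuinely nonlinear analyses of \cite{C2,MY}---which prevents $\mathcal N$ from producing higher-order $L^{j}\phi$ terms with worse $\delta$-weights than the principal part. Careful bookkeeping of commutators $[L,\partial]$, $[L,\Omega]$, and of how $\mathcal N$ redistributes $\delta$-factors across derivatives is the technical crux; the specific form of the cancellations required for \eqref{i1} and \eqref{Y-0} to be propagated is what forces \eqref{i1} to hold for $l=1,2$ and \eqref{Y-0} to hold for $0\leq k\leq 2$ (rather than only for $k=0$).
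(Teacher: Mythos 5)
Your overall architecture (classical local existence on the slab, then null-frame transport estimates) matches the paper's, but the mechanism you propose for \eqref{local1-2} with $a\geq 3$ has a genuine gap. You plan to (1) obtain $|L^a\phi(1,\cdot)|\lesssim\delta^{2-\varepsilon_0}$ for $a\geq 3$ by iteratively substituting $\p_t^2\phi=\Delta\phi+\mathcal N$ into the data, and then (2) propagate to $t_0$ by integrating a transport equation for $L^{a+1}(\p^q\O^k\phi)$ along integral curves of $L$. Step (1) fails: the hypotheses \eqref{i1}--\eqref{Y-0} control only two good derivatives, and substituting the equation gives (schematically) $L^3\phi=2\p_rL^2\phi+\cdots$, where \eqref{Y-0} only yields $\p_rL^2\phi=O(\delta^{1-\varepsilon_0})$; on the full initial slice one gets only $|L^a\phi|\lesssim\delta^{4-a-\varepsilon_0}$ for $a\geq 2$, exactly as the paper remarks (and Remark 1.6 explains why imposing \eqref{Y-0} for $k\geq 3$ would be over-determined). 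Step (2) is also the wrong direction: \eqref{me} is a transport equation along $L$ for $\underline L\phi$, so commuting with $L^{a-1}$ and using $[L,\underline L]=0$ yields a transport equation for $L^a\phi$ along $\underline L$, namely $\underline L(L^a\phi)=L^{a-1}(L\underline L\phi)$. The paper integrates this along $\underline L$-curves \emph{starting from the outgoing cone $C_0=\{t=r\}$}, where $\phi$ and all its derivatives vanish by finite propagation speed; the $\delta$-gain comes from the $O(\delta)$ $\underline L$-distance from $\{t=t_0,\ r\in[1-2\delta,1+2\delta]\}$ back to $C_0$ (the domain $D^1$ in Figure 1), not from the shortness of the time interval $[1,t_0]$. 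This is why the improved bound for $a\geq 3$ holds only on the annulus $r\in[1-2\delta,1+2\delta]$ and not on all of $\Sigma_{t_0}$. A two-pass bootstrap (first $\delta^{2-2\varepsilon_0}$, then reinsertion to get $\delta^{2-\varepsilon_0}$) is also needed because the worst source terms $L^4\phi\cdot\underline L\phi$ and $\underline L^2\phi\cdot L^3\phi$ are initially only $O(\delta^{1-2\varepsilon_0})$.

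A second, smaller gap: \eqref{local3-3} does not follow from \eqref{local2-2} and the commutation identities $[L,\underline L]=0$, $[\underline L,\Omega]=0$ alone. Replacing each $L$ by generic $\p$-derivatives in \eqref{local2-2} costs a factor $\delta^{-1}$ per derivative, giving $\delta^{2-b-\varepsilon_0}$ instead of $\delta^{2-\varepsilon_0}$. The point of \eqref{local3-3} is that mixed $\underline L^aL^b$ derivatives of arbitrary order lose nothing in $\delta$, and this requires an induction in which every occurrence of $L\underline L$ is traded, via \eqref{me}, for terms that are either already controlled or carry an extra factor $\delta^{2-\varepsilon_0}$ (cf.\ the derivation of \eqref{LLO} through \eqref{Lb} in the paper). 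Your $a\leq 2$ cases of \eqref{local1-2}, the derivation of \eqref{local2-2} by integrating along $L$, and the reduction of \eqref{local3-2} to (i) via $\p_t=\f12(L+\underline L)$, $\p_i=\f{\o^i}{2}(L-\underline L)+\f{\o^i_\perp}{r}\O$ are consistent with the paper's argument.
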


\begin{proof}
Although the proof is rather analogous to that of Theorem 3.1 in \cite{Ding4}, yet due to
the different structures between the general 4D quasilinear wave
equations satisfying the first null condition in \cite{Ding4} and the 2D quasilinear equation \eqref{1.9} fulfilling
both the first and second null conditions, we still give the details
for the reader's convenience.

Denote by $Z_g$ any fixed vector field in $\{S, H_i, i=1,2\}$. Suppose that for $1\leq t\leq t_0$
and $N_0\in\mathbb N_0$ with $N_0\ge 6$,
\begin{equation}\label{ba}
|\p^q\O^{k}Z_g^{a_0}\phi|\leq\delta^{3/2-|q|}\quad (|q|+k+a_0\leq N_0,\quad a_0\leq 1).
\end{equation}
Define the following energy for \eqref{1.9} and for $n\in\mathbb N_0$,
$$
M_n(t)=\sum_{|q|+k+a_0\leq n}\delta^{2|q|}\|\p\p^q\O^k Z_g^{a_0}\phi(t,\cdot)\|_{L^2(\mathbb R^2)}^2.
$$

Let $w=\delta^{|q|}\p^q\O^k Z_g^{a_0}\phi\quad(|q|+k+a_0\leq 2N_0-2, a_0\leq 1)$. It follows from \eqref{g-1}, \eqref{1.9}
and integration by parts that
\begin{equation}\label{Y-4}
\begin{split}
&\int_1^t\int_{\Sigma_\tau}(\p_twg^{\al\beta}\p_{\al\beta}^2w)(\tau,x) dxd\tau\\
=&\f12\int_{\Sigma_{t}}\big(-(\p_tw)^2-g^{ij}\p_iw\p_jw\big)(t,x)dx-\f12\int_{\Sigma_{1}}\big(-(\p_tw)^2-g^{ij}\p_iw\p_jw\big)(1,x)dx\\
&+\int_1^t\int_{\Sigma_\tau}\big(-\p_ig^{0i}(\p_tw)^2-(\p_ig^{ij})\p_jw\p_tw+\f12(\p_tg^{ij})\p_iw\p_jw\big)(\tau,x) dxd\tau
\end{split}
\end{equation}
with
\begin{equation}\label{w}
\begin{split}
g^{\al\beta}&\p_{\al\beta}^2w=\delta^{|q|}\sum_{\tiny\begin{array}{c}
|q_1|+|q_2|\leq|q|,\ k_1+k_2\leq k,\ a_1+a_2\leq a_0,\\ |q_2|+k_2+a_2<|q|+k+a_0\end{array}}(\p\p^{q_1}\O^{k_1}Z_g^{a_1}\phi)(\p^2\p^{q_2}\O^{k_2}Z_g^{a_2}\phi)\\
&+\delta^{|q|}\sum_{\tiny\begin{array}{c}
|q_1|+|q_2|+|q_3|\leq|q|,\\ k_1+k_2+k_3\leq k,\\
\ a_1+a_2+a_3\leq a_0,\\ |q_3|+k_3+a_3<|q|+k+a_0\end{array}}(\p\p^{q_1}\O^{k_1}Z_g^{a_1}\phi)(\p\p^{q_2}\O^{k_2}Z_g^{a_2}\phi)(\p^2\p^{q_3}\O^{k_3}Z_g^{a_3}\phi),
\end{split}
\end{equation}
where the unnecessary constant coefficients in \eqref{w} have been neglected.

It follows from the assumption \eqref{ba} and \eqref{Y-4} that
\begin{equation}\label{len}
\begin{split}
&\int_{\Sigma_t}\big((\p_tw)^2+|\nabla w|^2\big)(t,x)dx\\
\lesssim &\int_{\Sigma_1}\big((\p_tw)^2+|\nabla w|^2\big)(1,x)dx+\int_1^t\int_{\Sigma_\tau}\delta^{-1/2}\big((\p_tw)^2+|\nabla w|^2\big)(\tau,x)dxd\tau\\
&+\int_1^t\int_{\Sigma_\tau}|(\p_twg^{\al\beta}\p_{\al\beta}^2w)|(\tau,x) dxd\tau.
\end{split}
\end{equation}
Using the bootstrap assumption \eqref{ba} to estimate \eqref{w} and substituting
the resulting estimates into \eqref{len}, one can get from the Gronwall's inequality that for $1\leq t\leq t_0$,
$$
M_{2N_0-3}(t)\lesssim M_{2N_0-3}(1)e^{\delta^{-1/2}(t-1)}\lesssim\delta^{3-2\varepsilon_0}.
$$

Next, it follows from the following Sobolev's imbedding theorem on the circle $\mathbb S^1_r$
(with center at the origin and radius $r$):
$$
|w(t,x)|\lesssim \f{1}{\sqrt{r}}\|\O^{\leq 1}w\|_{L^2(\mathbb S^1_r)},
$$
together with $r\sim 1$ for $t\in [1,t_0]$ and $(t,x)\in \textrm{supp}\ w$ that
\begin{equation}\label{le}
|\p^q\O^k Z_g^{a_0}\phi(t,x)|\lesssim \|\O^{\leq 1}\p^q\O^k Z_g^{a_0}\phi\|_{L^2(\mathbb S^1_r)}\lesssim\delta^{1/2}\|\p\O^{\leq 1}\p^q\O^k Z_g^{a_0}\phi\|_{L^2(\Sigma_{t})}\lesssim\delta^{2-|q|-\varepsilon_0},\quad
\end{equation}
when $|q|+k+a_0\leq N_0$ and $a_0\leq 1$, where $N_0\geq 6$ has been used. Therefore, \eqref{ba} can be closed
for suitably small $\delta>0$ and $\varepsilon_0<\f12$.

This, together with $\ds L=(t+r)^{-1}(S+\o^iH_i)$, yields
\begin{equation}\label{Lle}
|L^{a_0}\p^q\O^k\phi(t,x)|\lesssim|Z_g^{a_0}\p^q\O^k\phi(t,x)|\lesssim\delta^{2-|q|-\varepsilon_0}
\end{equation}
with $|q|+k+a_0\leq N_0$ and $a_0\leq 1$.

\begin{figure}[htbp]
	\centering
	\includegraphics[scale=0.48]{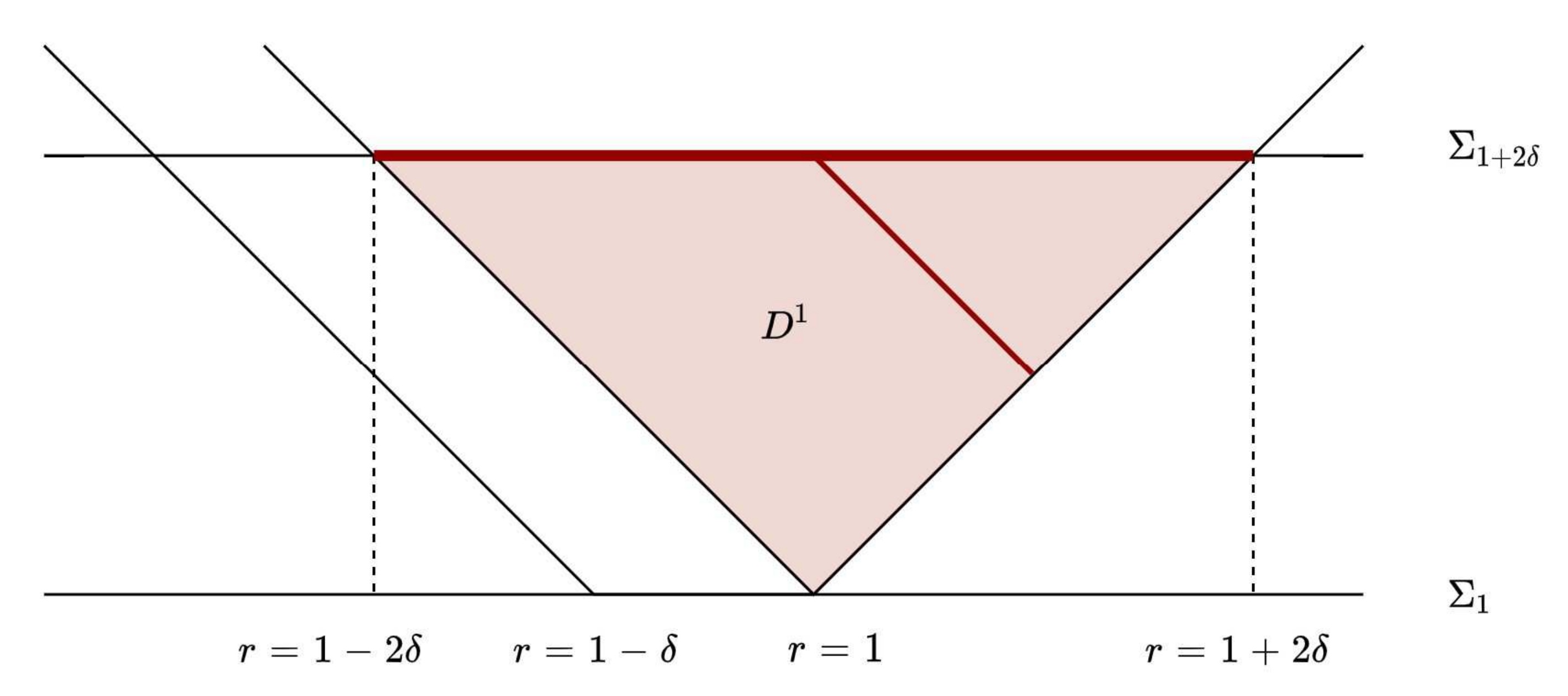}
	\caption{Space-time domain $D^1=\{(t,r): 1\le t\le t_0, 2-t\le r\le t\}$}\label{pic:p1}
\end{figure}

Now we improve the $L^\infty$ estimate of $\phi(t_0,x)$ on some special domains. Rewrite \eqref{1.9} as
\begin{equation}\label{me}
L\underline L\phi=\f{1}{2r}L\phi-\f{1}{2r}\underline L\phi+\f{1}{r^2}\Omega^2\phi+g^{\al\beta,\g}\p_\g\phi\p_{\al\beta}^2\phi+h^{\al\beta,\nu\g}\p_\nu\phi\p_\g\phi\p_{\al\beta}^2\phi.
\end{equation}

Acting the operator $L$ on both sides of \eqref{me} yields an expression of $\underline LL^2\phi$ by $\underline LL=L\underline L$ and direct computations.
It can be checked easily that the worst term in the expression of $\underline LL^2\phi$
is $L(g^{\al\beta,\g}\p_\g\phi\p_{\al\beta}^2\phi)$. Then one can use \eqref{Lle} to get $|\underline L L^2\phi|\lesssim\delta^{1-2\varepsilon_0}$ since \eqref{1.9} satisfies both null conditions. Using this together with
the vanishing property of $\phi$ on $C_0$,
one can integrate $\underline LL^2\phi$ along integral curves of $\underline L$ to show that for $(t,x)\in D^1$
(see Figure \ref{pic:p1}),
\begin{equation}\label{D-1}
|L^2\phi(t,x)|\lesssim\delta^{2-2\varepsilon_0}.
\end{equation}
Similarly, it holds that  for $(t,x)\in D^1$,
\begin{equation}\label{aux}
|L^2\p^q\O^k\phi(t,x)|\lesssim\delta^{2-|q|-2\varepsilon_0}\quad \text{for $|q|+k\leq N_0-4$}.
\end{equation}
Substituting \eqref{Lle} and \eqref{aux} into the expression of $\underline LL^2\phi$ again, and noting that the worst term
in the expression of $\underline LL^2\phi$ becomes $-\f{1}{2r}L\underline L\phi$, one then further gets $|\underline LL^2\phi|\lesssim\delta^{1-\varepsilon_0}$
for $(t,x)\in D^1$ by \eqref{Lle}. Hence the following improved smallness estimate holds:
\[
|L^2\phi|_{D^1}\lesssim\delta^{2-\varepsilon_0}\quad \text{for $1-2\delta\le r\le 1+2\delta$}.
\]
Similar arguments show that
$$|L^2\p^q\O^k\phi|_{D^1}\lesssim\delta^{2-|q|-\varepsilon_0},\quad\text{for}\  |q|+k\leq N_0-5\ \text{and}\ 1-2\delta\leq r\leq 1+2\delta.$$
{Analogously}, an induction argument yields that for $r\in [1-2\delta,1+2\delta]$,
 \begin{equation}\label{oi}
 |LL^a\p^q\O^k\phi|_{D^1}\lesssim\delta^{2-|q|-\varepsilon_0},\quad 3a+|q|+k\leq N_0-2.
 \end{equation}

 \begin{figure}[htbp]
 	\centering
 	\includegraphics[scale=0.42]{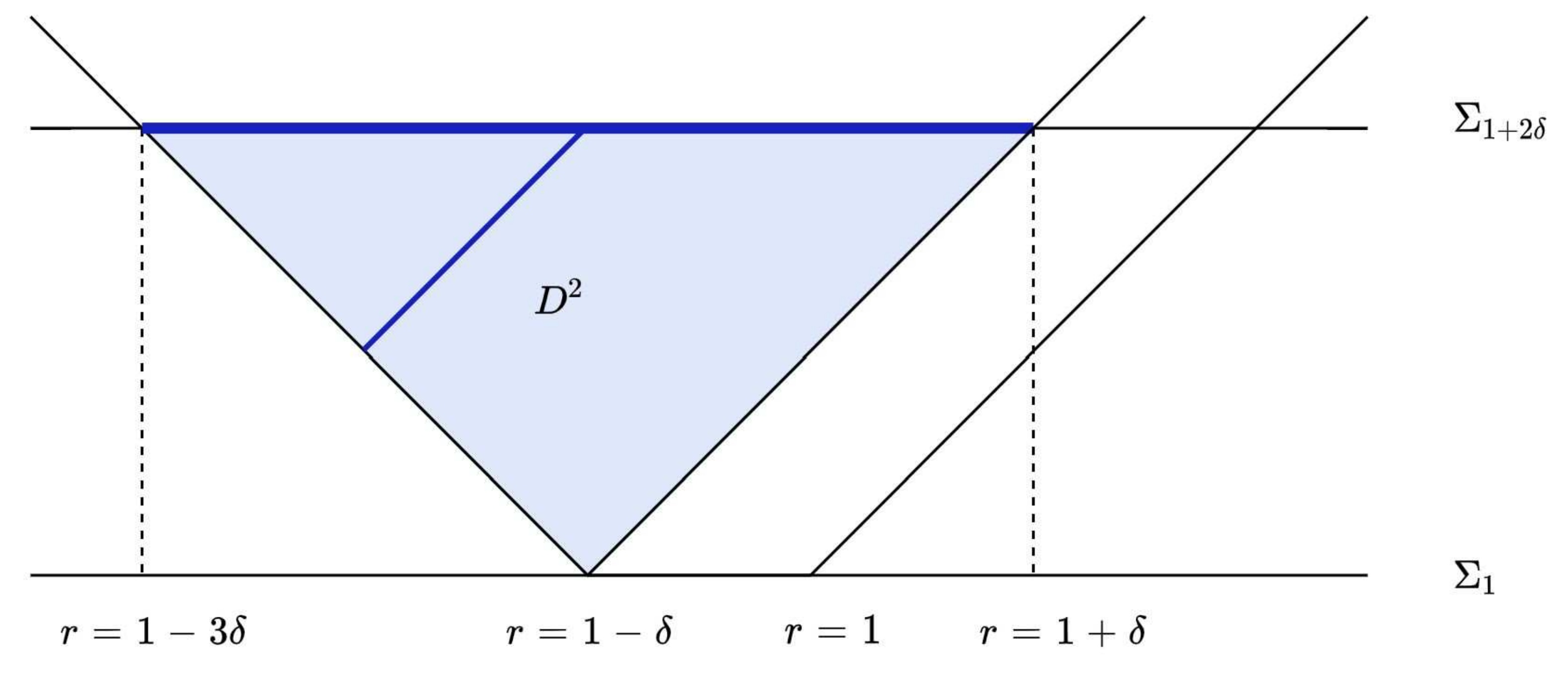}
 	\caption{Space domain for $1-3\delta\le r\le 1+\delta$ on $\Sigma_{1+2\dl}$}\label{pic:p2}
 \end{figure}

Similarly, by the expression of $L\underline L^a\p^q\O^\kappa\phi$, integrating
along integral curves of $L$ yields that for $r\in [1-3\delta,1+\delta]$ (see Figure \ref{pic:p2} below),
 \begin{equation}\label{ii}
 |\underline L^a\p^q\O^k\phi|_{D^2}\lesssim\delta^{2-|q|-\varepsilon_0},\quad 2a+|q|+k\leq N_0-1.
 \end{equation}
 On the other hand, it follows from \eqref{Lle} that
 \begin{equation}\label{Op}
 |\O^k\phi|_{D^2}\lesssim\dl^{3-\ve_0},\ k\leq N_0-1
 \end{equation}
 after integrating $L\O^k\phi$ along integral curves of $L$ in $D^2$.

Therefore, it follows from
 $\p_t=\f12(L+\underline L)$, $\p_i=\f{\o^i}{2}(L-\underline L)+\f{\o^i_{\perp}}{r}\O$,
\eqref{Lle} and \eqref{ii}-\eqref{Op} that for $|q|+k\leq N_0-3$
 and $r\in [1-3\delta, 1+\delta]$,
 \begin{equation}\label{iid}
 |\p^q\O^k\phi(t_0,x)|\lesssim\delta^{3-|q|-\varepsilon_0}.
 \end{equation}

 Finally, we show \eqref{local3-3}. Note that \eqref{iid} implies that on the
 surface $\Sigma_{t_0}$ with $r\in[1-2\delta,1+\delta]$, $|\underline L^{\leq 1}\O^k\phi|\lesssim\delta^{2-\varepsilon_0}$
 and $|L^{\leq 1}\O^k\phi|\lesssim\delta^{2-\varepsilon_0}$ for $k\leq N_0-5$, and
 hence $|L\underline L\O^k\phi|\lesssim\delta^{2-\varepsilon_0}$ by \eqref{me}.
 Furthermore, we claim that
 \begin{equation}\label{LLO}
 |\underline L^aL^b\O^k\phi|\lesssim\delta^{2-\varepsilon_0},\quad\text{for}\ 3a+3b+k\leq N_0-1,
 \end{equation}
which can be proved by induction. Indeed, assume that \eqref{LLO} holds for $a+b\leq n_0$
with $n_0\in \mathbb N_0$ satisfying $3n_0+k\leq N_0-1$. One needs to verify the estimate
in \eqref{LLO} for $3(n_0+1)+k\leq N_0$ and $a+b=n_0$. If $a\ge 1$, by \eqref{me} and the induction assumption, one can get  that
 	\begin{equation*}
 	\begin{split}
 	&|\underline L^aL^{b+1}\O^k\phi|=|\underline L^{a-1}L^b\O^k(L\underline L\phi)|\\
 	\lesssim&\delta^{2-\varepsilon_0}+\delta^{2-\varepsilon_0}|\underline L^{a+1}L^{b}\O^{\leq k}\phi|+\delta^{2-\varepsilon_0}|\underline L^{a-1}L^{b+2}\O^{\leq k}\phi|.
 	\end{split}
 	\end{equation*}
 This together with an induction argument yields
 	\begin{equation}\label{L1}
 	|\underline L^aL^{b+1}\O^k\phi|\lesssim\delta^{2-\varepsilon_0}+\delta^{2-\varepsilon_0}|\underline L^{a+1}L^{b}\O^{\leq k}\phi|.
 	\end{equation}
 	
 If $b\geq 1$, similar to the proof of \eqref{L1}, one has
 	\begin{equation}\label{Lb}
 	|\underline L^{a+1}L^{b}\O^k\phi|\lesssim\delta^{2-\varepsilon_0}+\delta^{2-\varepsilon_0}|\underline L^aL^{b+1}\O^{\leq k}\phi|.
 	\end{equation}

 Combining \eqref{L1} with \eqref{Lb} yields
 \[
 |\underline L^aL^{b+1}\O^k\phi|+|\underline L^{a+1}L^{b}\O^k\phi|\lesssim\delta^{2-\varepsilon_0},
 \]
 which implies \eqref{local3-3}.  Therefore, the proof of Theorem \ref{Th2.1} is finished.
\end{proof}

\subsection{The related geometry and definitions}\label{p}

In this subsection, we list some related geometric facts and definitions, which will be
utilized as basic tools later on. It is assumed that a smooth solution $\phi$ to \eqref{1.9} is given.
The {\it ``optical function"} corresponding to \eqref{1.9} can be defined as in \cite{C2}
(see also Definition 3.4 of \cite{J}).

\begin{definition} For a given smooth solution $\phi$ to the equation \eqref{1.9},
a $C^1$ function ${u}(t,x)$  is called an {\it{optical function}}
if ${u}$ satisfies the eikonal equation
\begin{equation}\label{Y-5}
g^{\al\beta}\p_\al{u}\p_\beta{u}=0.
\end{equation}
\end{definition}
Choose the initial data ${u}(t_0,x)=1+2\delta-r$ and impose the condition $\p_tu>0$
for \eqref{Y-5}.
For a given optical function, the {\it{inverse foliation density $\mu$}} of the
outgoing cones is defined as
\begin{equation}\label{demu}
\mu:=-\f{1}{g^{\al\beta}\p_\al u\p_\beta t}\quad (=-\f{1}{g^{\al 0}\p_\al u}).
\end{equation}
$\mu\ge C>0$ will be shown as long as the smooth solution
$\phi$ to \eqref{1.9} exists. We adopt most of terminologies and definitions in \cite{C2}
(see also \cite{J}).

Note that
$$
\tilde{L}=-g^{\al\beta}\p_\al u\p_\beta
$$
is a tangent vector field for the outgoing light cone $\{ u=C\}$. In addition, $\tilde{L}t=\mu^{-1}$.
Then it is natural to rescale $\tilde{ L}$ as
$$
\mathring{L}=\mu\tilde{L},
$$
which approximates $L=\p_t+\p_r$. To obtain an approximate vector  field of the incoming light cone,
one sets $\tilde T=-g^{\nu 0}\p_\nu-\mathring L$, which is near $-\p_r$ for $t=t_0$. Then in order
to define a null frame, one can set
$$
T=\mu\tilde T,\quad\mathring{\underline L}=\mu\mathring L+2T,
$$
where $\mathring{L}$ and $\mathring{\underline L}$ are two vector fields in the null frame.
Finally, the third vector field
$X$ in the null frame can be constructed by using $\mathring L$. Extending the local coordinate $\theta$ on $\mathbb{S}^1$ as
\begin{equation*}
\left\{
\begin{aligned}
&\mathring L\vartheta=0,\\
&\vartheta|_{t=t_0}=\th.
\end{aligned}
\right.
\end{equation*}
Subsequently, let $X=\f{\p}{\p\vartheta}$. Then $X$ is a tangent vector on $S_{\mathfrak t,u}$. Rewrite $X=X^\al\p_\al$.
Then $X^0=0$ holds due to $\f{\p t}{\p\vartheta}=0$.

\begin{lemma}\label{nullframe}
$\{\mathring L, \mathring{\underline L}, X\}$ constitutes a null frame with respect to the metric $(g_{\al\beta})$, and admits the following identities:
\begin{align}
&g(\mathring L,\mathring L)=g(\mathring{\underline L}, \mathring{\underline L})=g(\mathring L, X)=g(\mathring{\underline L}, X)=0,\\
&g(\mathring L, \mathring{\underline L})=-2\mu.
\end{align}
In addition,
\begin{align}
&\mathring L t=1,\quad\mathring L u=0,\\
&\mathring{\underline L}t=\mu,\quad \mathring{\underline L} u=2.
\end{align}
And
\begin{align}\label{LTTT}
g(\mathring L, T)=-\mu,\quad g(T,T)=\mu^2,
\end{align}
\begin{align}\label{T}
Tt=0,\quad T u=1.
\end{align}
\end{lemma}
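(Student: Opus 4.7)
The plan is to verify all the asserted identities by direct algebraic manipulation from the definitions of $\tilde L$, $\mathring L$, $T$, $\mathring{\underline L}$ and $X$, together with the eikonal equation \eqref{Y-5}. There is no real conceptual obstacle; the only thing to organize is the order in which the identities are established so that each one uses only its predecessors. The computational engine is the observation that lowering the index of $\tilde L^\alpha = -g^{\gamma\alpha}\p_\gamma u$ with $g_{\alpha\beta}$ reproduces $-du$, so that for any vector field $Y$ one has $g(\tilde L, Y) = -Yu$. Combined with the companion identity $g^{\nu 0}\p_\nu u = -\mu^{-1}$ coming from \eqref{demu}, this reduces every pairing involving $\tilde L$ to either a directional derivative of $u$ or a factor of $\mu^{-1}$.

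First I would establish the $\mathring L$ identities. Applying $g(\tilde L, Y) = -Yu$ with $Y = \tilde L$ and invoking the eikonal equation gives $g(\mathring L, \mathring L) = -\mu^2 \tilde L u = 0$. Directly from the definitions and \eqref{demu}, $\mathring L t = \mu\tilde L t = -\mu g^{\alpha 0}\p_\alpha u = 1$ and $\mathring L u = -\mu g^{\alpha\beta}\p_\alpha u\p_\beta u = 0$. Next, since $(t, u, \vartheta)$ forms a coordinate system on the region where $\vartheta$ is defined by $\mathring L\vartheta = 0$ with $\vartheta|_{t=t_0} = \theta$, the vector $X = \p/\p\vartheta$ is a coordinate basis vector, so $Xt = 0$ and $Xu = 0$; consequently $g(\mathring L, X) = -\mu Xu = 0$.

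Second, I would turn to $T = \mu\tilde T$ with $\tilde T = -g^{\nu 0}\p_\nu - \mathring L$. From $g^{00} = -1$ and the already-established $\mathring L t = 1$ one obtains $Tt = \mu(-g^{00} - \mathring L t) = 0$, while $g^{\nu 0}\p_\nu u = -\mu^{-1}$ and $\mathring L u = 0$ yield $Tu = \mu\cdot\mu^{-1} = 1$. Expanding $g(\mathring L, T) = \mu g(\tilde L, -g^{\nu 0}\p_\nu - \mathring L)$ and using $g(\tilde L, \p_\nu) = -\p_\nu u$ together with $g(\mathring L,\mathring L)=0$ produces $g(\mathring L, T) = -\mu$. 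A similar expansion of $g(T, T)$, using additionally the index contraction $g^{\nu 0}g^{\lambda 0}g_{\nu\lambda} = g^{00} = -1$ and the value $g^{\nu 0}\p_\nu u = -\mu^{-1}$, gives $g(T, T) = \mu^2$. The only minor bookkeeping is to keep track of the double $\mu$ factor coming from $T = \mu\tilde T$.

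Finally, the $\mathring{\underline L}$ identities follow by bilinearity from $\mathring{\underline L} = \mu\mathring L + 2T$ together with the pairings already computed: $\mathring{\underline L}t = \mu\cdot 1 + 2\cdot 0 = \mu$, $\mathring{\underline L}u = 0 + 2 = 2$, $g(\mathring L, \mathring{\underline L}) = 0 + 2g(\mathring L, T) = -2\mu$, and $g(\mathring{\underline L}, \mathring{\underline L}) = \mu^2\cdot 0 + 4\mu g(\mathring L, T) + 4g(T, T) = -4\mu^2 + 4\mu^2 = 0$. For $g(\mathring{\underline L}, X)$ one needs only $g(T, X)$, which from $T = -\mu g^{\nu 0}\p_\nu - \mu\mathring L$ simplifies to $-\mu g^{\nu 0}g_{\nu\gamma}X^\gamma = -\mu X^0 = -\mu\, Xt = 0$. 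Hence $g(\mathring{\underline L}, X) = 0$. The entire argument is thus a sequencing exercise once $g(\tilde L, Y) = -Yu$ and $g^{\nu 0}\p_\nu u = -\mu^{-1}$ are in place; there is no estimate or nontrivial analytic input needed.
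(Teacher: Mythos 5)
Your proof is correct, and it follows the standard direct-verification route that the paper (which omits the proof of this lemma, deferring to the constructions in \cite{C2} and \cite{J}) implicitly relies on: lowering the index of $\tilde L$ to get $g(\tilde L,Y)=-Yu$, using the eikonal equation and $g^{\nu 0}\p_\nu u=-\mu^{-1}$, and then propagating through the definitions of $T$, $\mathring{\underline L}$ and $X$ by bilinearity. All the individual computations check out.
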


As in \cite{J,MY}, one can perform the change of coordinates:
$(t, x^1, x^2)\longrightarrow (\mathfrak t, u, \vartheta)$ near $C_0$ with
\begin{equation}\label{H0-7}
(\mathfrak{t}, u, \vartheta)=(t, u(t,x), \vartheta(t,x)).
\end{equation}
In the new coordinate $(\mathfrak{t}, u, \vartheta)$, the following subsets are introduced (see Figure \ref{pic:p3} below):

\begin{definition} Set
	\begin{align*}
	&\Sigma_{\mathfrak t}^{u}:=\{({\mathfrak t}',u',\vartheta): {\mathfrak t}'=\mathfrak t,0\leq u'\leq  u\},\quad u\in [0, 4\delta],\\
	&C_{u}:=\{({\mathfrak t}',u',\vartheta): {\mathfrak t}'\geq t_0, u'=u\},\\
	&C_{u}^{\mathfrak t}:=\{({\mathfrak t}',u',\vartheta): t_0\leq {\mathfrak t}'\leq {\mathfrak t}, u'=u\},\\
	&S_{{\mathfrak t}, u}:=\Sigma_{\mathfrak t}\cap C_{u},\\
	&D^{{\mathfrak t}, u}:=\{({\mathfrak t}', u',\vartheta): t_0\leq {\mathfrak t}'<{\mathfrak t}, 0\leq u'\leq u\}.
	\end{align*}
\end{definition}

\begin{figure}[htbp]
	\centering
	\includegraphics[scale=0.45]{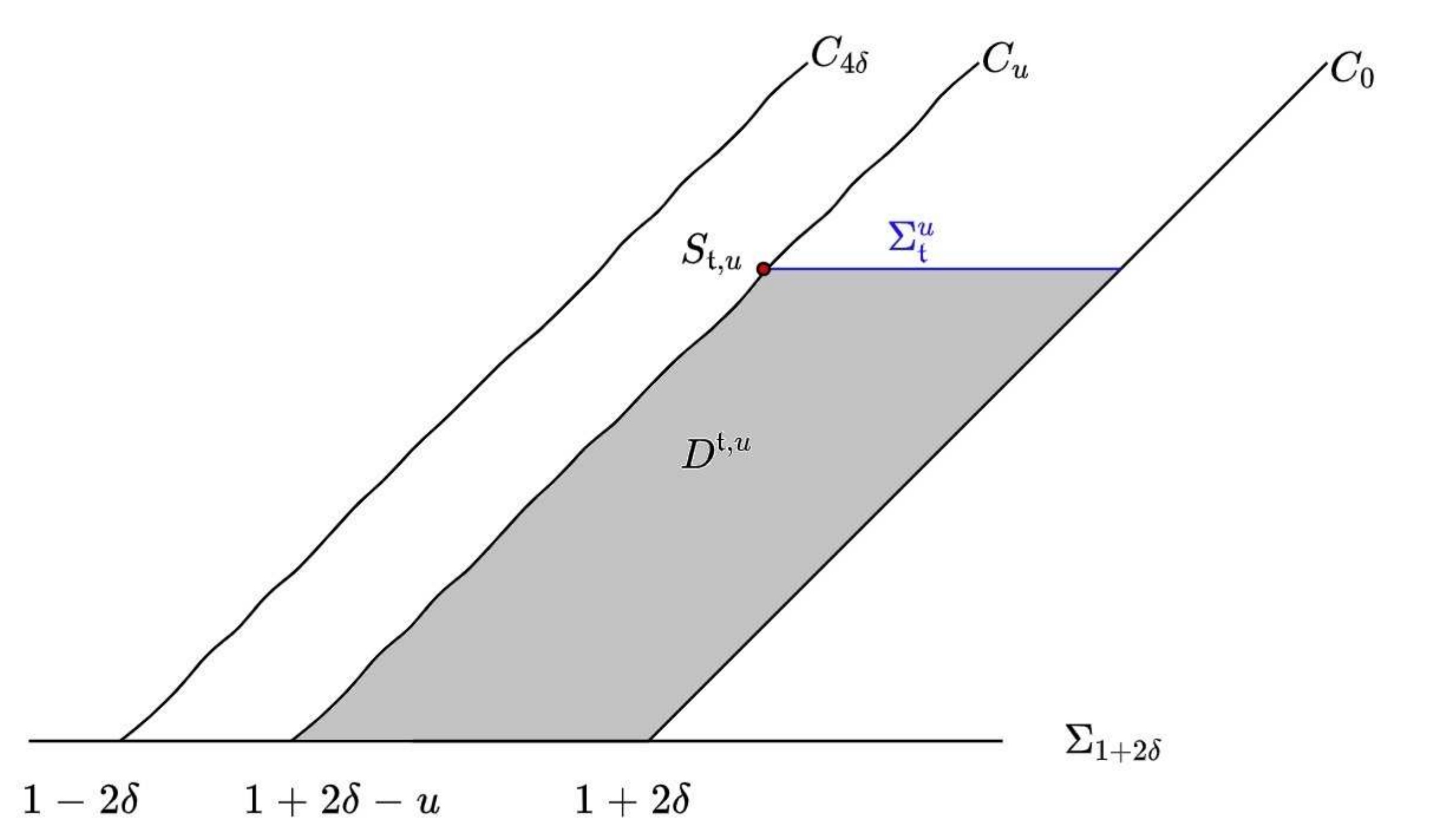}
	\caption{The indications of some domains}\label{pic:p3}
\end{figure}

Next, the following geometric notations which will be used frequently.
\begin{definition} For the metric $g$ on the spacetime,
	\begin{itemize}
		\item $\underline g=(g_{ij})$ is defined as the induced metric of $g$ on $\Sigma_{\mathfrak t}$, i.e.,
$\underline g(U,V)=g(U,V)$ for any tangent vectors $U$ and $V$ of $\Sigma_{\mathfrak t}$;
		\item $\slashed \Pi_\al^\beta:=\delta_\al^\beta-\delta_\al^0\mathring L^\beta+\mathring L_\al\tilde{T}^\beta$ is the
		projection tensor field on $S_{{\mathfrak t},u}$ of type $(1,1)$, where $\delta_\al^\beta$ is the Kronecker delta;
		\item For any $(m,n)$-type
		spacetime tensor field
		$\xi$, $\slashed\xi=\slashed\Pi\xi$ is the tensor field on $S_{{\mathfrak t},u}$  with components $$\slashed\xi^{\al_1\cdots\al_m}_{\beta_1\cdots\beta_n}:=(\slashed\Pi\xi)^{\al_1\cdots\al_m}_{\beta_1\cdots\beta_n}
		=\slashed\Pi_{\beta_1}^{\beta_1'}\cdots\slashed\Pi_{\beta_n}^{\beta_n'}
\slashed\Pi_{\al_1'}^{\al_1}\cdots\slashed\Pi_{\al_m'}^{\al_m}\xi^{\al_1'\cdots\al_m'}_{\beta_1'\cdots\beta_n'}.$$
		In particular, $\slashed g=(\slashed g_{\al\beta})$ is the induced metric of $g$ on $S_{{\mathfrak t},u}$;
		\item $(\slashed g^{XX})$ is defined as the inverse of $\slashed g_{XX}$ with $\slashed g_{XX}=g(X, X)$;
		\item $\mathscr D$ and $\slashed\nabla$ denote the Levi-Civita connections of $g$ and $\slashed g$, respectively;
		\item $\Box_g:=g^{\al\beta}\mathscr{D}^2_{\al\beta}$, $\slashed\triangle:=\slashed g^{XX}\slashed\nabla^2_{X}$;
		\item $\mathcal L_V\xi$ is the Lie derivative of $\xi$ with respect to $V$
and $\slashed{\mathcal L}_V\xi:=\slashed\Pi(\mathcal L_V\xi)$ for any tensor field $\xi$ and vector $V$;
		\item For any $(m,n)$-type spacetime tensor field $\xi$,
		$$
		|\xi|^2:=g_{\al_1\al_1'}\cdots g_{\al_m\al_m'}g^{\beta_1\beta_1'}\cdots g^{\beta_n\beta_n'}\xi_{\beta_1\cdots\beta_n}^{\al_1\cdots\al_m}\xi_{\beta_1'\cdots\beta_n'}^{\al_1'\cdots\al_m'};
		$$
		\item $\text{div}U:=\mathscr D_\al U^\al$ for any vectorfield $U$; $\slashed{\text{div}}Y:=\slashed{\nabla}_X Y^X$ and $\slashed{\text{div}}\kappa:=\slashed\nabla^X\kappa_X$ are angular divergence for any vectorfield $Y$ and one form $\kappa$ on $S_{{\mathfrak t},u}$.
	\end{itemize}
\end{definition}

Under the frame $\{\mathring L, \mathring{\underline L}, X\}$, {\it{the second fundamental forms}} $\chi$
and $\tilde\theta$ can be defined as
\begin{equation}\label{chith}
\chi_{XX}=g(\mathscr D_X \mathring L, X),\quad\tilde\theta_{XX}=g(\mathscr D_X\tilde T,X).
\end{equation}
Define {\it one-form tensors} $\zeta$ and $\xi$ as
\begin{equation}\label{zetaeta}
\zeta_X=g(\mathscr D_X \mathring L,\tilde T),\quad\xi_X=-g(\mathscr D_X T, \mathring L).
\end{equation}
Then $\mu\zeta_X=-X\mu+\xi_X$.
For any vector field $V$, denote its associate {\it{deformation tensor}} by
\begin{equation}\label{dt}
\leftidx{^{(V)}}\pi_{\al\beta}=g(\mathscr{D}_{\al}V,\p_{\beta})+g(\mathscr{D}_{\beta}V,\p_{\al}).
\end{equation}

On the initial hypersurface $\Sigma_{t_0}^{4\delta}$, one has that $\tilde T^i=\ds-\f{x^i}{r}+O(\delta^{1-\varepsilon_0})$,
$\mathring L^0=1$, $\mathring L^i=\ds\f{x^i}{r}+O(\delta^{2-\varepsilon_0})$ and
$\chi_{XX}=\ds\f1r\slashed g_{XX}+O(\delta^{1-\varepsilon_0})$.
Note that on $\Sigma_{t_0}$, $r$ is just $t_0-u$. For $\mathfrak t\geq t_0$, we
define the ``{\it{error vectors}}" with the components being
\begin{equation}\label{errorv}
\begin{split}
&\check{L}^0:=0,\ \check{L}^i:=\mathring L^i-\f{x^i}{\varrho},\ \check{T}^i:=\tilde T^i+\f{x^i}{\varrho},\\
&\check{\chi}_{XX}:=\chi_{XX}-\f{1}{\varrho}\slashed g_{XX},
\end{split}
\end{equation}
where $\varrho:={\mathfrak t}-u$.

Note that $\vartheta$ is the coordinate on $S_{{\mathfrak t}, u}$.
Then in the new coordinate system $({\mathfrak t}, u, \vartheta)$,
one has $\mathring L=\f{\p}{\p \mathfrak t}$. In addition, it follows from \eqref{T} that $T=\f{\p}{\p u}-\eta^X X$
for some smooth function $\eta^X$. And moreover, a similar analysis as for Lemma 3.66 of \cite{J} gives that

\begin{lemma}
In  domain $D^{{\mathfrak t}, u}$, the Jacobian determinant of map $({\mathfrak t}, u, \vartheta)\rightarrow
(x^0, x^1, x^2)$ is
\begin{equation}\label{Jacobian}
\det\f{\p(x^0, x^1, x^2)}{\p({\mathfrak t}, u, \vartheta)}=\mu(\det\underline g)^{-1/2}\sqrt{\slashed g_{XX}}.
\end{equation}
\end{lemma}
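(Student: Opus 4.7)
The plan is to compute the Jacobian directly by recognizing the columns of $\partial(x^0,x^1,x^2)/\partial({\mathfrak t},u,\vartheta)$ as the spacetime components of the null frame vectors $\mathring L$, $T$, $X$, and then reduce the resulting $3\times 3$ determinant to a manifestly geometric $2\times 2$ one.

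First, I would use the identifications stated just above the lemma, namely $\mathring L=\p/\p{\mathfrak t}$, $X=\p/\p\vartheta$, and $T=\p/\p u-\eta^X X$. The three columns of the Jacobian matrix are then $\mathring L^\al$, $(T+\eta^X X)^\al$, and $X^\al$. Since subtracting $\eta^X$ times the third column from the second does not alter the determinant, we obtain
$$\det\f{\p(x^0,x^1,x^2)}{\p({\mathfrak t},u,\vartheta)}=\det\bigl[\mathring L^\al\mid T^\al\mid X^\al\bigr].$$
Next, I would expand along the first row using the three identities $\mathring L^0=\mathring L t=1$ (Lemma \ref{nullframe}), $T^0=Tt=0$ (equation \eqref{T}), and $X^0=Xt=0$ (which is noted in the text because $\p t/\p\vartheta=0$). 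This collapses the determinant to the $2\times 2$ quantity $T^1X^2-T^2X^1$.

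The remaining step is to identify $T^1X^2-T^2X^1$ with the claimed right-hand side via a Gram-determinant argument on $\Sigma_{\mathfrak t}$. Since $T$ and $X$ are both tangent to $\Sigma_{\mathfrak t}$, the quantity $|T^1X^2-T^2X^1|$ equals $(\det\underline g)^{-1/2}$ times the $\underline g$-area of the parallelogram they span (a standard change-of-variables identity for areas on a Riemannian surface). Intrinsically, that area is the square root of the Gram determinant $g(T,T)\slashed g_{XX}-g(T,X)^2$. From \eqref{LTTT} we have $g(T,T)=\mu^2$. To handle the cross term, I would write $2T=\mathring{\underline L}-\mu\mathring L$ (using the definition $\mathring{\underline L}=\mu\mathring L+2T$) and then invoke $g(\mathring L,X)=g(\mathring{\underline L},X)=0$ from Lemma \ref{nullframe} to conclude $g(T,X)=0$. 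Hence the Gram area reduces to $\mu\sqrt{\slashed g_{XX}}$, and equating the two expressions for the area yields
$$|T^1X^2-T^2X^1|=\mu(\det\underline g)^{-1/2}\sqrt{\slashed g_{XX}},$$
with the sign pinned down by the orientation convention, since on $\Sigma_{t_0}$ both sides reduce to the same positive Euclidean quantity and the Jacobian is continuous.

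The calculation is essentially bookkeeping; the only point requiring a small observation is the orthogonality $g(T,X)=0$, which is not immediate from the definition $\tilde T=-g^{\nu 0}\p_\nu-\mathring L$ but becomes transparent once $T$ is rewritten through the null pair $(\mathring L,\mathring{\underline L})$. The argument parallels the proof of Lemma 3.66 in \cite{J}, and the stated formula \eqref{Jacobian} follows.
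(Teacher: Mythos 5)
Your approach supplies a self-contained computation where the paper only refers to Lemma 3.66 of \cite{J}, and it is essentially correct. The three main reductions are all sound: trading $\p/\p u$ for $T$ by a column operation (since $T=\p/\p u-\eta^X X$), collapsing to the $2\times 2$ minor via $\mathring L^0=1$, $T^0=X^0=0$, and then using the Gram-determinant identity together with $g(T,T)=\mu^2$ from \eqref{LTTT} and $g(T,X)=0$ (which indeed follows from $2T=\mathring{\underline L}-\mu\mathring L$ and Lemma \ref{nullframe}). This gives
\begin{equation*}
\bigl|T^1X^2-T^2X^1\bigr|=(\det\underline g)^{-1/2}\sqrt{g(T,T)\,\slashed g_{XX}-g(T,X)^2}
=\mu(\det\underline g)^{-1/2}\sqrt{\slashed g_{XX}},
\end{equation*}
which is exactly the intended content.

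One concrete flaw is in the sign-pinning step. You assert that on $\Sigma_{t_0}$ ``both sides reduce to the same positive Euclidean quantity,'' but they do not. At $\mathfrak t=t_0$ one has $T^i\approx-\omega^i$ and $X=\p/\p\theta$ with $X^1\approx-x^2$, $X^2\approx x^1$, so $T^1X^2-T^2X^1\approx-r<0$; equivalently, since $t=\mathfrak t$, $r\approx t_0-u$, $\theta=\vartheta$ there, the Jacobian factors as $(-1)\cdot r$. The right-hand side of \eqref{Jacobian}, on the other hand, is $\approx r>0$. So the Gram argument pins the \emph{absolute value}, and as stated the identity holds only up to a sign (or with a minus sign inserted). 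This does not affect the paper, since Remark \ref{3.1} uses only the nonvanishing of the Jacobian when $\mu>0$, but you should not claim the two sides have the same sign at $t_0$ without checking -- in the conventions of this paper they do not.
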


\begin{remark}\label{3.1}
\eqref{Jacobian} implies that for regular metrics $\underline g$ and $\slashed g$, i.e., $\det\underline g>0$
and $\slashed g_{XX}>0$, the transformation of coordinates between $({\mathfrak t}, u, \vartheta)$ and $(x^0, x^1, x^2)$
makes sense as long as $\mu>0$.
\end{remark}

On $\mathbb{S}^1$, the standard rotation vector field $\Omega=\epsilon_i^jx^i\p_j$ stands
as a tangential derivative.
In order to project $\Omega$ on $S_{{\mathfrak t}, u}$, as in (3.39b) of \cite{J},
one can denote by
\begin{equation*}
R:=\slashed\Pi\Omega,\quad \slashed d:=\slashed\Pi d
\end{equation*}
the rotation vectorfield and differential of $S_{{\mathfrak t}, u}$, respectively. Then
\begin{equation}\label{R}
\begin{split}
R&=({\slashed\Pi}\Omega)^i\p_i=({\slashed\Pi}^i_j\Omega^j)\p_i=(\delta_j^k-g_{ja}\tilde T^a\tilde T^k){\Omega}^j\p_k
=\Omega-g_{ja}\tilde{T}^a{\Omega}^j\tilde{T}.
\end{split}
\end{equation}
Set
\begin{equation}\label{omega}
\upsilon:=g_{ja}\tilde{T}^a{\Omega}^j=g_{ij}\check T^i\O^j.
\end{equation}
Then one has
$$R=\Omega-\upsilon\tilde{T}.$$

For domains with $\mu>0$, the following integrations and norms
will be utilized repeatedly in subsequent sections.

\begin{definition}
For any continuous function $f$, set
\begin{align*}
&\int_{S_{{\mathfrak t}, u}}f:=\int_{S_{{\mathfrak t}, u}}fd\nu_{\slashed g}:=\int_{\mathbb{S}^1}f(\mathfrak t, u,\vartheta)
\sqrt{\slashed g_{XX}(\mathfrak t, u,\vartheta)}d\vartheta,\qquad\|f\|_{L^2(S_{{\mathfrak t}, u})}^2:=\int_{S_{\mathfrak t, u}}|f|^2,\\
&\int_{ C^{\mathfrak t}_{ u}}f:=\int_{t_0}^{\mathfrak t}\int_{S_{\tau, u}}f(\tau, u,\vartheta) d\nu_{\slashed g}d\tau,\qquad\|f\|_{L^2( C^{\mathfrak t}_{ u})}^2:=\int_{ C^{\mathfrak t}_{ u}}|f|^2,\\
&\int_{\Sigma_{\mathfrak t}^{ u}}f:=\int_0^{ u}\int_{S_{{\mathfrak t}, u'}}f({\mathfrak t}, u',\vartheta) d\nu_{\slashed g}d u',\qquad\|f\|_{L^2(\Sigma_{\mathfrak t}^{u})}^2:=\int_{\Sigma_{\mathfrak t}^{ u}}|f|^2,\\
&\int_{D^{{\mathfrak t}, u}}f:=\int_{t_0}^{\mathfrak t}\int_0^{ u}\int_{S_{\tau, u'}}f(\tau, u',\vartheta)d\nu_{\slashed g}d u'd\tau,\quad\|f\|_{L^2(D^{{\mathfrak t}, u})}^2:=\int_{D^{{\mathfrak t}, u}}|f|^2.
\end{align*}
\end{definition}

For reader's convenience, the notation of contractions is recalled as follows:
\begin{definition}\label{def}
If $\Theta$ is a $(0,2)$-type spacetime tensor, $\kappa$ is a one-form, $U$ and $V$ are vector fields, the contraction
of $\Theta$ with respect to $U$ and $V$ is then defined as
$$
\Theta_{UV}:=\Theta_{\al\beta}U^{\al}V^{\beta},
$$
and the contraction of $\kappa$ with respect to $U$ is
$$
\kappa_U:=\kappa_{\al}U^{\al}.
$$
If $\xi$ is a $(0,2)$-type tensor on $S_{\mathfrak t,u}$, then the trace of $\xi$ is defined as
	\[
	\text{tr}\xi:=\slashed g^{XX}\xi_{XX}.
	\]
\end{definition}

\subsection{Basic equalities in the null frames}\label{be}

In this subsection, some basic equalities in the frame $\{\mathring L, \mathring{\underline L}, X\}$
or $\{T, \mathring{\underline L}, X\}$ will be given. Set
\begin{equation*}
\begin{split}
&\ds G_{\al\beta}^\gamma:=\f{\p g_{\al\beta}}{\p\varphi_\gamma}\quad\text{and}\quad \ds G_{\al\beta}^{\gamma\nu}:=\f{\p G_{\al\beta}^{\gamma}}{\p\varphi_\nu}.
\end{split}
\end{equation*}
For any vector fields $\ds U=U^\al\f{\p}{\p x^{\al}}$ and $\ds V=V^\al\f{\p}{\p x^{\al}}$, as in Definition \ref{def}, one can define $G_{UV}^\gamma:=G_{\al\beta}^\gamma U^\al V^\beta$ and
$G_{UV}^{\gamma\nu}:=G_{\al\beta}^{\gamma\nu}U^\al V^\beta$. Direct computations yield
\begin{equation}\label{G}
\begin{split}
&G_{\mathring L\mathring L}^\gamma=-2c^{-1}\mathring L^\gamma,\quad G_{\mathring L\tilde{T}}^0=2c^{-1},\quad G_{\mathring L\tilde{T}}^a=2c^{-1}\varphi_a-c^{-1}\tilde T^a,\\
&G_{\tilde{T}\tilde{T}}^0=-2c^{-1}, \quad G_{\tilde{T}\tilde{T}}^a=-2c^{-1}\varphi_a,\quad G_{\mathring LX}^0=0,\quad G_{\mathring LX}^a=-c^{-1}\slashed d_Xx^a,\\
& G_{\tilde{T}X}^\gamma=0,\quad G_{XX}^0=-2c^{-1}\slashed g_{XX},\quad G_{XX}^a=-2c^{-1}\varphi_a\slashed g_{XX}
\end{split}
\end{equation}
and
\begin{equation}\label{G1}
\begin{split}
&G_{\mathring L\mathring L}^{00}=8c^{-2},\quad G_{\mathring L\mathring{L}}^{0a}=4c^{-2}(\varphi_a+\mathring L^a),\quad G_{\mathring{L}\mathring L}^{ab}=4c^{-2}(\varphi_a\mathring L^b+\varphi_b\mathring L^a),\\
&G_{XX}^{00}=8c^{-2}\slashed g_{XX},\quad G_{XX}^{0a}=8c^{-2}\varphi_a\slashed g_{XX},\quad G_{XX}^{ab}=2c^{-2}(4\varphi_a\varphi_b-c\delta_{ab})\slashed g_{XX},\\
&G_{\mathring LX}^{00}=0,\quad G_{\mathring LX}^{0a}=2c^{-2}\slashed d_Xx^a,\quad G_{\mathring LX}^{ab}=2c^{-2}(\varphi_a\slashed d_Xx^b+\varphi_a\slashed d_Xx^a),\\
&G_{\mathring L\tilde T}^{00}=-8c^{-2},\quad G_{\mathring L\tilde T}^{0a}=2c^{-2}(\tilde T^a-4\varphi_a),\quad G_{\mathring L\tilde T}^{ab}=2c^{-2}(c\delta_{ab}-4\varphi_a\varphi_b+\varphi_a\tilde T^b+\varphi_b\tilde T^a),\\
&G_{\tilde T\tilde T}^{00}=8c^{-2},\quad G_{\tilde T\tilde T}^{0a}=8c^{-2}\varphi_a,\quad G_{\tilde T\tilde T}^{ab}=2c^{-2}(4\varphi_a\varphi_b-c\delta_{ab}),\quad G_{\tilde TX}^{\al\beta}=0,
\end{split}
\end{equation}
where $\slashed d_Xf=X^i{\slashed d}_if$ and ${\slashed d}_if={\slashed\Pi}_i^{\al}(d_{\al}f)={\slashed\Pi}_i^{\al}(\p_{\al}f)$
for any smooth function $f$.

It follows from \eqref{G} that the following lemma holds.

\begin{lemma}\label{mu}
 $\mu$ satisfies
 \begin{equation}\label{lmu}
 \begin{split}
 \mathring L\mu=c^{-1}\mu\big(\mathring L^\al\mathring L\varphi_\al-\mathring Lc\big).
 \end{split}
 \end{equation}
\end{lemma}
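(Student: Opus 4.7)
The plan is a direct computation starting from the defining identity $\mu^{-1}=-g^{\alpha 0}\partial_\alpha u$, which follows from $\mu=-(g^{\alpha\beta}\partial_\alpha u\,\partial_\beta t)^{-1}$ together with $\partial_\beta t=\delta^0_\beta$. Applying $\mathring L$ to both sides gives
\[
\mathring L(\mu^{-1}) \;=\; -(\mathring L g^{\alpha 0})\,\partial_\alpha u \;-\; g^{\alpha 0}\,\mathring L(\partial_\alpha u),
\]
so it suffices to evaluate each piece and multiply by $-\mu^2$. The first piece is immediate: since $g^{00}=-1$ and $g^{i0}=-\varphi_i$ from \eqref{g-1}, one has $(\mathring L g^{\alpha 0})\partial_\alpha u=-(\mathring L\varphi_i)\partial_i u$.

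For the second piece I would use the eikonal equation. Differentiating $g^{\alpha\beta}\partial_\alpha u\,\partial_\beta u=0$ with respect to $x^\alpha$ yields $\tilde L(\partial_\alpha u)=\tfrac{1}{2}(\partial_\alpha g^{\nu\lambda})\partial_\nu u\,\partial_\lambda u$. Substituting $\partial_\alpha g^{\nu\lambda}=-g^{\nu\mu}g^{\lambda\tau}G^\gamma_{\mu\tau}\partial_\alpha\varphi_\gamma$ (from differentiating $g^{\nu\lambda}g_{\lambda\tau}=\delta^\nu_\tau$), together with $g^{\nu\mu}\partial_\nu u=-\tilde L^\mu=-\mathring L^\mu/\mu$ and the tabulated value $G^\gamma_{\mathring L\mathring L}=-2c^{-1}\mathring L^\gamma$ from \eqref{G}, delivers the clean formula $\mathring L(\partial_\alpha u)=\mathring L^\gamma\partial_\alpha\varphi_\gamma/(c\mu)$. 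From the explicit form of $g^{\alpha 0}$ and the definitions of $\mathring L^i$ and $\tilde T$ one sees $g^{\alpha 0}\partial_\alpha=-(\mathring L+\tilde T)$ as differential operators, so $g^{\alpha 0}\mathring L(\partial_\alpha u)=-\mathring L^\gamma(\mathring L\varphi_\gamma+\tilde T\varphi_\gamma)/(c\mu)$. The key simplification is the symmetry of mixed partials $\partial_i\varphi_\gamma=\partial_\gamma\varphi_i$ (automatic since $\varphi_\alpha=\partial_\alpha\phi$), which via $\tilde T=c\mu\,\partial_i u\,\partial_i$ gives $\mathring L^\gamma\tilde T\varphi_\gamma=c\mu\,\partial_i u\,\mathring L^\gamma\partial_i\varphi_\gamma=c\mu\,\partial_i u\,\mathring L\varphi_i$.

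Assembling everything produces $\mathring L\mu=-2\mu^2\partial_i u\,\mathring L\varphi_i-\mu\mathring L^\gamma\mathring L\varphi_\gamma/c$. The final step is algebraic: using $\mathring L^i=\varphi_i-c\mu\,\partial_i u$ (consequent to $\mathring L^\beta=-\mu g^{\alpha\beta}\partial_\alpha u$) together with $\mathring Lc=2\mathring L\varphi_0+2\varphi_i\mathring L\varphi_i$ (from $c=1+2\varphi_0+|\varphi|^2$), one verifies the identity $2\mathring L^\alpha\mathring L\varphi_\alpha-\mathring Lc=-2c\mu\,\partial_i u\,\mathring L\varphi_i$, which directly converts the expression for $\mathring L\mu$ into the claimed form $c^{-1}\mu(\mathring L^\alpha\mathring L\varphi_\alpha-\mathring Lc)$. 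The only real subtlety is careful index bookkeeping and recognizing the role of the symmetry of mixed partials in reducing the $\tilde T$-derivative term to one involving $\mathring L\varphi_i$; no analytic estimates are required.
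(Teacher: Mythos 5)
Your computation is correct, and in substance it parallels the paper's proof; the packaging is what differs. The paper invokes the geodesic property $\mathscr D_{\tilde L}\tilde L=0$, takes the $t$-component to write $\mathring L\mu=\mu\mathring L^\alpha\mathring L^\beta\Gamma^0_{\alpha\beta}$, unpacks the Christoffel symbols via the $G$-tensors of \eqref{G}, and then applies the symmetry $\partial_\alpha\varphi_\beta=\partial_\beta\varphi_\alpha$ to simplify the $T\varphi_\alpha$ term. You avoid Christoffel symbols entirely: you differentiate the defining identity $\mu^{-1}=-g^{\alpha0}\partial_\alpha u$ directly, and obtain $\mathring L(\partial_\alpha u)$ by differentiating the eikonal equation — which is exactly the computation underlying the geodesic property of $\tilde L$, so you are implicitly re-deriving the paper's geometric input rather than citing it. The pivotal step is identical in both: using symmetry of mixed partials to convert the $\tilde T$-derivative (in the paper, the $T$-derivative) of $\varphi_\gamma$ into an $\mathring L$-derivative of $\varphi_i$, which is what produces the purely linearly-degenerate right-hand side $c^{-1}\mu(\mathring L^\alpha\mathring L\varphi_\alpha-\mathring Lc)$ rather than something involving $T\varphi$. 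Your coordinate-level version is a bit more elementary and self-contained; the paper's version slots more naturally into the surrounding differential-geometric framework. Both are valid and lead to the same cancellations.
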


\begin{proof}
The proof is exactly same as those in \cite[Lemma 4.3]{Ding4}, one can also take the explicit expression \eqref{G}
into \cite[(4.18)]{Ding4} to get \eqref{lmu}.
\end{proof}

\begin{remark}\label{4.1}
As in \cite{Ding4}, the importance of the expression \eqref{lmu} should be emphasized here. Due to both the null  conditions and the special
structures of equation \eqref{1.9},
$\mathring L\mu$ is just a combination of $\mathring L\varphi_\al$ (note that $\mathring L c=2\mathring L\varphi_0
+2\ds\sum_{i=1}^2\varphi_i\mathring L\varphi_i$). Based on this and the smallness and some suitable time decay rate
of $\mathring L\varphi_\al$, we will be able to show that $\mu\geq C$ for some positive constant $C$ (see \eqref{phimu}
in Section \ref{BA}). This is in contrast to the cases in \cite{C2, LS, MY, J}, where $\mathring L\mu$
contains the factor $T\varphi_\al$ which leads to $\mu\rightarrow 0+$ in finite time (e.g. \cite[(2.36)]{MY}).
\end{remark}

Note that the deformation tensor defined in \eqref{dt}
will occur in the subsequent energy estimates. It is necessary to check the components
of $\leftidx{^{(V)}}\pi$ in the null frame $\{\mathring L,\underline{\mathring L},X\}$.

Let $\leftidx{^{(V)}}{\slashed\pi}_{UX}=\leftidx{^{(V)}}{\pi}_{UX}$ for $U\in\{\mathring L,\underline{\mathring L},X\}$.
Following the computations in \cite[Proposition 7.7]{J} or \cite[(4.21)-(4.23)]{Ding4}, one has

\begin{enumerate}[(1)]
\item for $V=T$,
\begin{equation}\label{Lpi}
\begin{split}
&\leftidx{^{(T)}}\pi_{\mathring L\mathring L}=0,\quad \leftidx{^{(T)}}\pi_{T\tilde T}=2T\mu,\quad
\leftidx{^{(T)}}\pi_{\mathring LT}=-T\mu,\quad
\leftidx{^{(T)}}{\slashed\pi}_{TX}=0,\\
&\leftidx{^{(T)}}{\slashed\pi}_{\mathring LX}=-\slashed d_X\mu-2c^{-1}\mu\tilde T^a\slashed d_X\varphi_a,\quad \leftidx{^{(T)}}{\slashed\pi}_{XX}=2\mu\tilde\theta_{XX};
\end{split}
\end{equation}

\item for $V=\mathring L$,
\begin{equation}\label{uLpi}
\begin{split}
&\leftidx{^{(\mathring L)}}\pi_{\mathring L\mathring L}=0,\quad \leftidx{^{(\mathring L)}}\pi_{T\tilde T}=2\mathring L\mu,\quad \leftidx{^{(\mathring L)}}\pi_{\mathring LT}=-\mathring L\mu,\quad \leftidx{^{(\mathring L)}}{\slashed\pi}_{\mathring LX}=0,\\
&\leftidx{^{(\mathring L)}}{\slashed\pi}_{TX}=\slashed d_X\mu+2c^{-1}\mu\tilde T^a\slashed d_X\varphi_a,\quad \leftidx{^{(\mathring L)}}{\slashed\pi}_{XX}=2\chi_{XX};
\end{split}
\end{equation}
\item for $V=R$,
\begin{equation}\label{Rpi}
\begin{split}
&\leftidx{^{(R)}}\pi_{\mathring L\mathring L}=0,\quad \leftidx{^{(R)}}\pi_{T\tilde T}=2R\mu,\quad \leftidx{^{(R)}}\pi_{\mathring LT}=-R\mu,\\
&\leftidx{^{(R)}}{\slashed\pi}_{\mathring LX}=-{R}^X\check{\chi}_{XX}+g_{aj}\epsilon_{i}^j\check L^i\slashed d_Xx^a+\upsilon c^{-1}\tilde T^a\slashed d_X\varphi_a-\f12c^{-1}R^X\slashed g_{XX}(\mathring Lc)-\f12c^{-1}\upsilon\slashed d_Xc\\
&\leftidx{^{(R)}}{\slashed\pi}_{TX}=\mu{R}^X\check{\chi}_{XX}+\upsilon\slashed d_X\mu+\mu g_{aj}\epsilon_{i}^j\check T^i\slashed d_Xx^a+\f12c^{-1}\mu R^X\slashed g_{XX}(\mathring Lc)-c^{-1}\mu (R\varphi_a)\slashed d_Xx^a\\
&\qquad\quad\quad+\f12c^{-1}\mu\upsilon\slashed d_Xc,\\
&\leftidx{^{(R)}}{\slashed\pi}_{XX}=2\upsilon\chi_{XX}+c^{-1}\upsilon(\mathring Lc)\slashed g_{XX}-2c^{-1}\upsilon(\slashed d_Xx^a)\slashed  d_X\varphi_a-c^{-1}(Rc)\slashed g_{XX}.
\end{split}
\end{equation}
\end{enumerate}

As seen in \eqref{lmu} and  \eqref{Lpi}-\eqref{Rpi},
the components of $\mathring L$ and $\t T$ appear frequently. In view of $\tilde T^i=\varphi_i-\mathring L^i$,
one can find the equations for $\mathring L^i$ and $\check L^i$ under the null frame $\{T,\mathring L, X\}$  as follows.

\begin{lemma}\label{H-4}
It holds that
\begin{align}
&\mathring L\mathring L^i=-c^{-1}\mathring L^\al(\mathring L\varphi_\al)\tilde T^i,\label{LL}\\
&\mathring L\big(\varrho\check L^i\big)=\varrho\mathring L\mathring L^i
=-c^{-1}\varrho\mathring L^\al(\mathring L\varphi_\al)\tilde T^i,\label{LeL}\\
&\slashed d_X\mathring L^i=(\textrm{tr}\chi){\slashed d}_Xx^i+\f12c^{-1}(\mathring Lc)\slashed d_Xx^i-\f12c^{-1}(\slashed d_Xc)\tilde{T}^i+c^{-1}\tilde{T}^a(\slashed d_X\varphi_a)\tilde{T}^i,\label{dL}\\
&\slashed d_X\check{L}^i=(\textrm{tr}\check\chi){\slashed d}_Xx^i+\f12c^{-1}(\mathring Lc)\slashed d_Xx^i-\f12c^{-1}(\slashed d_Xc)\tilde{T}^i+c^{-1}\tilde{T}^a(\slashed d_X\varphi_a)\tilde{T}^i,\label{deL}
\end{align}
where $\slashed d^X f={\slashed g}^{XX}{\slashed d}_Xf={\slashed g}^{XX}(Xf)$ for any smooth function $f$.
\end{lemma}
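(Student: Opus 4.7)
The four identities split naturally into two pairs. The transport identities \eqref{LL}--\eqref{LeL} exploit the fact that $\tilde L=\mu^{-1}\mathring L$ is geodesic; the angular identities \eqref{dL}--\eqref{deL} exploit the null-frame decomposition of $\mathscr D_X\mathring L$ together with the defining relation $\lambda_{XX}=g(\mathscr D_X\mathring L,X)$. In both cases one reduces the problem to a Christoffel-symbol contraction which is then simplified via the tables \eqref{G}, the symmetry $\partial_\alpha\varphi_\beta=\partial_\beta\varphi_\alpha$, and the already-established formula \eqref{lmu}.

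\textbf{Transport identities.} Since $\mathscr D_{\tilde L}\tilde L=0$ and $\mathring L=\mu\tilde L$, a direct computation gives $\mathscr D_{\mathring L}\mathring L=\mu^{-1}(\mathring L\mu)\mathring L$. Writing the $i$-th Cartesian component yields
\begin{equation*}
\mathring L\mathring L^i=\mu^{-1}(\mathring L\mu)\mathring L^i-\mathring L^\alpha\mathring L^\beta\Gamma_{\alpha\beta}^i.
\end{equation*}
Expanding $\Gamma_{\alpha\beta}^i$ through $\partial_\alpha g_{\beta\gamma}=G_{\beta\gamma}^\nu\partial_\alpha\varphi_\nu$, using the identity $-g^{\nu 0}\partial_\nu=\mathring L+\tilde T$, and exploiting $\partial_\alpha\varphi_\beta=\partial_\beta\varphi_\alpha$ converts $\mathring L^\alpha\mathring L^\beta\Gamma_{\alpha\beta}^i$ into an expression in $\mathring L\varphi_\alpha$ and $\mathring Lc$ via \eqref{G}; substituting \eqref{lmu} for $\mu^{-1}\mathring L\mu$ and invoking $\tilde T^i=\varphi_i-\mathring L^i$ then collapses the right-hand side into $-c^{-1}\mathring L^\alpha(\mathring L\varphi_\alpha)\tilde T^i$, which is \eqref{LL}. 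For \eqref{LeL}, expanding $\varrho\check L^i=\varrho\mathring L^i-x^i$ with $\mathring L\varrho=\mathring L(\mathfrak t-u)=1$ (by Lemma \ref{nullframe}) and $\mathring Lx^i=\mathring L^i$ immediately gives $\mathring L(\varrho\check L^i)=\varrho\mathring L\mathring L^i$, and the second equality in \eqref{LeL} is then \eqref{LL}.

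\textbf{Angular identities.} Decompose $\mathscr D_X\mathring L=A\mathring L+B\tilde T+CX$ in the (linearly independent) null frame $\{\mathring L,\tilde T,X\}$. Metric compatibility together with $g(\mathring L,\mathring L)\equiv0$ gives $B=-g(\mathscr D_X\mathring L,\mathring L)=0$, while $g(\mathscr D_X\mathring L,X)=\lambda_{XX}$ gives $C=\textrm{tr}\lambda$. Reading off $(\mathscr D_X\mathring L)^\gamma=X\mathring L^\gamma+\Gamma_{\alpha\beta}^\gamma X^\alpha\mathring L^\beta$ at $\gamma=0$ and using $\mathring L^0\equiv1$, $X^0=0$ identifies $A=\Gamma_{\alpha\beta}^0X^\alpha\mathring L^\beta$, so the $i$-component yields
\begin{equation*}
\slashed d_X\mathring L^i=(\textrm{tr}\lambda)\slashed d_Xx^i+A\mathring L^i-\Gamma_{\alpha\beta}^iX^\alpha\mathring L^\beta.
\end{equation*}
Expanding the two Christoffel contractions in exactly the same manner as in the transport case delivers the three remaining terms on the right of \eqref{dL}. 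For \eqref{deL}, since $X$ is tangent to $S_{\mathfrak t,u}$ one has $X\mathfrak t=Xu=0$, so $\slashed d_X\varrho=0$ and hence $\slashed d_X\check L^i=\slashed d_X\mathring L^i-\varrho^{-1}\slashed d_Xx^i$; combined with $\textrm{tr}\check\lambda=\textrm{tr}\lambda-\varrho^{-1}$ (obtained from $\check\lambda_{XX}=\lambda_{XX}-\varrho^{-1}\slashed g_{XX}$ in \eqref{errorv}), this yields \eqref{deL}.

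\textbf{Main obstacle.} The only nontrivial bookkeeping lies in the Christoffel contractions $\mathring L^\alpha\mathring L^\beta\Gamma_{\alpha\beta}^i$ and $X^\alpha\mathring L^\beta\Gamma_{\alpha\beta}^i$ carrying a Cartesian upper index: the tables \eqref{G}--\eqref{G1} are tailored to null-frame contractions, so one must reassemble the mixed contractions by hand and then identify the precise combination of $\mathring L\varphi_\alpha$, $\mathring Lc$ and $\slashed d_Xc$ that matches the clean $\tilde T^i$ and $\slashed d_Xx^i$ factors appearing on the right-hand sides. This reassembly is driven by the same symmetry manipulation as in \eqref{GT}--\eqref{lmu}, which is ultimately a consequence of the potential structure of \eqref{1.9}.
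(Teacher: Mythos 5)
Your proof is correct and follows essentially the same route the paper takes: the paper's one-line proof simply cites Speck's Proposition 4.7 and the tables \eqref{G}, which is exactly the Christoffel-symbol-contraction approach you unpack. The decomposition of $\mathscr D_X\mathring L$ in the frame $\{\mathring L,\tilde T,X\}$, the identification of the $X$-coefficient with $\textrm{tr}\lambda$ via $\lambda_{XX}=g(\mathscr D_X\mathring L,X)$, the use of the geodesic equation for $\tilde L=\mu^{-1}\mathring L$, the symmetry $\p_\al\varphi_\beta=\p_\beta\varphi_\al$, and the substitution of \eqref{lmu} are all the intended ingredients; the reduction from \eqref{dL} to \eqref{deL} via $Xu=X\mathfrak t=0$ and $\textrm{tr}\check\lambda=\textrm{tr}\lambda-\varrho^{-1}$ is likewise correct.
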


\begin{proof}
This follows directly from \eqref{G} as for the proof of \cite[Proposition 4.7]{J}.
\end{proof}

In addition, it follows from \eqref{chith}-\eqref{zetaeta}, \eqref{G} and Lemma \ref{H-4}
that
\begin{align}
&\zeta_X=c^{-1}\tilde T^a(\slashed d_X\varphi_a),\qquad\xi_X=c^{-1}\mu\tilde T^a(\slashed d_X\varphi_a)+\slashed d_X\mu,\label{zeta}\\
&\tilde\theta_{XX}=-\f12c^{-1}\mu^{-1}(Tc)\slashed g_{XX}-\f12c^{-1}(\mathring Lc)\slashed g_{XX}+c^{-1}(\slashed d_X\varphi_a)\slashed d_Xx^a-\chi_{XX},\label{theta}
\end{align}
which have been given in \cite[Lemma 5.1]{J} in terms of $G_{XY}$.
Then \eqref{zeta} implies that
\begin{equation}\label{TL}
\begin{split}
&T\mathring{L}^i=-c^{-1}(\tilde{T}^a\mathring L\varphi_a)T^i+(\slashed d_X\mu)(\slashed d^Xx^i)
+c^{-1}\mu(\tilde{T}^a\slashed d_X\varphi_a)\slashed d^Xx^i+\f12c^{-1}\mu(\slashed d^Xc)(\slashed d_Xx^i).
\end{split}
\end{equation}

For later analysis, one also needs the following connection coefficients in the new frames, which are given
in Lemmas 5.1 and 5.3 of \cite{J}.

\begin{lemma}\label{cd}
The covariant derivatives in the frame $\{T, \mathring L, X\}$ are
\begin{equation}\label{cdf}
\begin{split}
&\mathscr D_{\mathring L}\mathring L=(\mu^{-1}\mathring L\mu)\mathring L,\quad\mathscr D_{T}\mathring L=-\mathring L\mu\mathring L+\xi^XX,\quad\mathscr D_X\mathring L=-\zeta_X\mathring L+{\textrm{tr}\chi}X,\\
&\mathscr D_{\mathring L}T=-\mathring L\mu\mathring L-\mu\zeta^XX,\quad\mathscr D_TT=\mu\mathring L\mu\mathring L+(\mu^{-1}T\mu+\mathring L\mu)T-\mu(\slashed d^X\mu) X,\\
&\mathscr D_XT=\mu\zeta_X\mathring L+\mu^{-1}\xi_XT+(\mu\textrm{tr}\tilde\theta)X,\\
&\mathscr D_XX=\slashed{\nabla}_XX+(\tilde\theta_{XX}+\chi_{XX})\mathring L+\mu^{-1}\chi_{XX}T.
\end{split}
\end{equation}
\end{lemma}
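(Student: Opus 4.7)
The plan is to verify each of the seven identities by expanding $\mathscr D_V U$ in the null frame $\{\mathring L, T, X\}$ and fixing the three scalar coefficients via three inner products. The tools are: the duality pairings $g(\mathring L,\mathring L)=0$, $g(\mathring L,T)=-\mu$, $g(T,T)=\mu^2$, $g(\mathring L,X)=g(T,X)=0$, $g(X,X)=\slashed g_{XX}$ from Lemma~\ref{nullframe}; metric compatibility $Vg(U,W)=g(\mathscr D_V U,W)+g(U,\mathscr D_V W)$; the definitions \eqref{chith}--\eqref{zetaeta} of $\lambda_{XX}$, $\tilde\theta_{XX}$, $\zeta_X$, $\xi_X$, together with the identity $\mu\zeta_X = -X\mu+\xi_X$ recorded right after \eqref{zetaeta}; and the torsion-free relation $\mathscr D_U V-\mathscr D_V U = [U,V]$. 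A crucial simplification is that, in the adapted coordinates $(\mathfrak t,u,\vartheta)$, $\mathring L=\p_{\mathfrak t}$ and $X=\p_\vartheta$, so $[\mathring L,X]=0$, while $[T,\mathring L]$ and $[T,X]$ are both tangential to $X$; this makes every use of torsion-freeness essentially free.

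The opening identity $\mathscr D_{\mathring L}\mathring L = (\mu^{-1}\mathring L\mu)\mathring L$ comes immediately from the eikonal equation \eqref{Y-5}, which forces $\tilde L^\beta=-g^{\al\beta}\p_\al u$ to be tangent to affinely parametrized null geodesics, so that $\mathscr D_{\tilde L}\tilde L=0$; expanding $\mathscr D_{\mu\tilde L}(\mu\tilde L)$ using $\mathring L = \mu\tilde L$ gives the claim. I would then handle the remaining six formulas in the order $\mathscr D_{\mathring L}T$, $\mathscr D_T\mathring L$, $\mathscr D_X\mathring L$, $\mathscr D_X T$, $\mathscr D_T T$, $\mathscr D_X X$, so that each feeds on the previous ones. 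As a model step, writing $\mathscr D_{\mathring L}T=a\mathring L+bT+cX$: pairing with $\mathring L$ and $T$ via metric compatibility together with Step~1 yields the linear system $-\mu b = 0$ and $-\mu a = \mu\mathring L\mu$, giving $b=0$ and $a=-\mathring L\mu$; pairing with $X$ and converting $\mathscr D_{\mathring L}X$ to $\mathscr D_X\mathring L$ (allowed since $[\mathring L,X]=0$) produces $c\slashed g_{XX}=-g(T,\mathscr D_X\mathring L)=-Xg(T,\mathring L)+g(\mathscr D_XT,\mathring L) = X\mu-\xi_X = -\mu\zeta_X$, whence $c=-\mu\zeta^X$. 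For $\mathscr D_T\mathring L$, torsion-freeness and $[T,\mathring L]\propto X$ immediately force its $\mathring L$- and $T$-coefficients to coincide with those of $\mathscr D_{\mathring L}T$, while the $X$-coefficient is read from $g(\mathscr D_T\mathring L,X)=-g(\mathring L,\mathscr D_X T)=\xi_X$ (again swapping $\mathscr D_T X$ for $\mathscr D_X T$ using $[T,X]\propto X$). The next four expansions follow the same recipe, using $\lambda_{XX}=g(\mathscr D_X\mathring L,X)$ and $\mu\tilde\theta_{XX}=g(\mathscr D_X T,X)$ (the latter from $T=\mu\tilde T$ and $g(\tilde T,X)=0$), and invoking $\mu\zeta_X=\xi_X-X\mu$ where needed. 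Finally, for $\mathscr D_X X$ I would split off the tangential part $\slashed\nabla_X X$ along $X$ and identify the two null-normal coefficients from $g(\mathscr D_X X,\mathring L)=-g(X,\mathscr D_X\mathring L)=-\lambda_{XX}$ and $g(\mathscr D_X X,T)=-g(X,\mathscr D_X T)=-\mu\tilde\theta_{XX}$, then invert the $2\times 2$ system on the null pair $\{\mathring L,T\}$ to extract the advertised coefficients $\tilde\theta_{XX}+\lambda_{XX}$ and $\mu^{-1}\lambda_{XX}$.

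The main obstacle is bookkeeping rather than conceptual: because the frame is null rather than orthonormal, every linear system for the coefficients has $\mu$-factors off the diagonal, and the identity $\mu\zeta_X=\xi_X-X\mu$ must be invoked at just the right moments to recognize the compact form of each coefficient displayed in \eqref{cdf}. Beyond that, the computation is a routine unpacking of the definitions.
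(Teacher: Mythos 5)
Your proof is correct: each of the seven coefficient computations checks out (I verified in particular the sign bookkeeping in $g(\mathscr D_{\mathring L}T,X)=-\mu\zeta_X$ via $\mu\zeta_X=-X\mu+\xi_X$, and the $2\times 2$ inversion for $\mathscr D_XX$ giving $a=\lambda_{XX}+\tilde\theta_{XX}$, $b=\mu^{-1}\lambda_{XX}$), and the ordering ensures each identity relies only on earlier ones. The paper itself gives no proof, simply citing Lemmas 5.1 and 5.3 of \cite{J}; the argument there is the same standard one you carry out — expansion in the null frame, metric compatibility, torsion-freeness with $[\mathring L,X]=0$ and $[T,X]\propto X$ — so your write-up is an accurate self-contained substitute for the citation.
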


\begin{lemma}
The covariant derivatives in the frame $\{\mathring{\underline L}, \mathring L, X\}$ are
\begin{equation}\label{LuL}
\begin{split}
&\mathscr D_{\mathring {\underline L}}{\mathring L}=-\mathring L\mu\mathring L+2\xi^XX,\quad\mathscr D_{\mathring L}\mathring{\underline L}=-2\mu\zeta^XX,\\
&\mathscr D_{\mathring{\underline L}}\mathring{\underline L}=(\mu^{-1}\mathring{\underline L}\mu+\mathring
 L\mu)\mathring{\underline L}-(2\mu \slashed d^X\mu) X.
\end{split}
 \end{equation}
\end{lemma}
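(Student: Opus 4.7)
The plan is to derive all three identities directly from the definition $\mathring{\underline L}=\mu\mathring L+2T$ together with the formulas for covariant derivatives in the frame $\{T,\mathring L,X\}$ already furnished by Lemma \ref{cd}, exploiting the linearity and Leibniz property of the Levi-Civita connection $\mathscr D$. The auxiliary relation $\mu\zeta_X=-X\mu+\xi_X$, equivalently $\mu\zeta^X=-\slashed d^X\mu+\xi^X$, will also be needed to simplify terms.

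First I would compute $\mathscr D_{\mathring{\underline L}}\mathring L=\mu\mathscr D_{\mathring L}\mathring L+2\mathscr D_T\mathring L$. Substituting $\mathscr D_{\mathring L}\mathring L=(\mu^{-1}\mathring L\mu)\mathring L$ and $\mathscr D_T\mathring L=-(\mathring L\mu)\mathring L+\xi^X X$ from Lemma \ref{cd}, the $\mathring L$-components combine to $(\mathring L\mu)\mathring L-2(\mathring L\mu)\mathring L=-(\mathring L\mu)\mathring L$, yielding the first identity. Next, expanding
\[
\mathscr D_{\mathring L}\mathring{\underline L}=(\mathring L\mu)\mathring L+\mu\mathscr D_{\mathring L}\mathring L+2\mathscr D_{\mathring L}T,
\]
and substituting $\mathscr D_{\mathring L}T=-(\mathring L\mu)\mathring L-\mu\zeta^XX$, the $\mathring L$-components cancel exactly and only $-2\mu\zeta^XX$ survives, which gives the second identity.

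The third identity is the most involved. I would first compute $\mathscr D_{\mathring{\underline L}}T=\mu\mathscr D_{\mathring L}T+2\mathscr D_TT$ using Lemma \ref{cd}, and then expand
\[
\mathscr D_{\mathring{\underline L}}\mathring{\underline L}=(\mathring{\underline L}\mu)\mathring L+\mu\mathscr D_{\mathring{\underline L}}\mathring L+2\mathscr D_{\mathring{\underline L}}T
\]
using the first identity for $\mathscr D_{\mathring{\underline L}}\mathring L$. After gathering terms, the $X$-component is $2\mu\xi^X-2\mu^2\zeta^X-4\mu\slashed d^X\mu$, which collapses to $-2\mu\slashed d^X\mu$ via $\mu\zeta^X=-\slashed d^X\mu+\xi^X$. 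The $\mathring L$ and $T$ components then need to be re-expressed through $\mathring{\underline L}$ by means of the substitution $2T=\mathring{\underline L}-\mu\mathring L$ and the derived relation $2T\mu=\mathring{\underline L}\mu-\mu\mathring L\mu$; after this repackaging, the $\mathring L$-coefficient cancels completely and the coefficient of $\mathring{\underline L}$ becomes $\mu^{-1}\mathring{\underline L}\mu+\mathring L\mu$, yielding the asserted identity.

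The whole argument is a routine linear-algebraic calculation; the only real subtlety is the last step, where the algebraic rearrangement mixing $T$-derivatives of $\mu$ with $\mathring L$-derivatives of $\mu$ and the use of $\mu\zeta_X=-X\mu+\xi_X$ must be carried out carefully so that the $\mathring L$-component cancels. No new analytic input is required beyond Lemma \ref{cd} and the definitions in \eqref{zeta}.
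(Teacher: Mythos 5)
Your proposal is correct. The paper does not supply a proof of this lemma (it is stated immediately after Lemma \ref{cd}, which is itself attributed to Lemmas 5.1 and 5.3 of \cite{J}), so your direct computation supplies the verification the paper omits. All three identities are checked cleanly from $\mathring{\underline L}=\mu\mathring L+2T$, linearity and the Leibniz rule for $\mathscr D$, the entries of Lemma \ref{cd}, and $\mu\zeta_X=-X\mu+\xi_X$. In particular, in the third identity the intermediate $X$-component $2\mu\xi^X-2\mu^2\zeta^X-4\mu\slashed d^X\mu$ does collapse to $-2\mu\slashed d^X\mu$, and after rewriting $2T=\mathring{\underline L}-\mu\mathring L$ and using $\mathring{\underline L}\mu=\mu\mathring L\mu+2T\mu$, the coefficient of $\mathring L$ vanishes while the coefficient of $\mathring{\underline L}$ becomes $2(\mu^{-1}T\mu+\mathring L\mu)=\mu^{-1}\mathring{\underline L}\mu+\mathring L\mu$, exactly as you describe.
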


Next we derive the equation for $\varphi_\gamma$ under the action of the covariant wave operator $\Box_g$.
With the metric $g$ given in \eqref{g}, one has
\begin{equation}\label{box}
\begin{split}
\Box_g\varphi_\gamma=&\f{1}{\sqrt{|\text{det}g|}}\p_\al\big(\sqrt{|\text{det}g|}g^{\al\beta}\p_\beta\varphi_\gamma\big)\\
=&-c^{-1}g^{\al\beta}\p_\al c(\p_\beta\varphi_\gamma)+g^{\al\beta}\p_{\al\beta}^2\varphi_\gamma\\
&-\sum_i\p_i\varphi_i(\p_t+\sum_j\varphi_j\p_j)\varphi_\gamma+\sum_i\p_i\varphi_\gamma(\p_t+\sum_j\varphi_j\p_j)\varphi_i.
\end{split}
\end{equation}
Differentiating \eqref{1.9} with respect to the variable $x^\gamma$ yields
\begin{equation}\label{gphi}
\begin{split}
g^{\al\beta}\p_{\al\beta}^2\varphi_\gamma=2\sum_i\p_i\varphi_\gamma(\p_t+\sum_j\varphi_j\p_j)\varphi_i
-2\sum_i\p_i\varphi_i(\p_t+\sum_j\varphi_j\p_j)\varphi_\gamma.
\end{split}
\end{equation}
Substituting \eqref{gphi} into \eqref{box} leads to
\begin{equation}\label{boxg}
\begin{split}
\Box_g\varphi_\gamma=&-c^{-1}g^{\al\beta}(\p_\al c)(\p_\beta\varphi_\gamma)\\
&-3\sum_i\p_i\varphi_i(\p_t+\sum_j\varphi_j\p_j)\varphi_\gamma+3\sum_i\p_i\varphi_\gamma(\p_t+\sum_j\varphi_j\p_j)\varphi_i.
\end{split}
\end{equation}
Note that
\begin{align}
&g^{\al\beta}=-\mathring L^\al\mathring L^\beta-\tilde T^\al\mathring L^\beta-\mathring L^\al\tilde T^\beta+(\slashed d_Xx^\al)(\slashed d^Xx^\beta),\label{gab}\\
&\p_i=c^{-1}\mu^{-1}\tilde T^i T+c^{-1}(\slashed d^Xx^i)X,\\
&\p_t+\sum_j\varphi_j\p_j=\mathring L+\mu^{-1}T.
\end{align}
Then in the frame $\{T,\mathring L,X\}$, \eqref{boxg} can be rewritten as
\begin{equation}\label{ge}
\begin{split}
\mu\Box_g \varphi_\gamma=&c^{-1}\big\{\f12\mu\mathring Lc+Tc-3\tilde T^iT\varphi_i-\f32\mu\tilde T^i\mathring L\varphi_i-\f32\mu\slashed d^Xx^i(\slashed d_X\varphi_i)\big\}\mathring L\varphi_\gamma\\
&+\f12c^{-1}\big\{\mathring Lc+3\tilde T^i\mathring L\varphi_i-3\slashed d^Xx^i(\slashed d_X\varphi_i)\big\}\mathring{\underline L}\varphi_\gamma\\
&+c^{-1}\big\{-\mu\slashed d^Xc+3\mu(\mathring L\varphi_i)\slashed d^Xx^i+3(T\varphi_i)\slashed d^Xx^i\big\}\slashed d_X\varphi_\gamma,
\end{split}
\end{equation}
where $\gamma=0,1,2.$

For later use, some identities involving commutators are given in the following lemmas,  which are proved in
Lemmas 4.10, 8.9 and 8.11 of \cite{J} for the 3D case.

\begin{lemma}\label{com}
In the frame $\{\mathring L,T,R\}$, it holds that
\begin{equation}\label{c}
\begin{split}
[\mathring L, R]&={\leftidx{^{(R)}}{\slashed\pi}_{\mathring L}}^X X,\ \ [\mathring L, T]={\leftidx{^{(T)}}{\slashed\pi}_{\mathring L}}^X X,\ \  [T,R]={\leftidx{^{(R)}}{\slashed\pi}_{T}}^X X,
\end{split}
\end{equation}
where ${\leftidx{^{(R)}}{\slashed\pi}_{\mathring L}}^X={\slashed g}^{XX} {\leftidx{^{(R)}}{\slashed\pi}_{\mathring L X}}$
and ${\leftidx{^{(R)}}{\slashed\pi}_{\mathring L X}}={\leftidx{^{(R)}}{\pi}_{\mathring L X}}=
{\leftidx{^{(R)}}{\pi}_{\al\beta}{\mathring L}^{\al} X^{\beta}}$.
\end{lemma}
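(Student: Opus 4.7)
The plan is to prove each of the three commutator identities in two stages: first, to show that the commutator is tangent to $S_{\mathfrak t, u}$ and hence a scalar multiple of $X$; second, to identify the scalar by recognizing the deformation tensor as a Lie derivative of the metric.

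For tangency, I would apply each commutator to the coordinate functions $t$ and $u$. Using the basic identities of Lemma 3.1 together with $Rt = Ru = 0$ (valid since $R=\slashed\Pi\Omega$ is by construction tangent to $S_{\mathfrak t, u}$), one checks directly that
\[
[\mathring L, R]\,t = \mathring L(Rt) - R(\mathring L\, t) = -R(1) = 0, \qquad [\mathring L, R]\,u = 0,
\]
and analogously $[\mathring L, T]\,t = [\mathring L, T]\,u = 0$ and $[T, R]\,t = [T, R]\,u = 0$. Since, in the frame $\{\mathring L, T, X\}$, a vector field is tangent to $S_{\mathfrak t,u}$ if and only if it annihilates both $t$ and $u$, each of the three commutators is of the form $\alpha X$ for some scalar function $\alpha$.

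To compute $\alpha$, I would use the identification $\leftidx{^{(V)}}\pi = \mathcal L_V g$ together with the standard expansion
\[
(\mathcal L_V g)(U,W) = V\bigl(g(U,W)\bigr) - g([V,U],W) - g(U,[V,W]).
\]
With $V = R$, $U = \mathring L$, $W = X$: the first term vanishes because $g(\mathring L, X) = 0$; for the last, writing $R = R^X X$ (valid since $R$ is tangent to the one-dimensional $S_{\mathfrak t,u}$ spanned by $X$) yields $[R, X] = -(XR^X)X$ in the coordinates $(\mathfrak t, u, \vartheta)$, so $g(\mathring L, [R, X]) = 0$ as well. Therefore
\[
{\leftidx{^{(R)}}{\pi}_{\mathring L X}} = g([\mathring L, R], X) = \alpha\,\slashed g_{XX},
\]
which gives $\alpha = {\leftidx{^{(R)}}{\slashed\pi}_{\mathring L}}^X$ and the first identity. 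The proof of $[\mathring L, T] = {\leftidx{^{(T)}}{\slashed\pi}_{\mathring L}}^X X$ is parallel, using $T = \partial_u - \eta^X X$ in the $(\mathfrak t, u, \vartheta)$ coordinates so that $[T, X]$ is also proportional to $X$. For $[T, R]$, one takes $V = R$, $U = T$, $W = X$ in the same expansion and uses $g(T, X) = 0$, a consequence of $\mathring{\underline L} = \mu\mathring L + 2T$ combined with $g(\mathring L, X) = g(\mathring{\underline L}, X) = 0$.

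I do not anticipate any real obstacle: the only care needed is in verifying the auxiliary facts $[R, X], [T, X] \in \mathrm{span}\{X\}$ and $g(T, X) = 0$, which together cause the bracket terms in the Lie-derivative identity to collapse. Notably, the covariant-derivative formulas of Lemma 3.6 and the explicit deformation-tensor components \eqref{Lpi}--\eqref{Rpi} are not required to prove the three identities themselves; they become relevant only later, when the coefficients ${\leftidx{^{(V)}}{\slashed\pi}_{\mathring L}}^X$ and ${\leftidx{^{(R)}}{\slashed\pi}_T}^X$ must be expanded in terms of $\zeta$, $\xi$, $\lambda$ and the other null-frame quantities.
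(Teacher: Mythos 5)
Your proof is correct. The paper offers no proof of this lemma at all --- it simply cites Lemmas 4.10, 8.9 and 8.11 of \cite{J} --- and your two-stage argument (tangency of the brackets to $S_{\mathfrak t,u}$ by applying them to $t$ and $u$, then identification of the coefficient via $\leftidx{^{(V)}}\pi=\mathcal L_Vg$, with the extra bracket terms collapsing because $g(\mathring L,X)=g(T,X)=0$ and $[R,X],[T,X]\in\mathrm{span}\{X\}$) is precisely the standard route taken in that reference.
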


\begin{lemma}\label{commute}
For any vector field $Z\in\{\mathring L,T,R\}$,
\begin{enumerate}
 \item
if $f$ is a smooth function, then
 \[
 \big([\slashed\nabla^2,\slashed{\mathcal L}_Z]f\big)_{XX}=\f12{\slashed\nabla}_X\big(\textrm{tr}\leftidx{^{(Z)}}{\slashed\pi}\big)\slashed d_Xf;
 \]

\item if $\Theta$ is a one-form on $S_{\mathfrak t, u}$, then
\[
([\slashed\nabla_X,\slashed{\mathcal L}_Z]\Theta)_X=\f12{\slashed\nabla}_X\big(\textrm{tr}\leftidx{^{(Z)}}{\slashed\pi}\big)\Theta_X;
\]
\item if  $\Theta$ is a $(0,2)$-type tensor on $S_{\mathfrak t, u}$, then
\begin{align*}
([\slashed\nabla_X,\slashed{\mathcal L}_Z]\Theta)_{XX}&={\slashed\nabla}_X\big(\textrm{tr}\leftidx{^{(Z)}}{\slashed\pi}\big)\Theta_{XX},\\
([\slashed\nabla_X,\slashed{\mathcal L}_Z]\slashed\nabla\Theta)_{XXX}&=\f32{\slashed\nabla}_X\big(\textrm{tr}\leftidx{^{(Z)}}{\slashed\pi}\big)\slashed\nabla_X\Theta_{XX},
\end{align*}
\end{enumerate}
where ${\leftidx{^{(Z)}}{\slashed\pi}_{XX}}={\leftidx{^{(Z)}}{\pi}_{XX}}$.
\end{lemma}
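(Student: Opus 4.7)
\medskip\noindent\textbf{Proof proposal.}
The plan is to exploit the one-dimensionality of $S_{\mathfrak t,u}$, which reduces both the induced connection and the projected Lie derivative to a single scalar identity. Working in the adapted coordinate system $(\mathfrak t,u,\vartheta)$ with $X=\p/\p\vartheta$, the sole Christoffel symbol on $S_{\mathfrak t,u}$ is
\[
\slashed\Gamma^X_{XX}=\f12\slashed g^{XX}(X\slashed g_{XX})=\f12 X\log\slashed g_{XX},
\]
and every claim below reduces to a Leibniz-type expansion along $X$.

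First, I would derive the basic bracket identity $[Z,X]=c^Z X$ for $Z\in\{\mathring L,T,R\}$. This holds because $\mathring L=\p/\p\mathfrak t$ commutes with $X=\p/\p\vartheta$ by the very construction of $\vartheta$; because $T=\p/\p u-\eta^X X$ gives $[T,X]=(X\eta^X)X$; and because $R=R^X X$ is tangent to $S_{\mathfrak t,u}$, giving $[R,X]=-(XR^X)X$. The crucial link to the deformation tensor comes from
\[
\leftidx{^{(Z)}}{\slashed\pi}_{XX}=(\mathcal L_Z g)(X,X)=Z\slashed g_{XX}-2c^Z\slashed g_{XX},
\]
which yields
\[
c^Z=\f12\bigl(Z\log\slashed g_{XX}-\mathrm{tr}\leftidx{^{(Z)}}{\slashed\pi}\bigr).
\]

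For item (1), I would expand $(\slashed\nabla^2 f)_{XX}=X^2 f-\slashed\Gamma^X_{XX}(Xf)$ and apply $\slashed{\mathcal L}_Z$, noting that on any $(0,2)$-tensor $T$ on $S_{\mathfrak t,u}$ the projected Lie derivative acts as $(\slashed{\mathcal L}_Z T)_{XX}=ZT_{XX}-2c^Z T_{XX}$. Substituting the commutator rule $Z(Xf)=X(Zf)+c^Z Xf$ together with the Leibniz identity $Z(\slashed\Gamma^X_{XX})=\f12 XZ\log\slashed g_{XX}+c^Z\slashed\Gamma^X_{XX}$, all contributions involving $XZ\log\slashed g_{XX}$ and the product $c^Z\slashed\Gamma^X_{XX}$ cancel, and what remains is precisely $\f12\slashed\nabla_X(\mathrm{tr}\leftidx{^{(Z)}}{\slashed\pi})\cdot\slashed d_X f$. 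Items (2) and (3) follow by the same mechanism: a one-form carries one copy of the $c^Z$ correction and a $(0,2)$-tensor two copies, producing the factors $\f12$ and $1$, respectively; the extra index from applying $\slashed\nabla$ to a $(0,2)$-tensor raises the coefficient to $\f32$. When $Z=R$ no extra care is needed, since $R$ is already tangent to $S_{\mathfrak t,u}$ and $\slashed{\mathcal L}_R$ reduces to the intrinsic Lie derivative.

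The main obstacle is bookkeeping rather than conceptual: one must carefully distinguish the coordinate derivative $X$ from the covariant derivative $\slashed\nabla_X$, keep track of how $c^Z$ interacts with $\slashed\Gamma^X_{XX}$, and verify that all connection-dependent contributions cancel exactly. These calculations become routine once the identity $c^Z=\f12(Z\log\slashed g_{XX}-\mathrm{tr}\leftidx{^{(Z)}}{\slashed\pi})$ is in hand, and they mirror the analogous computations in Lemmas 8.9 and 8.11 of \cite{J}, simplified by the fact that here $S_{\mathfrak t,u}$ is one-dimensional.
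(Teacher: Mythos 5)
Your proposal is correct, but it follows a genuinely different route from the paper: the paper gives no computation at all for this lemma, simply citing Lemmas 4.10, 8.9 and 8.11 of \cite{J}, whereas you supply a self-contained proof. Your key observations all check out: in the adapted coordinates $[\mathring L,X]=0$, $[T,X]=(X\eta^X)X$ and $[R,X]=-(XR^X)X$, so $[Z,X]=c^ZX$ is $S_{\mathfrak t,u}$-tangent in every case, which is exactly what makes the component formulas $(\slashed{\mathcal L}_Z\Xi)_{X\cdots X}=Z\Xi_{X\cdots X}-kc^Z\Xi_{X\cdots X}$ and $\leftidx{^{(Z)}}{\slashed\pi}_{XX}=Z\slashed g_{XX}-2c^Z\slashed g_{XX}$ legitimate; combined with $\slashed\Gamma^X_{XX}=\f12X\log\slashed g_{XX}$ and the identity $Z\slashed\Gamma^X_{XX}=\f12XZ\log\slashed g_{XX}+c^Z\slashed\Gamma^X_{XX}$, the commutator applied to a $(0,k)$-tensor collapses to $k\bigl(-Xc^Z+Z\slashed\Gamma^X_{XX}-c^Z\slashed\Gamma^X_{XX}\bigr)=\f{k}{2}\slashed\nabla_X(\textrm{tr}\leftidx{^{(Z)}}{\slashed\pi})$, reproducing the coefficients $\f12,\,1,\,\f32$ for $k=1,2,3$ (and item (1) via $\slashed\nabla^2f=\slashed\nabla(\slashed df)$ with $k=1$). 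What your approach buys is transparency: the cited results in \cite{J} are stated for a two-dimensional $S_{t,u}$, where the analogous commutators contain additional terms beyond the pure trace part of the deformation tensor, and one must observe that those terms vanish identically when $\dim S_{\mathfrak t,u}=1$; your direct computation makes the clean form of the present lemma manifest rather than inherited. The only point deserving explicit mention in a final write-up is the one you use implicitly: tangency of $[Z,X]$ to $S_{\mathfrak t,u}$ is what allows the projection $\slashed\Pi$ in the definition $\slashed{\mathcal L}_Z=\slashed\Pi\mathcal L_Z$ to be dropped when evaluating on $X\cdots X$.
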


\section{Bootstrap assumptions on $\p\phi$  near $C_0$ and some related estimates }\label{BA}

We will derive the global estimates of a solution to \eqref{1.9} near $C_0$ by a bootstrap argument. To this end, for any given smooth solution $\phi$ to \eqref{1.9}, we assume that in $D^{{\mathfrak t},u}\subset A_{2\dl}$:
\begin{equation*}
\begin{split}
&\delta^l\|(\mathfrak t\mathring LZ^\al \varphi_\gamma, \mathfrak t\slashed dZ^\al\varphi_\gamma, \dl\mathring{\underline L}Z^\al\varphi_\gamma)\|_{L^\infty(\Sigma_{\mathfrak t}^{u})}\leq M\delta^{1-\varepsilon_0} \mathfrak t^{-1/2},\\
&\|(\mathfrak t^2\slashed\nabla^2\varphi_\gamma, \varphi_\gamma)\|_{L^\infty(\Sigma_{\mathfrak t}^{u})}\leq M\delta^{1-\varepsilon_0} \mathfrak t^{-1/2},
\end{split}\tag{$\star$}
\end{equation*}
where $|\al|\leq N$,  $N$ is a fixed large positive integer, $M$ is a positive constant to be chosen later
(at least double bounds of the corresponding quantities on time $t_0$),
$Z\in\{\varrho\mathring L,T,R\}$, and $l$ is the number of $T$ included in $Z^\al$.

We start with a rough estimate of $\mu$ under assumptions ($\star$). Note that
$1=g_{ij}\tilde T^i\tilde T^j=\big(1+O(M\delta^{1-\varepsilon_0} \mathfrak t^{-1/2})\big)\ds\sum_{i=1}^2|\tilde T^i|^2$
by \eqref{LTTT}.  This means
\begin{equation}\label{L}
|\tilde T^i|, |\mathring L^i|\leq 1+O(M\delta^{1-\varepsilon_0} \mathfrak t^{-1/2})
\end{equation}
due to $\mathring L^i=\varphi_i-\tilde T^i$. This, together with $(\star)$ and \eqref{lmu},  implies $|\mathring L\mu|\lesssim M\delta^{1-\varepsilon_0}\mathfrak t^{-3/2}\mu$. When $\delta>0$ is small, by
integrating $\mathring L\mu$ along integral curves of $\mathring L$ and noting $\mu=\f{1}{\sqrt{c}}
=1+O(\delta^{1-\varepsilon_0})$ on $\Sigma_{t_0}$,
one can get directly that
\begin{equation}\label{phimu}
\mu=1+O(M\delta^{1-\varepsilon_0}).
\end{equation}

To improve the estimates and close the assumptions ($\star$), one may rewrite \eqref{ge} in the frame
$\{\mathring{\underline L}, \mathring L, X\}$ as
\begin{equation}\label{fequation}
\mathring L\mathring{\underline L}\varphi_\gamma+\f{1}{2\varrho}\mathring{\underline L}\varphi_\gamma=H_\gamma,
\end{equation}
due to the fact that $\mu\Box_g\varphi_\g=-\mathscr D_{\mathring L\mathring {\underline L}}^2\varphi_\g+\mu\slashed g^{XX}\mathscr D_{X}^2\varphi_\g=\mathring L\mathring{\underline L}\varphi_\g-2\mu\zeta^X\slashed d_X\varphi_\g+\mu\slashed\triangle\varphi_\g-\mu(\textrm{tr}\tilde\theta+\textrm{tr}\chi)\mathring L\varphi_\g-\textrm{tr}\chi T\varphi_\g$
by \eqref{cdf} and \eqref{LuL}. In addition, by \eqref{zeta}-\eqref{theta},
\begin{equation}\label{H}
\begin{split}
H_\gamma=&\mu\slashed\triangle\varphi_\gamma+\big\{-\f12c^{-1}Tc+3c^{-1}\tilde{T}^aT\varphi_a+\f32c^{-1}\mu \tilde{T}^a\mathring L\varphi_a+\f12c^{-1}\mu(\slashed d^Xx^a)\slashed d_X\varphi_a\\
&+\f12\mu\textrm{tr}\check{\chi}+\f{\mu}{2\varrho}\big\}\mathring L\varphi_\gamma+\big\{-\f12\textrm{tr}\check{\chi}-\f12c^{-1}\mathring Lc-\f32c^{-1}\tilde{T}^a\mathring L\varphi_a+\f32c^{-1}(\slashed d^Xx^a)\slashed d_X\varphi_a\big\}\mathring{\underline L}\varphi_\gamma\\
&-c^{-1}\mu\big\{3\tilde{T}^a
+\mathring L^\al\big\}(\slashed d^X\varphi_\al)\slashed d_X\varphi_\gamma.
\end{split}
\end{equation}
Note from the expression of $H_\g$ that the terms containing
factors $T\varphi_\al$ or ${\mathring{\underline L}}\varphi_\al$,
which are not small ``enough" and have slow decay rate in time, appear always with some accompanying factors $\mathring L\varphi_\g$ or $\textrm{tr}\check\chi$ with the ``good"
smallness and fast time decay rate (see $-\f12c^{-1}(Tc)(\mathring L\varphi_\g)$ and
$-\f12\textrm{tr}\check\chi(\mathring {\underline L}\varphi_\g)$ in $H_\gamma$). This implies that $H_\g$ may possess
some desired properties helpful for our analysis later.

Unless stated otherwise, from now on until Section \ref{ert}, the pointwise estimates for the corresponding quantities
are all carried out inside the domain $D^{{\mathfrak t},u}$.

To estimate $\mathring{\underline L}\varphi_\gamma$ by integrating  \eqref{fequation}
along integral curves of $\mathring L$, one needs to estimate $\slashed dx^i$, $\check{\chi}$ and so on in
$H_{\g}$ first.

Note that $|\slashed dx^i|^2=\slashed g^{ab}\slashed d_ax^i\slashed d_bx^i=c-(\tilde T^i)^2$.
Then $(\star)$ and \eqref{L} imply that
\begin{equation}\label{dx}
|\slashed dx^i|\lesssim 1.
\end{equation}

To estimate $\check\chi$, as in \cite{Ding4}, one can derive the governing equations for $\check\chi$ first. Exactly by the
same procedure of proof in \cite[Lemma 5.2]{Ding4}, we have

\begin{lemma}\label{LTchi}
The second fundamental form $\chi$ and its ``error" form $\check\chi$, defined in \eqref{chith} and \eqref{errorv}
respectively, satisfy the following structure equations:
\begin{equation}\label{Lchi}
\begin{split}
\mathring L\chi_{XX}&=c^{-1}\slashed d_Xx^a(\slashed d_X\mathring L\varphi_a)
-c^{-1}\slashed g_{XX}\big(\mathring L^2\varphi_0+\varphi_a\mathring L^2\varphi_a\big)-c^{-1}\mathring L^\gamma(\slashed\nabla_{X}^2\varphi_\gamma)+(\textrm{tr}\chi)\chi_{XX}\\
&-c^{-1}\textrm{tr}\chi(\slashed d_Xx^a)\slashed d_X\varphi_a-\f12c^{-1}(\mathring Lc)\chi_{XX}
+c^{-2}f(\mathring L^1,\mathring L^2,\varphi)\left(
\begin{array}{ccc}
(\slashed d_X\varphi)(\slashed d_X\varphi)\\
\slashed g^{XX}(\slashed d_Xx^a\slashed d_X\varphi_a)^2\\
\slashed g^{XX}(\mathring L\varphi)(\mathring L\varphi)\\
(\slashed d_Xx^a\slashed d_X\varphi_a)\mathring L\varphi
\end{array}
\right),
\end{split}
\end{equation}
\begin{equation}\label{Tchi}
\begin{split}
\slashed{\mathcal L}_T\chi&_{XX}=\slashed\nabla_{X}^2\mu-\mu(\textrm{tr}\chi)\chi_{XX}-\f12c^{-1}(\tilde{T}^aT\varphi_a+Tc)\chi_{XX}+\f12c^{-1}\mu(\slashed d_Xx^a\slashed d_X\varphi_a)\textrm{tr}\chi\\
&+c^{-1}\Big\{\f12\slashed d_Xx^a(\slashed d_XT\varphi_a)-\slashed g_{XX}(\mathring LT\varphi_0+\varphi_a\mathring LT\varphi_a)+\mu\slashed\nabla_{X}^2\varphi_0+\mu(\varphi_a+\f12\tilde T^a)\slashed\nabla_{X}^2\varphi_a\Big\}\\
&+\f32c^{-1}(\tilde{T}^a\slashed d_X\varphi_a)\slashed d_X\mu+c^{-2}f_1(\mathring L^1,\mathring L^2,\varphi)\left(
\begin{array}{ccc}
\mu \mathring L\varphi\\
T{\varphi}\\
\end{array}
\right)
\left(
\begin{array}{ccc}
\slashed g_{XX}\mathring L\vec\varphi\\
\slashed d_Xx^a\cdot\slashed d\varphi_a\\
\end{array}
\right)\\
&
+c^{-2}f_2(\mathring L^1,\mathring L^2,\varphi)\left(
\begin{array}{ccc}
\mu(\slashed d_X\varphi)(\slashed d_X\varphi)\\
\slashed g^{XX}\mu(\slashed d_Xx^a\slashed d\varphi_a)^2\\
\end{array}
\right),\\
\end{split}
\end{equation}

and then
\begin{equation}\label{Lchi'}
\begin{split}
\mathring L\check\chi_{XX}=&c^{-1}\slashed d_Xx^a(\slashed d_X\mathring L\varphi_a)-c^{-1}\slashed g_{XX}\big(\mathring L^2\varphi_0+\varphi_a\mathring L^2\varphi_a\big)-c^{-1}\mathring L^\gamma(\slashed\nabla_{X}^2\varphi_\gamma)-\f12c^{-1}(\mathring Lc)\check\chi_{XX}\\
&-c^{-1}(\textrm{tr}\check\chi)(\slashed d_Xx^a)\slashed d_X\varphi_a-\f{1}{2}\varrho^{-1}c^{-1}(\mathring Lc)\slashed g_{XX}-\varrho^{-1}c^{-1}(\slashed d_Xx^a)\slashed d_X\varphi_a+(\textrm{tr}\check\chi)\check\chi_{XX}\\
&+c^{-2}f(\mathring L^1,\mathring L^2,\varphi)\left(
\begin{array}{ccc}
(\slashed d_X\varphi)(\slashed d_X\varphi)\\
\slashed g^{XX}(\slashed d_Xx^a\slashed d_X\varphi_a)^2\\
\slashed g^{XX}(\mathring L\varphi)(\mathring L\varphi)\\
(\slashed d_Xx^a\slashed d_X\varphi_a)\mathring L\varphi
\end{array}
\right),
\end{split}
\end{equation}
\begin{equation}\label{Tchi'}
\begin{split}
\slashed{\mathcal L}_T&\check{\chi}_{XX}=\slashed\nabla_{X}^2\mu-\mu(\textrm{tr}\check\chi)\check\chi_{XX}-\f12c^{-1}(\tilde{T}^aT\varphi_a+Tc)\chi_{XX}
+\f12c^{-1}\mu(\slashed d_Xx^a\slashed d_X\varphi_a)\textrm{tr}\chi\\
&
+c^{-1}\Big\{\f12\slashed d_Xx^a(\slashed d_XT\varphi_a)-\slashed g_{XX}(\mathring LT\varphi_0+\varphi_a\mathring LT\varphi_a)+\mu\slashed\nabla_{X}^2\varphi_0+\mu(\varphi_a+\f12\tilde T^a)\slashed\nabla_{X}^2\varphi_a\Big\}\\
&
+\f32c^{-1}(\tilde{T}^a\slashed d_X\varphi_a)\slashed d_X\mu+\f{c^{-1}}{\varrho}(Tc)\slashed g_{XX}+\f{c^{-1}\mu}{\varrho}(\mathring Lc)\slashed g_{XX}
-\f{2c^{-1}\mu}{\varrho}\slashed d_Xx^a\slashed d_X\varphi_a+\f{\mu-1}{\varrho^2}\slashed g_{XX}\\
&+c^{-2}f_1(\mathring L^1,\mathring L^2,\varphi)\left(
\begin{array}{ccc}
\mu \mathring L\varphi\\
T{\varphi}\\
\end{array}
\right)
\left(
\begin{array}{ccc}
\slashed g_{XX}\mathring L\varphi\\
\slashed d_Xx^a\cdot\slashed d\varphi_a\\
\end{array}
\right)
+c^{-2}f_2(\mathring L^1,\mathring L^2,\varphi)\left(
\begin{array}{ccc}
\mu(\slashed d_X\varphi)(\slashed d_X\varphi)\\
\slashed g^{XX}\mu(\slashed d_Xx^a\slashed d\varphi_a)^2
\end{array}
\right).
\end{split}
\end{equation}
\end{lemma}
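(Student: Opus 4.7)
The strategy is to derive each identity in Lemma \ref{LTchi} by directly differentiating $\lambda_{XX}=g(\mathscr D_X\mathring L, X)$ along $\mathring L$ or $T$, invoking the definition \eqref{cur} of the Riemann tensor to commute the two covariant derivatives, and then substituting the explicit formulas for $\mathscr R_{\mathring L X \mathring L X}$ and $\mathscr R_{TX \mathring L X}$ furnished by Lemma \ref{curvature}. The passage from $\lambda$ to $\check\lambda$ in \eqref{Lchi'}--\eqref{Tchi'} will then be accomplished by the bookkeeping identity $\lambda_{XX}=\check\lambda_{XX}+\varrho^{-1}\slashed g_{XX}$ together with transport equations for the induced metric and for $\varrho$.

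For \eqref{Lchi}, I use the fact that $\mathring L=\partial/\partial\mathfrak t$ and $X=\partial/\partial\vartheta$ are coordinate vector fields, so $[\mathring L, X]=0$; the torsion-free property of $\mathscr D$ then gives $\mathscr D_{\mathring L}X=\mathscr D_X\mathring L$, while the Ricci identity supplies $\mathscr D_{\mathring L}\mathscr D_X\mathring L=\mathscr D_X\mathscr D_{\mathring L}\mathring L+\mathscr R(\mathring L, X)\mathring L$. Combining these with the connection formulas from Lemma \ref{cd} (specifically $\mathscr D_{\mathring L}\mathring L=(\mu^{-1}\mathring L\mu)\mathring L$ and $\mathscr D_X\mathring L=-\zeta_X\mathring L+(\textrm{tr}\lambda)X$) yields
\begin{equation*}
\mathring L\lambda_{XX}=(\mu^{-1}\mathring L\mu+\textrm{tr}\lambda)\lambda_{XX}-\mathscr R_{\mathring L X\mathring L X}.
\end{equation*}
Substituting \eqref{curL} and using Lemma \ref{mu} to rewrite $\mu^{-1}\mathring L\mu=-\tfrac12 c^{-1}\mathring Lc-c^{-1}\tilde T^a\mathring L\varphi_a$, which cancels precisely the $c^{-1}\lambda_{XX}(\tilde T^a\mathring L\varphi_a)$ contribution from $-\mathscr R_{\mathring L X\mathring L X}$, produces \eqref{Lchi}. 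For \eqref{Tchi} the strategy is analogous: from $(\slashed{\mathcal L}_T\lambda)_{XX}=T\lambda_{XX}-2\lambda([T,X], X)$ and the Ricci identity applied to $\mathscr D_T\mathscr D_X\mathring L$, one uses $\mathscr D_T\mathring L=-(\mathring L\mu)\mathring L+\xi^X X$ from Lemma \ref{cd} together with $\xi_X=\slashed d_X\mu+c^{-1}\mu\tilde T^a\slashed d_X\varphi_a$ from \eqref{zeta} to extract the $\slashed\nabla^2_X\mu$ term, while the remaining curvature and commutator contributions are supplied by \eqref{curT} and Lemma \ref{com}.

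Finally, \eqref{Lchi'} and \eqref{Tchi'} follow from the decomposition $\lambda_{XX}=\check\lambda_{XX}+\varrho^{-1}\slashed g_{XX}$ together with the auxiliary identities $\mathring L\varrho=1$, $T\varrho=-1$, $\mathring L\slashed g_{XX}=2\lambda_{XX}$, and $\slashed{\mathcal L}_T\slashed g_{XX}={\leftidx{^{(T)}}{\slashed\pi}}_{XX}=2\mu\tilde\theta_{XX}$ (the last identity coming from \eqref{Lpi}); the auxiliary tensor $\tilde\theta_{XX}$ is then eliminated via \eqref{theta} in favor of $\lambda$, $c$, and derivatives of $\varphi$, which is where the $(\mu-1)\varrho^{-2}\slashed g_{XX}$ term of \eqref{Tchi'} emerges from the fact that $\slashed g_{XX}/\varrho^2$ is generated by differentiating $\varrho^{-1}$ against $T\varrho=-1$ while $\mu^{-1}$ appears explicitly in \eqref{theta}. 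The principal obstacle throughout is the careful bookkeeping of the numerous quadratic error terms: each $\mathring L^a$ must be rewritten as $\varphi_a-\tilde T^a$, each $\mathring Lc$ or $Tc$ expanded as $2\mathring L\varphi_0+2\varphi_a\mathring L\varphi_a$ or $2T\varphi_0+2\varphi_a T\varphi_a$ respectively, and the resulting combinations absorbed into the schematic forms $f(\mathring L^i,\varphi)(\cdots)(\cdots)$ on the right-hand sides of \eqref{Lchi}--\eqref{Tchi'}, taking full advantage of the fact that the specific quasilinear structure of \eqref{1.9} guarantees that no ``bad'' $T\varphi\cdot\mathring L\varphi$ terms survive in \eqref{curL}.
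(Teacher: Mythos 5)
Your proposal follows the paper's own route almost exactly: both derive \eqref{Lchi} from $\mathscr R_{\mathring LX\mathring LX}=g\big(\mathscr D_X\mathscr D_{\mathring L}\mathring L-\mathscr D_{\mathring L}\mathscr D_X\mathring L,X\big)$ combined with Lemma \ref{cd} and \eqref{curL}, derive \eqref{Tchi} by expanding $\mathcal L_T\lambda_{XX}$ via the Ricci identity and \eqref{curT}, and pass to $\check\lambda$ using $\lambda_{XX}=\check\lambda_{XX}+\varrho^{-1}\slashed g_{XX}$, $\mathring L\slashed g_{XX}=2\lambda_{XX}$, $\slashed{\mathcal L}_T\slashed g_{XX}=2\mu\tilde\theta_{XX}$, and \eqref{theta}. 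One minor bookkeeping imprecision: the coefficient $\mu-1$ in the $\varrho^{-2}\slashed g_{XX}$ term of \eqref{Tchi'} arises from combining $T(\varrho^{-1})\slashed g_{XX}$, the $-\lambda_{XX}$ term inside $\tilde\theta_{XX}$ in \eqref{theta}, and the expansion of $-\mu(\textrm{tr}\lambda)\lambda_{XX}$ into checked quantities, rather than from the $\mu^{-1}$ factor in \eqref{theta} as you suggest (that $\mu^{-1}$ simply cancels against the overall $\mu$ multiplying $\tilde\theta$).
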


Based on $\mathring L\check\chi$ in \eqref{Lchi'}, the estimate of $\check\chi$ could be achieved
by integrating along integral curves of $\mathring L$.

\begin{proposition}\label{chi'}
Under the assumptions $(\star)$ with $\delta>0$ suitably small, it holds that
\begin{equation}\label{echi'}
|\check{\chi}|=|\textrm{tr}\check\chi|\lesssim M\delta^{1-\varepsilon_0} \mathfrak t^{-3/2}
\end{equation}
and
\begin{equation}\label{chi}
|\chi|=\f{1}{\varrho}+O(M\delta^{1-\varepsilon_0} \mathfrak t^{-3/2}).
\end{equation}
\end{proposition}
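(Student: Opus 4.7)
The strategy is to view \eqref{Lchi'} as a first-order transport equation for $\check\lambda_{XX}$ along integral curves of $\mathring L = \partial/\partial\mathfrak t$ and to derive pointwise bounds by integrating from $\Sigma_{t_0}$. Since $S_{\mathfrak t,u}$ is one-dimensional, any $(0,2)$-tensor on it has only a single independent component, so $|\check\lambda|$ coincides with $|\mathrm{tr}\check\lambda|$ and it suffices to control the scalar $\mathrm{tr}\check\lambda$. Contracting \eqref{Lchi'} with $\slashed g^{XX}$ and using the identity $\mathring L\slashed g^{XX} = -2\slashed g^{XX}\,\mathrm{tr}\lambda$ (which follows from $\mathring L\slashed g_{XX} = 2\lambda_{XX}$ together with $[\mathring L,X]=0$) and the decomposition $\mathrm{tr}\lambda = \varrho^{-1}+\mathrm{tr}\check\lambda$, the various $(\mathrm{tr}\check\lambda)^2$ contributions combine and the equation reduces to
\[
\mathring L(\mathrm{tr}\check\lambda) + \frac{2}{\varrho}\,\mathrm{tr}\check\lambda = -(\mathrm{tr}\check\lambda)^2 + F,
\]
where $F$ collects the remaining traced terms from \eqref{Lchi'}. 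Since $\mathring L\varrho=1$, this is equivalent to $\mathring L(\varrho^2\,\mathrm{tr}\check\lambda) = \varrho^2\bigl(F-(\mathrm{tr}\check\lambda)^2\bigr)$, which is ready for direct integration in $\mathfrak t$.

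The heart of the proof is the pointwise estimate $|F|\lesssim M\delta^{1-\varepsilon_0}\mathfrak t^{-5/2}$, which I would verify term-by-term from \eqref{Lchi'} using the bootstrap bounds $(\star)$, the estimate $\mu = 1+O(M\delta^{1-\varepsilon_0})$ from \eqref{phimu}, the component bounds $|\mathring L^i|,|\tilde T^i|\lesssim 1$ from \eqref{L}, and $|\slashed dx^i|\lesssim 1$ from \eqref{dx}. The two apparently singular pieces $-\frac{1}{2\varrho}c^{-1}(\mathring Lc)\slashed g_{XX}$ and $-\varrho^{-1}c^{-1}\slashed d_Xx^a\slashed d_X\varphi_a$ are rendered harmless because their coefficients $\mathring Lc$ and $\slashed d\varphi$ already carry smallness of order $M\delta^{1-\varepsilon_0}\mathfrak t^{-3/2}$; the second-order terms $c^{-1}\slashed g_{XX}(\mathring L^2\varphi_0+\varphi_a\mathring L^2\varphi_a)$ and $c^{-1}\slashed d_Xx^a\slashed d_X\mathring L\varphi_a$ are controlled at the required level by extracting a factor $\varrho^{-1}\sim\mathfrak t^{-1}$ from the $(\star)$ bounds with $Z=\varrho\mathring L$; the curvature piece $c^{-1}\mathring L^\gamma\slashed\nabla_X^2\varphi_\gamma$ uses the $\slashed\nabla^2\varphi$ bound in $(\star)$ directly; and the quadratic and cubic matrix forms are of strictly higher order in the smallness parameter. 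The linear-in-$\mathrm{tr}\check\lambda$ coefficients produced by the trace (from $-\tfrac12c^{-1}(\mathring Lc)\check\lambda_{XX}$ and $-c^{-1}\mathrm{tr}\check\lambda\cdot\slashed g^{XX}(\slashed d_Xx^a)\slashed d_X\varphi_a$) are of size $M\delta^{1-\varepsilon_0}\mathfrak t^{-3/2}$, integrable in time, and contribute only a harmless factor $1+O(M\delta^{1-\varepsilon_0})$ upon Gronwall.

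Integrating $\mathring L(\varrho^2\,\mathrm{tr}\check\lambda)$ from $t_0$ to $\mathfrak t$ and using $\varrho\sim\mathfrak t$ together with the initial size $|\check\lambda(t_0,\cdot)| = O(\delta^{1-\varepsilon_0})$ recorded in Section~\ref{p} from $\lambda_{XX} = r^{-1}\slashed g_{XX}+O(\delta^{1-\varepsilon_0})$ on $\Sigma_{t_0}^{4\delta}$, I would obtain
\[
\varrho^2|\mathrm{tr}\check\lambda(\mathfrak t)| \lesssim \delta^{1-\varepsilon_0} + M\delta^{1-\varepsilon_0}\int_{t_0}^{\mathfrak t}\tau^{-1/2}\,d\tau + \int_{t_0}^{\mathfrak t}\tau^2|\mathrm{tr}\check\lambda|^2\,d\tau \lesssim M\delta^{1-\varepsilon_0}\mathfrak t^{1/2},
\]
where the quadratic term is closed off by a standard continuity/bootstrap argument: under the a priori assumption $|\mathrm{tr}\check\lambda|\le 2M\delta^{1-\varepsilon_0}\mathfrak t^{-3/2}$, its contribution is $O\bigl((M\delta^{1-\varepsilon_0})^2\log\mathfrak t\bigr)$, absorbable by the leading term for small $\delta$. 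Dividing through by $\varrho^2\sim\mathfrak t^2$ yields \eqref{echi'}, and then \eqref{chi} is immediate from $\lambda_{XX} = \varrho^{-1}\slashed g_{XX}+\check\lambda_{XX}$. The principal technical obstacle is the interplay between the $\varrho^{-1}$-singular contributions to $\mathring L\check\lambda$ and their compensating small coefficients: this pairing, reflecting the totally linearly degenerate and null structure of \eqref{1.9} highlighted in Remark~\ref{4.1}, is precisely what prevents any finite-time blow-up of $\check\lambda$ near $C_0$ and allows the $\mathfrak t^{-3/2}$ decay to close.
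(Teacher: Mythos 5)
Your proof is correct and follows essentially the same route as the paper: both derive the transport equation $\mathring L(\varrho^2\textrm{tr}\check\lambda)=\varrho^2\bigl(F-(\textrm{tr}\check\lambda)^2\bigr)$ from \eqref{Lchi'}, bound the source $F$ using $(\star)$ together with the $\varrho^{-1}$-extraction from $Z=\varrho\mathring L$ (yielding $|F|\lesssim M\delta^{1-\varepsilon_0}\mathfrak t^{-5/2}$), and integrate along integral curves of $\mathring L$. You additionally spell out the continuity/bootstrap closure of the quadratic-in-$\textrm{tr}\check\lambda$ term, which the paper leaves implicit.
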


\begin{proof}
It follows from
$\textrm{tr}\check\chi=\slashed g^{XX}\check{\chi}_{XX}$ that
\begin{equation}\label{Ltrhchi}
\mathring L\big(\textrm{tr}\check\chi\big)=-2(\textrm{tr}\check\chi)^2-\f{2}{\rho}\textrm{tr}\check\chi
+\slashed g^{XX}\mathring L\check\chi_{XX}.
\end{equation}
Substituting \eqref{Lchi'} into \eqref{Ltrhchi} and using $(\star)$, \eqref{L} and \eqref{dx} to estimate
the right hand side of \eqref{Ltrhchi} except $\check\chi$ itself, one can get
\[
|\mathring L\big(\varrho^2\textrm{tr}\check\chi\big)|\lesssim
M\delta^{1-\varepsilon_0}\mathfrak t^{-5/2}\varrho^2+M\delta^{1-\varepsilon_0}\mathfrak t^{-3/2}|\varrho^2\textrm{tr}\check\chi|
+\varrho^{-2}|\varrho^2\textrm{tr}\check\chi|^2.
\]
Thus, for small $\delta>0$, \eqref{echi'} follows from integrating along integral curves of $\mathring L$, which
also yields \eqref{chi} directly due to \eqref{errorv}.
\end{proof}

It follows from Proposition \ref{chi'}, \eqref{dL} and \eqref{deL} that for small $\delta>0$,
\begin{equation}\label{edL}
|\slashed d\mathring L^i|\lesssim\varrho^{-1},\quad |\slashed d\check L^i|\lesssim M\delta^{1-\varepsilon_0} \mathfrak t^{-3/2}.
\end{equation}
Note that
\begin{equation}\label{Y-7}
\mathring L\big(\varrho^2|\slashed d\mu|^2\big)=2\varrho^2
\big\{-\textrm{tr}\check\chi|\slashed d\mu|^2+(\slashed d_X\mathring L\mu)(\slashed d^X\mu)\big\}.
\end{equation}
Substituting \eqref{lmu} into \eqref{Y-7} and applying $(\star)$, \eqref{chi'} and \eqref{edL} give
\[
|\mathring L\big(\varrho|\slashed d\mu|\big)|\lesssim
M\delta^{1-\varepsilon_0} \mathfrak t^{-3/2}\big(\varrho|\slashed d\mu|\big)+M\delta^{1-\varepsilon_0} \mathfrak t^{-3/2}.
\]
This implies immediately that
\begin{equation}\label{dmu}
|\slashed d\mu|\lesssim M\delta^{1-\varepsilon_0} \mathfrak t^{-1}.
\end{equation}

Now we are ready to improve the estimate on $\mathring {\underline L}\varphi_\g$, which will
lead to  better estimates for $\mathring L\varphi_\gamma$
and some other related quantities independent of $M$.
\begin{proposition}\label{TVarphi}
Under the assumptions $(\star)$ with $\delta>0$ suitably small, it holds that
\begin{equation}\label{Tvarphi}
|\mathring{\underline L}\varphi_\gamma(\mathfrak t, u, \vartheta)|+|T\varphi_\gamma(\mathfrak t, u, \vartheta)|\lesssim\delta^{-\varepsilon_0} \mathfrak t^{-1/2}.
\end{equation}
\end{proposition}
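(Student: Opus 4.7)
The plan is to integrate the transport equation \eqref{fequation} along integral curves of $\mathring L$ with an integrating factor $\varrho^{1/2}$. Since $\mathring L=\partial/\partial\mathfrak t$ in the new coordinates and $\mathring Lu=0$, one has $\mathring L\varrho=1$, so \eqref{fequation} rewrites as
\[
\mathring L\bigl(\varrho^{1/2}\mathring{\underline L}\varphi_\gamma\bigr)=\varrho^{1/2}H_\gamma.
\]
Integrating from $\tau=t_0$ to $\tau=\mathfrak t$ at fixed $(u,\vartheta)$ yields
\[
\varrho^{1/2}(\mathfrak t)\mathring{\underline L}\varphi_\gamma(\mathfrak t,u,\vartheta)=\varrho^{1/2}(t_0)\mathring{\underline L}\varphi_\gamma(t_0,u,\vartheta)+\int_{t_0}^{\mathfrak t}\varrho^{1/2}(\tau)H_\gamma(\tau,u,\vartheta)\,d\tau.
\]
On $\Sigma_{t_0}$, the vector field $\mathring{\underline L}$ differs from $\underline L$ by $O(\delta^{1-\varepsilon_0})$ and $\varrho(t_0)\sim 1$, so \eqref{local2-2} with $a=1$ gives $\bigl|\varrho^{1/2}(t_0)\mathring{\underline L}\varphi_\gamma(t_0,\cdot)\bigr|\lesssim\delta^{-\varepsilon_0}$.

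Next, one inspects every term in the expression \eqref{H} for $H_\gamma$ using the bootstrap assumptions $(\star)$ together with Proposition \ref{chi'}, \eqref{phimu}, \eqref{L} and \eqref{dx}. The term $\mu\slashed\triangle\varphi_\gamma$ is $O(M\delta^{1-\varepsilon_0}\mathfrak t^{-5/2})$; the coefficient of $\mathring L\varphi_\gamma$ consists of $\frac{\mu}{2\varrho}=O(\mathfrak t^{-1})$ plus factors of size $O(M\delta^{-\varepsilon_0}\mathfrak t^{-1/2})$ coming from $Tc$ and $\tilde T^aT\varphi_a$, so multiplying by $\mathring L\varphi_\gamma=O(M\delta^{1-\varepsilon_0}\mathfrak t^{-3/2})$ produces $O(M\delta^{1-\varepsilon_0}\mathfrak t^{-5/2}+M^2\delta^{1-2\varepsilon_0}\mathfrak t^{-2})$; the final $(\slashed d\varphi)(\slashed d\varphi)$ term is $O(M^2\delta^{2-2\varepsilon_0}\mathfrak t^{-3})$. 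Integrating each of these against $\tau^{1/2}$ from $t_0$ to $\mathfrak t$ yields contributions bounded by $CM\delta^{1-\varepsilon_0}+CM^2\delta^{1-2\varepsilon_0}$, which is $o(\delta^{-\varepsilon_0})$ for sufficiently small $\delta$ (since $\varepsilon_0\leq 1/8$). The one remaining piece of $H_\gamma$ is self-referencing of the form $c_\gamma\cdot\mathring{\underline L}\varphi_\gamma$, where the coefficient $c_\gamma=-\tfrac12\mathrm{tr}\check\lambda-\tfrac12 c^{-1}\mathring Lc-\tfrac32 c^{-1}\tilde T^a\mathring L\varphi_a+\tfrac32 c^{-1}(\slashed d^Xx^a)\slashed d_X\varphi_a$ is bounded by $CM\delta^{1-\varepsilon_0}\mathfrak t^{-3/2}$ by Proposition \ref{chi'} and $(\star)$. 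Thus, writing $f(\mathfrak t):=\varrho^{1/2}(\mathfrak t)|\mathring{\underline L}\varphi_\gamma(\mathfrak t)|$ and using $\varrho\sim\tau$, Gronwall's inequality gives
\[
f(\mathfrak t)\leq C\delta^{-\varepsilon_0}\exp\Bigl(CM\delta^{1-\varepsilon_0}\int_{t_0}^{\infty}\tau^{-3/2}d\tau\Bigr)\leq 2C\delta^{-\varepsilon_0},
\]
since $CM\delta^{1-\varepsilon_0}t_0^{-1/2}\ll 1$ for small $\delta$. This produces the bound $|\mathring{\underline L}\varphi_\gamma|\lesssim\delta^{-\varepsilon_0}\mathfrak t^{-1/2}$ \emph{without} the factor $M$, which is the desired improvement. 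Finally, using $T=\tfrac12(\mathring{\underline L}-\mu\mathring L)$ and combining with the bootstrap bound on $\mathring L\varphi_\gamma$ and \eqref{phimu} gives $|T\varphi_\gamma|\lesssim\delta^{-\varepsilon_0}\mathfrak t^{-1/2}+M\delta^{1-\varepsilon_0}\mathfrak t^{-3/2}\lesssim\delta^{-\varepsilon_0}\mathfrak t^{-1/2}$.

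The main obstacle is verifying that the self-referencing coefficient $c_\gamma$ in $H_\gamma$ is of the "good" size $O(M\delta^{1-\varepsilon_0}\mathfrak t^{-3/2})$ rather than the "bad" size $O(M\delta^{-\varepsilon_0}\mathfrak t^{-1/2})$ that the large factors $Tc$ or $T\varphi_a$ would produce. This is precisely where the totally linearly degenerate structure of \eqref{1.9} becomes essential, as emphasized in Remark \ref{4.1}: the potentially dangerous factors $T\varphi$ and $\mathring{\underline L}\varphi$ appearing in $H_\gamma$ are always paired with "good" small factors $\mathring L\varphi$ or $\slashed d\varphi$ (or with $\mathrm{tr}\check\lambda$, which itself enjoys good decay by Proposition \ref{chi'}), so that no term of the form (large)$\times\mathring{\underline L}\varphi_\gamma$ survives in \eqref{H}. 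Without this cancellation, the Gronwall factor would be $\exp(CM\delta^{-\varepsilon_0}\int\tau^{-1}d\tau)=\mathfrak t^{CM\delta^{-\varepsilon_0}}$, which grows polynomially in $\mathfrak t$ and destroys any hope of closing the bootstrap globally.
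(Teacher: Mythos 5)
Your proof is correct and follows essentially the same route as the paper: rewrite \eqref{fequation} as $\mathring L\bigl(\varrho^{1/2}\mathring{\underline L}\varphi_\gamma\bigr)=\varrho^{1/2}H_\gamma$, integrate along integral curves of $\mathring L$, and estimate $H_\gamma$ term by term using $(\star)$, Proposition \ref{chi'}, \eqref{L} and \eqref{dx}. The only (immaterial) difference is that the paper does not invoke Gronwall for the self-referencing terms such as $-\tfrac12\mathrm{tr}\check\lambda\,\mathring{\underline L}\varphi_\gamma$: it simply substitutes the bootstrap bound $|\mathring{\underline L}\varphi_\gamma|\leq M\delta^{-\varepsilon_0}\mathfrak t^{-1/2}$ to get $|H_\gamma|\lesssim M^2\delta^{1-2\varepsilon_0}\mathfrak t^{-2}$, whose integral against $\varrho^{1/2}$ is $O(M^2\delta^{1-2\varepsilon_0})=o(\delta^{-\varepsilon_0})$ for small $\delta$, which is absorbed by the initial-data contribution exactly as in your argument.
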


\begin{proof}
	By $(\star)$, \eqref{L}, \eqref{dx} and \eqref{echi'}, it follows from \eqref{fequation} that
	\[
	|\mathring L\mathring{\underline L}\varphi_\gamma+\f{1}{2\varrho}\mathring{\underline L}\varphi_\gamma|\lesssim
	M^2\delta^{1-2\varepsilon_0} \mathfrak t^{-2}.
	\]
	This yields
	\begin{equation}\label{rLphi}
	|\mathring L\big(\varrho^{1/2} \mathring{\underline L}\varphi_\gamma\big)(\mathfrak t, u, \vartheta)|\lesssim
	M^2\delta^{1-2\varepsilon_0} \mathfrak t^{-3/2}.
	\end{equation}
	Integrating \eqref{rLphi} along integral curves of $\mathring L$ gives
	\begin{equation}\label{Y-8}
	|\varrho^{1/2} \mathring{\underline L}\varphi_\gamma(\mathfrak t, u, \vartheta)-\varrho_0^{1/2}\mathring{\underline L}\varphi_\gamma(t_0, u, \vartheta)|\lesssim  M^2\delta^{1-2\varepsilon_0},
	\end{equation}
	where $\varrho_0=t_0-u$.  Then the estimate on $|\mathring{\underline L}{\varphi_\gamma}|$ in \eqref{Tvarphi} follows
	from \eqref{Y-8} for small $\delta$. This, together with $\mathring{\underline L}=\mu\mathring L+2T$, yields the estimate
	for $|T\varphi_\gamma|$ in \eqref{Tvarphi} by $(\star)$.
\end{proof}

To improve the estimate on $\varphi_\gamma$ further, one treats $\eta$ first since   $T=\f{\p}{\p u}-\eta^XX$.

\begin{lemma}\label{eXi}
	Under the assumptions $(\star)$, it holds that for $\delta>0$ small,
	\begin{equation}\label{Xi}
	|\eta|=\sqrt{g_{XX}\eta^X\eta^X}\lesssim  M\delta^{1-\varepsilon_0}\mathfrak t.
	\end{equation}
\end{lemma}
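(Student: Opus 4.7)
The plan is to derive a transport equation for $\eta^X$ along the integral curves of $\mathring L$, then integrate to control $|\eta|$.

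First, I would exploit the coordinate description $\mathring L=\partial/\partial\mathfrak t$, $T=\partial/\partial u-\eta^X X$, and $X=\partial/\partial\vartheta$. Since the coordinate vector fields commute, $[\mathring L,\partial/\partial u]=0$ and $[\mathring L,X]=0$, so a direct computation gives $[\mathring L,T]=-(\mathring L\eta^X)X$. Comparing with Lemma \ref{com} yields the transport equation
\begin{equation*}
\mathring L\eta^X=-\leftidx{^{(T)}}{\slashed\pi}_{\mathring L}{}^{X},
\end{equation*}
and by \eqref{Lpi}, $\leftidx{^{(T)}}{\slashed\pi}_{\mathring L X}=-\slashed d_X\mu-2c^{-1}\mu\tilde T^a\slashed d_X\varphi_a$.

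Next, using $\mathring L\slashed g_{XX}=2\lambda_{XX}$ (by definition of $\lambda$) together with the transport equation above, I would compute
\begin{equation*}
\mathring L|\eta|^2=2\lambda_{XX}(\eta^X)^2-2\slashed g_{XX}\eta^X\,\leftidx{^{(T)}}{\slashed\pi}_{\mathring L}{}^{X}.
\end{equation*}
Because $S_{\mathfrak t,u}$ is one-dimensional, $\lambda_{XX}=(\textrm{tr}\lambda)\slashed g_{XX}$, and by \eqref{chi}, $\textrm{tr}\lambda=\varrho^{-1}+O(M\delta^{1-\varepsilon_0}\mathfrak t^{-3/2})$. Combined with $|\slashed d\mu|\lesssim M\delta^{1-\varepsilon_0}\mathfrak t^{-1}$ from \eqref{dmu}, $|\slashed d\varphi_a|\lesssim M\delta^{1-\varepsilon_0}\mathfrak t^{-3/2}$ from $(\star)$, $|\tilde T|\lesssim 1$ from \eqref{L}, and $\mu\sim 1$ from \eqref{phimu}, I would obtain
\begin{equation*}
\mathring L|\eta|\leq \frac{|\eta|}{\varrho}+CM\delta^{1-\varepsilon_0}\mathfrak t^{-3/2}|\eta|+CM\delta^{1-\varepsilon_0}\mathfrak t^{-1}.
\end{equation*}

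The key issue is the leading factor $\varrho^{-1}$, which by naive Gronwall gives only $|\eta|\lesssim\varrho/\varrho_0$ times an extra logarithmic factor. To kill this, I would pass to the quantity $|\eta|/\varrho$. Using $\mathring L\varrho=1$, the $\varrho^{-1}$ term cancels exactly and one arrives at
\begin{equation*}
\mathring L(|\eta|/\varrho)\leq CM\delta^{1-\varepsilon_0}\mathfrak t^{-3/2}\,(|\eta|/\varrho)+CM\delta^{1-\varepsilon_0}\mathfrak t^{-2}.
\end{equation*}
Both $\tau^{-3/2}$ and $\tau^{-2}$ are integrable on $[t_0,\infty)$, so Gronwall along the $\mathring L$-curves (which have $\mathfrak t$ as parameter, $u$ fixed) gives $|\eta|/\varrho\lesssim M\delta^{1-\varepsilon_0}$, provided the initial datum $\eta|_{\mathfrak t=t_0}$ is controlled.

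Finally, I would verify that $\eta|_{\mathfrak t=t_0}=0$. This follows from a direct computation at $t_0$: solving the eikonal equation with $u(t_0,x)=t_0-r$ and $\partial_t u>0$ yields $\mathring L^i|_{t_0}=\varphi_i+\sqrt{c}\,\omega^i$, hence $\tilde T^i|_{t_0}=-\sqrt c\,\omega^i$ and $T|_{t_0}=-\omega^i\partial_i=-\partial_r$, while the relation $u=t_0-r$ gives $\partial/\partial u|_{t_0}=-\partial_r$ as well, so $T-\partial/\partial u$ vanishes at $t_0$, forcing $\eta^X|_{t_0}=0$. Combining with the Gronwall estimate above and using $\varrho\leq\mathfrak t$ completes the proof: $|\eta|\lesssim M\delta^{1-\varepsilon_0}\varrho\lesssim M\delta^{1-\varepsilon_0}\mathfrak t$. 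The main technical obstacle is recognizing that the apparent linear-in-$\mathfrak t$ growth from $\textrm{tr}\lambda\sim\varrho^{-1}$ is compensated exactly by the geometric factor $\mathring L\varrho=1$, so that the perturbative terms are the only ones that contribute to growth.
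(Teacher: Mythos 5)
Your proposal is correct and follows essentially the same route as the paper: derive the transport equation $\mathring L\eta^X=\slashed d^X\mu+2\mu\zeta^X$ (the paper obtains it directly via $\mathscr D_T\mathring L-\mathscr D_{\mathring L}T$ from Lemma \ref{cd}, you via coordinate commutativity and Lemma \ref{com}, but these are the same identity), pass to the $\varrho$-weighted quantity to cancel the $\text{tr}\lambda\sim\varrho^{-1}$ term, and close with Gronwall along $\mathring L$-curves. Your explicit verification that $\eta|_{\mathfrak t=t_0}=0$ is a correct detail the paper leaves implicit.
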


\begin{proof}
	Note that $\eta$ is a vector field on $S_{\mathfrak t, u}$ and one has
	\begin{equation}\label{LXi}
	\mathring L\eta^X=[T,\mathring L]^X=\big(\mathscr{D}_T\mathring L-\mathscr{D}_{\mathring L}T\big)^X=\slashed d^X\mu+2\mu\zeta^X.
	\end{equation}
	Then, it follows from \eqref{zeta} that
	\[
	\mathring L(\varrho^{-2}|\eta|^2)=\varrho^{-2}\big\{2\textrm{tr}\check\chi|\eta|^2+\slashed d_X\mu\eta^X
	+2c^{-1}\mu\tilde{T}^a\slashed d_X\varphi_a\eta^X\big\}.
	\]
	This, together with \eqref{dmu}, \eqref{echi'} and $(\star)$, yields
	\begin{equation}\label{Y1}
	|\mathring L(\varrho^{-1}|\eta|)|\lesssim M\delta^{1-\varepsilon_0} \mathfrak t^{-3/2}(\varrho^{-1}|\eta|)
	+ M\delta^{1-\varepsilon_0} \mathfrak t^{-2}.
	\end{equation}
	Thus \eqref{Xi} follows immediately by integrating \eqref{Y1} along integral curves of $\mathring L$.
\end{proof}

Based on Proposition \ref{TVarphi} and Lemma \ref{eXi}, we can now improve the estimates
on $\varphi_\gamma$ and $\mathring L\varphi_\gamma$,
which are independent of $M$.

\begin{proposition}\label{Phi}
	Under the assumptions $(\star)$ with $\delta>0$ suitably small, it holds that
	\begin{equation}\label{phi}
	|\varphi_\gamma(\mathfrak t, u, \vartheta)|\lesssim\delta^{1-\varepsilon_0} \mathfrak t^{-1/2},
\qquad|\mathring L\varphi_\gamma(\mathfrak t, u, \vartheta)|\lesssim\delta^{1-\varepsilon_0} \mathfrak t^{-3/2}.
	\end{equation}
\end{proposition}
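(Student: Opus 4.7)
The strategy I would use exploits the fact that, by finite propagation from the compactly supported initial data, $\phi$ (and hence $\varphi_\gamma$ and any $\mathring L$-derivative of it) vanishes identically on the outermost characteristic $C_0 = \{u=0\}$; one then integrates inward in $u$ using $T = \p_u - \eta^X X$ in the coordinates $(\mathfrak t, u, \vartheta)$.

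For the bound on $\varphi_\gamma$, since $\varphi_\gamma(\mathfrak t, 0, \vartheta) \equiv 0$, one writes
\[
\varphi_\gamma(\mathfrak t, u, \vartheta) = \int_0^u \bigl(T\varphi_\gamma + \eta^X X\varphi_\gamma\bigr)(\mathfrak t, u', \vartheta)\, du'.
\]
Proposition \ref{TVarphi} supplies the $M$-free pointwise bound $|T\varphi_\gamma| \lesssim \delta^{-\varepsilon_0}\mathfrak t^{-1/2}$, while Lemma \ref{eXi} together with $(\star)$ gives $|\eta^X X \varphi_\gamma| \le |\eta|\,|\slashed d \varphi_\gamma| \lesssim M^2\delta^{2-2\varepsilon_0}\mathfrak t^{-1/2}$. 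Since $u \le 2\delta$ and $M^2\delta^{2-\varepsilon_0}$ is small for $\delta$ small, integration yields $|\varphi_\gamma| \lesssim \delta^{1-\varepsilon_0}\mathfrak t^{-1/2}$.

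For the bound on $\mathring L\varphi_\gamma$, since $\mathring L$ is tangent to $C_0$, also $\mathring L\varphi_\gamma(\mathfrak t, 0, \vartheta) \equiv 0$, whence
\[
\mathring L\varphi_\gamma(\mathfrak t, u, \vartheta) = \int_0^u \bigl(T\mathring L\varphi_\gamma + \eta^X X\mathring L\varphi_\gamma\bigr)(\mathfrak t, u', \vartheta)\, du'.
\]
The crux is to obtain an $M$-independent pointwise estimate $|T\mathring L\varphi_\gamma| \lesssim \delta^{-\varepsilon_0}\mathfrak t^{-3/2}$. Writing $T\mathring L\varphi_\gamma = \mathring L T\varphi_\gamma - [\mathring L, T]\varphi_\gamma$, I would solve the wave equation \eqref{fequation}, after expanding $\mathring{\underline L} = \mu\mathring L + 2T$ in $\mathring L\mathring{\underline L}\varphi_\gamma$, to obtain the algebraic identity
\[
2\mathring L T\varphi_\gamma = H_\gamma - \frac{1}{2\varrho}\mathring{\underline L}\varphi_\gamma - (\mathring L\mu)\mathring L\varphi_\gamma - \mu\mathring L^2\varphi_\gamma.
\]
Every term on the right admits a clean bound: $|\frac{1}{2\varrho}\mathring{\underline L}\varphi_\gamma|\lesssim \delta^{-\varepsilon_0}\mathfrak t^{-3/2}$ by Proposition \ref{TVarphi} (the dominant and $M$-free contribution); $|\mathring L\mu|\lesssim M\delta^{1-\varepsilon_0}\mathfrak t^{-3/2}$ by Lemma \ref{mu} and $(\star)$; $|\mathring L^2\varphi_\gamma|\lesssim M\delta^{1-\varepsilon_0}\mathfrak t^{-5/2}$ read off from $(\star)$ applied to $Z=\varrho\mathring L$; and $|H_\gamma|\lesssim M\delta^{1-2\varepsilon_0}\mathfrak t^{-2}$ as in the proof of Proposition \ref{TVarphi}. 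The commutator satisfies $|[\mathring L, T]\varphi_\gamma| \lesssim M^2\delta^{2-2\varepsilon_0}\mathfrak t^{-5/2}$ via \eqref{Lpi} together with \eqref{dmu} and $(\star)$, and $|\eta^X X \mathring L\varphi_\gamma| \lesssim M^2\delta^{2-2\varepsilon_0}\mathfrak t^{-3/2}$ by Lemma \ref{eXi} and the bootstrap applied to $\slashed d (\varrho\mathring L\varphi_\gamma)$. For $\delta$ sufficiently small the $M$-dependent errors are absorbed, and integration over $u \in [0, 2\delta]$ produces $|\mathring L\varphi_\gamma| \lesssim \delta^{1-\varepsilon_0}\mathfrak t^{-3/2}$.

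The main obstacle is ensuring $M$-independence of the leading bound on $\mathring L T \varphi_\gamma$: every contribution to the right-hand side of the identity above must be either $O(\delta^{-\varepsilon_0}\mathfrak t^{-3/2})$ with constant independent of $M$, or carry an extra positive power of $\delta$ (possibly multiplying a power of $M$) that will be absorbed. The first alternative is realized precisely by $\frac{1}{2\varrho}\mathring{\underline L}\varphi_\gamma$ thanks to Proposition \ref{TVarphi}; the totally linearly degenerate structure of \eqref{1.9}, which produces the favorable Lemma \ref{mu} and the uniform estimate $\mu \sim 1$ in \eqref{phimu}, is exactly what guarantees that the remaining terms degenerate quadratically in the smallness parameter and can therefore be swallowed into the leading contribution.
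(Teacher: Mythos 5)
Your proposal follows essentially the same route as the paper: for $\varphi_\gamma$ you integrate $\p_u\varphi_\gamma = T\varphi_\gamma + \eta^X X\varphi_\gamma$ from $C_0$ using Proposition \ref{TVarphi} and Lemma \ref{eXi}, and for $\mathring L\varphi_\gamma$ you extract an $M$-free bound on $T\mathring L\varphi_\gamma$ from \eqref{fequation} plus a commutator and again integrate in $u$. The paper organizes the second step via $\mathring{\underline L}\mathring L\varphi_\gamma$ and the frame relations \eqref{LuL} rather than expanding $\mathring{\underline L}=\mu\mathring L+2T$ and then passing through $[\mathring L,T]$, but since $\mathring{\underline L}=2T+\mu\mathring L$ these are the same computation, and your term-by-term estimates and absorption of the $M$-dependent, $\delta$-small corrections match the paper's.
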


\begin{proof}
	First, substituting $T=\f{\p}{\p u}-\eta^XX$ into \eqref{Tvarphi} and using $(\star)$ and \eqref{Xi} lead to
	\begin{align}\label{Z-1}
	|\f{\p}{\p u}\varphi_\gamma(\mathfrak t, u,\vartheta)|\lesssim\delta^{-\varepsilon_0} \mathfrak t^{-1/2}.
	\end{align}
	Integrating \eqref{Z-1} from $0$ to $u$ yields the desired estimate on $\varphi_\gamma$ in \eqref{phi} since $\varphi_\g$ vanishes on $C_0$.
	
	Next, note that $\mathring L\varphi_\gamma$ satisfies the equation
	\begin{equation*}
	\begin{split}
	\mathring{\underline L}\mathring L\varphi_\gamma=-\f{1}{2\varrho}\mathring{\underline L}\varphi_\gamma+H_\gamma-\mathring L\mu\mathring L\varphi_\gamma+2\eta^A\slashed d_A\varphi_\gamma+2\mu\zeta^A\slashed d_A\varphi_\gamma
	\end{split}
	\end{equation*}
	due to \eqref{LuL} and \eqref{fequation}.
	Then $|\mathring{\underline L}\mathring L\varphi_\gamma|\lesssim\delta^{-\varepsilon_0} \mathfrak t^{-3/2}$ follows from \eqref{H}, \eqref{lmu} and \eqref{zeta}. Similarly as for \eqref{Z-1}, one can get
	\begin{equation}\label{puL}
	|\f{\p}{\p u}\mathring L\varphi_\gamma(\mathfrak t, u, \vartheta)|\lesssim\delta^{-\varepsilon_0} \mathfrak t^{-3/2}.
	\end{equation}
	Then the estimate for $\mathring L\varphi_\gamma$ in \eqref{phi} follows by integrating \eqref{puL} from $0$ to $u$.
\end{proof}

One can now use Propositions \ref{TVarphi} and \ref{Phi} to measure the errors of the components of the frames.

\begin{lemma}\label{gL}
	Under the assumptions $(\star)$ with $\delta>0$ suitably small, it holds that
	\begin{equation}\label{eL}
	|\check{L}^i|\lesssim \delta^{1-\varepsilon_0}\mathfrak t^{-1/2},\ \ |\check{T}^i|\lesssim \delta^{1-\varepsilon_0} \mathfrak t^{-1/2},
	\end{equation}
	\begin{equation}\label{r}
	|\f{r}{\varrho}-1|\lesssim\delta^{1-\varepsilon_0} \mathfrak t^{-1/2},\ \
	|{R}^k-{\Omega}^k|\lesssim\delta^{1-\varepsilon_0}\mathfrak t^{1/2} .
	\end{equation}
\end{lemma}

\begin{proof}
	First, it follows from \eqref{L}, \eqref{phi}, and \eqref{LeL} that
	\begin{equation}\label{4-0}
	|\mathring L\big(\varrho\check{L}^i\big)|\lesssim\delta^{1-\varepsilon_0} \mathfrak t^{-1/2}.
	\end{equation}
	Integrating \eqref{4-0} along integral curves of $\mathring L$ yields the desired estimate on $\check L^i$ in \eqref{eL},
	which implies the estimate on $\check T^i$ in \eqref{eL} by
	$\check T^i=\varphi_i-\check L^i$.
	
	Next, due to $\ds g_{ij}(\check T^i-\f{x^i}{\varrho})(\check T^j-\f{x^j}{\varrho})=1$, then
	$
	\f{r^2}{\varrho^2}=c-\delta_{ij}\check T^i\check T^j+2\delta_{ij}\check T^i\f{x^j}{\varrho}.
	$
	Thus,
	\begin{equation}\label{rrho}
	\f{r}{\varrho}-1=\f{2(\varphi_0+\varphi_i\omega^i)+2\check L^i\varphi_i-\delta_{ij}\check L^i\check L^j-2\delta_{ij}\check{L}^i\omega^j}{\sqrt{c+(\delta_{ij}\check T^i\omega^j)^2-\delta_{ij}\check T^i\check T^j}+1
		-\delta_{ij}\check T^i\omega^j},
	\end{equation}
	which implies the first inequality in \eqref{r} by $(\star)$ and \eqref{eL}, while the second one follows from this, \eqref{eL} and \eqref{r}.
\end{proof}

Note that the rotation vector field $R$ behaves just as the scaling operator $r\slashed\nabla$ under the assumptions $(\star)$
and the estimate \eqref{echi'} as stated in the following lemma, which is similar to \cite[Lemma 12.22]{J}.

\begin{lemma}\label{12form}
 Under the assumptions $(\star)$ with $\delta>0$ suitably small, it holds that

  (i) if $\kappa$ is a 1-form on $S_{\mathfrak t, u}$, then
  \begin{align}
  &(\kappa\cdot R)^2=\big(1+O(\delta^{1-\varepsilon_0} \mathfrak t^{-1/2})\big)r^2|\kappa|^2\label{1-f},\\
  &|\slashed{\mathcal L}_{R}\kappa|^2=\big(1+O(\delta^{1-\varepsilon_0} \mathfrak t^{-1/2} )\big)r^2|\slashed\nabla\kappa|^2+O(\delta^{1-\varepsilon_0}\mathfrak t^{-1/2}M^2)|\kappa|^2;\label{1f}
  \end{align}

  (ii) if $\Theta$ is a 2-form on $S_{\mathfrak t, u}$, then
  \begin{equation}\label{2-f}
  |\slashed{\mathcal L}_{R}\Theta|^2=r^2|\slashed\nabla\Th|^2\big(1+O(\delta^{1-\varepsilon_0} \mathfrak t^{-1/2})\big)
  +|\Theta|^2O(\delta^{1-\varepsilon_0} \mathfrak t^{-1/2}M^2).
  \end{equation}

\end{lemma}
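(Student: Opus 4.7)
Since $S_{\mathfrak t,u}$ is one-dimensional, every tensor on $S_{\mathfrak t,u}$ is determined by its sole component in the frame $X=\p/\p\vartheta$, and one has the explicit norm formulae
\[
|\kappa|^2=\slashed g^{XX}\kappa_X^2,\quad |\Theta|^2=(\slashed g^{XX})^2\Theta_{XX}^2,\quad |\slashed\nabla\kappa|^2=(\slashed g^{XX})^2(\slashed\nabla_X\kappa_X)^2,\quad |\slashed\nabla\Theta|^2=(\slashed g^{XX})^3(\slashed\nabla_X\Theta_{XX})^2.
\]
The plan is to first establish $|R|^2=\slashed g_{XX}(R^X)^2=(1+O(\delta^{1-\varepsilon_0}\mathfrak t^{-1/2}))r^2$ by writing $|R|^2=g_{ij}R^iR^j=c^{-1}\delta_{ij}R^iR^j$, invoking \eqref{Ri} to replace $R$ by $\Omega$ up to error $O(\delta^{1-\varepsilon_0}\mathfrak t^{1/2})$, noting $\delta_{ij}\Omega^i\Omega^j=r^2\sim\mathfrak t^2$, and using $c=1+O(\delta^{1-\varepsilon_0}\mathfrak t^{-1/2})$ from Proposition \ref{Phi}. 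Part (i)(a) is then immediate: since $R=R^XX$, one has $(\kappa\cdot R)^2=(\kappa_XR^X)^2=\slashed g^{XX}\kappa_X^2\cdot\slashed g_{XX}(R^X)^2=|\kappa|^2|R|^2$.

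For part (i)(b), apply the standard Lie derivative formula on the one-dimensional manifold to get $(\slashed{\mathcal L}_R\kappa)_X=R^X\slashed\nabla_X\kappa_X+\kappa_X\slashed\nabla_XR^X$, where by \eqref{dt} and the orthogonality of $X$ to the normal directions of $S_{\mathfrak t,u}$ one has $\slashed\nabla_XR^X=\leftidx{^{(R)}}\slashed\pi_{XX}/(2\slashed g_{XX})$. Expanding $|\slashed{\mathcal L}_R\kappa|^2=\slashed g^{XX}(\slashed{\mathcal L}_R\kappa)_X^2$, the leading piece is $(R^X)^2\slashed g^{XX}(\slashed\nabla_X\kappa_X)^2=|R|^2|\slashed\nabla\kappa|^2=(1+O(\delta^{1-\varepsilon_0}\mathfrak t^{-1/2}))r^2|\slashed\nabla\kappa|^2$ by the preliminary estimate. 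The central intermediate bound to be derived is
\[
\bigl|\leftidx{^{(R)}}\slashed\pi_{XX}\bigr|/\slashed g_{XX}\lesssim M\delta^{1-\varepsilon_0}\mathfrak t^{-1/2},
\]
obtained by dividing each of the four summands in \eqref{Rpi} by $\slashed g_{XX}\sim\mathfrak t^2$: the term $2\upsilon\lambda_{XX}/\slashed g_{XX}=2\upsilon\,\textrm{tr}\lambda$ is bounded by $\delta^{1-\varepsilon_0}\mathfrak t^{1/2}\cdot\mathfrak t^{-1}=\delta^{1-\varepsilon_0}\mathfrak t^{-1/2}$ via Lemma \ref{gL} and \eqref{chi}; $c^{-1}|Rc|\lesssim M\delta^{1-\varepsilon_0}\mathfrak t^{-1/2}$ follows from $R\varphi_\gamma=R^X\slashed d_X\varphi_\gamma$ with $|R^X|\sim 1$ and $|\slashed d_X\varphi_\gamma|=\sqrt{\slashed g_{XX}}|\slashed d\varphi_\gamma|\lesssim M\delta^{1-\varepsilon_0}\mathfrak t^{-1/2}$ via $(\star)$; the two remaining terms are smaller by additional powers of $\delta^{1-\varepsilon_0}$, using Proposition \ref{Phi} and $|\slashed d^Xx^a|\lesssim\mathfrak t^{-1}$ (derived by lowering indices in \eqref{dx}). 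Squaring, the pure error contribution $|\kappa|^2(\leftidx{^{(R)}}\slashed\pi_{XX})^2/(4\slashed g_{XX}^2)\lesssim M^2\delta^{2(1-\varepsilon_0)}\mathfrak t^{-1}|\kappa|^2$ is absorbed into $O(\delta^{1-\varepsilon_0}\mathfrak t^{-1/2}M^2)|\kappa|^2$ for $\delta$ small, and the cross term is handled by Cauchy--Schwarz $2|AB|\le\epsilon A^2+\epsilon^{-1}B^2$ with $\epsilon=\delta^{1-\varepsilon_0}\mathfrak t^{-1/2}$, distributing its contribution between the main and the error pieces at the claimed orders.

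Part (ii) proceeds identically: the Lie derivative of the $(0,2)$-tensor $\Theta$ reads $(\slashed{\mathcal L}_R\Theta)_{XX}=R^X\slashed\nabla_X\Theta_{XX}+2\Theta_{XX}\slashed\nabla_XR^X$, and the same decomposition together with the bound on $\leftidx{^{(R)}}\slashed\pi_{XX}/\slashed g_{XX}$ yields the result, the innocuous extra factor $2$ entering only the error term. \emph{The main obstacle} is precisely the sharp normalization of $\leftidx{^{(R)}}\slashed\pi_{XX}$: since $|\lambda_{XX}|\sim\mathfrak t$ and $\slashed g_{XX}\sim\mathfrak t^2$, a naive estimate of \eqref{Rpi} only yields $|\leftidx{^{(R)}}\slashed\pi_{XX}|\lesssim M\delta^{1-\varepsilon_0}\mathfrak t^{3/2}$, which would produce an error $\sim M^2\delta^{2(1-\varepsilon_0)}\mathfrak t$ growing in time and thus insufficient for unbounded $\mathfrak t$; the $\mathfrak t^{-1/2}$ decay is recovered only by first dividing each term in \eqref{Rpi} by $\slashed g_{XX}$, which exploits the intrinsic scaling of the $\lambda_{XX}$ and $\slashed g_{XX}$ pairings together with the $M$-independent improved pointwise estimates in Propositions \ref{Phi} and \ref{chi'}.
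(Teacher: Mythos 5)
Your proof is correct. The paper does not actually present a proof of this lemma; it just says the result ``is similar to Lemma 12.22 in \cite{J} or Sect.3.3.3 in \cite{MY}'' (both 3D works where $S_{\mathfrak t,u}$ is two-dimensional), so there is no in-text argument to compare against, and a self-contained 2D proof is genuinely useful here. Your approach — reducing everything to the sole component in the frame $X=\p/\p\vartheta$, establishing $|R|^2=(1+O(\delta^{1-\varepsilon_0}\mathfrak t^{-1/2}))r^2$ from \eqref{Ri} and $c^{-1}=1+O(\delta^{1-\varepsilon_0}\mathfrak t^{-1/2})$, and then isolating $\slashed\nabla_XR^X=\leftidx{^{(R)}}{\slashed\pi}_{XX}/(2\slashed g_{XX})$ as the only new quantity in the Lie-derivative expansion — is the natural one in 2D, where $S_{\mathfrak t,u}$ is a curve. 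Your verification that $|\leftidx{^{(R)}}{\slashed\pi}_{XX}|/\slashed g_{XX}\lesssim M\delta^{1-\varepsilon_0}\mathfrak t^{-1/2}$ from \eqref{Rpi} (with $\upsilon\,\mathrm{tr}\lambda$ as the leading contribution) is precisely the $k=0$ case of the bound $|\leftidx{^{(R)}}{\slashed\pi}|\lesssim M\delta^{1-\varepsilon_0}\mathfrak t^{-1/2}$ that the paper later records in Lemma \ref{Rh}, so your intermediate estimate is consistent with the rest of the paper. The Cauchy--Schwarz splitting of the cross term with weight $\epsilon=\delta^{1-\varepsilon_0}\mathfrak t^{-1/2}$, distributing one piece into the main coefficient and the other into the $O(\delta^{1-\varepsilon_0}\mathfrak t^{-1/2}M^2)|\kappa|^2$ error, is exactly what is needed to match the two asymmetric error orders in \eqref{1f}--\eqref{2-f}. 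One minor remark: in your bound on $c^{-1}|Rc|$ it is cleaner to observe directly that $|R\varphi_\gamma|=|R|\,|\slashed d\varphi_\gamma|\lesssim r\cdot M\delta^{1-\varepsilon_0}\mathfrak t^{-3/2}$, which is simply a special case of \eqref{1-f} already proved at that stage; this avoids needing $|R^X|\sim 1$ as a separate assertion (which would require first knowing $\slashed g_{XX}\sim\mathfrak t^2$). The logical order you adopt — \eqref{1-f} first, then the deformation-tensor bound, then \eqref{1f} and \eqref{2-f} — handles this cleanly.
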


Note that although the $L^\infty$ estimates for lower order derivatives of some geometrical quantities (such as \eqref{echi'}, \eqref{edL} and \eqref{dmu}) and the $M$-independent estimates for the first order derivatives of $\varphi_\gamma$ (see \eqref{Tvarphi} and \eqref{phi}) obtained
in the above are not enough to close the assumptions $(\star)$, yet the method  proving \eqref{phi} indicates that it is possible to improve the estimates on $\varphi_\gamma$ by making use of the equation \eqref{fequation}. Thus, under the assumptions $(\star)$, one can continue to improve the estimates of $\varphi_\g$ up to the $(N-1)^{th}$ order derivatives as given in the following proposition, whose proof is exactly same as those in \cite[Section 6]{Ding4} and thus omitted.

\begin{proposition}\label{LTRh}
Under the assumptions $(\star)$ with $\delta>0$ small, it holds that for any operator $Z\in\{\varrho\mathring L, T, R\}$ and $k\leq N-1$,
\begin{equation}\label{z}
\begin{split}
&|\slashed{\mathcal{L}}_{Z}^{k+1;i,l}\slashed dx^j|\lesssim \delta^{-i},\quad|\slashed{\mathcal{L}}_{ Z}^{k;i,l}\check{\chi}|\lesssim M\delta^{1-i-\varepsilon_0}\mathfrak t^{-3/2},\quad |\slashed{\mathcal{L}}_{Z}^{k+1;i,l}R|\lesssim M\delta^{1-i-\varepsilon_0}\mathfrak t^{1/2},\\
&|\slashed{\mathcal{L}}_{Z}^{k;i,l}\leftidx{^{(R)}}{\slashed\pi}|+|\slashed{\mathcal{L}}_{Z}^{k;i,l}\leftidx{^{(R)}}{\slashed\pi}_{\mathring L}|+|\slashed{\mathcal{L}}_{Z}^{k;i,l}\leftidx{^{(R)}}{\slashed\pi}_{T}|\lesssim M\delta^{1-i-\varepsilon_0}\mathfrak t^{-1/2},\\
&|Z^{k+1;i,l}\check{L}^j|\lesssim M\delta^{1-i-\varepsilon_0}\mathfrak t^{-1/2},\quad| Z^{k+1;i,l}\upsilon|\lesssim M\delta^{1-i-\varepsilon_0}\mathfrak t^{1/2},\\
&|Z^{k+1;i,l}\mu|\lesssim M\delta^{1-i-\varepsilon_0},\quad |\slashed{\mathcal{L}}_{Z}^{k;i,l}\leftidx{^{(T)}}{\slashed\pi}|\lesssim M\delta^{-i-\varepsilon_0}\mathfrak t^{-1/2},
\end{split}
\end{equation}
where $(k;i,l)$ means that the number of $Z$ is $k$, the number of $T$ is $i$ , and the number of $\varrho \mathring L$ is $l$.
\end{proposition}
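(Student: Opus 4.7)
The plan is to proceed by induction on $l\geq 1$, the number of $\varrho\mathring L$ factors appearing in $Z^{k;i,l}$. The base case $l=0$ is exactly Proposition \ref{TRh} (from which the bounds on the $l\geq 1$ versions must in particular be consistent). Assume now that every estimate in \eqref{z} holds whenever the $\varrho\mathring L$-count is strictly less than $l$ and the total count is at most $N-1$. The strategy for the inductive step is to commute one factor of $\varrho\mathring L$ to the innermost position, let it act directly on the relevant base quantity ($\check\lambda$, $\check L^j$, $\mu$, $\slashed dx^j$, $R$, $\upsilon$, $\leftidx{^{(R)}}{\slashed\pi}$, $\leftidx{^{(T)}}{\slashed\pi}$, etc.) via the corresponding transport equation from Section \ref{be} (in particular \eqref{Lchi'}, \eqref{LeL}, \eqref{lmu}, together with \eqref{R}, \eqref{omega}, \eqref{Rpi}, \eqref{Lpi}), and then control the resulting expression by the inductive hypothesis, the bootstrap assumptions $(\star)$, and the $L^\infty$ bounds from Sections \ref{BA}--\ref{ho}.

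Concretely, for $\slashed{\mathcal L}_{Z}^{k;i,l}\check\lambda$ one writes, modulo commutator corrections, $\slashed{\mathcal L}_{Z'}^{k-1;i,l-1}(\slashed{\mathcal L}_{\varrho\mathring L}\check\lambda)$, where $Z'$ contains one fewer $\varrho\mathring L$. By \eqref{Lchi'}, $\slashed{\mathcal L}_{\varrho\mathring L}\check\lambda$ is a polynomial expression in $\mu$, $\varphi_\gamma$, $\mathring L^i$, $\slashed dx^j$, $\check\lambda$ and first-order $\mathring L$-, $\slashed d$- and $\slashed\nabla^2$-derivatives of $\varphi_\gamma$, multiplied by powers of $\varrho$. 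A Leibniz expansion of $\slashed{\mathcal L}_{Z'}^{k-1;i,l-1}$ over these factors distributes the remaining derivatives so that every resulting term has $\varrho\mathring L$-count at most $l-1$ and is therefore controlled inductively. The commutator remainders are generated by $[\varrho\mathring L,T]=\mathring L+\varrho[\mathring L,T]$ and $[\varrho\mathring L,R]=\varrho[\mathring L,R]$ (using $T\varrho=-1$, $R\varrho=0$ together with Lemma \ref{com}); each of them either reduces the $\varrho\mathring L$-count or produces a bare $\mathring L$ derivative on a base quantity, which is absorbed by the extra $\mathfrak t^{-1}$-decay already present in $\mathring L$-derivatives via $(\star)$ and the transport equations. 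Exactly the same scheme handles the remaining quantities: \eqref{LeL} gives $\slashed{\mathcal L}_{\varrho\mathring L}\check L^j$ and hence, through $\check T^j=\varphi_j-\check L^j$, also $\check T^j$; \eqref{lmu} gives $\slashed{\mathcal L}_{\varrho\mathring L}\mu$; $\mathring Lx^j=\mathring L^j$ reduces $\slashed dx^j$ to $\mathring L^j$; \eqref{omega} and \eqref{R} yield $\upsilon$ and $R$; and the deformation tensor components in \eqref{Rpi}, \eqref{Lpi} are polynomial expressions in the preceding quantities and hence controlled directly. In every case, the extra factor of $\varrho\sim\mathfrak t$ introduced by upgrading $\mathring L$ to $\varrho\mathring L$ is absorbed by the additional $\mathfrak t^{-1}$ intrinsic to each $\mathring L$-transport equation, which is precisely what produces the stated $\mathfrak t$-powers in \eqref{z}.

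The principal obstacle is the combinatorial bookkeeping: after Leibniz expansion of $\slashed{\mathcal L}_{Z'}^{k-1;i,l-1}$ and repeated commutator reductions one obtains a lengthy sum, and one must track simultaneously the $\delta$-weight (determined by how many $T$ factors land on each sub-factor) and the $\mathfrak t$-weight (determined by which sub-factor absorbs the $\varrho$) to verify that no individual term violates the claimed bound. The hierarchy of decays among the base quantities is crucial here: factors such as $\slashed dx^j$ and $\mathring L^j$ have the weakest $\mathfrak t$-decay and so the $\varrho$ from the Lie derivative should be routed to them, while the $\mathring L\varphi_\gamma$, $\slashed d\varphi_\gamma$ and $\check\lambda$ factors in the transport equations provide the stronger decay needed to close the induction. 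Once this bookkeeping is verified, the closure of the induction proceeds exactly as in Proposition \ref{TRh}, as remarked in the paragraph preceding the statement.
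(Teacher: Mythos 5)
Your proposal follows essentially the same approach as the paper, which sketches the argument in the paragraph preceding the proposition: apply $\varrho\mathring L$ to the base quantity via the transport equations \eqref{Lchi'}, \eqref{LeL}, \eqref{lmu}, then take the remaining $\bar Z$-derivatives as in Proposition \ref{TRh}, iterating on the $\varrho\mathring L$-count. The commutator identities you identify, $[\varrho\mathring L,T]=\mathring L+\varrho[\mathring L,T]$ and $[\varrho\mathring L,R]=\varrho[\mathring L,R]$ (together with $[\mathring L,\varrho\mathring L]=\mathring L$), are indeed the mechanism by which the extra $\varrho$-weight is propagated and absorbed against the $\mathfrak t^{-1}$ gain in each $\mathring L$-transport equation.
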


\begin{proposition}\label{Z}
Under the same assumptions as in Proposition \ref{LTRh}, it holds that for $k\leq N-1$,
\begin{align}
&|Z^{k+2;i+1,l+1}\varphi_\gamma(\mathfrak t, u, \vartheta)|\lesssim\delta^{-i-\varepsilon_0}\mathfrak t^{-1/2},\label{ztl}\\
&|Z^{k+1;i,l+1}\varphi_\gamma(\mathfrak t, u, \vartheta)|\lesssim\delta^{1-i-\varepsilon_0}\mathfrak t^{-1/2}.\label{zl}
\end{align}
\end{proposition}

Finally, it is noted that under the assumptions $(\star)$,
we have obtained not only the estimates with $M$ dependent bounds in Proposition \ref{LTRh}, but also more refined estimates independent of $M$
in Proposition \ref{Z}.
On the other hand, if starting with Proposition \ref{Z} and repeating
the above analysis, one can improve the conclusions in \ref{LTRh}
such that all the related constants are independent of $M$ when $k\leq N-3$. Therefore,
from now on, we may apply these estimates without the constant $M$ since $N$ could be chosen large enough.

\section{Energy estimates for the linearized equation and some higher order $L^2$ estimates}\label{EE}

To close the bootstrap assumptions $(\star)$, one needs further refined estimates than those derived in Section \ref{BA}. To this end, we plan to construct some suitable energies for higher order derivatives of $\varphi_\g$. Note that $\varphi_\gamma$ satisfies the nonlinear equation \eqref{ge}, and each
derivative of $\varphi_\g$ also fulfills a similar equation with the same metric. Thus, in the section, we first focus on
the energy estimates for any smooth
function $\Psi$ solving the following linear equation
\begin{equation}\label{gel}
\mu\Box_g\Psi=\Phi
\end{equation}
for a given function $\Phi$, where $\Psi$ and its derivatives vanish on $C_0^{\mathfrak t}$. The following divergence theorem in $D^{{\mathfrak t},u}$ will be used, whose proof follows that of \cite[Lemma 10.12]{J}.
\begin{lemma}\label{divergence}
For any vector field $J$, it holds that
\begin{equation}\label{div}
\begin{split}
\int_{D^{{\mathfrak t}, u}}\mu\mathscr D_\al J^\al d\nu_{\slashed g}du'd\tau=&\int_{\Sigma_{\mathfrak t}^u}\big(-J_T-\mu J_{\mathring L}\big) d\nu_{\slashed g}du'-\int_{\Sigma_{t_0}^u}\big(-J_T-\mu J_{\mathring L}\big) d\nu_{\slashed g}du'\\
&-\int_{C_{u}^{\mathfrak t}}J_{\mathring L} d\nu_{\slashed g}d\tau.
\end{split}
\end{equation}
\end{lemma}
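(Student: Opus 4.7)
The plan is to view the identity as an instance of the standard divergence theorem in the coordinate chart $(\mathfrak t,u,\vartheta)$ on $D^{\mathfrak t,u}$ (which is a regular chart by Remark \ref{3.1} as long as $\mu>0$), and then to translate the resulting coordinate‐basis fluxes into the null-frame quantities $J_T$ and $J_{\mathring L}$ that appear in \eqref{div}.

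First I would establish the spacetime volume element in the new coordinates. Using Lemma \ref{nullframe} (so that in the null frame $\{\mathring L,\mathring{\underline L},X\}$ the metric matrix has nonzero entries $g(\mathring L,\mathring{\underline L})=-2\mu$ and $g(X,X)=\slashed g_{XX}$ only), together with the change-of-basis formulas $\partial_{\mathfrak t}=\mathring L$, $\partial_u=T+\eta^{X}X=\tfrac12\mathring{\underline L}-\tfrac{\mu}{2}\mathring L+\eta^{X}X$, and $\partial_\vartheta=X$, a direct determinant computation gives $|\det(g_{\alpha\beta})|=\mu^{2}\slashed g_{XX}$. Hence $\sqrt{|\det g|}\,d\mathfrak t\,du\,d\vartheta=\mu\sqrt{\slashed g_{XX}}\,d\mathfrak t\,du\,d\vartheta=\mu\,d\nu_{\slashed g}\,du\,d\tau$, in agreement with the Jacobian identity \eqref{Jacobian}.

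Next, writing $J=\hat J^{\alpha}\partial_\alpha$ in the coordinate basis and using $\mathscr D_\alpha J^\alpha=\tfrac{1}{\sqrt{|\det g|}}\partial_\alpha(\sqrt{|\det g|}\,\hat J^\alpha)$, the integrand on the left of \eqref{div} becomes $\partial_\alpha(\mu\sqrt{\slashed g_{XX}}\,\hat J^\alpha)\,d\vartheta\,du'\,d\tau$. Applying Fubini and the one-dimensional fundamental theorem of calculus in each variable yields three surface terms (the $\vartheta$-boundary drops out by $2\pi$-periodicity): the two spacelike slabs $\Sigma_{\mathfrak t}^{u}$ and $\Sigma_{t_0}^{u}$, and the two lateral cones at $u'=u$ and $u'=0$.

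Finally, I would convert coordinate components into null-frame components. From $g(\mathring L,\mathring L)=0$, $g(\mathring L,T)=-\mu$, $g(T,T)=\mu^{2}$, $g(X,\mathring L)=g(X,T)=0$, the relations $J_{\mathring L}=g(J,\mathring L)$ and $J_T=g(J,T)$ invert to $\mu\hat J^{u}=-J_{\mathring L}$ and $\mu\hat J^{\mathfrak t}=-J_T-\mu J_{\mathring L}$; substituting back recovers precisely the three integrals in \eqref{div}. The would-be fourth contribution at $u'=0$, i.e.\ on $C_{0}^{\mathfrak t}$, is zero under the convention in force in this section, namely that $\Psi$ and its derivatives vanish on $C_{0}^{\mathfrak t}$ so that any $J$ constructed from $\Psi$ (as will be done in the energy estimates that follow) vanishes there as well. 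There is no conceptual obstacle in the proof; the only place where care is needed is the sign/orientation bookkeeping when inverting the two frame–coordinate identities above, and in verifying the disappearance of the $u'=0$ flux under the contextual assumption on $J$.
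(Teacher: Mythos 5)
Your proof is correct and takes essentially the standard approach (which is also the route of the cited Lemma 10.12 in Speck's monograph, to whose proof the paper defers): work in the adapted coordinates $(\mathfrak t,u,\vartheta)$, use $\sqrt{|\det g|}=\mu\sqrt{\slashed g_{XX}}$ together with the coordinate form of the divergence, apply Fubini and the fundamental theorem of calculus, and then translate $\mu\hat J^{\mathfrak t}=-J_T-\mu J_{\mathring L}$ and $\mu\hat J^{u}=-J_{\mathring L}$ from the inner-product relations of Lemma~\ref{nullframe}. You also correctly flag that the would-be flux through $C_0^{\mathfrak t}$ does not in fact vanish for a truly arbitrary $J$, so that the lemma as literally stated (``for any vector field $J$'') implicitly relies on the standing convention in this section that $J$ is built from a $\Psi$ vanishing on $C_0^{\mathfrak t}$; this is a genuine (if harmless) imprecision in the paper's phrasing rather than a gap in your argument.
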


As explained in Section \ref{6-2}, the vectorfields $J'$s used in \cite{Ding4} are not suitable for \eqref{1.9}
due to the slow time decay of the solutions to the 2D wave equations. We will choose
some new  vectorfields $J'$s  in \eqref{div}, which  are expressed as
\begin{align}
&J_1:=-\varrho^{2m}g^{\al\kappa}Q_{\kappa\beta}\mathring L^\beta\p_\al,\quad J_2:=-g^{\al\kappa}Q_{\kappa\beta}\mathring{\underline L}^\beta\p_\al,\label{J1}\\
&J_3:=\big(\f12\varrho^{2m-1}\Psi\mathscr D^\al\Psi-\f14\Psi^2\mathscr D^\al(\varrho^{2m-1})\big)\p_\al,\label{J3}
\end{align}
where $m\in (\f12, \f34)$ is a fixed constant and $Q$ is the {\it{energy-momentum tensor field}} of $\Psi$ defined as
\begin{equation*}
\begin{split}
Q_{\al\beta}&=Q_{\al\beta}[\Psi]:=(\p_\al\Psi)(\p_\beta\Psi)-\f12 g_{\al\beta}g^{\nu\chi}(\p_\nu\Psi)(\p_\chi\Psi)\\
&=(\p_\al\Psi)(\p_\beta\Psi)-\f12 g_{\al\beta}\big\{|\slashed d\Psi|^2-\mu^{-1}(\mathring{\underline L}\Psi)(\mathring L\Psi)\big\}.
\end{split}
\end{equation*}

For each $J_i$ ($1\le i\le 3$), the corresponding terms in \eqref{div} will be analyzed respectively.

We start with the estimates of the right hand side of \eqref{div}. Note that the components of $Q_{\al\beta}$ relative to $\{\mathring L, \mathring{\underline L}, X\}$ are
\begin{equation*}
\begin{split}
&Q_{\mathring L\mathring L}=(\mathring L\Psi)^2,\quad Q_{\mathring{\underline L}\mathring{\underline L}}=(\mathring{\underline L}\Psi)^2,\quad Q_{\mathring L\mathring{\underline L}}=\mu|\slashed d\Psi|^2,\\
&Q_{\mathring LX}=(\mathring L\Psi)(\slashed d_X\Psi),\quad Q_{\mathring{\underline L}X}=(\mathring{\underline L}\Psi)(\slashed d_X\Psi),\\
&Q_{XX}=\f12(\slashed d_X\Psi)(\slashed d_X\Psi)+\f12\slashed g_{XX}\mu^{-1}(\mathring{\underline L}\Psi)(\mathring L\Psi).
\end{split}
\end{equation*}

Then it follows from \eqref{div}-\eqref{J1} that
\begin{equation}\label{divJ1}
\begin{split}
\int_{D^{{\mathfrak t}, u}}\mu\mathscr D_\al {J_1}^\al=&\int_{\Sigma_{\mathfrak t}^u}\f12\mu\varrho^{2m}\big((\mathring L\Psi)^2+|\slashed d\Psi|^2\big)-\int_{\Sigma_{t_0}^u}\f12\mu\varrho^{2m}\big((\mathring L\Psi)^2+|\slashed d\Psi|^2\big)\\
&+\int_{C_{u}^{\mathfrak t}}\varrho^{2m}(\mathring L\Psi)^2
\end{split}
\end{equation}
and
\begin{equation}\label{divJ2}
\begin{split}
\int_{D^{{\mathfrak t}, u}}\mu\mathscr D_\al {J_2}^\al=&\int_{\Sigma_{\mathfrak t}^u}\f12\big((\mathring {\underline L}\Psi)^2+\mu^2|\slashed d\Psi|^2\big)-\int_{\Sigma_{t_0}^u}\f12\big((\mathring {\underline L}\Psi)^2+\mu^2|\slashed d\Psi|^2\big)\\
&+\int_{C_{u}^{\mathfrak t}}\mu|\slashed d\Psi|^2.
\end{split}
\end{equation}
While \eqref{div} and \eqref{J3} yield
\begin{lemma}
It holds that
\begin{equation}\label{divJ3}
\begin{split}
\int_{D^{{\mathfrak t}, u}}\mu\mathscr D_\al {J_3}^\al=&\int_{\Sigma_{\mathfrak t}^u}\bigg(-\f12\varrho^{2m-1}\Psi(\mu\mathring L\Psi)+\f14\varrho^{2m-2}\Psi^2((2m-1)(\mu-2)+\varrho\mu\textrm{tr}\tilde\theta)\bigg)\\
&-\int_{\Sigma_{t_0}^u}\bigg(-\f12\varrho^{2m-1}\Psi(\mu\mathring L\Psi)+\f14\varrho^{2m-2}\Psi^2((2m-1)(\mu-2)+\varrho\mu\textrm{tr}\tilde\theta)\bigg)\\
&+\int_{C_{u}^{\mathfrak t}}\bigg(-\f14\textrm{tr}\chi\cdot\varrho^{2m-1}\Psi^2-\varrho^{2m-1}\Psi\mathring L\Psi\bigg).
\end{split}
\end{equation}
\end{lemma}

\begin{proof}
Applying \eqref{div} to $J_3$ yields
\begin{equation}\label{divJ_3}
\begin{split}
\int_{D^{{\mathfrak t}, u}}\mu\mathscr D_\al {J_3}^\al=&\int_{\Sigma_{\mathfrak t}^u}\Big(-\f12\varrho^{2m-1}\Psi(T\Psi)-\f12\varrho^{2m-1}\Psi(\mu\mathring L\Psi)+\f{2m-1}{4}\varrho^{2m-2}\Psi^2(\mu-1)\Big)\\
&-\int_{\Sigma_{t_0}^u}\Big(-\f12\varrho^{2m-1}\Psi(T\Psi)-\f12\varrho^{2m-1}\Psi(\mu\mathring L\Psi)
+\f{2m-1}{4}\varrho^{2m-2}\Psi^2(\mu-1)\Big)\\
&+\int_{C_{u}^{\mathfrak t}}\big(-\f12\varrho^{2m-1}\Psi(\mathring L\Psi)+\f{2m-1}{4}\varrho^{2m-2}\Psi^2\big).
\end{split}
\end{equation}
Note that $\Psi$ vanishes on $u=0$. Then
\begin{equation*}
\begin{split}
\int_{S_{\mathfrak t, u}}\varrho^{2m-1}\Psi^2d\nu_{\slashed g}&=\int_0^u\Big(\f{\p}{\p u'}\int_{S_{\mathfrak t, u'}}\varrho^{2m-1}\Psi^2d\nu_{\slashed g}\Big)du'\\
&=\int_{\Sigma_{\mathfrak t}^u}\Big(-(2m-1)\varrho^{2m-2}\Psi^2+2\varrho^{2m-1}\Psi T\psi+\varrho^{2m-1}\mu\textrm{tr}\tilde\theta\Psi^2\Big),
\end{split}
\end{equation*}
which implies
\begin{equation}\label{psiTpsi}
\int_{\Sigma_{\mathfrak t}^u}-\f12\varrho^{2m-1}\Psi(T\Psi)=-\f14\int_{S_{\mathfrak t, u}}\varrho^{2m-1}\Psi^2+\f14\int_{\Sigma_{\mathfrak t}^u}\big(-(2m-1)\varrho^{2m-2}\Psi^2+\varrho^{2m-1}\mu\textrm{tr}\tilde\theta\Psi^2\big).
\end{equation}
Taking $t=t_0$ in \eqref{psiTpsi} gives
\begin{equation}\label{psitpsi}
\int_{\Sigma_{t_0}^u}-\f12\varrho^{2m-1}\Psi(T\Psi)=-\f14\int_{S_{t_0, u}}\varrho^{2m-1}\Psi^2+\f14\int_{\Sigma_{t_0}^u}\big(-(2m-1)\varrho^{2m-2}\Psi^2+\varrho^{2m-1}\mu\textrm{tr}\tilde\theta\Psi^2\big).
\end{equation}
Thanks to the following identity
\begin{equation*}
\begin{split}
\int_{S_{\mathfrak t, u}}\varrho^{2m-1}\Psi^2d\nu_{\slashed g}&=\int_{S_{t_0, u}}\varrho^{2m-1}\Psi^2d\nu_{\slashed g}+\int_{t_0}^{\mathfrak t}\f{\p}{\p\tau}\Big(\int_{S_{\tau, u}}\varrho^{2m-1}\Psi^2d\nu_{\slashed g}\Big)d\tau\\
&=\int_{S_{t_0, u}}\varrho^{2m-1}\Psi^2d\nu_{\slashed g}+\int_{ C^{\mathfrak t}_{ u}}\Big(\mathring L(\varrho^{2m-1}\Psi^2)+\textrm{tr}\chi\cdot\varrho^{2m-1}\Psi^2\Big)d\nu_{\slashed g}d\tau,
\end{split}
\end{equation*}
 then   \eqref{psiTpsi} becomes
\begin{equation}\label{pTp}
\begin{split}
\int_{\Sigma_{\mathfrak t}^u}-\f12\varrho^{2m-1}\Psi(T\Psi)=&-\f14\int_{S_{t_0, u}}\varrho^{2m-1}\Psi^2-\f14\int_{ C^{\mathfrak t}_{ u}}\Big(\mathring L(\varrho^{2m-1}\Psi^2)+\textrm{tr}\chi\cdot\varrho^{2m-1}\Psi^2\Big)\\
&+\f14\int_{\Sigma_{\mathfrak t}^u}\big(-(2m-1)\varrho^{2m-2}\Psi^2+\varrho^{2m-1}\mu\textrm{tr}\tilde\theta\Psi^2\big).
\end{split}
\end{equation}
Substituting  \eqref{pTp} and \eqref{psitpsi} into \eqref{divJ_3} yields \eqref{divJ3}.
\end{proof}

It follows from \eqref{divJ3} and \eqref{divJ1} together with \eqref{phimu} that
\begin{equation}\label{j1j3}
\begin{split}
&\int_{D^{{\mathfrak t},u}}\mu(\mathscr D_\al {J_1}^\al-\mathscr D_\al{J_3}^\al)\\
=&\f12\int_{\Sigma_{\mathfrak t}^u}\Big(\mu\varrho^{2m}|\slashed d\Psi|^2+\mu\varrho^{2m}(\mathring L\Psi)^2+\mu\varrho^{2m-1}\Psi(\mathring L\Psi)+m\varrho^{2m-2}\Psi^2+O(\delta^{-\varepsilon_0}\varrho^{2m-3/2}\Psi^2)\Big)\\
&-\f12\int_{\Sigma_{t_0}^u}\Big(\mu\varrho^{2m}|\slashed d\Psi|^2+\mu\varrho^{2m}(\mathring L\Psi)^2+\mu\varrho^{2m-1}\Psi(\mathring L\Psi)+m\varrho^{2m-2}\Psi^2+O(\delta^{-\varepsilon_0}\varrho^{2m-3/2}\Psi^2)\Big)\\
&+\int_{C_{u}^{\mathfrak t}}\big((\varrho^m\mathring L\Psi+\f12\varrho^{m-1}\Psi)^2+O(\delta^{1-\varepsilon_0}\varrho^{2m-5/2}\Psi^2)\big),
\end{split}
\end{equation}
here one has used the estimate $|\mu\textrm{tr}\tilde\theta|\lesssim\delta^{-\varepsilon_0}\mathfrak t^{-1/2}$ due to \eqref{theta} and the estimates in Section \ref{BA}.

In order to estimate $\delta^{-\varepsilon_0}\int_{\Sigma_{\mathfrak t}^u}\varrho^{2m-3/2}\Psi^2$ and
$\delta^{1-\varepsilon_0}\int_{ C^{\mathfrak t}_{ u}}\varrho^{2m-5/2}\Psi^2$ in \eqref{j1j3}, one needs
the following inequality.
\begin{lemma}
Under the assumptions $(\star)$, it holds that for any $f\in C^1(D^{{\mathfrak t},u})$ vanishing on $C_0^t$,
\begin{equation}\label{f2}
\int_{\Sigma_{\mathfrak t}^u}\mu f^2+\delta\int_{C_{u}^{\mathfrak t}}\varrho^{-1}f^2\lesssim\int_{\Sigma_{t_0}^u}\mu f^2
+\int_{D^{{\mathfrak t}, u}}\big(|\mathring L(f^2)|+\delta|T(\varrho^{-1}f^2)|\big).
\end{equation}
\end{lemma}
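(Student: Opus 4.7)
The plan is to apply the divergence theorem of Lemma \ref{divergence} with the multiplier
\[
J = f^2 \mathring L + \delta \varrho^{-1} f^2 T.
\]
Using the null-frame identities $g(\mathring L, \mathring L) = 0$, $g(\mathring L, T) = -\mu$, and $g(T, T) = \mu^2$ from Lemma \ref{nullframe}, one immediately finds that $-J_T - \mu J_{\mathring L} = \mu f^2$ on both $\Sigma_{\mathfrak t}^u$ and $\Sigma_{t_0}^u$, while $-J_{\mathring L} = \delta\mu\varrho^{-1}f^2$ on $C_u^{\mathfrak t}$. Since $\mu \sim 1$ by \eqref{phimu}, the three boundary contributions produced by Lemma \ref{divergence} reproduce, up to a universal constant, both terms on the left-hand side of \eqref{f2} together with the initial-data term on the right.

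Expanding the bulk integrand gives
\[
\mu \mathscr D_\alpha J^\alpha = \mu\mathring L(f^2) + \delta \mu T(\varrho^{-1}f^2) + \mu f^2\,\mathrm{div}\,\mathring L + \delta \varrho^{-1} f^2\,\mu\,\mathrm{div}\,T,
\]
in which the first two terms match, after taking absolute values, precisely the two good quantities on the right-hand side of \eqref{f2}. From Lemma \ref{cd} one has $\mathrm{div}\,\mathring L = \mu^{-1}\mathring L\mu + \mathrm{tr}\,\lambda$ and $\mu\,\mathrm{div}\,T = T\mu + \mu^2\,\mathrm{tr}\,\tilde\theta$; the subleading pieces $\mathring L\mu$, $T\mu$, and the remainders in $\mathrm{tr}\,\lambda$ and $\mathrm{tr}\,\tilde\theta$ are controlled by Lemma \ref{mu}, Proposition \ref{chi'}, and Proposition \ref{TRh}, producing errors that either decay like $\delta^{1-\varepsilon_0}\mathfrak t^{-3/2}$ or carry an explicit $\delta^{1-\varepsilon_0}$-prefactor, hence are absorbable by smallness of $\delta$ together with Gronwall in $\mathfrak t$.

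The genuinely delicate contribution is the leading $\varrho^{-1}$-piece of $\mathrm{tr}\,\lambda$, which produces the borderline bulk term $\int_{D^{\mathfrak t, u}} \mu f^2/\varrho$ of the same order as the left-hand side, and similarly the leading $-\mu^2\varrho^{-1}$-piece of $\mu^2\mathrm{tr}\,\tilde\theta$ yielding $\int_{D^{\mathfrak t, u}}\delta\mu^2 f^2/\varrho^2$. My strategy is to exploit the hypothesis $f|_{u=0}=0$ through a Hardy-Poincar\'e argument in the $u'$-direction: writing $f^2(\mathfrak t, u', \vartheta) = \int_0^{u'}\partial_{u''}(f^2)\,du''$ and using $\partial/\partial u' = T + \eta^X X$ in the $(\mathfrak t, u, \vartheta)$-coordinates together with the bound $|\eta|\lesssim \delta^{1-\varepsilon_0}\mathfrak t$ from Lemma \ref{eXi} and $u \le 4\delta$, one controls $\int_{D^{\mathfrak t, u}}\mu f^2/\varrho$ by $\delta\int_{D^{\mathfrak t, u}}|T(\varrho^{-1}f^2)|$ plus further $\delta^{1-\varepsilon_0}$-small corrections, and likewise for the $\varrho^{-2}$-piece.

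The main obstacle is exactly this borderline $1/\varrho$-growth in $\mathrm{tr}\,\lambda$: since it sits at the critical scale, it cannot be absorbed by naive $\delta$-smallness or Gronwall, and the Hardy-Poincar\'e trick using $f|_{C_0}=0$ is essential to reduce it to the good $T$-derivative term on the right of \eqref{f2}. Once that reduction is in place, all other error contributions close using the pointwise estimates of Sections \ref{BA}-\ref{ho}, and collecting the results yields \eqref{f2}.
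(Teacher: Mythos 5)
Your choice of multiplier $J=f^2\mathring L+\delta\varrho^{-1}f^2T$ is essentially the same as the paper's (they take $J=f^2\mathring L$ and $J=\delta\varrho^{-1}\mu^{-1}f^2T$ separately and add the resulting identities), your boundary computation $-J_T-\mu J_{\mathring L}=\mu f^2$ and $-J_{\mathring L}|_{C_u^{\mathfrak t}}=\delta\mu\varrho^{-1}f^2$ is correct, and the divergence formulas $\mathrm{div}\,\mathring L=\mu^{-1}\mathring L\mu+\mathrm{tr}\lambda$ and $\mu\,\mathrm{div}\,T=T\mu+\mu^2\mathrm{tr}\tilde\theta$ check out against Lemma \ref{cd}. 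So you correctly isolate the borderline bulk term $\int_{D^{\mathfrak t,u}}\mu f^2/\varrho$ coming from the leading $\varrho^{-1}$ part of $\mathrm{tr}\lambda$. The problem is your treatment of that term.

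The Hardy--Poincar\'e step, as you describe it, has a genuine gap. Writing $f^2(\mathfrak t,u',\vartheta)=\int_0^{u'}\partial_{u''}(f^2)\,du''$ and expanding $\partial_u=T+\eta^XX$ produces, besides the good $T(f^2)$ contribution, an angular-derivative term $\eta^XX(f^2)$. After integration and using $|\eta|\lesssim\delta^{1-\varepsilon_0}\mathfrak t$, $u\le4\delta$, $\varrho\sim\mathfrak t$, this correction is of size $\delta^{2-\varepsilon_0}\int_{D^{\mathfrak t,u}}|X(f^2)|$, which involves $|\slashed df|$. Nothing in the statement of the lemma --- neither the energy-type terms on the left nor the $|\mathring L(f^2)|$, $|T(\varrho^{-1}f^2)|$ terms on the right --- controls an angular derivative of $f$, so this contribution cannot be absorbed or discarded merely by $\delta$-smallness, contrary to your claim that it is a ``$\delta^{1-\varepsilon_0}$-small correction.'' You could try to eliminate $X(f^2)$ by integrating by parts on the closed circle $S_{\mathfrak t,u}$, which would move the angular derivative onto $\eta$, but that requires an estimate on $\slashed{\textrm{div}}\,\eta$ which you neither invoke nor derive.

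The paper avoids the issue entirely and more cheaply. After arriving at
\[
\int_{\Sigma_{\mathfrak t}^u}\mu f^2+\delta\int_{C_u^{\mathfrak t}}\varrho^{-1}f^2\lesssim\int_{\Sigma_{t_0}^u}\mu f^2+\int_{D^{\mathfrak t,u}}\varrho^{-1}f^2+\int_{D^{\mathfrak t,u}}\big(|\mathring L(f^2)|+\delta|T(\varrho^{-1}f^2)|\big),
\]
one notices that $\int_{D^{\mathfrak t,u}}\varrho^{-1}f^2=\delta^{-1}\int_0^u\big(\delta\int_{C_{u'}^{\mathfrak t}}\varrho^{-1}f^2\big)\,du'$ is precisely the $u'$-integral of the flux appearing on the left. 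Gronwall's inequality in the variable $u$ then applies with coefficient $\delta^{-1}$ over a range $u\le4\delta$, yielding a bounded factor $e^4$. No fundamental theorem of calculus, no expansion of $\partial_u$, and hence no angular derivative appears. You should replace the Hardy--Poincar\'e step with this Gronwall-in-$u$ argument; the key structural observation is that the borderline bulk term is exactly the $u$-antiderivative of the boundary flux already controlled on the left-hand side.
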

\begin{proof}
   It follows from Lemma 3.4 in \cite{LS} that for any vectorfield $J$,
   \begin{equation}\label{Div}
   \mu\mathscr D_\al J^\al=-\mathring L(\mu J_{\mathring L}+J_T)-T(J_{\mathring L})+\slashed{div}(\mu\slashed J)-\mu(\textrm{tr}\tilde\theta+\textrm{tr}\chi)J_{\mathring L}-\textrm{tr}\chi J_T,
   \end{equation}
   where $\slashed J$ is the projection of $J$ on $S_{{\mathfrak t},u}$.
   Set $J=f^2\mathring L$ and $J=\delta\varrho^{-1}\mu^{-1}f^2 T$ in \eqref{Div} respectively, and then
   use \eqref{div} to obtain
    \begin{align}
    \int_{\Sigma_{\mathfrak t}^u}\mu f^2&=\int_{\Sigma_{t_0}^u}\mu f^2+\int_{D^{{\mathfrak t},u}}\big((\mathring L\mu+\mu\textrm{tr}\chi)f^2
    +\mu\mathring L(f^2)\big),\label{muf}\\
    \delta\int_{C_{u}^{\mathfrak t}}\varrho^{-1}f^2&=\delta\int_{D^{{\mathfrak t},  u}}\big(\mu\varrho^{-1}\textrm{tr}\tilde\theta f^2
    +T(\varrho^{-1}f^2)\big)\label{rf}.
    \end{align}
  By adding \eqref{muf} and \eqref{rf} together with the estimates on  the coefficients in the right hand sides
  of \eqref{muf}-\eqref{rf}, one arrives at
    \[
    \int_{\Sigma_{\mathfrak t}^u}\mu f^2+\delta\int_{C_{u}^{\mathfrak t}}\varrho^{-1}f^2\lesssim\int_{\Sigma_{t_0}^u}\mu f^2+\int_{D^{{\mathfrak t}, u}}\big(\varrho^{-1}f^2+\mu|\mathring L(f^2)|+\delta|T(\varrho^{-1}f^2)|\big).
    \]
    Hence \eqref{f2} follows directly from Gronwall's inequality.
\end{proof}

Let $f=\delta^{-\varepsilon_0/2}\varrho^{m-3/4}\Psi$ in \eqref{f2}. Using the facts that
\[
|\mathring L(f^2)|\lesssim\varrho^{-1}f^2+\delta^{-\varepsilon_0}\varrho^{2m-1/2}|\mathring L\Psi|^2
\]
and
\[
|T(\varrho^{-1}f^2)|\lesssim\delta^{-1}\varrho^{-1}f^2+\delta^{1-\varepsilon_0}\varrho^{2m-5/2}|T\Psi|^2,
\]
one then gets
\begin{equation}\label{SC}
\begin{split}
&\int_{\Sigma_{\mathfrak t}^u}\mu\delta^{-\varepsilon_0}\varrho^{2m-3/2}\Psi^2+\int_{C_{u}^{\mathfrak t}}\delta^{1-\varepsilon_0}\varrho^{2m-5/2}\Psi^2\\
\lesssim&\int_{\Sigma_{t_0}^u}\mu\delta^{-\varepsilon_0}\varrho^{2m-3/2}\Psi^2+\int_{D^{{\mathfrak t}, u}}\bigg(\delta^{-\varepsilon_0}\varrho^{-1/2}(\varrho^m\mathring L\Psi+\f12\varrho^{m-1}\Psi)^2+\delta^{2-\varepsilon_0}\varrho^{2m-5/2}|T\Psi|^2\bigg).
\end{split}
\end{equation}
Substituting \eqref{SC} into \eqref{j1j3} yields
\begin{equation}\label{gtr}
\begin{split}
&\int_{D^{{\mathfrak t},u}}\mu\big(\mathscr D_\al{J_1}^\al-\mathscr D_\al{J_3}^\al\big)\\
\gtrsim&\int_{\Sigma_{\mathfrak t}^u}\bigg(\varrho^{2m}|\slashed d\Psi|^2+\varrho^{2m}(\mathring L\Psi)^2+\varrho^{2m-2}\Psi^2\bigg)-\int_{\Sigma_{t_0}^u}\bigg(|\slashed d\Psi|^2+(\mathring L\Psi)^2+\delta^{-\varepsilon_0}\Psi^2\bigg)\\
&+\int_{C_{u}^{\mathfrak t}}(\varrho^{m}\mathring L\Psi+\f12\varrho^{m-1}\Psi)^2
-\int_{D^{{\mathfrak t},  u}}\delta^{-\varepsilon_0}\bigg(\varrho^{-1/2}(\varrho^m\mathring L\Psi+\f12\varrho^{m-1}\Psi)^2+\delta^{2}\varrho^{2m-5/2}(\mathring{\underline L}\Psi)^2\bigg).
\end{split}
\end{equation}
By \eqref{gtr} and the identity \eqref{divJ2}, it is natural to define the following energies and fluxes
\begin{align}
&E_1[\Psi](\mathfrak t, u):=\ds\int_{\Sigma_{\mathfrak t}^{u}}\bigg(\varrho^{2m}(\mathring L\Psi)^2+\varrho^{2m}|\slashed d\Psi|^2
+\varrho^{2m-2}\Psi^2\bigg),\label{E1}\\
&E_2[\Psi](\mathfrak t, u):=\ds\int_{\Sigma_{\mathfrak t}^{u}}((\mathring{\underline L}\Psi)^2+|\slashed d\Psi|^2),\label{E2}\\
&F_1[\Psi](\mathfrak t, u):=\ds\int_{C_{u}^{\mathfrak t}}(\varrho^m\mathring L\Psi+\f12\varrho^{m-1}\Psi)^2,\label{F1}\\
&F_2[\Psi](\mathfrak t,u):=\ds\int_{C_{u}^{\mathfrak t}}|\slashed d\Psi|^2.\label{F2}
\end{align}

Next, we treat the left hand side of \eqref{div}.
Set
\[
V_1=\varrho^{2m}\mathring L,\qquad V_2=\mathring {\underline L}.
\]
Direct computations give that
\begin{align}
&\mu\mathscr D_\al {J_1}^\al=-\Phi\varrho^{2m}(\mathring L\Psi)
-\f12\mu Q^{\al\beta}[\Psi]\leftidx{^{(V_1)}}\pi_{\al\beta},\label{lJ1}\\
&\mu\mathscr D_\al {J_2}^\al=-\Phi(\mathring {\underline L}\Psi)
-\f12\mu Q^{\al\beta}[\Psi]\leftidx{^{(V_2)}}\pi_{\al\beta},\label{lJ2}\\
&\mu\mathscr D_\al {J_3}^\al=\f12\varrho^{2m-1}\Psi\Phi
+\Big(-\f12\varrho^{2m-1}(\mathring L\Psi)(\mathring{\underline L\Psi})
+\f12\mu\varrho^{2m-1}|\slashed d\Psi|^2\Big)\nonumber\\
&\qquad\qquad+\f14\Psi^2(2m-1)\varrho^{2m-2}\Big((2m-2)\varrho^{-1}(\mu-2)+\mathring L\mu+\mu\textrm{tr}\t\theta+(\mu-1)\textrm{tr}\chi\Big).\label{lJ3}
\end{align}
Recall that the components of $\leftidx{^{(V_i)}}\pi_{\al\beta}$ in the
frame $\{\mathring L,\mathring {\underline L},X\}$ have been given in \eqref{Lpi}-\eqref{Rpi}.
To compute $Q^{\al\beta}\leftidx{^{(V_i)}}\pi_{\al\beta}$ in \eqref{lJ1}-\eqref{lJ2}, one can use the
components of the metric in the frame $\{\mathring L,\underline{\mathring L}, X\}$, given in \eqref{gab}, to derive directly that
\begin{equation}\label{v1}
\begin{split}
&-\f12 \mu Q^{\al\beta}[\Psi]\leftidx{^{(V_1)}}\pi_{\al\beta}\\
=&-\f14\mu^{-1}\leftidx{^{(V_1)}}\pi_{\mathring L\mathring{\underline L}} Q_{\mathring L\mathring{\underline L}}-\f18\mu^{-1}\leftidx{^{(V_1)}}\pi_{\mathring{\underline L}\mathring{\underline L}}Q_{\mathring L\mathring L}+\f12\leftidx{^{(V_1)}}\pi_{\mathring{\underline L}}^XQ_{\mathring LX}-\f12\mu\leftidx{^{(V_1)}}{\slashed\pi}^{XX}Q_{XX}\\
=&\bigg(\f12\varrho^{2m}\mathring L\mu+\mu (m-\f12)\varrho^{2m-1}\bigg)|\slashed d\Psi|^2+\bigg(m(\mu-2)\varrho^{2m-1}-\f12\varrho^{2m}\mathring L\mu\bigg)(\mathring L\Psi)^2+\varrho^{2m}\bigg(\slashed d^X\mu\\
&+2c^{-1}\mu\tilde T^a\slashed d^X\varphi_a\bigg)(\mathring L\Psi)(\slashed d_X\Psi)-\f12\mu\varrho^{2m}\textrm{tr}\check\chi|\slashed d\Psi|^2-\f12\varrho^{2m}\textrm{tr}\chi(\mathring L\Psi)(\mathring{\underline L}\Psi)
\end{split}
\end{equation}
and
\begin{equation}\label{v2}
\begin{split}
&-\f12 \mu Q^{\al\beta}[\Psi]\leftidx{^{(V_2)}}\pi_{\al\beta}\\
=&\f12\bigg(\mathring{\underline L}\mu+\mu\mathring L\mu+c^{-1}\mu Tc+c^{-1}\mu^2\mathring Lc+\mu^2\textrm{tr}\chi-2c^{-1}\mu^2\slashed d_Xx^a\cdot\slashed d^X\varphi_a\bigg)|\slashed d\Psi|^2\\
&-\bigg(2\mu c^{-1}\tilde T^a\slashed d^X\varphi_a+\slashed d^X\mu\bigg)(\slashed d_X\Psi)(\mathring{\underline L}\Psi)-\mu\slashed d^X\mu(\mathring L\Psi)(\slashed d_X\Psi)\\
&+\bigg(\f12c^{-1}Tc+\f12\mu c^{-1}\mathring Lc-c^{-1}\mu\slashed d^Xx^a\cdot\slashed d_X\varphi_a+\f12\mu\textrm{tr}\chi\bigg)(\mathring L\Psi)(\mathring{\underline L}\Psi).
\end{split}
\end{equation}

We first treat the terms involving $J_1$ and $J_3$.

Combining \eqref{lJ1} and \eqref{lJ3} and using \eqref{v1}, $\f12<m<\f34$ and $\mu<2$, one can get by the
estimates in Section \ref{BA} that
\begin{equation}\label{lJ13}
\begin{split}
&\int_{D^{{\mathfrak t},u}}\big(\mu\mathscr D_\al {J_1}^\al-\mu\mathscr D_\al{J_3}^\al\big)\\
=&-\int_{D^{{\mathfrak t},u}}\Phi(\varrho^{2m}\mathring L\Psi+\f12\varrho^{2m-1}\Psi)+\int_{D^{{\mathfrak t}, u}}\big(\mu(m-1)\varrho^{2m-1}+\f12\varrho^{2m}\mathring L\mu-\f12\mu\varrho^{2m}\textrm{tr}\check\chi\big)|\slashed d\Psi|^2\\
&{+\int_{D^{{\mathfrak t}, u}}\varrho^{2m}\big(m\varrho^{-1}(\mu-2)-\f12\mathring L\mu\big)(\mathring L\Psi)^2+\int_{D^{{\mathfrak t},u}}\varrho^{2m}(\slashed d_X\mu+2c^{-1}\mu\tilde{T}^a\slashed d_X\varphi_a)(\mathring L\Psi)(\slashed d^X\Psi)}\\
&-\int_{D^{{\mathfrak t}, u}}\f14\Psi^2(2m-1)\varrho^{2m-2}\Big((2m-2)\varrho^{-1}(\mu-2)+\mathring L\mu+\mu\textrm{tr}\t\theta+(\mu-1)\textrm{tr}\chi\Big)\\
&-\int_{D^{{\mathfrak t}, u}}\f12\varrho^{2m}\textrm{tr}\check\chi(\mathring L\Psi)(\mathring{\underline L\Psi})\\
\leq&-\int_{D^{{\mathfrak t},u}}\Phi(\varrho^{2m}\mathring L\Psi+\f12\varrho^{2m-1}\Psi)+\int_{D^{{\mathfrak t}, u}}\f12\varrho^{2m}\big(\mathring L\mu-\mu\textrm{tr}\check\chi\big)|\slashed d\Psi|^2-\int_{D^{{\mathfrak t}, u}}\f12\varrho^{2m}\mathring L\mu(\mathring L\Psi)^2\\
&{+\int_{D^{{\mathfrak t},u}}\varrho^{2m}(\slashed d_X\mu+2c^{-1}\mu\tilde{T}^a\slashed d_X\varphi_a)(\mathring L\Psi)(\slashed d^X\Psi)-\int_{D^{{\mathfrak t}, u}}\f12\varrho^{2m}\textrm{tr}\check\chi(\mathring L\Psi)(\mathring{\underline L\Psi})}\\
&-\int_{D^{{\mathfrak t}, u}}\f14(2m-1)\varrho^{2m-2}\Psi^2(\mathring L\mu+\mu\textrm{tr}\t\theta+(\mu-1)\textrm{tr}\chi)\\
\lesssim&|\int_{D^{{\mathfrak t},u}}\Phi(\varrho^{2m}\mathring L\Psi+\f12\varrho^{2m-1}\Psi)|+\int_{D^{{\mathfrak t},u}}\big(\delta^{1-\varepsilon_0}\varrho^{2m-3/2}|\slashed d\Psi|^2+\delta^{2-\varepsilon_0}\varrho^{2m-5/2}|\mathring{\underline L}\Psi|^2\big)\\
&+\int_{D^{{\mathfrak t},u}}\delta^{-\varepsilon_0}\varrho^{-1/2}(\varrho^m\mathring L\Psi+\f12\varrho^{m-1}\Psi)^2+\int_{D^{{\mathfrak t},u}}\delta^{-\varepsilon_0}\varrho^{2m-5/2}\Psi^2.
\end{split}
\end{equation}
The last integral in \eqref{lJ13} can be estimated by setting $f=\varrho^{m-3/4}\Psi$ in \eqref{f2} as
\begin{equation*}
\begin{split}
\delta\int_{C_u^t}\varrho^{2m-5/2}\Psi^2\lesssim&\int_{\Sigma_{t_0}^u}\Psi^2+\int_{D^{{\mathfrak t},u}}\big(\varrho^{2m-1/2}|\mathring L\Psi|^2+\delta^2\varrho^{2m-5/2}|\mathring{\underline L}\Psi|^2+{\varrho^{2m-5/2}\Psi^2}\big)\\
\lesssim&\int_{\Sigma_{t_0}^u}\Psi^2+\int_{D^{{\mathfrak t},u}}\big(\varrho^{-1/2}(\varrho^m\mathring L\Psi+\f12\varrho^{m-1}\Psi)^2+\delta^2\varrho^{2m-5/2}|\mathring{\underline L}\Psi|^2\big)\\
&+\int_{D^{{\mathfrak t},u}}\varrho^{2m-5/2}\Psi^2.
\end{split}
\end{equation*}
This, together with Gronwall's inequality, yields
\[
\delta\int_{C_u^t}\varrho^{2m-5/2}\Psi^2\lesssim\int_{\Sigma_{t_0}^u}\Psi^2+\int_{D^{{\mathfrak t},u}}\big(\varrho^{-1/2}(\varrho^m\mathring L\Psi+\f12\varrho^{m-1}\Psi)^2+\delta^2\varrho^{2m-5/2}|\mathring{\underline L}\Psi|^2\big).
\]
Thus
\begin{equation}\label{rpsi}
\begin{split}
\int_{D^{{\mathfrak t},u}}\varrho^{2m-5/2}\Psi^2\lesssim&\int_{\Sigma_{t_0}^u}\Psi^2+\int_{D^{{\mathfrak t},u}}\big(\varrho^{-1/2}(\varrho^m\mathring L\Psi+\f12\varrho^{m-1}\Psi)^2+\delta^2\varrho^{2m-5/2}|\mathring{\underline L}\Psi|^2\big).
\end{split}
\end{equation}
Substituting \eqref{rpsi} into \eqref{lJ13} yields
\begin{equation}\label{les}
\begin{split}
&\int_{D^{{\mathfrak t},u}}\big(\mu\mathscr D_\al {J_1}^\al-\mu\mathscr D_\al{J_3}^\al\big)\\
\lesssim&|\int_{D^{{\mathfrak t},u}}\Phi(\varrho^{2m}\mathring L\Psi+\f12\varrho^{2m-1}\Psi)|+\int_{D^{{\mathfrak t},u}}\big(\delta^{1-\varepsilon_0}\varrho^{2m-3/2}|\slashed d\Psi|^2+\delta^{2-\varepsilon_0}\varrho^{2m-5/2}|\mathring{\underline L}\Psi|^2\big)\\
&+\int_{D^{{\mathfrak t},u}}\delta^{-\varepsilon_0}\varrho^{-1/2}(\varrho^m\mathring L\Psi+\f12\varrho^{m-1}\Psi)^2+\int_{\Sigma_{t_0}^u}\delta^{-\varepsilon_0}\Psi^2.
\end{split}
\end{equation}

It follows from \eqref{gtr}, \eqref{les} and Gronwall's inequality that
\begin{equation}\label{EF1}
\begin{split}
E_1[\Psi](\mathfrak t,u)+F_1[\Psi](\mathfrak t,u)\lesssim &E_1[\Psi](t_0, u)+\int_{\Sigma_{t_0}}\delta^{-\varepsilon_0}\Psi^2+\delta^{2-\varepsilon_0}\int_{t_0}^\mathfrak t\tau^{2m-5/2}E_2[\Psi](\tau,u)d\tau\\
&+|\int_{D^{{\mathfrak t},u}}\Phi(\varrho^{2m}\mathring L\Psi+\f12\varrho^{2m-1}\Psi)|.
\end{split}
\end{equation}

It remains to treat \eqref{lJ2}. Recalling \eqref{v2} and estimating each coefficient in it by Proposition \ref{LTRh}, one can get
\begin{equation}\label{Qv2}
\begin{split}
&-\int_{D^{{\mathfrak t},u}}\f12\mu Q^{\al\beta}[\Psi]\leftidx{^{(V_2)}}\pi_{\al\beta}\\
\lesssim&\delta^{-\varepsilon_0}\int_{D^{{\mathfrak t},u}}\big(|\slashed d\Psi|^2+\delta\tau^{-1}|\mathring {\underline L}\Psi|\cdot|\slashed d\Psi|+\delta\tau^{-1}|\mathring L\Psi|\cdot|\slashed d\Psi|+\tau^{-1/2}|\mathring L\Psi|\cdot|\mathring{\underline L\Psi|}\big)\\
\lesssim&\delta^{-\varepsilon_0}\int_0^uF_2[\Psi](\mathfrak t,u')du'+\int_{t_0}^\mathfrak t\tau^{-m-\f12}E_2[\Psi](\tau,u)d\tau
+\delta^{-2\varepsilon_0}\int_{t_0}^\mathfrak t\tau^{-m-\f12}E_1[\Psi](\tau,u)d\tau.
\end{split}
\end{equation}

Then, it follows from \eqref{divJ2}, \eqref{E2}, \eqref{F2}, \eqref{lJ2}, \eqref{Qv2} and Gronwall's inequality that
\begin{equation}\label{EF2}
E_2[\Psi](t,u)+F_2[\Psi](\mathfrak t,u)\lesssim E_2[\Psi](t_0,u)+\int_{D^{{\mathfrak t},u}}|\Phi|\cdot|\mathring {\underline L}\Psi|
+\delta^{-2\varepsilon_0}\int_{t_0}^\mathfrak t \tau^{-m-\f12}E_1[\Psi](\tau,u)d\tau.
\end{equation}

Combining \eqref{EF1} with \eqref{EF2} and using Gronwall's inequality again,
due to $m\in (\f12,\f34)$, one concludes finally that
\begin{equation}\label{e}
\begin{split}
&\delta E_2[\Psi](\mathfrak t, u)+\delta F_2[\Psi](\mathfrak t, u)+E_1[\Psi](\mathfrak t, u)+F_1[\Psi](\mathfrak t, u)\\
\lesssim& \delta E_2[\Psi](t_0, u)+E_1[\Psi](t_0, u)+\int_{\Sigma_{t_0}}\delta^{-\varepsilon_0}\Psi^2+\delta\int_{D^{{\mathfrak t}, u}}|\Phi\cdot \mathring{\underline L}\Psi|\\
&+|\int_{D^{{\mathfrak t},u}}\Phi(\varrho^{2m}\mathring L\Psi+\f12\varrho^{2m-1}\Psi)|.
\end{split}
\end{equation}

\eqref{e} will be used to derive the energy estimates for $\varphi_\gamma$ and its derivatives. To this end, one can choose $\Psi=\Psi_\gamma^{k+1}:=Z^{k+1}\varphi_\gamma$ and then  $\Phi=\Phi_\gamma^{k+1}:=\mu\Box_g\Psi_\gamma^{k+1}$ ({$k\leq 2N-6$})
in \eqref{e}. Note that
\begin{equation}\label{Psi}
\begin{split}
\Phi_\gamma^{k+1}&=\mu\Box_g\Psi_\gamma^{k+1}=\mu[\Box_g,Z]\Psi_\gamma^{k}+Z\big(\mu\Box_g\Psi_\gamma^{k}\big)-(Z\mu)\Box_g\Psi_\gamma^{k}\\
&=\mu\textrm{div}\leftidx{^{(Z)}}C_\gamma^{k}+(Z+\leftidx{^{(Z)}}\chi)\Phi_\gamma^{k},
\end{split}
\end{equation}
where
\begin{equation}\label{ClP}
\begin{split}  &\leftidx{^{(Z)}}C_\gamma^{k}=\big(\leftidx{^{(Z)}}\pi^{\nu\beta}
-\f12g^{\nu\beta}\textrm{tr}_g\leftidx{^{(Z)}}\pi\big)\p_\nu\Psi_\gamma^{k}\p_\beta,\\
&\leftidx{^{(Z)}}\chi=\f12\textrm{tr}_g\leftidx{^{(Z)}}\pi-\mu^{-1}Z\mu,\\
&\Psi_\gamma^0=\varphi_\gamma,\quad\Phi_\g^0=\mu\Box_g\varphi_\g
\end{split}
\end{equation}
with $\textrm{tr}_g\leftidx{^{(Z)}}{\pi}=g^{\al\beta}\leftidx{^{(Z)}}\pi_{\al\beta}$ and $\Phi_\g^0$ being
the right hand side of \eqref{ge}.
Consequently, for $\Psi_\g^{k+1}=Z_{k+1}Z_{k}\cdots Z_{1}\varphi_\gamma$ with $Z_i\in\{\varrho\mathring L, T, R\}$, one can derive
by \eqref{Psi} and an induction argument that
\begin{equation}\label{Phik}
\begin{split}
\Phi_\gamma^{k+1}=&\sum_{j=1}^{k}\big(Z_{k+1}+\leftidx{^{(Z_{k+1})}}\chi\big)\dots\big(Z_{k+2-j}
+\leftidx{^{(Z_{k+2-j})}}\chi\big)\big(\mu\textrm{div}\leftidx{^{(Z_{k+1-j})}}C_\gamma^{k-j}\big)\\
&+\mu\textrm{div}\leftidx{^{(Z_{k+1})}}C_\gamma^{k}+\big(Z_{k+1}
+\leftidx{^{(Z_{k+1})}}\chi\big)\dots\big(Z_{1}+\leftidx{^{(Z_1)}}\chi\big)\Phi_\gamma^0,\qquad k\geq 1,\\
\Phi_\gamma^{1}=&\big(Z_{1}+\leftidx{^{(Z_1)}}\chi\big)\Phi_\gamma^0+\mu\textrm{div}\leftidx{^{(Z_{1})}}C_\gamma^{0}.
\end{split}
\end{equation}
To estimate $\leftidx{^{(Z)}}\chi$ and $\mu\textrm{div}\leftidx{^{(Z)}}C_\gamma^{k}$
in \eqref{Phik}, by
\begin{equation*}
\textrm{tr}_g\leftidx{^{(Z)}}{\pi}=-2\mu^{-1}\leftidx{^{(Z)}}\pi_{\mathring LT}+\textrm{tr}_{\slashed g}\leftidx{^{(Z)}}{\slashed{\pi}}
\end{equation*}
and \eqref{Lpi}-\eqref{Rpi}, we have that
\begin{equation}\label{lamda}
\begin{split}
&\leftidx{^{(T)}}\chi=-\f12c^{-1}Tc-\f12c^{-1}\mu\mathring Lc+c^{-1}\mu\slashed dx^a\cdot\slashed d\varphi_a
-\mu\textrm{tr}_{\slashed g}\chi,\\
&\leftidx{^{(\varrho\mathring L)}}\chi=\varrho\textrm{tr}\check\chi+2,\\
&\leftidx{^{(R)}}\chi=\upsilon\textrm{tr}{\chi}
+\f12c^{-1}\upsilon(\mathring Lc)-c^{-1}\upsilon\slashed dx^a\cdot\slashed d\varphi_a-\f12c^{-1}Rc.
\end{split}
\end{equation}
In addition, in the null frame $\{\mathring{\underline L},\mathring L,X\}$, the
term $\mu\textrm{div}\leftidx{^{(Z)}}C_\g^{k}$ can be written as
\begin{equation}\label{muC}
\begin{split}
\mu\textrm{div}\leftidx{^{(Z)}}C_\gamma^{k}=&-\f12\mathring L\big({\leftidx{^{(Z)}}C_\gamma^{k}}_{,\mathring{\underline L}}\big)-\f12\mathring{\underline L}\big({\leftidx{^{(Z)}}C_\gamma^{k}}_{,\mathring L}\big)
+\slashed{\textrm{div}}\big(\mu \leftidx{^{(Z)}}{\slashed C}_\g^{k}\big)\\
&-\f12\big(\mathring L\mu+\mu \textrm{tr}{\chi}+2\mu\textrm{tr}\tilde\theta\big){\leftidx{^{(Z)}}C_\g^{k}}_{,\mathring L}-\f12\textrm{tr}\chi{\leftidx{^{(Z)}}C_\g^{k}}_{,\mathring{\underline L}},
\end{split}
\end{equation}
where
\begin{equation}\label{C}
\begin{split}
&{\leftidx{^{(Z)}}C_\g^{k}}_{,\mathring L}=\leftidx{^{(Z)}}{\slashed\pi}_{\mathring LX}(\slashed d^X\Psi_\g^{k})-\f12(\textrm{tr}\leftidx{^{(Z)}}{\slashed\pi})\mathring L\Psi_\g^{k},\\
&{\leftidx{^{(Z)}}C_\gamma^{k}}_{,\mathring{\underline L}}=-2(\leftidx{^{(Z)}}\pi_{LT}+\mu^{-1}\leftidx{^{(Z)}}\pi_{TT})(\mathring L\Psi_\gamma^{k})+\leftidx{^{(Z)}}{\slashed\pi}_{\mathring{\underline L}X}(\slashed d^X\Psi_\gamma^{k})-\f12(\textrm{tr}_\leftidx{^{(Z)}}{\slashed\pi})\mathring{\underline L}\Psi_\gamma^{k},\\
&\mu{\leftidx{^{(Z)}}{\slashed C}_\gamma^{k}}_{,X}=-\f12\leftidx{^{(Z)}}{\slashed\pi}_{\mathring{\underline L}X}(\mathring L\Psi_\gamma^{k})-\f12\leftidx{^{(Z)}}{\slashed\pi}_{\mathring LX}(\mathring{\underline L}\Psi_\gamma^{k})+\leftidx{^{(Z)}}{{\pi}}_{\mathring LT}(\slashed d_X\Psi_\gamma^{k})+\f12\mu\textrm{tr}\leftidx{^{(Z)}}{\slashed\pi}\slashed d_X\Psi_\gamma^{k}.
\end{split}
\end{equation}
A direct substitution of \eqref{C} into \eqref{muC} would result in a lengthy and complicated equation for $\mu\textrm{div}\leftidx{^{(Z)}}C_\gamma^{k}$. To overcome this difficulty and to get the desired estimates efficiently, we follow the ideas in \cite{MY} to decompose $\mu\textrm{div}\leftidx{^{(Z)}}C_\gamma^{k}$ as
\begin{equation}\label{muZC}
\mu\textrm{div}\leftidx{^{(Z)}}C_\gamma^{k}=\leftidx{^{(Z)}}D_{\gamma,1}^k+\leftidx{^{(Z)}}D_{\gamma,2}^k
+\leftidx{^{(Z)}}D_{\gamma,3}^k,
\end{equation}
where
\begin{equation}\label{D1}
\begin{split}
\leftidx{^{(Z)}}D_{\gamma,1}^k=&\f12\textrm{tr}\leftidx{^{(Z)}}{\slashed{\pi}}\big(\mathring L\mathring{\underline L}\Psi_\gamma^k+\f12\textrm{tr}{\chi}\mathring{\underline L}\Psi_\gamma^k\big)
-\leftidx{^{(Z)}}{\slashed\pi}_{\mathring{\underline L}X}(\slashed d^X\mathring L\Psi_\gamma^{k})-\leftidx{^{(Z)}}{\slashed\pi}_{\mathring LX}(\slashed d^X \mathring{\underline L}\Psi_\gamma^{k})\\
&+(\leftidx{^{(Z)}}\pi_{\mathring LT}+\leftidx{^{(Z)}}\pi_{\tilde TT})(\mathring L^2\Psi_\gamma^k)
+\f12\mu\textrm{tr}\leftidx{^{(Z)}}{{\slashed\pi}}\slashed\triangle\Psi_\gamma^k
+\leftidx{^{(Z)}}\pi_{\mathring LT}\slashed\triangle\Psi_\gamma^k,
\end{split}
\end{equation}
\begin{equation}\label{D2}
\begin{split}
\leftidx{^{(Z)}}D_{\gamma,2}^k=&\mathring L(\leftidx{^{(Z)}}\pi_{\mathring LT}+\leftidx{^{(Z)}}\pi_{\tilde TT})\mathring L\Psi_\gamma^k-\big(\f12\slashed\nabla^X\leftidx{^{(Z)}}{\slashed\pi}_{\mathring{\underline L}X}
-\f14\mathring{\underline L}(\textrm{tr}\leftidx{^{(Z)}}{\slashed\pi})\big)\mathring L\Psi_\g^k\\
&-(\f12\slashed{\mathcal L}_{\mathring{\underline L}}\leftidx{^{(Z)}}{\slashed\pi}_{\mathring LX}
-\slashed d_X\leftidx{^{(Z)}}{\pi}_{\mathring LT})\slashed d^X\Psi_\gamma^k+\f12\slashed d\big(\mu\textrm{tr}\leftidx{^{(Z)}}{\slashed\pi}\big)\cdot\slashed d\Psi_\gamma^k\\
&-\f12(\slashed\nabla^X\leftidx{^{(Z)}}{\slashed\pi}_{\mathring LX})\mathring{\underline L}\Psi_\gamma^k
-\f12(\slashed{\mathcal L}_{\mathring L}\leftidx{^{(Z)}}{\slashed\pi}_{\mathring{\underline L}X})\slashed d^X\Psi_\gamma^k
+\f14\mathring L(\textrm{tr}\leftidx{^{(Z)}}{\slashed{\pi}})\mathring{\underline L}\Psi_\gamma^k,\qquad
\end{split}
\end{equation}
\begin{equation}\label{D3}
\begin{split}
\leftidx{^{(Z)}}D_{\gamma,3}^k=&\Big\{\textrm{tr}{\chi}(\leftidx{^{(Z)}}\pi_{\mathring LT}+\leftidx{^{(Z)}}\pi_{\tilde TT})+(\f14\mu\textrm{tr}{\chi}+\f12\mu\textrm{tr}\tilde\theta)\textrm{tr}\leftidx{^{(Z)}}{\slashed{\pi}}-\f12\slashed d^X\mu\leftidx{^{(Z)}}{\slashed\pi}_{\mathring LX}\Big\}\mathring L\Psi_\gamma^k\\
&+\f12\Big\{(\textrm{tr}\leftidx{^{(Z)}}{\slashed{\pi}})({\xi}_X+\mu\zeta_X)+(\mu\textrm{tr}\chi-\mathring L\mu)\leftidx{^{(Z)}}{\slashed\pi}_{\mathring LX}+\textrm{tr}{\chi}\leftidx{^{(Z)}}{\slashed\pi}_{\mathring{\underline L}X}\Big\}\slashed d^X\Psi_\gamma^k.
\end{split}
\end{equation}
Note that all the terms in $\leftidx{^{(Z)}}D_{\gamma,1}^k$ are the products of the deformation tensor and the second
order derivatives of $\Psi_\g^k$, except the first term containing the factor of the form $\mathring L\mathring{\underline L}\Psi_\gamma^k+\f12\textrm{tr}{\chi}\mathring{\underline L}\Psi_\gamma^k$ (see \eqref{D1}). It should be emphasized here that such a structure is crucial in our analysis since $\Psi_\gamma^k$ is the derivative of $\varphi_\gamma$ and by \eqref{fequation}, $\mathring L\mathring{\underline L}\varphi_\gamma+\f12\textrm{tr}{\chi}\mathring{\underline L}\varphi_\gamma=H_\g+\f12\textrm{tr}\check\chi\mathring{\underline L}\varphi_\g$ admits the better smallness and the faster time decay than those for $\mathring L\mathring{\underline L}\varphi_\gamma$ and $\f12\textrm{tr}{\chi}\mathring{\underline L}\varphi_\gamma$ separately. In addition, $\leftidx{^{(Z)}}D_{\gamma,2}^k$
collects all the products of the first order derivatives of the deformation tensor and the first order derivatives of
$\Psi_\g^k$, while $\leftidx{^{(Z)}}D_{\gamma,3}^k$ denotes all the other terms.

The explicit expression for $\Phi_\gamma^{k+1}$ obtained from \eqref{Phik}-\eqref{lamda} and \eqref{muZC}-\eqref{D3} will be used to estimate the corresponding last two integrals in \eqref{e}. Due to the structure of \eqref{e} for $\Psi=\Psi_\gamma^{k+1}$, it is natural to define the corresponding weighted energy and flux as in \cite{MY}:
\begin{align}
E_{i,p+1}(\mathfrak t, u)&=\sum_{\gamma=0}^2\sum_{|\al|=p}\delta^{2l}E_i[Z^{\al}\varphi_\gamma](\mathfrak t, u),\quad i=1,2,\label{ei}\\
F_{i,p+1}(\mathfrak t, u)&=\sum_{\gamma=0}^2\sum_{|\al|=p}\delta^{2l}F_i[Z^{\al}\varphi_\gamma](\mathfrak t, u),\quad i=1,2,\label{fi}\\
E_{i,\leq p+1}(\mathfrak t, u)&=\sum_{0\leq n\leq p}E_{i,n+1}(\mathfrak t, u),\quad i=1,2,\label{eil}\\
F_{i,\leq p+1}(\mathfrak t, u)&=\sum_{0\leq n\leq p}F_{i,n+1}(\mathfrak t, u),\quad i=1,2,\label{fil}
\end{align}
where $l$ is the number of $T$ in $Z^\al$.  We will treat these weighted energies in subsequent sections.

Next, we need to carry out the higher order $L^2$ estimates for some related quantities
so that the last two terms of \eqref{e} can be absorbed by the left hand side, and hence the higher order energy
estimates on \eqref{ge} can be done.
To this end, we now state two elementary lemmas, whose analogous results
can be found in \cite[Lemma 7.3]{MY}, \cite[Lemma 12.57]{J} and \cite[Lemma 8.1, 8.2]{Ding4}.

\begin{lemma}\label{L2T}
For any function $\psi\in C^1(D^{{\mathfrak t}, u})$ vanishing on $C_0$, it holds that  for small $\delta>0$,
\begin{align}
\int_{S_{\mathfrak t, u}} \psi^2&\lesssim\delta\int_{\Sigma_{\mathfrak t}^{u}}\big(|\mathring{\underline L}\psi|^2
+\mu^2|\mathring L\psi|^2\big)\label{SSi},\\
\int_{\Sigma_{\mathfrak t}^{u}} \psi^2&\lesssim\delta^2\int_{\Sigma_{\mathfrak t}^{u}}\big(|\mathring{\underline L}\psi|^2
+\mu^2|\mathring L\psi|^2\big).\label{SiSi}
\end{align}
Therefore,
\begin{align}
\int_{S_{\mathfrak t, u}} \psi^2&\lesssim\delta\big(E_2[\psi](\mathfrak t, u)+\varrho^{-2m}E_1[\psi](\mathfrak t, u)\big),\label{SE}\\
\int_{\Sigma_{\mathfrak t}^{u}} \psi^2&\lesssim\delta^2\big(E_2[\psi](\mathfrak t, u)+\varrho^{-2m}E_1[\psi](\mathfrak t, u)\big).\label{SiE}
\end{align}
Furthermore,
\begin{equation}\label{Em}
E_{1,\leq k+1}(\mathfrak t,u)\lesssim\delta^2\varrho^{2m-2}E_{2,\leq k+2}(\mathfrak t,u)+\delta^2\varrho^{-2}E_{1,\leq k+2}(\mathfrak t,u).
\end{equation}
\end{lemma}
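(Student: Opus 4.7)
\medskip

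The four estimates \eqref{SSi}--\eqref{SiE} are Hardy--Poincar\'e inequalities exploiting that $\psi|_{C_0}=0$; the final inequality \eqref{Em} will then follow by applying them to each $Z^\al\varphi_\g$. The key geometric observation is that one should integrate along the integral curves of the vector field $T$ rather than along coordinate lines: by Lemma \ref{nullframe}, $Tu=1$ and $Tt=0$, so the flow of $T$ stays on $\Sigma_{\mathfrak t}$ and sweeps $C_0$ onto $S_{\mathfrak t,u}$ in parameter length $u\le 4\delta$. The coordinate derivative $\f{\p}{\p u}=T+\eta^XX$ is ill-suited here because $|\eta|\lesssim \delta^{1-\varepsilon_0}\mathfrak t$ from Lemma \ref{eXi} is not small and would introduce an uncontrollable $|\slashed d\psi|^2$ contribution on the right-hand side of \eqref{SSi}.

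To establish \eqref{SSi}, I let $\Phi_s$ denote the flow of $T$, so $\Phi_s:S_{\mathfrak t,0}\to S_{\mathfrak t,s}$. For $q\in S_{\mathfrak t,0}$, the vanishing of $\psi$ on $C_0$ and Cauchy--Schwarz yield
\[
|\psi(\Phi_u(q))|^2=\Big|\int_0^u(T\psi)(\Phi_s(q))\,ds\Big|^2\le\delta\int_0^u|T\psi|^2(\Phi_s(q))\,ds.
\]
Integrating over $q\in S_{\mathfrak t,0}$ with $d\nu_{\slashed g}$ and changing variables by $\Phi_s$ converts both sides into integrals over $S_{\mathfrak t,u}$ and $\Sigma_{\mathfrak t}^u$ respectively, with Jacobian $J_s=\exp\big(\int_0^s\mu\,\textrm{tr}\tilde\theta|_{\Phi_{s'}(q)}\,ds'\big)$, since $\mathcal L_T\,d\nu_{\slashed g}=\mu\,\textrm{tr}\tilde\theta\,d\nu_{\slashed g}$. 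By \eqref{theta}, \eqref{chi} and the pointwise estimates of Section \ref{ho}, one has $\mu\,\textrm{tr}\tilde\theta=-\mu\varrho^{-1}+O(\delta^{1-\varepsilon_0}\mathfrak t^{-3/2})$, hence $|\mu\,\textrm{tr}\tilde\theta|\lesssim \mathfrak t^{-1}$, and therefore $J_s=1+O(\delta\mathfrak t^{-1})$ is uniformly comparable to $1$. Writing $T=\f12(\mathring{\underline L}-\mu\mathring L)$ to bound $|T\psi|^2\lesssim|\mathring{\underline L}\psi|^2+\mu^2|\mathring L\psi|^2$ completes \eqref{SSi}. Then \eqref{SiSi} follows by integrating \eqref{SSi} over $u'\in[0,u]$ and using $u\le 4\delta$, while \eqref{SE} and \eqref{SiE} are immediate by recognising $|\mathring{\underline L}\psi|^2$ as a term in $E_2[\psi]$ and $\mu^2|\mathring L\psi|^2\lesssim \varrho^{-2m}\cdot\varrho^{2m}(\mathring L\psi)^2$ as a term in $\varrho^{-2m}E_1[\psi]$.

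For \eqref{Em}, the compact support of the short-pulse data \eqref{i0} together with finite propagation implies $\phi\equiv 0$ in $\{u<0\}$, so $Z^\al\varphi_\g|_{C_0}=0$ for every multi-index $\al$; hence \eqref{SiE} applies to each $\psi=Z^\al\varphi_\g$ with $|\al|\le k$. I split $E_1[\psi]=\int_{\Sigma_{\mathfrak t}^u}\big(\varrho^{2m}(\mathring L\psi)^2+\varrho^{2m}|\slashed d\psi|^2+\varrho^{2m-2}\psi^2\big)$ into three pieces. Since $\varrho\sim\mathfrak t$ on $\Sigma_{\mathfrak t}^u$, \eqref{SiE} applied to $\psi$ itself directly controls the $\varrho^{2m-2}\psi^2$ term by $\delta^2\varrho^{2m-2}E_2[\psi]+\delta^2\varrho^{-2}E_1[\psi]$. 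For $\int\varrho^{2m}(\mathring L\psi)^2$, I rewrite $\mathring L=\varrho^{-1}(\varrho\mathring L)$ and note that $(\varrho\mathring L)\psi=Z^{\al'}\varphi_\g$ with $|\al'|=|\al|+1\le k+1$; applying \eqref{SiE} to $(\varrho\mathring L)\psi$ gives a bound in terms of $E_{2,\le k+2}$ and $E_{1,\le k+2}$. For $\int\varrho^{2m}|\slashed d\psi|^2$, Lemma \ref{12form} yields $|\slashed d\psi|^2\lesssim r^{-2}(R\psi)^2$ with $r\sim\varrho$ by \eqref{r}, and $R\psi=Z^{\al''}\varphi_\g$ with $|\al''|\le k+1$, so \eqref{SiE} applied to $R\psi$ handles this piece analogously. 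Summing over $\g$ and $|\al|\le k$ with the appropriate $\delta^{2l}$ weights produces \eqref{Em}.

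The main technical obstacle will be the change-of-variables step for \eqref{SSi}: the uniform comparability $J_s\sim 1$ throughout $D^{{\mathfrak t},u}$ for all $\mathfrak t\ge t_0$ hinges on the sharp bound $|\mu\,\textrm{tr}\tilde\theta|\lesssim \mathfrak t^{-1}$, which in turn depends delicately on the near-cancellation $\textrm{tr}\tilde\theta+\textrm{tr}\lambda=O(\text{small})$ implicit in \eqref{theta} combined with \eqref{chi}, and hence on the refined bootstrap estimates assembled in Sections \ref{BA}--\ref{ho}. A secondary point is to verify $Z^\al\varphi_\g|_{C_0}=0$ for an arbitrary multi-index $\al$, which rests on propagating the angular support of the short-pulse data by the characteristic cone $C_0$ past $t_0$.
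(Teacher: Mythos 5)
Your argument is essentially the paper's: the paper proves \eqref{SSi} by writing $\f{\p}{\p u}\int_{S_{\mathfrak t,u}}f\,d\nu_{\slashed g}=\int_{S_{\mathfrak t,u}}\big(Tf+\f12\textrm{tr}\leftidx{^{(T)}}{\slashed\pi}f\big)$, taking $f=\psi^2$, applying Cauchy--Schwarz with weights $\delta^{\pm1}$ and then Gronwall over the $u$-interval of length $\le 4\delta$; your explicit flow map of $T$ with Jacobian $\exp(\int\mu\,\textrm{tr}\tilde\theta)$ is the same computation in different packaging, and your derivations of \eqref{SiSi}--\eqref{Em} (integrate in $u'$, identify terms of $E_1,E_2$, and apply \eqref{SiE} to $(\varrho\mathring L)\psi$ and $R\psi$ for the top-weight terms) match what the paper does.

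One correction to the step you single out as the main obstacle: the expansion $\mu\,\textrm{tr}\tilde\theta=-\mu\varrho^{-1}+O(\delta^{1-\varepsilon_0}\mathfrak t^{-3/2})$ is not available, because the term $-\f12c^{-1}\mu^{-1}(Tc)\slashed g_{XX}$ in \eqref{theta} contributes $O(\delta^{-\varepsilon_0}\mathfrak t^{-1/2})$ to $\mu\,\textrm{tr}\tilde\theta$ (indeed the paper itself only uses $|\mu\,\textrm{tr}\tilde\theta|\lesssim\delta^{-\varepsilon_0}\mathfrak t^{-1/2}$, see the remark after \eqref{j1j3}). There is no delicate cancellation to invoke here; your Jacobian is still uniformly comparable to $1$ simply because the flow parameter ranges over $[0,u]$ with $u\le 4\delta$, so $\int_0^s|\mu\,\textrm{tr}\tilde\theta|\,ds'\lesssim\delta^{1-\varepsilon_0}$. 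This shortness of the $u$-range is exactly the mechanism that lets the paper's Gronwall step absorb the $\delta^{-\varepsilon_0}|f|$ coefficient, and it is all you need.
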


\begin{lemma}\label{L2L}
Assume that $(\star)$ holds for small $\delta>0$. Then for any $f\in C(D^{{\mathfrak t}, u})$,
$
F(\mathfrak t,u,\vartheta):=\int_{t_0}^\mathfrak tf(\tau,u,\vartheta)d\tau
$ admits the following estimate:
\begin{equation}\label{Ff}
\|F\|_{L^2(\Sigma_{\mathfrak t}^u)}\leq(1+C\delta^{1-\varepsilon_0})\sqrt{\varrho({\mathfrak t},u)}
\int_{t_0}^\mathfrak t\f{1}{\sqrt{\varrho(\tau,u)}}
\|f\|_{L^2(\Sigma_{\tau}^u)}d\tau.
\end{equation}
\end{lemma}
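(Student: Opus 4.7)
\textbf{Proof proposal for Lemma \ref{L2L}.} The inequality is essentially Minkowski's integral inequality applied to the time integration, combined with a careful accounting of how the angular measure $\sqrt{\slashed g_{XX}}\,d\vartheta\,du$ on $\Sigma_{\mathfrak t}^u$ evolves along the integral curves of $\mathring L$ (which are curves of constant $(u,\vartheta)$ in the new coordinates). The strategy is to first transport the weight $\sqrt{\slashed g_{XX}(\mathfrak t,u,\vartheta)}$ back to time $\tau$, producing the factor $\varrho(\mathfrak t,u)/\varrho(\tau,u)$ modulo a small error, and then to apply Minkowski.

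The main computation is to establish a pointwise ratio identity for the area element. Since $\mathring L=\p/\p\mathfrak t$ in the $(\mathfrak t,u,\vartheta)$-coordinates and $\mathring L\slashed g_{XX}=2\lambda_{XX}$, one has $\mathring L\log\sqrt{\slashed g_{XX}}=\textrm{tr}\lambda=\varrho^{-1}+\textrm{tr}\check\lambda$. Integrating along an integral curve of $\mathring L$ from $\tau$ to $\mathfrak t$ and using $|\textrm{tr}\check\lambda|\lesssim\delta^{1-\varepsilon_0}\mathfrak t^{-3/2}$ from Proposition \ref{chi'} (so that $\int_\tau^{\mathfrak t}|\textrm{tr}\check\lambda|ds\lesssim\delta^{1-\varepsilon_0}$ uniformly in $\tau$, $\mathfrak t$) yields
\begin{equation*}
\frac{\sqrt{\slashed g_{XX}(\mathfrak t,u',\vartheta)}}{\sqrt{\slashed g_{XX}(\tau,u',\vartheta)}}=\bigl(1+O(\delta^{1-\varepsilon_0})\bigr)\,\frac{\varrho(\mathfrak t,u')}{\varrho(\tau,u')}.
\end{equation*}

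Next I will exploit the monotonicity of $u'\mapsto\varrho(\mathfrak t,u')/\varrho(\tau,u')=(\mathfrak t-u')/(\tau-u')$ for $\mathfrak t\ge\tau$: a direct derivative computation gives $\frac{d}{du'}\frac{\mathfrak t-u'}{\tau-u'}=\frac{\mathfrak t-\tau}{(\tau-u')^2}\ge0$, so the ratio attains its maximum on $[0,u]$ at $u'=u$. Consequently, for every fixed $\tau\in[t_0,\mathfrak t]$,
\begin{equation*}
\|f(\tau,\cdot,\cdot)\|_{L^2(\Sigma_{\mathfrak t}^u)}^2=\int_0^u\!\!\int_{\mathbb S^1}|f(\tau,u',\vartheta)|^2\sqrt{\slashed g_{XX}(\mathfrak t,u',\vartheta)}\,d\vartheta\,du'\le\bigl(1+C\delta^{1-\varepsilon_0}\bigr)\frac{\varrho(\mathfrak t,u)}{\varrho(\tau,u)}\|f(\tau,\cdot,\cdot)\|_{L^2(\Sigma_{\tau}^u)}^2.
\end{equation*}

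Finally, Minkowski's integral inequality applied to $F(\mathfrak t,u,\vartheta)=\int_{t_0}^{\mathfrak t}f(\tau,u,\vartheta)\,d\tau$ on $\Sigma_{\mathfrak t}^u$ gives $\|F\|_{L^2(\Sigma_{\mathfrak t}^u)}\le\int_{t_0}^{\mathfrak t}\|f(\tau,\cdot,\cdot)\|_{L^2(\Sigma_{\mathfrak t}^u)}d\tau$, and combining this with the pointwise bound above (after taking square roots, which preserves the $1+O(\delta^{1-\varepsilon_0})$ factor) produces precisely \eqref{Ff}. The only delicate point is ensuring the $\check\lambda$-contribution integrates uniformly to something $O(\delta^{1-\varepsilon_0})$; this follows from Proposition \ref{chi'} provided $\mathfrak t\ge t_0=1+2\delta$, which is always assumed in $D^{{\mathfrak t},u}$. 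The monotonicity observation for $(\mathfrak t-u')/(\tau-u')$ is the only nonobvious point, but it is elementary.
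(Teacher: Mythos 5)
Your proof is correct, and since the paper itself omits the argument (deferring to Lemma~12.57 of \cite{J}), you have essentially reconstructed the standard proof. All the key ingredients are in order: the identity $\mathring L\log\sqrt{\slashed g_{XX}}=\textrm{tr}\lambda=\varrho^{-1}+\textrm{tr}\check\lambda$ follows from $\mathring L\slashed g_{XX}=2\lambda_{XX}$ (using $[\mathring L,X]=0$ and metric compatibility); the integral $\int_\tau^{\mathfrak t}|\textrm{tr}\check\lambda|\,ds\lesssim\delta^{1-\varepsilon_0}\int_{t_0}^\infty s^{-3/2}ds\lesssim\delta^{1-\varepsilon_0}$ is uniformly bounded by Proposition~\ref{chi'} together with $t_0\geq1$; the monotonicity $\frac{d}{du'}\frac{\mathfrak t-u'}{\tau-u'}=\frac{\mathfrak t-\tau}{(\tau-u')^2}\geq0$ is correct; and Minkowski's integral inequality closes the argument with the right exponents, since $(1+C\delta^{1-\varepsilon_0})^{1/2}\leq1+C\delta^{1-\varepsilon_0}$. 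The comparison of the area element across time slices followed by Minkowski is precisely the mechanism used in the reference, so no deviation from the paper's intended approach is involved.
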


Similarly to \cite[Section 8]{Ding4}, one can use Lemma \ref{L2T} and \ref{L2L} to derive $L^2$ estimates for higher order derivatives of
${\check\chi}$, $\slashed\pi_{\mathring LX}$, ${\slashed\pi}_{XX}$, $\check{L}$, $\upsilon$ and $x^j$. The proof are
analogous to those in \cite{Ding4} except different time decay rates, so will be omitted here.

\begin{proposition}\label{Y5}
Under the assumptions $(\star)$ with small $\delta>0$, it holds that for $k\leq 2N-6$,
\begin{align}
&\delta^l\|Z^{k+1}\check L^i\|_{L^2(\Sigma_{\mathfrak t}^{u})}\lesssim\delta^{3/2-\varepsilon_0}
+\mathfrak t^{1-m}\sqrt{\tilde E_{1,\leq k+2}(\mathfrak t,u)}+\delta\sqrt{\tilde E_{2,\leq k+2}(\mathfrak t,u)},\\
&\delta^l\|(\mathfrak t^{-1}Z^{k+2}x^i, \slashed{\mathcal L}^{k+1}_Z\slashed g)\|_{L^2(\Sigma_{\mathfrak t}^{u})}\lesssim\delta^{1/2}\mathfrak t^{1/2}
+\mathfrak t^{1-m}\sqrt{\tilde E_{1,\leq k+2}(\mathfrak t,u)}+\delta \sqrt{\tilde E_{2,\leq k+2}(\mathfrak t,u)},\\
&\delta^l\|(\slashed{\mathcal L}_Z^{k}\check\chi, \mathfrak t^{-2}Z^{k+1}\upsilon)\|_{L^2(\Sigma_{\mathfrak t}^{u})}\lesssim\delta^{3/2-\varepsilon_0}\mathfrak t^{-1}
+\mathfrak t^{-m}\sqrt{\tilde E_{1,\leq k+2}(\mathfrak t,u)}+\delta \mathfrak t^{-1}\sqrt{\tilde E_{2,\leq k+2}(\mathfrak t,u)},\label{Zkchi}\\
&\delta^l\|(Z^{k+1}\mu, \mathfrak t\slashed{\mathcal L}_{Z}^k\leftidx{^{(T)}}{\slashed\pi}_{\mathring L})\|_{L^2(\Sigma_{\mathfrak t}^u)}\lesssim\delta^{3/2-\varepsilon_0}\mathfrak t^{1/2}
+\mathfrak t^{1/2}\sqrt{\tilde E_{1,\leq k+2}(\mathfrak t,u)}+\delta \mathfrak t^{1/2}\sqrt{\tilde E_{2,\leq k+2}(\mathfrak t,u)},\\
&\delta^l\|\slashed{\mathcal L}_{Z}^k\big(\leftidx{^{(R)}}{\slashed\pi},\leftidx{^{(R)}}{\slashed\pi}_{\mathring L},\leftidx{^{(R)}}{\slashed\pi}_{T}\big)\|_{L^2(\Sigma_{\mathfrak t}^u)}
\lesssim\delta^{3/2-\varepsilon_0}+{\mathfrak t}^{1-m}\sqrt{\tilde E_{1,\leq k+2}(\mathfrak t,u)}
+\delta \sqrt{\tilde E_{2,\leq k+2}(\mathfrak t,u)},\\
&\delta^l\|\slashed{\mathcal L}_{Z}^k\leftidx{^{(T)}}{\slashed\pi}\|_{L^2(\Sigma_{\mathfrak t}^u)}\lesssim\delta^{1/2-\varepsilon_0}
+\delta^{-\varepsilon_0}\mathfrak t^{1/2-m}\sqrt{\tilde E_{1,\leq k+2}(\mathfrak t,u)}+\sqrt{\tilde E_{2,\leq k+2}(\mathfrak t,u)},
\end{align}
where $l$ is the number of $T$ in the corresponding derivatives and $\tilde E_{i,\leq k+2}(\mathfrak t,u)=\sup_{t_0\leq\tau\leq {\mathfrak t}}E_{i,\leq k+2}(\tau,u)$ $(i=1,2)$.
\end{proposition}

\section{$L^2$ estimates for the highest order derivatives of tr{$\chi$} and $\slashed\triangle\mu$}\label{L2chimu}

Analogously to \cite[Section 9]{Ding4}, due to \eqref{Psi} and \eqref{D2}, the top orders of derivatives of $\varphi$, $\chi$ and $\mu$ for the energy estimates in \eqref{e} are $2N-4$, $2N-5$ and $2N-4$ respectively. However, as shown in Proposition \ref{Y5}, the $L^2$ estimates for the $(2N-5)^{\text{th}}$ order derivatives of $\chi$ and $(2N-4)^{\text{th}}$ order derivatives of $\mu$ can be controlled by the $(2N-3)$ order energy of $\varphi$. So there is a mismatch here.
To overcome this difficulty, we need to deal with $\textrm{tr}{\chi}$ and $\slashed\triangle\mu$ with the corresponding top order derivatives as in \cite{Ding4}. Since the time decays of these derivatives are crucial, we will give all the  details of analysis for the 2D case here. Furthermore, different from the 4D case in \cite{Ding4},
we also need to improve the $L^2$ estimate of $\slashed{\mathcal L}_Z^k\check\chi$ in \eqref{Zkchi}
so that the last term of \eqref{e} can  be handled despite the slow time decay rate.

\subsection{Estimates for the derivatives of tr{$\chi$}}\label{trchi}

Due to \eqref{Lchi}, tr$\chi$ satisfies a transport equation as
\begin{equation}\label{Ltrchi}
\begin{split}
\mathring L(\textrm{tr}{\chi})=&c^{-1}\slashed dx^a\cdot\slashed d\mathring L\varphi_a
-c^{-1}(\mathring L^2\varphi_0+\varphi_a\mathring L^2\varphi_a)
-c^{-1}\mathring L^\g\slashed\triangle\varphi_\g-\f12c^{-1}(\mathring Lc)\textrm{tr}\chi\\
&-c^{-1}\textrm{tr}\chi(\slashed dx^a\cdot\slashed d\varphi_a)-|{{\chi}}|^2
+c^{-2}f(\slashed dx,\mathring L^1,\mathring L^2,\varphi,\slashed g)
\left(
\begin{array}{ccc}
\slashed d\varphi\cdot\slashed d\varphi\\
(\slashed dx^a\cdot\slashed d\varphi_a)^2\\
(\mathring L\varphi)(\mathring L\varphi)\\
(\slashed dx^a\cdot\slashed d\varphi_a)\mathring L\varphi
\end{array}
\right).
\end{split}
\end{equation}
In addition, \eqref{fequation} is rewritten as
\begin{equation}\label{triphi}
\begin{split}
\mu\slashed\triangle\varphi_\g=\mathring L\mathring{\underline L}\varphi_\g
+\f{1}{2\varrho}\mathring {\underline L}\varphi_\g-\tilde{H}_\g,
\end{split}
\end{equation}
where $\tilde{H}_\g=H_\g-\mu\slashed\triangle\varphi_\g$.
Then \eqref{Ltrchi} can be written as
\begin{equation}\label{Ltr}
\begin{split}
\mathring L\big(\textrm{tr}\chi-E)=\big(-\f{2}{\varrho}-c^{-1}\mathring Lc+c^{-1}\mathring L^\beta\mathring L\varphi_\beta\big)\textrm{tr}\chi+\f{1}{\varrho^2}-|\check\chi|^2+e,
\end{split}
\end{equation}
which contains only the first or zeroth order derivatives of $\varphi_\gamma$ on the right hand side, where
\begin{align}
E=&c^{-1}\slashed dx^a\cdot\slashed d\varphi_a-\f32c^{-1}\mathring Lc
+c^{-1}\mathring L^\beta\mathring L\varphi_\beta,\label{phiE}\\
e=&c^{-2}f(\slashed dx,\mathring L^1,\mathring L^2,\varphi,\slashed g)
\left(
\begin{array}{ccc}
\slashed d\varphi\cdot\slashed d\varphi\\
(\slashed dx^a\cdot\slashed d\varphi_a)^2\\
(\mathring L\varphi)(\mathring L\varphi)\\
(\slashed dx^a\cdot\slashed d\varphi_a)\mathring L\varphi
\end{array}
\right).\label{phie}
\end{align}
Set $F^k=\slashed dZ^k\textrm{tr}\chi-\slashed dZ^k E$ with $Z\in\{\varrho\mathring L,T,R\}$. It then
follows from \eqref{Ltr} inductively that
\begin{equation}\label{LFal}
\begin{split}
\slashed{\mathcal L}_{\mathring L}F^k=&\big(-\f{2}{\varrho}-c^{-1}\mathring Lc
+c^{-1}\mathring L^\beta\mathring L\varphi_\beta\big)F^k\\
&+\big(-\f{2}{\varrho}-c^{-1}\mathring Lc+c^{-1}\mathring L^\beta\mathring L\varphi_\beta\big)\slashed dZ^k E
-\slashed dZ^{k}(|\check\chi|^2)+e^k,
\end{split}
\end{equation}
where for $k\geq 1$,
\begin{equation}\label{eal}
\begin{split}
e^k=&\slashed{\mathcal L}_{Z}^k e^0+\sum_{k_1+k_2=k-1}\slashed{\mathcal L}_{Z}^{k_1}\slashed{\mathcal L}_{[\mathring L,Z]}F^{k_2}\\
&+\sum_{k_1\leq k-1}Z^{k-k_1}\big(-\f{2}{\varrho}-c^{-1}\mathring Lc
+c^{-1}\mathring L^\beta\mathring L\varphi_\beta\big)\slashed dZ^{k_1}\textrm{tr}\chi
\end{split}
\end{equation}
and
\begin{equation}\label{e0}
\begin{split}
e^0=&\slashed de+\slashed d\big(-c^{-1}\mathring Lc
+c^{-1}\mathring L^\beta\mathring L\varphi_\beta\big)\textrm{tr}\chi.
\end{split}
\end{equation}
Note that for any one-form $\kappa$ on $S_{{\mathfrak t},u}$, it holds that
\begin{equation}\label{Lxi}
\mathring L(\varrho^2|\kappa|^2)=-2\varrho^2\textrm{tr}\check{\chi}|\kappa|^2
+2\varrho^2(\textrm{tr}\slashed{\mathcal L}_{\mathring L}\kappa)|\kappa|.
\end{equation}
Then choosing $\kappa=\varrho^2F^k$ in \eqref{Lxi} and using \eqref{LFal} lead to
\begin{equation*}
\begin{split}
\mathring L\big(\varrho^6|F^k|^2\big)=&\varrho^6\big\{-2\textrm{tr}\check\chi|F^k|^2
+2\big(-c^{-1}\mathring Lc+c^{-1}\mathring L^\beta\mathring L\varphi_\beta\big)|F^k|^2+2e^k\cdot F^k\\
&+2\big(-\f{2}{\varrho}-c^{-1}\mathring Lc
+c^{-1}\mathring L^\beta\mathring L\varphi_\beta\big)\slashed d Z^k E\cdot F^k
-2\slashed dZ^k(|\check\chi|^2)\cdot F^k\big\}.
\end{split}
\end{equation*}
This yields,
\begin{equation}\label{LrhoF}
\begin{split}
|\mathring L(\varrho^3|F^k|)|\lesssim&\varrho^3|\check\chi|\cdot|F^k|+\varrho^3|
-c^{-1}\mathring Lc+c^{-1}\mathring L^\g\mathring L\varphi_\g|\cdot|F^k|+\varrho^3|
-\f2\varrho-c^{-1}\mathring Lc\\
&+c^{-1}\mathring L^\g\mathring L\varphi_\g|\cdot|\slashed dZ^k E|
+\varrho^3|\slashed dZ^k(|\check\chi|^2)|+\varrho^3|e^k|.
\end{split}
\end{equation}
Then it follows from this and \eqref{Ff} that
\begin{equation*}
\begin{split}
\delta^l\varrho^3\|F^k\|_{L^2(\Sigma_{\mathfrak t}^{u})}\lesssim&\delta^l\|F^k(t_0,\cdot,\cdot)\|_{L^2(\Sigma_{\mathfrak t}^{u})}
+\delta^l\varrho^{1/2}\int_{t_0}^\mathfrak t\Big\{\tau^{5/2}\|\check\chi\|_{L^\infty(\Sigma_\tau^u)}\cdot\|F^k\|_{L^2(\Sigma_{\tau}^{u})}\\
&+\tau^{5/2}\|-c^{-1}\mathring Lc+c^{-1}\mathring L^\gamma\mathring L\varphi_\gamma\|_{L^\infty(\Sigma_\tau^u)}\cdot\|F^k\|_{L^2(\Sigma_{\tau}^{u})}\\
&+\tau^{5/2}\|-\f{2}{\varrho}-c^{-1}\mathring Lc
+c^{-1}\mathring L^\gamma\mathring L\varphi_\gamma\|_{L^\infty(\Sigma_\tau^u)}\cdot\|\slashed dZ^k E\|_{L^2(\Sigma_{\tau}^{u})}\\
&+\tau^{5/2}\|\slashed dZ^k(|\check\chi|^2)\|_{L^2(\Sigma_{\tau}^u)}+\tau^{5/2}\|e^k\|_{L^2(\Sigma_{\tau}^u)}\Big\}d\tau.
\end{split}
\end{equation*}
Thus, the Gronwall's inequality yields
\begin{equation}\label{Eal}
\begin{split}
\delta^l\varrho^{5/2}\|F
^k\|_{L^2(\Sigma_{\mathfrak t}^{u})}\lesssim\delta^{3/2-\varepsilon_0}+\delta^l\int_{t_0}^\mathfrak t\Big\{&\tau^{3/2}\|\slashed dZ^k E\|_{L^2(\Sigma_{\tau}^{u})}+\tau^{5/2}\|\slashed dZ^k(|\check\chi|^{2})\|_{L^2(\Sigma_{\tau}^u)}\\
&+\tau^{5/2}\|e^k\|_{L^2(\Sigma_{\tau}^u)}\Big\}{d}\tau.
\end{split}
\end{equation}

Each term in the integrand of \eqref{Eal} will be estimated as follows.

First, due to $E=c^{-1}\slashed dx^a\cdot\slashed d\varphi_a-\f32c^{-1}\mathring Lc
+c^{-1}\mathring L^\beta\mathring L\varphi_\beta$, then
\begin{equation}\label{EL2}
\begin{split}
\delta^l\|\slashed dZ^k E\|_{L^2(\Sigma_{\mathfrak t}^{u})}\lesssim\delta^{3/2-\varepsilon_0}\mathfrak t^{-2}
+\mathfrak t^{-1-m}\sqrt{\tilde E_{1,\leq k+2}(\mathfrak t,u)}
+\delta^{2-\varepsilon_0}\mathfrak t^{-5/2}\sqrt{\tilde E_{2,\leq k+2}(\mathfrak t,u)},
\end{split}
\end{equation}
where one has used $L^\infty$ estimates in Section \ref{BA}, Proposition \ref{Y5} and Lemma \ref{L2T}.

Next, direct computations give
\begin{equation}\label{dZk}
\begin{split}
\slashed dZ^k(|\check{\chi}|^2)=&2\textrm{tr}\check\chi(\slashed dZ^k\textrm{tr}\chi)+2\sum_{k_1+k_2=k,\ k_1\leq k-1}Z^{k_2}\textrm{tr}\check\chi(\slashed dZ^{k_1}\textrm{tr}\check\chi)\\
=&2\textrm{tr}\check\chi(F^k+\slashed dZ^kE)+2\sum_{k_1+k_2=k,\ k_1\leq k-1}Z^{k_2}\textrm{tr}\check\chi(\slashed dZ^{k_1}\textrm{tr}\check\chi).
\end{split}
\end{equation}
Taking $L^2$ norm of \eqref{dZk} on the surfaces $\Sigma_{\mathfrak t}^u$ directly, and
applying the estimates in Section \ref{BA} and Section \ref{EE} to handle the lower and higher order derivatives respectively,
one can deduce
\begin{equation}\label{Y-19}
\begin{split}
\delta^l\|\slashed dZ^k(|\check{\chi}|^2)\|_{L^2(\Sigma_{\mathfrak t}^u)}\lesssim&\delta^{1-\varepsilon_0}\mathfrak t^{-3/2}\delta^l\|F^k\|_{L^2(\Sigma_{\mathfrak t}^u)}
+\delta^{5/2-2\varepsilon_0}\mathfrak t^{-7/2}\\
&+\delta^{1-\varepsilon_0}\mathfrak t^{-5/2-m}\sqrt{\tilde E_{1,\leq k+2}(\mathfrak t,u)}
+\delta^{2-\varepsilon_0}\mathfrak t^{-7/2}\sqrt{\tilde E_{2,\leq k+2}(\mathfrak t,u)}.
\end{split}
\end{equation}

Finally, it remains to treat $e^k$.
One starts with the term $\slashed{\mathcal L}_Z^{k_1}\slashed{\mathcal L}_{[\mathring L,Z]}F^{k_2}$
in \eqref{eal} with $k_1+k_2=k-1$. If $Z\in\{R,T\}$, then $[\mathring L, Z]=\leftidx^{{(Z)}}\slashed\pi_{\mathring L}^XX$
by \eqref{c}, and therefore, $\slashed{\mathcal L}_{[\mathring L,Z]}F^{k_2}=\leftidx^{{(Z)}}{\slashed\pi_{\mathring L}}\cdot\slashed\nabla F^{k_2}+F^{k_2}\cdot\slashed\nabla\leftidx^{{(Z)}}{\slashed\pi_{\mathring L}}$. This leads to that by Proposition \ref{Y5},
\begin{equation}\label{LF}
\begin{split}
\delta^l\|\slashed{\mathcal L}_Z^{k_1}\slashed{\mathcal L}_{[\mathring L,Z]}F^{k_2}
&\|_{L^2(\Sigma_{\mathfrak t}^u)}\lesssim\delta^{1-\varepsilon_0} \mathfrak t^{-3/2}\delta^l\|F^k\|_{L^2(\Sigma_{\mathfrak t}^u)}
+\delta^{5/2-2\varepsilon_0}\mathfrak t^{-7/2}\\
&+\delta^{1-\varepsilon_0} \mathfrak t^{-5/2-m}\sqrt{\tilde E_{1,\leq k+2}(\mathfrak t,u)}
+\delta^{2-\varepsilon_0}\mathfrak t^{-7/2}\sqrt{\tilde E_{2,\leq k+2}(\mathfrak t,u)}.
\end{split}
\end{equation}
If $Z=\varrho\mathring L$, then by $[\mathring L,Z]=\mathring L$ and \eqref{LFal},
\begin{equation}\label{Y-20}
\slashed{\mathcal L}_Z^{k_1}\slashed{\mathcal L}_{[\mathring L,Z]}F^{k_2}
=\slashed{\mathcal L}_Z^{k_1}\big\{(-\f2\varrho-c^{-1}\mathring Lc+c^{-1}\mathring L^\g\mathring L\varphi_\g)\slashed dZ^{k_2}\textrm{tr}\chi-\slashed dZ^{k_2}(|\check\chi|^2)+e^{k_2}\big\}.
\end{equation}
Note that the estimate of $\slashed{\mathcal L}_Z^{k_1}\slashed dZ^{k_2}(|\check\chi|^2)$
in \eqref{Y-20} follows from Proposition \ref{Y5} immediately as,
\begin{equation}\label{dchi2}
\begin{split}
\delta^l\|\slashed dZ^{k-1}(|\check{\chi}|^2)\|_{L^2(\Sigma_{\mathfrak t}^u)}\lesssim&\delta^{5/2-2\varepsilon_0}\mathfrak t^{-7/2}
+\delta^{1-\varepsilon_0}\mathfrak t^{-5/2-m}\sqrt{\tilde E_{1,\leq k+2}(\mathfrak t,u)}\\
&+\delta^{2-\varepsilon_0}\mathfrak t^{-7/2}\sqrt{\tilde E_{2,\leq k+2}(\mathfrak t,u)}.
\end{split}
\end{equation}
The other two terms in \eqref{Y-20} can be estimated similarly as for the remaining terms in \eqref{eal} as below.

Indeed, for the term $Z^{k-k_1}(-\f2\varrho-c^{-1}\mathring Lc
+c^{-1}\mathring L^\beta\mathring L\varphi_\beta)\slashed dZ^{k_1}\textrm{tr}\chi$
in \eqref{eal} with $k_1\leq k-1$, Proposition \ref{Y5} implies
\begin{equation}\label{Zrho}
\begin{split}
&\delta^l\|Z^{k-k_1}(-\f2\varrho-c^{-1}\mathring Lc+c^{-1}\mathring L^\beta\mathring L\varphi_\beta)\slashed dZ^{k_1}\textrm{tr}\chi\|_{L^2(\Sigma_{\mathfrak t}^u)}\\
\lesssim&\delta^{3/2-\varepsilon_0}\mathfrak t^{-3}+\mathfrak t^{-2-m}\sqrt{\tilde{E}_{1,\leq k+2}(\mathfrak t,u)}
+\delta \mathfrak t^{-3}\sqrt{\tilde E_{2,\leq k+2}(\mathfrak t,u)}.
\end{split}
\end{equation}

To estimate $\slashed{\mathcal L}_Z^k e^0$ in \eqref{eal},
by \eqref{e0}, \eqref{phie} and Proposition \ref{Y5}, one has
\begin{equation}\label{Le0}
\begin{split}
\delta^l\|\slashed{\mathcal L}_Z^k e^0\|_{L^2(\Sigma_{\mathfrak t}^u)}\lesssim&\delta^{3/2-\varepsilon_0}\mathfrak t^{-3}
+\mathfrak t^{-2-m}\sqrt{\tilde E_{1,\leq k+2}(\mathfrak t,u)}s+\delta^{2-\varepsilon_0}\mathfrak t^{-7/2}\sqrt{\tilde E_{2,\leq k+2}(\mathfrak t,u)}.
\end{split}
\end{equation}

Combining the estimates \eqref{LF}-\eqref{Le0} and \eqref{eal} yields
\begin{equation}\label{Y-21}
\begin{split}
\delta^l\|e^k\|_{L^2(\Sigma_{\mathfrak t}^u)}\lesssim&\delta^{1-\varepsilon_0}\mathfrak t^{-3/2}\delta^l\|F^k\||_{L^2(\Sigma_{\mathfrak t}^u)}d\tau
+\delta^{3/2-\varepsilon_0}\mathfrak t^{-3}+\mathfrak t^{-2-m}\sqrt{\tilde E_{1,\leq k+2}(\mathfrak t,u)}\\
&+\delta \mathfrak t^{-3}\sqrt{\tilde E_{2,\leq k+2}(\mathfrak t,u)}.
\end{split}
\end{equation}

By inserting \eqref{EL2}, \eqref{Y-19} and \eqref{Y-21} into \eqref{Eal}, one obtains from \eqref{Ltr} and \eqref{phiE} that
\begin{equation}\label{Fal}
\begin{split}
\delta^l\|F^k\|_{L^2(\Sigma_{\mathfrak t}^u)}\lesssim&\delta^{3/2-\varepsilon_0}\mathfrak t^{-2}+\mathfrak t^{-1-m}\sqrt{\tilde E_{1,\leq k+2}(\mathfrak t,u)}
+\delta \mathfrak t^{-2}\sqrt{\tilde E_{2,\leq k+2}(\mathfrak t,u)}.
\end{split}\end{equation}
In addition, due to the definition of $F^k$, $\slashed dZ^k\textrm{tr}\chi=F^k+\slashed d Z^k E$, and hence it holds that
\begin{equation}\label{d}
\begin{split}
&\delta^l\|\slashed dZ^k\textrm{tr}\chi\|_{L^2(\Sigma_{\mathfrak t}^u)}+\delta^l\|\slashed{\textrm{div}}{\slashed{\mathcal L}}_{Z}^k\check{\chi}\|_{L^2(\Sigma_{\mathfrak t}^u)}\\
\lesssim&\delta^{3/2-\varepsilon_0}\mathfrak t^{-2}+\mathfrak t^{-1-m}\sqrt{\tilde E_{1,\leq k+2}(\mathfrak t,u)}
+\delta \mathfrak t^{-2}\sqrt{\tilde E_{2,\leq k+2}(\mathfrak t,u)},
\end{split}
\end{equation}
where one has used \eqref{Fal} and \eqref{EL2}.

\begin{remark}
Since $\slashed dZ^k\textrm{tr}\check\chi=\slashed dZ^k\textrm{tr}\chi$
by \eqref{errorv}, \eqref{d} gives also the $L^2$ estimate for $\slashed dZ^k\textrm{tr}\check\chi$.
\end{remark}

\subsection{Estimates for the derivatives of $\slashed\triangle\mu$}\label{trimu}
As in Subsection \ref{trchi}, one can use \eqref{lmu} to estimate $\slashed\triangle\mu$.
Indeed, it follows from $[\mathring   L,\slashed\triangle]\mu=-2(\textrm{tr}\check\chi)\slashed\triangle\mu
-2\varrho^{-1}\slashed\triangle\mu-\slashed d\textrm{tr}\check\chi\cdot\slashed d\mu$ due to Lemma \ref{commute}
and \eqref{lmu} that
\begin{equation}\label{Ltrimu}
\begin{split}
\mathring L\slashed\triangle\mu=&[\mathring L,\slashed\triangle]\mu+\slashed\triangle\mathring L\mu\\
=&-2(\textrm{tr}\check\chi)\slashed\triangle\mu-\f2\varrho\slashed\triangle\mu
-{\slashed d\textrm{tr}\check\chi}\cdot\slashed d\mu+(\slashed\triangle\mu)c^{-1}(\mathring L^\al\mathring L\varphi_\al
-\mathring Lc)+2\slashed d\mu\cdot\slashed d(c^{-1}\mathring L^\al\mathring L\varphi_\al\\
&-c^{-1}\mathring Lc)+\mu\big\{\uwave{\slashed\triangle(c^{-1}\mathring L^\al)}\mathring L\varphi_\al
+2\slashed d(c^{-1}\mathring L^\al)\cdot\slashed d\mathring L\varphi_\al\big\}
+\uline{\mathring L(c^{-1}\mu\mathring L^\al\slashed\triangle\varphi_\al)}\\
&-\mathring L(c^{-1}\mu\mathring L^\al)\slashed\triangle\varphi_\al
-c^{-1}\mu\mathring L^\al[\mathring L,\slashed\triangle]\varphi_\al+\mu[\mathring L,\slashed\triangle]\ln c
-\underline{\mathring L(\mu\slashed\triangle\ln c)}+(\mathring L\mu)\slashed\triangle\ln c.
\end{split}
\end{equation}
To estimate the term with wavy line in \eqref{Ltrimu}, one notes that \eqref{deL} implies
\begin{equation}\label{Y-22}
\begin{split}
\slashed\triangle\check L^a=&\slashed d\textrm{tr}\chi\cdot\slashed dx^a
+\textrm{tr}\check\chi\slashed\triangle x^a+\slashed\nabla^X\big\{\f12c^{-1}(\mathring Lc)\slashed d_Xx^a
-\f12c^{-1}(\slashed d_Xc)\tilde T^a+c^{-1}\tilde T^b(\slashed d_X\varphi_b)\tilde T^a\big\},
\end{split}
\end{equation}
and the $L^2$ norm of each term in \eqref{Y-22}
can be estimated by \eqref{d}.

Observe that the two terms with the underline in \eqref{Ltrimu} are both derivatives with respect
to $\mathring L$, and then can be moved to the left hand side of \eqref{Ltrimu}. Let $\tilde E=c^{-1}\mu\mathring L^\al\slashed\triangle\varphi_\al-\mu\slashed\triangle\ln c$ and $\tilde F=\slashed\triangle\mu-\tilde E$.
Then \eqref{Ltrimu} can be rewritten as
\begin{equation*}
\begin{split}
\mathring L\tilde F=&(-2\textrm{tr}\check\chi-2\varrho^{-1}+c^{-1}\mathring L^\al\mathring L\varphi_\al
-c^{-1}\mathring Lc)\tilde F\\
&-\slashed d\textrm{tr}\chi\cdot(\slashed d\mu-c^{-1}\mu\mathring L\varphi_a\slashed dx^a
-c^{-1}\mu\mathring L^\al\slashed d\varphi_\al+\mu\slashed d\ln c)+\tilde e.
\end{split}
\end{equation*}
Let $\bar Z$ be any vector field in $\{T,R\}$ and set $\bar F^k=\bar Z^k\slashed\triangle\mu-\bar Z^k\tilde E$.
Then one can get by induction as for \eqref{LFal} that
\begin{equation}\label{LbarF}
\begin{split}
\mathring L\bar F^k=&(-2\textrm{tr}\check\chi-2\varrho^{-1}+c^{-1}\mathring L^\gamma\mathring L\varphi_\gamma
-c^{-1}\mathring Lc)\bar F^k\\
&-\slashed d\bar Z^k\textrm{tr}\chi\cdot(\slashed d\mu-c^{-1}\mu\mathring L\varphi_a\slashed dx^a
-c^{-1}\mu\mathring L^\al\slashed d\varphi_\al+\mu\slashed d\ln c)+[\mathring L,\bar Z]^X\slashed d_X\bar F^{k-1}
+\bar e^k
\end{split}
\end{equation}
with
\begin{equation*}
\begin{split}
\bar e^k=&\sum_{{\mbox{\tiny$\begin{array}{c}k_1+k_2=k-1\\|k_1|\geq 1\end{array}$}}}\slashed{\mathcal L}_{\bar Z}^{k_1}[\mathring L,\bar Z]^X\slashed d_X\bar F^{k_2}+\bar Z^{k}\tilde e+\sum_{{\mbox{\tiny$\begin{array}{c}k_1+k_2=k\\|k_1|\geq 1\end{array}$}}}\Big\{\bar Z^{k_1}(-2\textrm{tr}\check\chi-2\varrho^{-1}+c^{-1}\mathring L^\gamma\mathring L\varphi_\gamma
-c^{-1}\mathring Lc)\bar F^{k_2}\\
&-(\slashed d\bar Z^{k_2}\textrm{tr}\chi)\cdot\slashed{\mathcal L}_{\bar Z}^{k_1}(\slashed d\mu
-c^{-1}\mu\mathring L\varphi_a\slashed dx^a-c^{-1}\mu\mathring L^\al\slashed d\varphi_\al+\mu\slashed d\ln c)\Big\},
\end{split}
\end{equation*}
where the first sum on the right hand side above vanishes when $k=1$. Thus, applying \eqref{Ff} to $F(\mathfrak t,u,\vartheta)=\varrho^2\bar F^k(\mathfrak t,u,\vartheta)-\varrho_0^2\bar F^k(t_0,u,\vartheta)$
and using \eqref{LbarF} lead to
\begin{equation}\label{tildeF}
\begin{split}
\delta^l\varrho^{3/2}\|\bar F^k\|_{L^2(\Sigma_{\mathfrak t}^u)}
\lesssim&\delta^{3/2-\varepsilon_0}+\int_{t_0}^\mathfrak t\delta^{1-\varepsilon_0}\delta^l\|\bar F^k\|_{L^2(\Sigma_\tau^u)}d\tau\\
&+\int_{t_0}^\mathfrak t\delta^{1+l-\varepsilon_0}\tau^{1/2}\|\slashed d\bar Z^k\textrm{tr}\chi\|_{L^2(\Sigma_\tau^u)}d\tau+\int_{t_0}^\mathfrak t\tau^{3/2}\delta^l\|\bar e^k\|_{L^2(\Sigma_\tau^u)}d\tau.
\end{split}
\end{equation}
It follows from  \eqref{d} and Proposition \ref{Y5} that
\begin{equation}\label{Y-23}
\begin{split}
&\int_{t_0}^\mathfrak t\delta^{1+l-\varepsilon_0}\tau^{1/2}\|\slashed d\bar Z^k\textrm{tr}\chi\|_{L^2(\Sigma_\tau^u)}d\tau+\int_{t_0}^\mathfrak t\tau^{3/2}\delta^l\|\bar e^k\|_{L^2(\Sigma_\tau^u)}d\tau\\
\lesssim&\delta^{3/2-\varepsilon_0}+\int_{t_0}^\mathfrak t\biggl(\tau^{-1/2-m}\sqrt{\tilde E_{1,\leq k+2}(\tau,u)}
+\delta\tau^{-3/2}\sqrt{\tilde E_{2,\leq k+2}(\tau,u)}\biggr)d\tau\\
\lesssim&\delta^{3/2-\varepsilon_0}+\sqrt{\tilde E_{1,\leq k+2}(\mathfrak t,u)}
+\delta\sqrt{\tilde E_{2,\leq k+2}(\mathfrak t,u)}.
\end{split}
\end{equation}
Inserting \eqref{Y-23} into \eqref{tildeF} and appying the Gronwall's inequality yield
\begin{equation*}
\begin{split}
\delta^l\varrho^{3/2}\|\bar F^k\|_{L^2(\Sigma_{\mathfrak t}^u)}\lesssim&\delta^{3/2-\varepsilon_0}
+\sqrt{\tilde E_{1,\leq k+2}(\mathfrak t,u)}+\delta\sqrt{\tilde E_{2,\leq k+2}(\mathfrak t,u)},
\end{split}
\end{equation*}
and hence,
\begin{equation*}
\begin{split}
\delta^l\|\bar Z^k\slashed\triangle\mu\|_{L^2(\Sigma_{\mathfrak t}^u)}\lesssim&\delta^{3/2-\varepsilon_0}\mathfrak t^{-3/2}
+\mathfrak t^{-3/2}\sqrt{\tilde E_{1,\leq k+2}(\mathfrak t,u)}+\delta \mathfrak t^{-3/2}\sqrt{\tilde E_{2,\leq k+2}(\mathfrak t,u)}.
\end{split}
\end{equation*}

The other cases, which contain at least one $\varrho\mathring L$ in $Z^k$, can be treated by using \eqref{lmu} and commutators $[\varrho\mathring L,\bar Z]$ and $[\varrho\mathring L,\slashed\triangle]$.
Therefore, we eventually arrive at
\begin{equation}\label{Zmu}
\begin{split}
&\delta^l\|Z^k\slashed\triangle\mu\|_{L^2(\Sigma_{\mathfrak t}^u)}
\lesssim\delta^{3/2-\varepsilon_0}\mathfrak t^{-3/2}+\mathfrak t^{-3/2}\sqrt{\tilde E_{1,\leq k+2}(\mathfrak t,u)}
+\delta \mathfrak t^{-3/2}\sqrt{\tilde E_{2,\leq k+2}(\mathfrak t,u)}.
\end{split}
\end{equation}

At the end of this section, we are going to improve the $L^2$ estimate of $\slashed{\mathcal L}_Z^k\check\chi$
in \eqref{Zkchi}, which
was obtained by integrating $\slashed{\mathcal L}_{\mathring L}\slashed{\mathcal L}_Z^k\check\chi$ along integral curves
of $\mathring L$. Such an approach caused losses of time decay of some related terms. To avoid such a difficulty, we
now use \eqref{d} and \eqref{Zmu} and carry out $L^2$-estimate directly to the equation of $\check\chi$ under actions of different
vector fields.

\begin{corollary}
Under the assumptions $(\star)$ with $\delta>0$ small, it then holds that for $k\leq 2N-6$,
\begin{equation}\label{uZkchi}
\delta^{l}\|Z^k\textrm{tr}\check\chi\|_{L^2(\Sigma_{\mathfrak t}^{u})}\lesssim\delta^{3/2-\varepsilon_0}\mathfrak t^{-1}
+\delta \mathfrak t^{-1-m}\sqrt{\tilde E_{1,\leq k+2}(\mathfrak t,u)}+\delta \mathfrak t^{-1}\sqrt{\tilde E_{2,\leq k+2}(\mathfrak t,u)},
\end{equation}
where $l$ is the number of $T$ in the corresponding derivatives. Furthermore,
\begin{equation}\label{ZTchi}
\delta^{l}\|(\mathfrak t Z^k\mathring L\textrm{tr}\check\chi, \dl Z^{k}T\textrm{tr}\check\chi)\|_{L^2(\Sigma_{\mathfrak t}^{u})}\lesssim\delta^{3/2-\varepsilon_0}\mathfrak t^{-1}
+\mathfrak t^{-m}\sqrt{\tilde E_{1,\leq k+2}(\mathfrak t,u)}+\delta \mathfrak t^{-1}\sqrt{\tilde E_{2,\leq k+2}(\mathfrak t,u)}.
\end{equation}
\end{corollary}

\begin{proof} We first derive \eqref{uZkchi}.
Without loss of generality, $k\geq 1$ is assumed.
$Z^{k-1}(\varrho\mathring L)\textrm{tr}\check\chi$, $Z^{k-1}T\textrm{tr}\check\chi$
and $Z^{k-1}R\textrm{tr}\check\chi$ will be treated separately as follows.

First, it follows from \eqref{Lchi'} that
\begin{equation}\label{rLchi}
\begin{split}
\delta^l|Z^{k-1}(\varrho\mathring L)\textrm{tr}\check\chi|\lesssim&\delta^{l_1}|Z^{n_1}\textrm{tr}\check\chi|
+\delta^{1-\varepsilon_0}\mathfrak t^{-5/2}\delta^{l_2}|Z^{n_2}x|+\delta^{l_2}|\mathring LZ^{n_2}\varphi|\\
&+\delta^{l_2}|\slashed dZ^{n_2}\varphi|+\mathfrak t^{-1}\delta^{l_2}|Z^{n_2}\varphi|
+\delta^{1-\varepsilon_0}\mathfrak t^{-3/2}\delta^{l_1}|Z^{n_1}\check L^i|\\
&+\delta^{1-\varepsilon_0}\mathfrak t^{-3/2}\delta^{l_0}|\slashed{\mathcal L}_Z^{n_0}\leftidx{^{(R)}}{\slashed\pi_{\mathring L}}|
+\delta^{2-\varepsilon_0}\mathfrak t^{-3/2}\delta^{l_0}|\slashed{\mathcal L}_Z^{n_0}\leftidx{^{(T)}}{\slashed\pi_{\mathring L}}|,
\end{split}
\end{equation}
where $l_i$ is the number of $T$ in $Z^{n_i}$ $(i=0,1,2)$, and $n_i\leq k-2+i$. \eqref{rLchi} and Proposition \ref{Y5} imply
\begin{equation}\label{ZrLch}
\begin{split}
\delta^l\|Z^{k-1}(\varrho\mathring L)\textrm{tr}\check\chi\|_{L^2(\Sigma_{\mathfrak t}^{u})}
\lesssim\delta^{3/2-\varepsilon_0}\mathfrak t^{-1}+\mathfrak t^{-m}\sqrt{\tilde E_{1,\leq k+1}(\mathfrak t,u)}
+\delta \mathfrak t^{-1}\sqrt{\tilde E_{2,\leq k+1}(\mathfrak t,u)},
\end{split}
\end{equation}
which, together with \eqref{Em}, yields
\begin{equation}\label{Zk-1L}
\delta^l\|Z^{k-1}(\varrho\mathring L)\textrm{tr}\check\chi\|_{L^2(\Sigma_{\mathfrak t}^{u})}\lesssim\delta^{3/2-\varepsilon_0}\mathfrak t^{-1}
+\delta \mathfrak t^{-1-m}\sqrt{\tilde E_{1,\leq k+2}(\mathfrak t,u)}+\delta \mathfrak t^{-1}\sqrt{\tilde E_{2,\leq k+2}(\mathfrak t,u)}.
\end{equation}

Next, thanks to \eqref{Tchi'}, one has
\begin{equation}\label{8.37}
\begin{split}
\delta^{l+1}|Z^{k-1}T\textrm{tr}\check\chi|\lesssim&
\delta^{2-\varepsilon_0}\mathfrak t^{-3/2}\delta^{l_1}|Z^{n_1}\textrm{tr}\leftidx{^{(T)}}{\slashed\pi}|
+\delta^{1-\varepsilon_0}\mathfrak t^{-1/2}\delta^{1_1}|Z^{n_1}\textrm{tr}\check\chi|
+\delta^{l_1+1}|Z^{n_1}\slashed\triangle\mu|\\
&+\delta^{1-\varepsilon_0}\mathfrak t^{-5/2}\delta^{l_2}|Z^{n_2}x|
+\delta^{l_2}|\slashed dZ^{n_2}\varphi|+\delta^{l_2}|\mathring LZ^{n_2}\varphi|
+\mathfrak t^{-1}\delta^{l_2}|Z^{n_2}\varphi|\\
&+\delta^{1-\varepsilon_0}\mathfrak t^{-3/2}\delta^{l_1}|Z^{n_1}\check L^i|+\delta t^{-2}\delta^{l_2}|Z^{n_2}\mu|
+\delta^{1-\varepsilon_0}\mathfrak t^{-3/2}\delta^{l_0}|\slashed{\mathcal L}_Z^{n_0}\leftidx{^{(R)}}{\slashed\pi_{\mathring L}}|\\
&+\delta^{2-\varepsilon_0}\mathfrak t^{-3/2}\delta^{l_0}|\slashed{\mathcal L}_Z^{n_0}\leftidx{^{(T)}}{\slashed\pi_{\mathring L}}|,
\end{split}
\end{equation}
where the number of $T$ in $Z^{k-1}$ is $l$, $l_i$ and $n_i$ $(i=0,1,2)$ are given as in \eqref{rLchi}.
One now applies \eqref{Zmu} to estimate $Z^{n_1}\slashed\triangle\mu$ and uses Proposition \ref{Y5} to
handle the other terms in \eqref{8.37}. This leads to
\begin{equation}\label{ZTch}
\delta^{l+1}\|Z^{k-1}T\textrm{tr}\check\chi\|_{L^2(\Sigma_{\mathfrak t}^{u})}\lesssim\delta^{3/2-\varepsilon_0}\mathfrak t^{-1}
+\mathfrak t^{-m}\sqrt{\tilde E_{1,\leq k+1}(\mathfrak t,u)}+\delta \mathfrak t^{-1}\sqrt{E_{2,\leq k+1}(\mathfrak t,u)}.
\end{equation}
Thus
\begin{equation}\label{Zk-1T}
\delta^{l+1}\|Z^{k-1}T\textrm{tr}\check\chi\|_{L^2(\Sigma_{\mathfrak t}^{u})}
\lesssim\delta^{3/2-\varepsilon_0}\mathfrak t^{-1}+\delta \mathfrak t^{-1-m}\sqrt{\tilde E_{1,\leq k+2}(\mathfrak t,u)}
+\delta \mathfrak t^{-1}\sqrt{E_{2,\leq k+2}(\mathfrak t,u)}.
\end{equation}

Finally, due to $Z^{k-1}R\textrm{tr}\check\chi=[Z^{k-1}, R]\textrm{tr}\check\chi+RZ^{k-1}\textrm{tr}\check\chi$, then it holds that $$\delta^l|Z^{k-1}R\textrm{tr}\check\chi|\lesssim\delta^{1-\varepsilon_0}\mathfrak t^{-3/2}\delta^{l_0}|\slashed{\mathcal L}_Z^{n_0}\leftidx{^{(R)}}{\slashed\pi_{\mathring L}}|+\delta^{2-\varepsilon_0}\mathfrak t^{-5/2}\delta^{l_0}|\slashed{\mathcal L}_Z^{n_0}\leftidx{^{(R)}}{\slashed\pi_{T}}|+\mathfrak t\delta^{l_1}|\slashed dZ^{n_1}\textrm{tr}\check\chi|,$$
which, together with Proposition \ref{Y5}, \eqref{d} and \eqref{Em}, implies
\begin{equation}\label{Zk-1R}
\begin{split}
\delta^l\|Z^{k-1}R\textrm{tr}\check\chi\|_{L^2(\Sigma_{\mathfrak t}^{u})}&\lesssim\delta^{3/2-\varepsilon_0}\mathfrak t^{-1}
+\mathfrak t^{-m}\sqrt{\tilde E_{1,\leq k+1}(\mathfrak t,u)}+\delta \mathfrak t^{-1}\sqrt{E_{2,\leq k+1}(\mathfrak t,u)}\\
&\lesssim\delta^{3/2-\varepsilon_0}\mathfrak t^{-1}+\delta \mathfrak  t^{-1-m}\sqrt{\tilde E_{1,\leq k+2}(\mathfrak t,u)}
+\delta \mathfrak t^{-1}\sqrt{E_{2,\leq k+2}(\mathfrak t,u)}.
\end{split}
\end{equation}

Collecting \eqref{Zk-1L}, \eqref{Zk-1T} and \eqref{Zk-1R} leads to the desired estimate \eqref{uZkchi}.
Moreover, \eqref{ZTchi} follows from \eqref{ZrLch} and \eqref{ZTch} directly.
\end{proof}

\section{Estimates for the error terms}\label{ert}

With the $L^2$ estimates in Sections \ref{EE} and \ref{L2chimu},
we are ready to handle the error terms $\delta\int_{D^{{\mathfrak t}, u}}|\Phi\cdot\mathring{\underline L}\Psi|$
and $|\int_{D^{{\mathfrak t}, u}}\Phi(\varrho^{2m}\mathring L\Psi+\f12\varrho^{2m-1}\Psi)|$ in \eqref{e}, and then
get the final energy estimates for $\vp$.
For the 4D case in \cite{Ding4}, the time decay rate of the solution is $\varrho^{-3/2}$,
 which leads to the direct treatments for the term $|\int_{D^{{\mathfrak t}, u}}\Phi(\varrho^{2m}\mathring L\Psi+\f12\varrho^{2m-1}\Psi)|$
 with $m\in(0,1/2)$. However, as explained in Subsection \ref{6-2}, here the 2D case is rather different due to the slower time decay.
To estimate $\int_{D^{{\mathfrak t}, u}}\Phi(\varrho^{2m}\mathring L\Psi+\f12\varrho^{2m-1}\Psi)$
and its  top order derivatives,
one needs the following two lemmas.

\begin{lemma}
For any smooth functions $f$ and $h$, it holds that
\begin{align}
\int_{S_{\mathfrak t, u}}(fh)=&\int_{C_u^\mathfrak t}\{\mathring L(fh)+\textrm{tr}\chi(fh)\}+\int_{S_{t_0, u}}(fh)\no\\
=&\int_{C_u^\mathfrak t}f(\mathring Lh+\f{1}{2\varrho}h)+\int_{C_u^\mathfrak t}(\mathring Lf+\f{1}{2\varrho}f)h+\int_{C_u^\mathfrak t}\textrm{tr}\check\chi(fh)+\int_{S_{t_0, u}}(fh),\label{fh}\\
\int_{S_{\mathfrak t, u}}(Rf)h=&-\int_{S_{\mathfrak t, u}}f(Rh)-\f12\int_{S_{\mathfrak t, u}}\textrm{tr}\leftidx{^{(R)}}{\slashed\pi}(fh).\label{Rfh}
\end{align}
\end{lemma}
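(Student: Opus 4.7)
The proof has two parts, one for each identity.

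For the first equality in \eqref{fh}, my plan is to differentiate the $\mathfrak t$-dependent integral $\int_{S_{\mathfrak t,u}} fh = \int_{\mathbb S^1} fh\,\sqrt{\slashed g_{XX}(\mathfrak t,u,\vartheta)}\,d\vartheta$ with respect to $\mathfrak t$ with $u$ held fixed, and then integrate back from $t_0$ to $\mathfrak t$. In the new coordinate system $(\mathfrak t, u, \vartheta)$ the field $\mathring L$ is nothing but $\partial/\partial\mathfrak t$, so it commutes with the $d\vartheta$-integration and only hits the integrand; the point is to identify what $\mathring L$ does to the volume element. From the definition $\lambda_{XX}=g(\mathscr D_X\mathring L,X)$ one has $\mathring L\slashed g_{XX}=2\lambda_{XX}$, hence $\mathring L\sqrt{\slashed g_{XX}}=\textrm{tr}\lambda\cdot\sqrt{\slashed g_{XX}}$. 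This produces the identity
\[
\frac{\partial}{\partial \mathfrak t}\int_{S_{\mathfrak t,u}} fh = \int_{S_{\mathfrak t,u}}\{\mathring L(fh)+\textrm{tr}\lambda\cdot(fh)\},
\]
and integrating in $\mathfrak t$ from $t_0$ to $\mathfrak t$ gives the first line of \eqref{fh}.

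The second equality of \eqref{fh} is purely algebraic. I expand $\mathring L(fh)=f\mathring Lh+h\mathring Lf$ and split $\textrm{tr}\lambda=\textrm{tr}\check\lambda+\frac1\varrho$ using the definition $\check\lambda_{XX}=\lambda_{XX}-\frac{1}{\varrho}\slashed g_{XX}$ together with $\slashed g^{XX}\slashed g_{XX}=1$ (since $S_{\mathfrak t,u}$ is one-dimensional). Distributing the $\frac{1}{\varrho}$-piece symmetrically between $f$ and $h$ gives exactly the terms $f(\mathring Lh+\frac{1}{2\varrho}h)+(\mathring Lf+\frac{1}{2\varrho}f)h+\textrm{tr}\check\lambda\cdot(fh)$, as required.

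For \eqref{Rfh}, my approach is integration by parts on the closed one-dimensional manifold $S_{\mathfrak t,u}$. Since $R=\slashed\Pi\Omega$ is tangent to $S_{\mathfrak t,u}$, the Leibniz rule gives $R(fh)=(Rf)h+f(Rh)$, while the divergence theorem on the closed curve $S_{\mathfrak t,u}$ yields $\int_{S_{\mathfrak t,u}}\slashed{\mathrm{div}}(fh\,R)=0$, i.e.\ $\int_{S_{\mathfrak t,u}}R(fh)=-\int_{S_{\mathfrak t,u}}(fh)\,\slashed{\mathrm{div}}R$. The remaining step is to identify $\slashed{\mathrm{div}}R$. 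From $\leftidx{^{(R)}}\pi=\mathcal L_R g$ and projection onto $S_{\mathfrak t,u}$ one has $\slashed{\mathcal L}_R\slashed g=\leftidx{^{(R)}}{\slashed\pi}$, and tracing with $\slashed g^{XX}$ produces $\textrm{tr}\leftidx{^{(R)}}{\slashed\pi}=2\,\slashed{\mathrm{div}}R$. Substituting this gives exactly \eqref{Rfh}.

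There is no real obstacle here; the lemma is a pair of standard differential-geometric identities tailored to the coordinates $(\mathfrak t,u,\vartheta)$. The only point worth being careful about is the dimensional fact $\slashed g^{XX}\slashed g_{XX}=1$ on the 1-dimensional $S_{\mathfrak t,u}$, which is what lets $\textrm{tr}\lambda-\textrm{tr}\check\lambda$ equal the scalar $1/\varrho$ rather than $2/\varrho$ as in the three-dimensional analogues; once this is noted, the rest reduces to Leibniz, the $\mathring L$-transport of the area form, and the divergence theorem on a circle.
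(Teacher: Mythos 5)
Your proof is correct and is the natural argument for this lemma, which the paper states without proof. Both pieces hold together: the first line of \eqref{fh} is the fundamental theorem of calculus in $\mathfrak t$ at fixed $u$, using $\mathring L=\partial/\partial\mathfrak t$ in the $(\mathfrak t,u,\vartheta)$ coordinates and $\mathring L\slashed g_{XX}=2\lambda_{XX}$ (stated explicitly in the paper's proof of Lemma \ref{LTchi}) to give $\mathring L\sqrt{\slashed g_{XX}}=\textrm{tr}\lambda\,\sqrt{\slashed g_{XX}}$; the second line of \eqref{fh} is just Leibniz plus the algebraic split $\textrm{tr}\lambda=\textrm{tr}\check\lambda+\varrho^{-1}$, where you correctly observe that $\slashed g^{XX}\slashed g_{XX}=1$ in one dimension so the trace of $\varrho^{-1}\slashed g$ is $\varrho^{-1}$, not $2/\varrho$; and \eqref{Rfh} is the divergence theorem on the closed curve $S_{\mathfrak t,u}$ combined with $\textrm{tr}\leftidx{^{(R)}}{\slashed\pi}=\slashed g^{XX}\cdot 2g(\mathscr D_XR,X)=2\,\slashed{\text{div}}R$, which is verified directly from the definition \eqref{dt} of the deformation tensor and the fact that $R$ is tangent so $\slashed\nabla_XR=\slashed\Pi(\mathscr D_XR)$ and $g(\slashed\nabla_XR,X)=g(\mathscr D_XR,X)$. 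No gaps.
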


\begin{proof}
This follows from similar arguments in \cite{J}.
\end{proof}

\begin{lemma}\label{f1f2f3}
If $f_i$ $(i=1,2,3)$ are smooth functions, then it holds that
\begin{equation}\label{fLfRf}
\begin{split}
-\int_{D^{{\mathfrak t},u}}\{f_1\cdot(\mathring Lf_2+\f{1}{2\varrho}f_2)\cdot Rf_3\}=&-\int_{D^{{\mathfrak t},u}}\{f_1\cdot Rf_2\cdot\mathring Lf_3\}+\int_{\Sigma_{\mathfrak t}^u}\{f_1\cdot Rf_2\cdot f_3\}\\
&+\int_{D^{{\mathfrak t},u}}Er_1+\int_{\Sigma_{\mathfrak t}^u}Er_2+\int_{\Sigma_{t_0}^u}Er_3
\end{split}
\end{equation}
with
\begin{align*}
Er_1=&(\textrm{tr}\check\chi)f_1\cdot f_2\cdot Rf_3-Rf_1\cdot f_2\cdot\mathring Lf_3
-\f12(\textrm{tr}\leftidx{^{(R)}}{\slashed\pi})f_1\cdot f_2\cdot\mathring Lf_3
-f_1\cdot(\leftidx{^{(R)}}{\slashed\pi}_{\mathring LX}\slashed d^Xf_2)\cdot f_3\\
&-(\leftidx{^{(R)}}{\slashed\pi}_{\mathring LX}\slashed d^Xf_1)\cdot f_2\cdot f_3
-(\slashed{\textrm{div}}\leftidx{^{(R)}}{\slashed\pi}_{\mathring L})f_1\cdot f_2\cdot f_3
-(\mathring L+\f{1}{2\varrho})f_1\cdot Rf_2\cdot f_3\\
&-R(\mathring L+\f{1}{2\varrho})f_1\cdot f_2\cdot f_3-\f12(\textrm{tr}\leftidx{^{(R)}}{\slashed\pi})(\mathring L
+\f{1}{2\varrho})f_1\cdot f_2\cdot f_3,\\
Er_2=&Rf_1\cdot f_2\cdot f_3+\f12(\textrm{tr}\leftidx{^{(R)}}{\slashed\pi})f_1\cdot f_2\cdot f_3,\\
Er_3=&-f_1\cdot Rf_2\cdot f_3-Rf_1\cdot f_2\cdot f_3-\f12(\textrm{tr}\leftidx{^{(R)}}{\slashed\pi})f_1\cdot f_2\cdot f_3.
\end{align*}
\end{lemma}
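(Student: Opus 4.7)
\textbf{Proof proposal for Lemma \ref{f1f2f3}.} The plan is to perform a double integration by parts: first move the operator $\mathring L + \tfrac{1}{2\varrho}$ off $f_2$ using identity \eqref{fh}, and then use the angular integration by parts identity \eqref{Rfh} (together with divergence on $\mathbb S^1$, which has no boundary) to swap the rotation $R$ from $f_3$ onto $f_2$. Throughout, the commutator $[\mathring L,R]=\leftidx{^{(R)}}{\slashed\pi_{\mathring L}}^X X$ from Lemma \ref{com} produces the ``$\leftidx{^{(R)}}{\slashed\pi}_{\mathring LX}\slashed d^X$'' corrections appearing in $Er_1$, and the fact that $R\varrho=0$ (since $R$ is tangent to $S_{\mathfrak t,u}$) simplifies the handling of the weight $\tfrac{1}{2\varrho}$.

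First I would apply \eqref{fh} on each null cone $C_{u'}^{\mathfrak t}$ with $f=f_1Rf_3$ and $h=f_2$, then integrate in $u'\in[0,u]$, to obtain
\[
-\int_{D^{{\mathfrak t},u}}f_1(\mathring Lf_2+\tfrac{1}{2\varrho}f_2)Rf_3
=-\int_{\Sigma_{\mathfrak t}^u}f_1Rf_3\cdot f_2+\int_{\Sigma_{t_0}^u}f_1Rf_3\cdot f_2+\int_{D^{{\mathfrak t},u}}(\mathring L+\tfrac{1}{2\varrho})(f_1Rf_3)\,f_2+\int_{D^{{\mathfrak t},u}}\textrm{tr}\check\lambda\,f_1f_2Rf_3.
\]
The last term already matches the first contribution to $Er_1$. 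For the two boundary integrals I would apply \eqref{Rfh} with $f=f_3$, $h=f_1f_2$ to replace $Rf_3$ by $-Rf_1\cdot f_2-f_1Rf_2-\tfrac12\textrm{tr}\leftidx{^{(R)}}{\slashed\pi}\,f_1f_2$; the $f_1Rf_2$ term on $\Sigma_{\mathfrak t}^u$ is exactly the second term on the right-hand side of \eqref{fLfRf}, and the remaining pieces combine to form precisely $\int_{\Sigma_{\mathfrak t}^u}Er_2+\int_{\Sigma_{t_0}^u}Er_3$.

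It then remains to process the bulk term $\int_{D^{{\mathfrak t},u}}(\mathring L+\tfrac{1}{2\varrho})(f_1Rf_3)f_2$. I would expand it by Leibniz and use $[\mathring L,R]f_3=\leftidx{^{(R)}}{\slashed\pi}_{\mathring LX}\slashed d^Xf_3$ to write
\[
(\mathring L+\tfrac{1}{2\varrho})(f_1Rf_3)=\bigl((\mathring L+\tfrac{1}{2\varrho})f_1\bigr)Rf_3+f_1R(\mathring Lf_3)+f_1\leftidx{^{(R)}}{\slashed\pi}_{\mathring LX}\slashed d^Xf_3.
\]
On the first piece I apply \eqref{Rfh} with $f=f_3$, $h=(\mathring L+\tfrac{1}{2\varrho})f_1\cdot f_2$; using once more the commutator $[\mathring L,R]f_1$, the resulting $(\mathring L+\tfrac{1}{2\varrho})Rf_1$ combines with the $\leftidx{^{(R)}}{\slashed\pi}_{\mathring LX}\slashed d^Xf_1$ correction to yield the term $-R(\mathring L+\tfrac{1}{2\varrho})f_1\cdot f_2\cdot f_3$ in $Er_1$, and the remaining pieces give $-(\mathring L+\tfrac{1}{2\varrho})f_1\cdot Rf_2\cdot f_3$ and $-\tfrac12\textrm{tr}\leftidx{^{(R)}}{\slashed\pi}(\mathring L+\tfrac{1}{2\varrho})f_1\cdot f_2f_3$. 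On the second piece I apply \eqref{Rfh} with $f=\mathring Lf_3$, $h=f_1f_2$, producing $-\int f_1Rf_2\cdot\mathring Lf_3$ (the main term on the right of \eqref{fLfRf}) together with the $Er_1$ contributions $-Rf_1\cdot f_2\cdot\mathring Lf_3$ and $-\tfrac12\textrm{tr}\leftidx{^{(R)}}{\slashed\pi}f_1f_2\mathring Lf_3$. Finally, the third piece $\int f_1\leftidx{^{(R)}}{\slashed\pi}_{\mathring LX}\slashed d^Xf_3\cdot f_2$ is handled by integration by parts of the angular one-form $f_1f_2\leftidx{^{(R)}}{\slashed\pi}_{\mathring L}$ on the closed $1$-manifold $S_{\mathfrak t,u'}$ (no boundary terms), generating precisely the three terms $-f_1\leftidx{^{(R)}}{\slashed\pi}_{\mathring LX}\slashed d^Xf_2\cdot f_3$, $-\leftidx{^{(R)}}{\slashed\pi}_{\mathring LX}\slashed d^Xf_1\cdot f_2f_3$, and $-\slashed{\textrm{div}}\leftidx{^{(R)}}{\slashed\pi}_{\mathring L}\cdot f_1f_2f_3$ in $Er_1$.

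The calculation is essentially bookkeeping and no step presents a real analytic difficulty; the only point where one must be careful is in tracking the two commutator corrections $[\mathring L,R]f_1$ and $[\mathring L,R]f_3$ so that the expression $R(\mathring L+\tfrac{1}{2\varrho})f_1$ (rather than $(\mathring L+\tfrac{1}{2\varrho})Rf_1$) appears in $Er_1$. Once these are collected the identity \eqref{fLfRf} follows by summation.
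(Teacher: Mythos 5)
Your proof is correct and follows essentially the same route as the paper: first apply \eqref{fh} with $f=f_1Rf_3$, $h=f_2$ and integrate in $u$; then use \eqref{Rfh} on the two boundary integrals; then expand $(\mathring L+\tfrac1{2\varrho})(f_1Rf_3)$ by Leibniz and the commutator $[\mathring L,R]f_3$, treating the three resulting bulk pieces by \eqref{Rfh} (twice) and by angular divergence on the closed circle $S_{\mathfrak t,u'}$. One small remark: the appeal to $[\mathring L,R]f_1$ on the first bulk piece is an unnecessary detour — after expanding $R[(\mathring L+\tfrac1{2\varrho})f_1\cdot f_2]$ by Leibniz, the term $R(\mathring L+\tfrac1{2\varrho})f_1$ appears directly (the intermediate step of producing $(\mathring L+\tfrac1{2\varrho})Rf_1$ and then recombining with the commutator correction just round-trips to the same expression), so the only commutator actually needed in the bulk computation is $[\mathring L,R]f_3$.
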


\begin{proof}
Let $f=f_1\cdot Rf_3$ and $h=f_2$ in \eqref{fh} and then integrate over $[0,u]$ to get
\begin{equation}\label{f123}
\begin{split}
&-\int_{D^{{\mathfrak t},u}}\{f_1\cdot(\mathring Lf_2+\f{1}{2\varrho}f_2)\cdot Rf_3\}\\
=&\int_{D^{{\mathfrak t},u}}\textrm{tr}\check\chi\{f_1\cdot f_2\cdot Rf_3\}+\int_{D^{{\mathfrak t},u}}\{(\mathring L+\f{1}{2\varrho})(f_1\cdot Rf_3)\cdot f_2\}\\
&-\int_{\Sigma_{\mathfrak t}^u}\{f_1\cdot f_2\cdot Rf_3\}+\int_{\Sigma_{t_0}^u}\{f_1\cdot f_2\cdot Rf_3\}\\
:=&\int_{D^{{\mathfrak t},u}}\textrm{tr}\check\chi\{f_1\cdot f_2\cdot Rf_3\}+I+II+III.
\end{split}
\end{equation}
Choosing $f=f_3$ and $h=f_2(\mathring Lf_1+\f{1}{2\varrho}f_1)$ in \eqref{Rfh} yields
\begin{equation}\label{I}
\begin{split}
I=&-\int_{D^{{\mathfrak t},u}}R\{f_2(\mathring Lf_1+\f{1}{2\varrho} f_1)\}f_3
-\f12\int_{D^{{\mathfrak t},u}}\textrm{tr}\leftidx{^{(R)}}{\slashed\pi}\{f_2(\mathring Lf_1
+\f{1}{2\varrho}f_1)f_3\}\\
&+\int_{D^{{\mathfrak t},u}}\{f_1\cdot f_2\cdot R\mathring Lf_3\}
+\int_{D^{{\mathfrak t},u}}\{f_1\cdot f_2\cdot(\leftidx{^{(R)}}{\slashed\pi}_{\mathring LX}\slashed d^Xf_3)\}.
\end{split}
\end{equation}
Treating $\int_{D^{{\mathfrak t},u}}\{f_1\cdot f_2\cdot R\mathring Lf_3\}$ by using \eqref{Rfh}, one can get from \eqref{I} that
\begin{equation}\label{I2}
\begin{split}
I=&-\int_{D^{{\mathfrak t},u}}Rf_2(\mathring Lf_1+\f{1}{2\varrho}f_1)f_3-\int_{D^{{\mathfrak t},u}}f_2\{R(\mathring L+\f{1}{2\varrho})f_1\}f_3-\int_{D^{{\mathfrak t},u}}(\slashed{\textrm{div}}\leftidx{^{(R)}}{\slashed\pi}_{\mathring L})f_1\cdot f_2\cdot f_3\\
&-\f12\int_{D^{{\mathfrak t},u}}\textrm{tr}\leftidx{^{(R)}}{\slashed\pi}\{f_2(\mathring Lf_1+\f{1}{2\varrho}f_1)f_3\}
-\int_{D^{{\mathfrak t},u}}f_1\cdot Rf_2\cdot\mathring Lf_3-\int_{D^{{\mathfrak t},u}}Rf_1\cdot f_2\cdot\mathring Lf_3\\
&-\f12\int_{D^{{\mathfrak t},u}}\textrm{tr}\leftidx{^{(R)}}{\slashed\pi}\{f_1\cdot f_2\cdot\mathring Lf_3\}-\int_{D^{{\mathfrak t},u}}f_1(\leftidx{^{(R)}}{\slashed{\pi}}_{\mathring LX}\slashed d^Xf_2)f_3
-\int_{D^{{\mathfrak t},u}}(\leftidx{^{(R)}}{\slashed{\pi}}_{\mathring LX}\slashed d^Xf_1)f_2\cdot f_3.
\end{split}
\end{equation}
Similarly,
\begin{align}
II&=\int_{\Sigma_{\mathfrak t}^u}f_1(Rf_2)f_3+\int_{\Sigma_{\mathfrak t}^u}Rf_1\cdot f_2\cdot f_3
+\f12\int_{\Sigma_{\mathfrak t}^u}(\textrm{tr}\leftidx{^{(R)}}{\slashed\pi})f_1\cdot f_2\cdot f_3,\label{II}\\
III&=-\int_{\Sigma_{t_0}^u}f_1(Rf_2)f_3-\int_{\Sigma_{t_0}^u}Rf_1\cdot f_2\cdot f_3-\f12\int_{\Sigma_{t_0}^u}(\textrm{tr}\leftidx{^{(R)}}{\slashed\pi})f_1\cdot f_2\cdot f_3.\label{III}
\end{align}
Thus \eqref{fLfRf} follows from substituting \eqref{I2}-\eqref{III} into \eqref{f123}.
\end{proof}

Recall the notations in Section \ref{EE} that for $\Psi=Z^{k+1}\varphi_\gamma=Z_{k+1}Z_k\cdots Z_1\varphi_\gamma$, $\Phi=\Phi_\gamma^{k+1}\equiv J_1^k+J_2^k$ is given explicitly in \eqref{Phik} with $J_1^k$ and $J_2^k$ being the summation and the rest in \eqref{Phik} respectively. We will also use the notation that $\varphi_\gamma^n=Z_n\cdots Z_1\varphi_\gamma$ for $n\geq 1$, and $\varphi_\gamma^0=\varphi_\gamma$. Then the main task is to estimate $J_1^k$ and $J_2^k$.

\subsection{Estimates for $J_1^k$}\label{tot}

It follows from the explicit form of $J_1^k$  that the key is to estimate the derivatives of $\mu\textrm{div}\leftidx{^{(Z)}}C_\gamma^{n}$ $(0\leq n \leq k)$. Due to \eqref{muZC} and \eqref{D1}-\eqref{D3}, the treatment involving $\leftidx{^{(Z)}}D_{\gamma,2}^{n}$ will be given separately from those for $\leftidx{^{(Z)}}D_{\gamma,1}^{n}$ and $\leftidx{^{(Z)}}D_{\gamma,3}^{n}$, since the latter do not contain the top order derivatives of $\varphi_\gamma$.
\begin{enumerate}[(1)]
\item We start with the estimates involving $\leftidx{^{(Z)}}D_{\gamma,1}^{n}$ and $\leftidx{^{(Z)}}D_{\gamma,3}^{n}$. Substituting \eqref{Lpi}-\eqref{Rpi} into \eqref{D1} and \eqref{D3} yields
\begin{equation}\label{T1}
\begin{split}
\leftidx{^{(T)}}D_{\gamma,1}^{n}=&T\mu\mathring L^2\varphi_\gamma^{n}+\mu(\slashed d\mu
+2c^{-1}\mu\tilde T^a\slashed d\varphi_a)\cdot\slashed d\mathring L\varphi_\gamma^{n}+\f12\textrm{tr}\leftidx{^{(T)}}{\slashed\pi}(\mathring L\mathring{\underline L}\varphi_\gamma^{n}+\f12\textrm{tr}\chi\mathring{\underline L}\varphi_\gamma^{n})\\
&+(\slashed d\mu+2c^{-1}\mu\tilde T^a\slashed d\varphi_a)\cdot\slashed d\mathring{\underline L}\varphi_\gamma^{n}-T\mu\slashed\triangle\varphi_\gamma^{n}+\f12\big(-c^{-1}Tc-c^{-1}\mu\mathring Lc\\
&+2c^{-1}\mu\slashed dx^a\cdot\slashed d\varphi_a-2\mu\textrm{tr}\chi\big)\slashed\triangle \varphi_\gamma^{n},
\end{split}
\end{equation}
\begin{equation}\label{T3}
\begin{split}
\leftidx{^{(T)}}D_{\gamma,3}^{n}=&\big\{\textrm{tr}\chi T\mu+(\f14\mu\textrm{tr}\chi
+\f12\mu\textrm{tr}\tilde\theta)\textrm{tr}_{\slashed g}\leftidx{^{(T)}}{\slashed \pi}
-\f12|\slashed d\mu|^2-c^{-1}\mu\tilde T^a\slashed d\mu\cdot\slashed d\varphi_a\big\}\mathring L\varphi_\gamma^{n}\qquad\qquad\\
&+(\f12\textrm{tr}\leftidx{^{(T)}}{\slashed\pi}+\f12\mathring L\mu
-\mu\textrm{tr}\chi)(\slashed d\mu+2c^{-1}\mu\tilde T^a\slashed d\varphi_a)\cdot\slashed d\varphi_\gamma^{n},
\end{split}
\end{equation}
\begin{equation}\label{rL1}
\begin{split}
\leftidx{^{(\varrho\mathring L)}}D_{\gamma,1}^{n}=&(2-\mu+\varrho\mathring L\mu)\mathring L^2\varphi_\gamma^{n}-2\varrho(\slashed d\mu+2c^{-1}\mu\tilde T^a\slashed d\varphi_a)\cdot\slashed d\mathring L\varphi_\gamma^{n}\\
&+\varrho\textrm{tr}\chi(\mathring L\mathring{\underline L}\varphi_\gamma^{n}+\f12\textrm{tr}\chi\mathring{\underline L}\varphi_\gamma^{n})+\varrho(\mu\textrm{tr}\check\chi
-\mathring L\mu )\slashed\triangle\varphi_\gamma^{n},\qquad\qquad\qquad\qquad\quad
\end{split}
\end{equation}
\begin{equation}\label{rL3}
\begin{split}
\leftidx{^{(\varrho\mathring L)}}D_{\gamma,3}^{n}=&\textrm{tr}\chi\big\{2-\mu+\varrho\mathring L\mu+\varrho\mu\textrm{tr}\tilde\theta+\f12\varrho\mu\textrm{tr}\chi\big\}\mathring L\varphi_\gamma^{n}\qquad\qquad\qquad\qquad\quad\qquad\quad\\
&+2\varrho\textrm{tr}\chi(\slashed d\mu+2\mu c^{-1}\tilde T^a\slashed d\varphi_a)\cdot\slashed d\varphi_\g^n,
\end{split}
\end{equation}
\begin{equation}\label{R1}
\begin{split}
\leftidx{^{(R)}}D_{\gamma,1}^{n}=
&R\mu\mathring L^2\varphi_\gamma^{n}-\leftidx{^{(R)}}{\slashed\pi}_{\mathring{\underline L}}\cdot\slashed d\mathring L\varphi_\gamma^{n}+\f12\textrm{tr}\leftidx{^{(R)}}{\slashed\pi}(\mathring L\mathring{\underline L}\varphi_\gamma^{n}+\f12\textrm{tr}\chi\mathring{\underline L}\varphi_\gamma^{n})\qquad\qquad\qquad\\
&-\leftidx{^{(R)}}{\slashed\pi}_{\mathring L}\cdot\slashed d\mathring{\underline L}\varphi_\gamma^{n}
-R\mu\slashed\triangle\varphi_\gamma^{n}
+\f12\mu(\textrm{tr}\leftidx{^{(R)}}{\slashed\pi})\slashed\triangle\varphi_\g^n,
\end{split}
\end{equation}
\begin{equation}\label{R3}
\begin{split}
\leftidx{^{(R)}}D_{\gamma,3}^{n}=&\big\{\textrm{tr}\chi R\mu+\f12\mu(\textrm{tr}\tilde\theta
+\f12\textrm{tr}\chi)\textrm{tr}\leftidx{^{(R)}}{\slashed\pi}
+\f12\slashed d\mu\cdot\leftidx{^{(R)}}{\slashed\pi}_{\mathring L}\big\}\mathring L\varphi_\gamma^{n}
+\big\{\f12\textrm{tr}\leftidx{^{(R)}}{\slashed\pi}(\slashed d\mu\\
&+2\mu c^{-1}\tilde T^a\slashed d\varphi_a)-\f12\mathring L\mu\leftidx{^{(R)}}{\slashed\pi}_{\mathring L}+\f12\textrm{tr}\chi\leftidx{^{(R)}}{\slashed\pi}_{\mathring{\underline L}}
+\f12\mu\textrm{tr}\chi\leftidx{^{(R)}}{\slashed\pi}_{\mathring L}\big\}\cdot\slashed d\varphi_\gamma^{n}.
\end{split}
\end{equation}

Note that $\mathring L\underline{\mathring L}\varphi_\gamma^n+\f12\textrm{tr}\chi\underline{\mathring L}\varphi_\gamma^n$ appears in $\leftidx{^{(Z)}}D_{\gamma,1}^{n}$ and $J_1^k$ contains at most the $(k-n)^{\text{th}}$ order derivatives of $\leftidx{^{(Z)}}D_{\gamma,i}^{n}$ $(i=1,3)$. Then it can be checked that the $L^2$ norms of all the terms involving $\leftidx{^{(Z)}}D_{\gamma,i}^{n}$ $(i=1,3)$ in $J_1^k$ can be treated by using the $L^\infty$-estimates in Section \ref{BA} and the related $L^2$ estimates of Proposition \ref{Y5}.
Therefore, it holds that
\begin{equation}\label{D11}
\begin{split}
&\delta^{2l+1}\mid\int_{D^{{\mathfrak t}, u}}\sum_{j=1}^{k}\big(Z_{k+1}
+\leftidx{^{(Z_{k+1})}}\chi\big)\dots\big(Z_{k+2-j}+\leftidx{^{(Z_{k+2-j})}}\chi\big)\leftidx{^{(Z_{k+1-j})}}D_{\gamma,1}^{k-j}\\
&\qquad\qquad\qquad\cdot\mathring{\underline L}\varphi_\gamma^{k+1}\mid\\
\lesssim&\delta^{2l+1}\int_{t_0}^\mathfrak t\sum_{j=1}^{k}\|\big(Z_{k+1}+\leftidx{^{(Z_{k+1})}}\chi\big)\dots\big(Z_{k+2-j}
+\leftidx{^{(Z_{k+2-j})}}\chi\big)\leftidx{^{(Z_{k+1-j})}}D_{\gamma,1}^{k-j}\|_{L^2(\Sigma_{\tau}^{u})}\\
&\qquad\cdot\|\mathring{\underline L}Z^{k+1}\varphi_\gamma\|_{L^2(\Sigma_{\tau}^{u})}d\tau\\
\lesssim&\delta^{4-4\varepsilon_0}+\int_{t_0}^\mathfrak t\tau^{-3/2}\tilde E_{1,\leq k+2}(\tau,u)d\tau
+\delta\int_{t_0}^\mathfrak t\tau^{-3/2}\tilde E_{2,\leq k+2}(\tau,u)d\tau.
\end{split}
\end{equation}
Similarly, one can get that
\begin{equation}\label{D33}
\begin{split}
&\delta^{2l+1}\mid\int_{D^{{\mathfrak t}, u}}\sum_{j=1}^{k}\big(Z_{k+1}+\leftidx{^{(Z_{k+1})}}\chi\big)\dots\big(Z_{k+2-j}
+\leftidx{^{(Z_{k+2-j})}}\chi\big)\leftidx{^{(Z_{k+1-j})}}D_{\gamma,3}^{k-j}\\
&\qquad\qquad\qquad\cdot\mathring{\underline L}Z^{k+1}\varphi_\gamma\mid\\
\lesssim&\delta^{3-3\varepsilon_0}+\delta^{2-3\varepsilon_0}\int_{t_0}^\mathfrak t\tau^{-3/2}\tilde E_{1,\leq k+2}(\tau,u)d\tau+\delta^{2-\varepsilon_0}\int_{t_0}^\mathfrak t\tau^{-3/2}\tilde E_{2,\leq k+2}(\tau,u)d\tau.
\end{split}
\end{equation}

The corresponding terms in $\delta^{2l}|\int_{D^{{\mathfrak t}, u}}\Phi(\varrho^{2m}\mathring L\Psi+\f12\varrho^{2m-1}\Psi)|$
can also be estimated as
\begin{equation}\label{D13}
\begin{split}
&\delta^{2l}\int_{D^{{\mathfrak t}, u}}\mid\sum_{j=1}^{k}\big(Z_{k+1}+\leftidx{^{(Z_{k+1})}}\chi\big)
\dots\big(Z_{k+2-j}+\leftidx{^{(Z_{k+2-j})}}\chi\big)\big(\leftidx{^{(Z_{k+1-j})}}D_{\gamma,1}^{k-j}\\
&\qquad\qquad\qquad+\leftidx{^{(Z_{k+1-j})}}D_{\gamma,3}^{k-j}\big)(\varrho^{2m}\mathring L\varphi_\gamma^{k+1}+\f12\varrho^{2m-1}\varphi_\g^{k+1})\mid\\
\lesssim&\delta^{2l+1}\int_{D^{{\mathfrak t}, u}}\varrho^{2m}\Big\{\sum_{j=1}^{k}\big(Z_{k+1}
+\leftidx{^{(Z_{k+1})}}\chi\big)\dots\big(Z_{k+2-j}
+\leftidx{^{(Z_{k+2-j})}}\chi\big)\big(\leftidx{^{(Z_{k+1-j})}}D_{\gamma,1}^{k-j}\\
&\qquad+\leftidx{^{(Z_{k+1-j})}}D_{\gamma,3}^{k-j}\big)\Big\}^2+\delta^{2l-1}\int_{D^{{\mathfrak t}, u}}|(\varrho^{m}\mathring L\varphi_\gamma^{k+1}+\f12\varrho^{m-1}\varphi_\g^{k+1})|^2\\
\lesssim&\delta^{4-4\varepsilon_0}+\delta\int_{t_0}^\mathfrak t\tau^{-2}\tilde E_{1,\leq k+2}(\tau,u)d\tau
+\delta\int_{t_0}^\mathfrak t\tau^{-3+2m}\tilde E_{2,\leq k+2}(\tau,u)d\tau\\
&+\delta^{-1}\int_0^uF_{1,k+2}(\mathfrak t,u')du'.
\end{split}
\end{equation}

\item We now estimate the terms involving $\leftidx{^{(Z)}}D_{\gamma,2}^{n}$ $(0\leq n\leq k)$ in $J_1^k$. Note that in the special case $n=0$, $j$ equals $k$ in $J_1^k$, and the order of the top derivatives in $\leftidx{^{(Z)}}D_{\gamma,2}^{0}$ is $k$, so that $\leftidx{^{(Z)}}D_{\gamma,2}^{0}$ contains terms involving the $(k+1)^{\text{th}}$ order derivatives of the deformation tensor.
This prevents one from using Proposition \ref{Y5} to estimate the $L^2$ norm of $\leftidx{^{(Z)}}D_{\gamma,2}^0$
directly since the $L^2$ norm of the $(k+1)^{\text{th}}$ order derivatives of the deformation tensor can be controlled only by $\tilde E_{1,\leq k+3}(\mathfrak t,u)$ and $\tilde E_{2,\leq k+3}(\mathfrak t,u)$ in the energy estimates. Thus, we will
examine carefully the expression of $\leftidx{^{(Z)}}D_{\gamma,2}^n$ and apply the estimates in Section \ref{L2chimu}
to deal with the top order derivatives of $\textrm{tr}\chi$ and $\mu$.
Indeed, it follows from direct computations that
\begin{equation}\label{T2}
\begin{split}
\leftidx{^{(T)}}D_{\gamma,2}^{n}=&\mathring L T\mu\cdot\mathring L\varphi_\gamma^{n}-\f12\big(\slashed{\mathcal L}_{\mathring L}\leftidx{^{(T)}}{\slashed\pi}_{\mathring{\underline L}}\big)\cdot\slashed d\varphi_\gamma^n+\f12\boxed{\slashed{\textrm{div}}(\slashed d\mu}+2c^{-1}\mu\tilde T^a\slashed d\varphi_a)\underline L\varphi_\g^n\\
&+\big\{\f14\mathring{\underline L}\big(-c^{-1}Tc-c^{-1}\mu\mathring Lc+2\mu c^{-1}\slashed dx^a\cdot\slashed d\varphi_a\uwave{-2\mu\textrm{tr}\chi}\big)+\f12\boxed{\slashed\nabla_X\big(\mu\slashed d^X\mu}\\
&+2c^{-1}\mu^2\tilde T^a\slashed d\varphi_a\big)\big\}\mathring L\varphi_\gamma^n-\big\{\uline{\slashed dT\mu-\f12\slashed{\mathcal L}_{\mathring{\underline L}}(\slashed d\mu}+2c^{-1}\mu\tilde T^a\slashed d\varphi_a)\big\}\cdot\slashed d\varphi_\gamma^n\\
&-\f12\underbrace{\slashed d\big(2\mu^2\textrm{tr}\check\chi}+2\varrho^{-1}\mu^2+c^{-1}\mu Tc+c^{-1}\mu^2\mathring Lc-2c^{-1}\mu^2(\slashed dx^a)\cdot\slashed d\varphi_a\big)\cdot\slashed d\varphi_\gamma^n\\
&+\f14\big(\mathring L\textrm{tr}\leftidx{^{(T)}}{\slashed\pi}\big)\mathring{\underline L}\varphi_\gamma^n,
\end{split}
\end{equation}
\begin{equation}\label{rL2}
\begin{split}
\leftidx{^{(\varrho\mathring L)}}D_{\gamma,2}^{n}=&\big\{\mathring L\big(-\mu+\varrho\mathring L\mu\big)+\uwave{\f12\mathring{\underline L}\big(\varrho\textrm{tr}\check\chi\big)}-\boxed{\slashed\nabla_X\big(\varrho\slashed d^X\mu}+2c^{-1}\varrho\mu\tilde T^a\slashed d^X\varphi_a\big)\big\}\mathring L\varphi_\gamma^n\\
&+\f12\mathring L\big(\varrho\textrm{tr}\check\chi\big)\mathring{\underline L}\varphi_\gamma^n-\big\{\slashed{\mathcal L}_{\mathring L}\big(\varrho\slashed d\mu+2c^{-1}\varrho\mu\tilde T^a\slashed d\varphi_a\big)+\slashed d\big(\mu+\varrho\mathring L\mu\big)\big\}\cdot\slashed d\varphi_\gamma^n\\
&+\underbrace{\slashed d\big(\varrho\mu\textrm{tr}\chi\big)}\cdot\slashed d\varphi_\gamma^n,
\end{split}
\end{equation}
\begin{equation}\label{R2}
\begin{split}
\leftidx{^{(R)}}D_{\gamma,2}^{n}=&(\mathring LR\mu)\mathring L\varphi_\gamma^n-\big\{\f12\slashed{\mathcal L}_{\mathring L}\big(\leftidx{^{(R)}}{\slashed\pi}_{\mathring{\underline L}}\big)+\boxed{\slashed dR\mu}-\f12\uwave{\slashed{\mathcal L}_{\mathring{\underline L}}\big(R^X\textrm{tr}\check\chi\slashed g}-g_{ab}\epsilon_{i}^a\check L^i\slashed dx^b\\
&-\upsilon c^{-1}\tilde T^a\slashed d\varphi_a+\f12c^{-1}R^X\mathring Lc\slashed g+\f12c^{-1}\upsilon\slashed dc\big)\big\}\cdot\slashed d\varphi_\gamma^n+\f14\mathring L\big(\textrm{tr}\leftidx{^{(R)}}{\slashed\pi}\big)\mathring{\underline L}\varphi_\gamma^n\\
&+\f14\uwave{\mathring{\underline L}\big(2\upsilon\textrm{tr}\chi}+c^{-1}\upsilon\mathring Lc-2c^{-1}\upsilon\slashed dx^a\cdot\slashed d\varphi_a-c^{-1}Rc\big)\mathring L\varphi_\gamma^n-\f12\underbrace{\slashed\nabla_X\big(\mu R^X\textrm{tr}\check\chi}\\
&+g_{ab}\mu\epsilon_{i}^b \check L^i\slashed d^Xx^a+2g_{ab}\mu{\epsilon_{i}^a}\check T^i\slashed dx^b+\mu\upsilon c^{-1}\tilde T^a\slashed d\varphi_a+\f12c^{-1}\mu R^X\mathring Lc\\
&+\f12c^{-1}\mu\upsilon\slashed dc+2\upsilon\slashed d\mu-2c^{-1}\mu R\varphi_a\slashed d^Xx^a\big)\mathring L\varphi_\gamma^n+\f12\underbrace{\slashed\nabla_X\big(R^X\textrm{tr}\check\chi}\\
&-g_{ab}\epsilon_{i}^a\check L^i\slashed d^Xx^b-\upsilon c^{-1}\tilde T^a\slashed d^X\varphi_a+\f12c^{-1}R^X\mathring Lc+\f12c^{-1}\upsilon\slashed d_Xc\big)\mathring{\underline L}\varphi_\gamma^n\\
&+\f12\underbrace{\slashed d\big(2\upsilon\mu\textrm{tr}\chi}+c^{-1}\mu\upsilon\mathring Lc-2c^{-1}\mu\upsilon\slashed dx^a\cdot\slashed d\varphi_a
-c^{-1}\mu Rc\big)\cdot\slashed d\varphi_\gamma^n.
\end{split}
\end{equation}

It is emphasized that special attentions are needed for terms with underlines, wavy lines, boxes, or braces in \eqref{T2}-\eqref{R2}. In \eqref{T2},
due to $\f12\slashed{\mathcal L}_{\mathring{\underline L}}\slashed d\mu=\slashed dT\mu+\f12\mu\slashed d\mathring L\mu$, the
corresponding underline part is
\begin{equation}\label{Y-25}
\slashed dT\mu-\f12\slashed{\mathcal L}_{\mathring{\underline L}}\slashed d\mu=-\f12\mu\slashed d\mathring L\mu,
\end{equation}
which can be estimated by using \eqref{lmu}. For the terms with wavy lines in \eqref{T2}-\eqref{R2}, one can use \eqref{Tchi'} and \eqref{Lchi'} to replace $\f12\slashed{\mathcal L}_{\mathring{\underline L}}\check\chi$ by $\slashed\nabla^2\mu+\cdots$ which can be handled by \eqref{Zmu}.
We also apply \eqref{Zmu} and \eqref{d} to estimate those terms with boxes and braces respectively.
Meanwhile, one notes that in \eqref{T2}-\eqref{R2}, there are some terms containing derivatives of the deformation tensors with respect to $\mathring L$, for example, $\f12\big(\slashed{\mathcal L}_{\mathring L}\leftidx{^{(T)}}{\slashed\pi}_{\mathring{\underline L}}\big)\cdot\slashed d\varphi_\gamma^n$ appears in \eqref{T2}. However, these terms can be estimated by taking into account of \eqref{lmu}, \eqref{Lchi'}, \eqref{Lpi} and \eqref{Rpi}.

In summary, we can arrive at
\begin{equation}\label{D22}
\begin{split}
&\delta^{2l+1}\mid\int_{D^{{\mathfrak t}, u}}\sum_{j=1}^{k}\big(Z_{k+1}+\leftidx{^{(Z_{k+1})}}\chi\big)\dots\big(Z_{k+2-j}
+\leftidx{^{(Z_{k+2-j})}}\chi\big)\leftidx{^{(Z_{k+1-j})}}D_{\gamma,2}^{k-j}\\
&\qquad\qquad\cdot\mathring{\underline L}\varphi_\gamma^{k+1}\mid\\
\lesssim&\delta^{4-4\varepsilon_0}+\delta^{1-2\varepsilon_0}\int_{t_0}^\mathfrak t\tau^{-1/2-m}\tilde E_{1,\leq k+2}(\tau,u)d\tau+\delta\int_{t_0}^\mathfrak t\tau^{-1/2-m}\tilde E_{2,\leq k+2}(\tau,u)d\tau.
\end{split}
\end{equation}

It remains to deal with
\[
\delta^{2l}\mid \int_{D^{{\mathfrak t}, u}}\big(Z_{k+1}+\leftidx{^{(Z_{k+1})}}\chi\big)\dots\big(Z_{k+2-j}
+\leftidx{^{(Z_{k+2-j})}}\chi\big)\leftidx{^{(Z_{k+1-j})}}D_{\gamma,2}^{k-j}(\varrho^{2m}\mathring L\varphi_\gamma^{k+1}+\f12\varrho^{2m-1}\varphi_\g^{k+1})\mid,
\]
    where $j=1,2,\cdots k$, which will be estimated as follows.
    \begin{enumerate}
     \item For $Z_{k+1-j}=T$, according \eqref{T2}, one can get from  \eqref{d}, \eqref{Zmu} and Proposition \ref{Y5} that
     \begin{equation}\label{TD}
     \begin{split}
     &\delta^{2l}\mid\int_{D^{{\mathfrak t}, u}}\sum_{j=1}^k\big(Z_{k+1}+\leftidx{^{(Z_{k+1})}}\chi\big)\dots\big(Z_{k+2-j}
     +\leftidx{^{(Z_{k+2-j})}}\chi\big)\leftidx{^{(T)}}D_{\gamma,2}^{k-j}\cdot(\varrho^{2m}\mathring L\varphi_\gamma^{k+1}\\
     &\qquad\qquad\qquad+\f12\varrho^{2m-1}\varphi_\g^{k+1})\mid\\
     \lesssim&\delta^{2l+1}\int_{D^{{\mathfrak t},u}}\varrho^{2m}\mid\sum_{j=1}^k\big(Z_{k+1}
     +\leftidx{^{(Z_{k+1})}}\chi\big)\dots\big(Z_{k+2-j}
     +\leftidx{^{(Z_{k+2-j})}}\chi\big)\leftidx{^{(T)}}D_{\gamma,2}^{k-j}\mid^2\\
     &+\delta^{-1}\int_0^uF_{1,k+2}(\mathfrak t,u')du'\\
     \lesssim&\delta^{6-6\varepsilon_0}+\delta^{3-4\varepsilon_0}\int_{t_0}^\mathfrak t\tau^{-2}\tilde E_{1,\leq k+2}(\tau,u)d\tau+\delta^{3-2\varepsilon_0}\int_{t_0}^\mathfrak t\tau^{2m-3}\tilde E_{2,\leq k+2}(\tau,u)d\tau\\
     &+\delta^{-1}\int_0^uF_{1,k+2}(\mathfrak t,u')du'.
     \end{split}
     \end{equation}

     \item For $Z_{k+1-j}=\varrho\mathring L$, $Z^k\leftidx{^{(\varrho\mathring L)}}D_{\gamma,2}^0$
     contains a term $\f12\big(Z^k\mathring L(\varrho\textrm{tr}\check{\chi})\big)\underline{\mathring L}\varphi_\gamma$.
      Then \eqref{Lchi'} implies that
     \begin{equation}\label{Lrchi}
     \begin{split}
     \delta^l|Z^{k}\mathring L(\varrho\textrm{tr}\check\chi)|&\lesssim\delta^{l_2}|Z^{n_2}\textrm{tr}\check\chi|
     +\delta^{l_2}|Z^{n_2}\big(\varrho(\slashed\triangle\varphi_0+\mathring L^a\slashed\triangle\varphi_a)\big)|
     +\delta^{l_3}|\mathring LZ^{n_3}\varphi|\\
     &+\delta^{1-\varepsilon_0}\mathfrak t^{-5/2}\delta^{l_3}|Z^{n_3}x|+\mathfrak t^{-1}\delta^{l_3}|Z^{n_3}\varphi|
     +\delta^{1-\varepsilon_0}\mathfrak t^{-3/2}\sum_{i=1}^2\delta^{l_2}|Z^{n_2}\check L^i|\\
     &+\delta^{1-\varepsilon_0}\mathfrak t^{-3/2}\delta^{l_1}|\slashed{\mathcal L}_Z^{n_1}\leftidx{^{(R)}}{\slashed\pi_{\mathring L}}|
     +\delta^{2-\varepsilon_0}\mathfrak t^{-3/2}\delta^{l_1}|\slashed{\mathcal L}_Z^{n_1}\leftidx{^{(T)}}{\slashed\pi_{\mathring L}}|,
     \end{split}
     \end{equation}
     where $l_i$ is the number of $T$ in $Z^{n_i}$ and $n_i\leq k-2+i$ $(i=1,2,3)$. Due to \eqref{triphi},
     \begin{equation}\label{mutri}
     \begin{split}
     \mu(\slashed\triangle\varphi_0+\mathring L^a\slashed\triangle\varphi_a)
     =&\mathring L(\mathring L^\al\underline{\mathring L}\varphi^\al)-(\mathring L\mathring L^a)\underline{\mathring L}\varphi_a+\f1{2\varrho}\underline{\mathring L}^\al\mathring L\varphi_\al-\mathring L^\al\tilde H_\al.
     \end{split}
     \end{equation}
     Substituting \eqref{LL} and \eqref{H} (note that $\tilde H_\al=H_\al-\mu\slashed\triangle\varphi_\al)$
     into \eqref{mutri} yields
     \begin{equation}\label{tri}
     \begin{split}
     &\slashed\triangle\varphi_0+\mathring L^a\slashed\triangle\varphi_a\\
     =&(\mathring L+\f1{2\varrho})\mathring L\varphi_0+\sum_{a=1}^2\big\{(2\varphi_a-\mathring L^a)(\mathring L+\f1{2\varrho})\mathring L\varphi_a+\mathring L(2\varphi_a-\mathring L^a)\mathring L\varphi_a\big\}\\
     &+c^{-1}(\mathring L^\al\varphi_\al-\mathring Lc)\big\{\mathring L\varphi_0+\sum_{a=1}^2(2\varphi_a-\mathring L^a)\mathring L\varphi_a\big\}+\f12c^{-1}(\tilde T^a\mathring L\varphi_a\\
     &-\slashed dx^a\cdot\slashed d\varphi_a-c\textrm{tr}\check\chi-c\varrho^{-1})\mathring L^\al\mathring L\varphi_\al+c^{-1}(\mathring L^\al\slashed d\varphi_\al+3\tilde T^a\slashed d\varphi_a)\cdot\slashed dx^b(\mathring L\varphi_b)\\
     &+\f12c^{-1}(\mathring Lc+3\tilde T^a\mathring L\varphi_a-3\slashed dx^a\cdot\slashed d\varphi_a+c\textrm{tr}\check\chi)(\mathring L^\al+2\tilde T^\al)\mathring L\varphi_\al.
     \end{split}
     \end{equation}
     It then follows from \eqref{tri} and Proposition \ref{Y5} that
     for any fixed constant $\backepsilon\geq\f12$,
     \begin{equation}\label{Ztri}
     \begin{split}
     &\delta^{2l}\int_{D^{{\mathfrak t},u}}\varrho^{2m-\backepsilon}|Z^k\big(\varrho(\slashed\triangle\varphi_0+\mathring L^a\slashed\triangle\varphi_a)\big)|^2\\
     \lesssim&\delta^{2l_3}\int_{D^{{\mathfrak t},u}}\tau^{2m-\backepsilon}|(\mathring L+\f1{2\varrho})Z^{n_3}\varphi|^2+\delta^{2l_3}\int_{D^{{\mathfrak t},u}}\tau^{2m-2-\backepsilon}|Z^{n_3}\varphi|^2\\
     &+\delta^{2-2\varepsilon_0+2l_2}\int_{D^{{\mathfrak t},u}}\tau^{2m-3-\backepsilon}\sum_{i=1}^2|Z^{n_2}\check L^i|^2+\delta^{2-2\varepsilon_0+l_3}\int_{D^{{\mathfrak t},u}}\tau^{2m-5-\backepsilon}|Z^{n_3}x|^2\\
     &+\delta^{2-2\varepsilon_0+l_2}\int_{D^{{\mathfrak t},u}}\tau^{2m-1-\backepsilon}|Z^{n_2}\textrm{tr}\check\chi|^2
     +\delta^{2-2\varepsilon_0+2l_1}\int_{D^{{\mathfrak t},u}}\tau^{2m-3-\backepsilon}|\slashed{\mathcal L}_Z^{n_1}\leftidx{^{(R)}}{\slashed\pi}_{\mathring L}|^2\\
     &+\delta^{4-2\varepsilon_0+2l_1}\int_{D^{{\mathfrak t},u}}\tau^{2m-3-\backepsilon}|\slashed{\mathcal L}_Z^{n_1}\leftidx{^{(T)}}{\slashed\pi}_{\mathring L}|^2\\
     \lesssim&\int_0^uF_{1,\leq k+2}(\mathfrak t,u')du'+\delta^{3-2\varepsilon_0}+\delta^{2-2\varepsilon_0}\int_{t_0}^\mathfrak t\tau^{-1-\backepsilon}\tilde E_{1,\leq k+2}(\tau,u)d\tau\\
     &+\delta^{2}\int_{t_0}^\mathfrak t\tau^{2m-2-\backepsilon}\tilde E_{2,\leq k+2}(\tau,u)d\tau.
     \end{split}
     \end{equation}
   In addition, \eqref{SiE} implies that for the constant $\backepsilon\geq\f12$,
     \begin{equation}\label{LZphi}
     \begin{split}
     &\int_{D^{{\mathfrak t},u}}\varrho^{2m-\backepsilon}\delta^{2l_3}|\mathring LZ^{n_3}\varphi|^2\\
     \lesssim&\int_{D^{{\mathfrak t},u}}\tau^{2m-\backepsilon}\delta^{2l_3}|(\mathring L+\f1{2\varrho})Z^{n_3}\varphi|^2+\int_{D^{{\mathfrak t},u}}\tau^{2m-2-\backepsilon}\delta^{2l_3}|Z^{n_3}\varphi|^2\\
     \lesssim&\int_0^uF_{1,n_3+1}(\mathfrak t,u')du'+\delta^2\int_{t_0}^\mathfrak t\tau^{-2-\backepsilon}E_{1,n_3+1}(\tau,u)d\tau
     +\delta^2\int_{D^{{\mathfrak t},u}}\tau^{2m-2-\backepsilon}E_{2,n_3+1}.
     \end{split}
     \end{equation}
 On the other hand, applying \eqref{uZkchi}, \eqref{Ztri} and \eqref{LZphi} to estimate the first line at the right hand side of \eqref{Lrchi},
 and utilizing \eqref{SiE} and Proposition \ref{Y5} to handle the other terms, one then can obtain by
 choosing $\backepsilon=1$ in \eqref{Ztri} and \eqref{LZphi} that
     \begin{equation}\label{LrphiuL}
     \begin{split}
     &\delta^{2l}\int_{D^{{\mathfrak t},u}}\varrho^{2m}|Z^k\mathring L(\varrho\textrm{tr}\check\chi)|^2|\underline{\mathring L}\varphi_\gamma|^2\lesssim\delta^{2l-2\varepsilon_0}\int_{D^{{\mathfrak t},u}}\varrho^{2m-1}|Z^k\mathring L(\varrho\textrm{tr}\check\chi)|^2\\
     \lesssim&\delta^{3-4\varepsilon_0}+\delta^{2-4\varepsilon_0}\int_{t_0}^\mathfrak t\tau^{-2}\tilde E_{1,\leq k+2}(\tau,u)d\tau+\delta^{2-2\varepsilon_0}\int_{t_0}^\mathfrak t\tau^{2m-3}\tilde E_{2,\leq k+2}(\tau,u)d\tau\\
     &+\delta^{-2\varepsilon_0}\int_0^uF_{1,\leq k+2}(\mathfrak t,u')du'.
     \end{split}
     \end{equation}
     Therefore, thanks to \eqref{rL2}, Proposition \ref{Y5} and \eqref{LrphiuL}, one has
     \begin{equation*}
     \begin{split}
     &\delta^{2l}\mid\int_{D^{{\mathfrak t}, u}}\sum_{j=1}^k\big(Z_{k+1}+\leftidx{^{(Z_{k+1})}}\chi\big)\dots\big(Z_{k+2-j}
     +\leftidx{^{(Z_{k+2-j})}}\chi\big)\leftidx{^{(\varrho\mathring L)}}D_{\gamma,2}^{k-j}\cdot(\varrho^{2m}\mathring L\varphi_\gamma^{k+1}\\
     &\qquad\qquad\qquad+\f12\varrho^{2m-1}\varphi_\g^{k+1})\mid\\
     \lesssim&\delta^{2l+1}\int_{D^{{\mathfrak t},u}}\varrho^{2m}\mid\sum_{j=1}^k\big(Z_{k+1}
     +\leftidx{^{(Z_{k+1})}}\chi\big)\dots\big(Z_{k+2-j}+\leftidx{^{(Z_{k+2-j})}}\chi\big)\leftidx{^{(\varrho\mathring L)}}D_{\gamma,2}^{k-j}\mid^2\\
     &+\delta^{-1}\int_0^uF_{1,k+2}(\mathfrak t,u')du'
          \end{split}
     \end{equation*}
        \begin{equation}\label{LD}
     \begin{split}
     \lesssim&\delta^{4-4\varepsilon_0}+\delta^{3-4\varepsilon_0}\int_{t_0}^\mathfrak t\tau^{-2}\tilde E_{1,\leq k+2}(\tau,u)d\tau+\delta^{3-2\varepsilon_0}\int_{t_0}^\mathfrak t\tau^{2m-3}\tilde E_{2,\leq k+2}(\tau,u)d\tau\\
     &+\delta^{-1}\int_0^uF_{1,k+2}(\mathfrak t,u')du'.
     \end{split}
     \end{equation}

     \item Finally, we deal with the most difficult case, $Z_{k+1-j}=R$. In this case, it follows from \eqref{R2} that $Z^k\leftidx{^{(R)}}D_{\gamma,2}^0$ contains the term $\f12(RZ^k\text{tr}\check\chi)\underline{\mathring L}\varphi_\gamma$ whose treatment is more subtle and will be given later in Proposition \ref{9.1}. The rest can be estimated as follows
     \begin{equation}\label{D2-R}
     \begin{split}
     &\delta^l|Z^k\leftidx{^{(R)}}D_{\gamma,2}^0-\f12(RZ^k\textrm{tr}\check\chi)\underline{\mathring L}\varphi_\gamma|\\
     \lesssim&\delta^{-\varepsilon_0}\mathfrak t^{-1/2}\underbrace{\delta^{l_3}|\mathring LZ^{n_3}\varphi|}_{\eqref{LZphi}}+\delta^{1-2\varepsilon_0}\underbrace{\delta^{l_2}|Z^{n_2}\mathring L\textrm{tr}\check\chi|}_{\eqref{ZLch}}
     +\delta^{-\varepsilon_0}\mathfrak t^{-1/2}\underbrace{\delta^{l_2}|Z^{n_2}\textrm{tr}\check\chi|}_{\eqref{uZkchi}}\\
     &+\delta^{1-\varepsilon_0}\mathfrak t^{-2}\delta^{l_2}|\slashed{\mathcal L}^{n_2}_Z\leftidx{^{(R)}}{\slashed\pi}_{\mathring L}|+\delta^{1-\varepsilon_0}\mathfrak t^{-5/2}\delta^{l_3}|Z^{n_3}\mu|+\delta^{2-2\varepsilon_0}\mathfrak t^{-3}\delta^{l_3}\sum_{i=1}^2|Z^{n_3}\check L^i|\\
     &+\delta^{-\varepsilon_0}\mathfrak t^{-3/2}\delta^{l_2}\sum_{i=1}^2|Z^{n_2}\check L^i|+\delta^{1-2\varepsilon_0}\mathfrak t^{-3}\delta^{l_4}|Z^{n_4}x|+\delta^{-\varepsilon_0}\mathfrak t^{-3/2}\delta^{l_3}|Z^{n_3}\varphi|\\
     &+\delta^{-\varepsilon_0}\mathfrak t^{-5/2}\delta^{l_2}|Z^{n_2}\upsilon|
     +\delta^{1-\varepsilon_0}\mathfrak t^{-3/2}\delta^{l_3}|\underline{\mathring L}Z^{n_3}\varphi|
     +\delta^{1-2\varepsilon_0}\mathfrak t^{-1}\delta^{l_3}|\slashed dZ^{n_3}\varphi|\\
     &+\delta^{1-2\varepsilon_0}\mathfrak t^{-2}\delta^{l_2}|Z^{n_2}\textrm{tr}\leftidx{^{(R)}}{\slashed\pi}|
     +\delta^{2-2\varepsilon_0}\mathfrak t^{-3}\delta^{l_2}|\slashed{\mathcal L}_Z^{n_2}\leftidx{^{(R)}}{\slashed\pi}_T|
     +\delta^{1-2\varepsilon_0}\mathfrak t^{-3}\delta^{l_3}|Z^{n_3}\upsilon|\\
     &+\delta^{2-2\varepsilon_0}\mathfrak t^{-1}\underbrace{\delta^{l_2}|Z^{n_2}T\textrm{tr}\check\chi|}_{\eqref{ZTchi}}
     +\delta^{1-\varepsilon_0}\mathfrak t^{-1/2}\underbrace{\delta^{l_2}|Z^{n_2}\slashed\triangle\mu|}_{\eqref{Zmu}}
     +\delta^{1-\varepsilon_0}\mathfrak t^{-1/2}\underbrace{\delta^{l_2}|\slashed dZ^{n_2}\textrm{tr}\check{\chi}|}_{\eqref{d}},
     \end{split}
     \end{equation}
     where $l_i$ is the number of $T$ in $Z^{n_i}$ and $n_i\leq k-2+i$ $(i=2,3,4)$.
     To estimate the $L^2$-norm of $\delta^l|Z^k\leftidx{^{(R)}}D_{\gamma,2}^0-\f12(RZ^k\textrm{tr}\check\chi)\underline{\mathring L}\varphi_\gamma|$ over $D^{{\mathfrak t},u}$, one can bound the $L^2$-norms of the terms underlined with braces in \eqref{D2-R} by the corresponding estimates indicated bellow the braces.
      While the other terms without braces can be treated by using Proposition \ref{Y5} and \eqref{SiE}. Then one can conclude that
     \begin{equation}\label{D-R}
     \begin{split}
     &\mid\delta^{2l}\int_{D^{{\mathfrak t},u}}\big(Z^k\leftidx{^{(R)}}D_{\gamma,2}^0-\f12(RZ^k\textrm{tr}\check\chi)\underline{\mathring L}\varphi_\gamma\big)\cdot(\varrho^{2m}\mathring L\varphi_\gamma^{k+1}+\f12\varrho^{2m-1}\varphi_\g^{k+1})\mid\\
     \lesssim&\delta^{-1}\int_0^uF_{1,\leq k+2}(\mathfrak t,u')du'+\delta^{4-4\varepsilon_0}
     +\delta^{3-4\varepsilon_0}\int_{t_0}^\mathfrak t\tau^{-2}\tilde E_{1,\leq k+2}(\tau,u)d\tau\\
     &+\delta^{3-2\varepsilon_0}\int_{t_0}^\mathfrak t\tau^{2m-3}\tilde E_{2,\leq k+2}(\tau,u)d\tau.
     \end{split}
     \end{equation}
     Finally, it remains to treat $\f12(RZ^k\textrm{tr}\check\chi)\underline{\mathring L}\varphi_\gamma$, whose $L^2$ norm cannot be estimated by \eqref{d} directly since the resulting time decay rate is not enough to close the energy estimate \eqref{e} (see also the beginning of Section \ref{YY}). Our strategy here is based on the structural equation \eqref{Ltr}. Indeed, \eqref{Ltr} implies that $\mathring L(\text{tr}\check\chi-E)$ admits better rate of decay in time, which, combined with \eqref{fLfRf} and \eqref{Rfh}, will enable us to obtain the desired estimates for corresponding terms. Meanwhile, the terms involving $E$ defined by \eqref{phiE} can be handled easily by using Proposition \ref{Y5} and \eqref{LZphi} directly. Thus we will show
     \begin{proposition}\label{9.1}
     For $\delta>0$ small, it holds that
     \begin{equation}\label{RchiuL}
     \begin{split}
     &\mid\delta^{2l}\int_{D^{{\mathfrak t},u}}(RZ^k\textrm{tr}\check\chi)\underline {\mathring L}\varphi_\g(\varrho^{2m}\mathring LZ^{k+1}\varphi_\g+\f12\varrho^{2m-1}Z^{k+1}\varphi_\g)\mid\\
     \lesssim&\delta^{3-3\varepsilon_0}+\delta^{-1}\int_0^u\tilde F_{1,\leq k+2}(\mathfrak t,u')du'
     +\delta^{-1}\int_0^u\delta F_{2,k+2}(\mathfrak t,u')du'\\
     &+\delta^{2-3\varepsilon_0}\int_{t_0}^\mathfrak t\tau^{m-2}\tilde E_{1,\leq k+2}(\tau,u)d\tau
     +\delta^{2-\varepsilon_0}\int_{t_0}^\mathfrak t\tau^{2m-5/2}\tilde E_{2,\leq k+2}(\tau,u)d\tau\\
     &+\delta^{2-3\varepsilon_0}\tilde E_{1,\leq k+2}(\mathfrak t,u)
     +\delta^{2-\varepsilon_0}\tilde E_{2,\leq k+2}(\mathfrak t,u),
     \end{split}
     \end{equation}
     where
     $\tilde F_{i,\leq k+2}(\mathfrak t,u)=\sup_{t_0\leq\tau\leq\mathfrak t}F_{i,\leq k+2}(\mathfrak t,u),\quad i=1,2.$
     \end{proposition}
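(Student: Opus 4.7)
The plan is to decompose $\textrm{tr}\check\lambda = (\textrm{tr}\check\lambda - E) + E$, where $E$ is the lower-order quantity defined in \eqref{phiE}, and to estimate the two resulting pieces of the integral separately. The $E$-piece contains only first-order derivatives of $\varphi_\gamma$ through $E$, so $RZ^kE$ admits a direct $L^2(\Sigma_\tau^u)$-bound via Proposition \ref{Y5} and the Poincar\'e-type inequality \eqref{SiE}. Splitting $\varrho^{2m} = \varrho^m\cdot\varrho^m$ and pairing $\varrho^m(\mathring L Z^{k+1}\varphi_\gamma + \f{1}{2\varrho}Z^{k+1}\varphi_\gamma)$ against $\varrho^m(\underline{\mathring L}\varphi_\gamma)(RZ^kE)$ by Cauchy--Schwarz, together with $|\underline{\mathring L}\varphi_\gamma|\lesssim\delta^{-\varepsilon_0}\tau^{-1/2}$ from Proposition \ref{TVarphi}, produces precisely the flux contribution $\delta^{-1}\int_0^u\tilde F_{1,\leq k+2}du' + \int_0^u F_{2,k+2}du'$ together with the stated time-integrals of $\tilde E_{1,\leq k+2}$ and $\tilde E_{2,\leq k+2}$.

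For the $(\textrm{tr}\check\lambda - E)$-piece, the structural equation \eqref{Ltr} is crucial: $\mathring L(\textrm{tr}\check\lambda - E)$ is built purely from $\textrm{tr}\lambda$, $\check\lambda$, $\varrho^{-1}$ and the small-order term $e$, and therefore carries an extra $\varrho^{-1}$ of time-decay compared with $\textrm{tr}\check\lambda - E$ itself, while being controlled at order $k+2$ by Section \ref{L2chimu}. I would then apply the integration-by-parts identity \eqref{fLfRf} of Lemma \ref{f1f2f3} with $f_1 = \varrho^{2m}\underline{\mathring L}\varphi_\gamma$, $f_2 = Z^{k+1}\varphi_\gamma$ and $f_3 = Z^k(\textrm{tr}\check\lambda - E)$, which trades $\int f_1\cdot(\mathring L f_2 + \f{1}{2\varrho}f_2)\cdot Rf_3$ for $\int f_1\cdot Rf_2\cdot\mathring L f_3$ plus boundary terms on $\Sigma_{\mathfrak t}^u$, $\Sigma_{t_0}^u$ and the error integrand $Er_1$. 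In the transformed main term, $|Rf_2|\lesssim r|\slashed d Z^{k+1}\varphi_\gamma|$ by \eqref{1-f} gives $\|Rf_2\|_{L^2(\Sigma_\tau^u)}\lesssim\tau\sqrt{\tilde E_{2,\leq k+2}(\tau,u)}$, while $\mathring L f_3 = Z^k\mathring L(\textrm{tr}\check\lambda - E) + [\mathring L,Z^k](\textrm{tr}\check\lambda - E)$ is controlled by \eqref{ZLch} and its analogue for $E$. Cauchy--Schwarz then produces the time integrals $\delta^{2-3\varepsilon_0}\int_{t_0}^{\mathfrak t}\tau^{m-2}\tilde E_{1,\leq k+2}d\tau$ and $\delta^{2-\varepsilon_0}\int_{t_0}^{\mathfrak t}\tau^{2m-5/2}\tilde E_{2,\leq k+2}d\tau$.

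The $\Sigma_{\mathfrak t}^u$-boundary term $\int_{\Sigma_{\mathfrak t}^u}f_1\cdot Rf_2\cdot f_3$ is estimated by Cauchy--Schwarz with $\|f_3\|_{L^2(\Sigma_{\mathfrak t}^u)}$ controlled by \eqref{uZkchi} and the $E$-analogue, $\|Rf_2\|_{L^2(\Sigma_{\mathfrak t}^u)}\lesssim\mathfrak t\sqrt{\tilde E_{2,\leq k+2}(\mathfrak t,u)}$ and the pointwise bound on $\underline{\mathring L}\varphi_\gamma$; after absorbing $\sup_{\tau\in[t_0,\mathfrak t]}$, this delivers the $\delta^{2-3\varepsilon_0}\tilde E_{1,\leq k+2}(\mathfrak t,u) + \delta^{2-\varepsilon_0}\tilde E_{2,\leq k+2}(\mathfrak t,u)$ terms of \eqref{RchiuL}, while the $\Sigma_{t_0}^u$-term produces the $\delta^{3-3\varepsilon_0}$ initial-data contribution through Theorem \ref{Th2.1}. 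The error integrand $Er_1$ in Lemma \ref{f1f2f3} involves $\textrm{tr}\check\lambda$, the deformation-tensor components $\leftidx{^{(R)}}{\slashed\pi}$, $\leftidx{^{(R)}}{\slashed\pi}_{\mathring L}$ and $\slashed{\textrm{div}}\leftidx{^{(R)}}{\slashed\pi}_{\mathring L}$, together with $Rf_1$, $\mathring L f_1$ and $(\mathring L+\f{1}{2\varrho})f_1$; each of these has already been estimated in $L^\infty$ in Section \ref{ho} and in $L^2$ in Propositions \ref{L2chi}--\ref{Y5}, so these contributions fit into the flux and energy integrals already present.

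The main obstacle will be simultaneously preserving the derivative count and retaining enough time-decay: the naive $L^2$-bound \eqref{d} on $RZ^k\textrm{tr}\check\lambda$, when multiplied by the slow $\tau^{-1/2}$-decay of $\underline{\mathring L}\varphi_\gamma$ and the weight $\varrho^{2m-1/2}$, generates borderline-divergent time integrals that cannot be closed inside Gronwall. The decomposition $\textrm{tr}\check\lambda = (\textrm{tr}\check\lambda - E) + E$, combined with the integration-by-parts trade \eqref{fLfRf}, circumvents this obstruction: it peels off the top-order $E$-piece to be controlled directly, and replaces the $R$-derivative on the remaining $\textrm{tr}\check\lambda - E$ by an $\mathring L$-derivative, thereby importing the extra $\varrho^{-1}$-decay coming from \eqref{Ltr}. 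Careful tracking of the weights $\varrho^{2m}$ and the $\delta^{2l}$ bookkeeping tied to the number of $T$-factors as in Proposition \ref{Y5} then matches exactly the right-hand side of \eqref{RchiuL} without any logarithmic loss.
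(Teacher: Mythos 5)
You have the right skeleton: the decomposition $\textrm{tr}\check\lambda = (\textrm{tr}\check\lambda-E)+E$, the choice of $f_1,f_2,f_3$ so that Lemma \ref{f1f2f3} applies, and the recognition that the structural equation \eqref{Ltr} furnishes an extra $\varrho^{-1}$ of decay for $\mathring L(\textrm{tr}\check\lambda-E)$. But the proposal glosses over the step that is actually the crux, and your claimed final bound does not in fact follow from what you write.

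After the single integration by parts \eqref{fLfRf}, both the transformed bulk term and the $\Sigma_{\mathfrak t}^u$-boundary term still carry the factor $Rf_2=RZ^{k+1}\varphi_\gamma$, whose $L^2(\Sigma_\tau^u)$-norm behaves like $\tau\sqrt{E_{2,\leq k+2}}$ by \eqref{1-f}. Pair this with $\|\underline{\mathring L}\varphi_\gamma\|_{L^\infty}\lesssim\delta^{-\varepsilon_0}\tau^{-1/2}$, the weight $\varrho^{2m}$, and the best available decay $\tau^{-1-m}\sqrt{\tilde E_{1,\leq k+2}}$ for $\|\mathring Lf_3\|_{L^2}$ (or $\tau^{-1-m}\sqrt{\tilde E_{1,\leq k+2}}$ for $\|f_3\|_{L^2}$ via \eqref{uZkchi}): you get integrands proportional to $\tau^{m-1/2}\sqrt{E_2}\sqrt{\tilde E_1}$, and since $m>\f12$ this is growing. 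The Cauchy--Schwarz you propose therefore yields time-integrals $\int_{t_0}^{\mathfrak t}\tau^{m-1/2}(\cdots)\,d\tau$ which diverge and do not match the convergent $\int\tau^{m-2}\tilde E_{1,\leq k+2}$ and $\int\tau^{2m-5/2}\tilde E_{2,\leq k+2}$ appearing in \eqref{RchiuL}. The same objection applies to your treatment of the $\Sigma_{\mathfrak t}^u$-boundary term: $\mathfrak t\sqrt{\tilde E_{2,\leq k+2}}$ from $\|Rf_2\|_{L^2}$ times $\mathfrak t^{-1-m}\sqrt{\tilde E_1}$ from \eqref{uZkchi} leaves an unbounded $\mathfrak t^{m-1/2}$.

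What the paper does, and what your proposal omits, is a \emph{second} integration by parts via \eqref{Rfh} to shed the $R$ from $Z^{k+1}\varphi_\gamma$. This replaces the growing factor $\|RZ^{k+1}\varphi_\gamma\|_{L^2}\sim\tau\sqrt{E_2}$ by $\|Z^{k+1}\varphi_\gamma\|_{L^2}\lesssim\delta(\sqrt{E_2}+\varrho^{-m}\sqrt{E_1})$ from the Poincar\'e-type inequality \eqref{SiE}, which is bounded in $\tau$ and carries an extra factor of $\delta$. The $R$ is instead absorbed by $Z^k(\textrm{tr}\check\lambda-E)$, and here the naive bound \eqref{uZkchi} is insufficient: one needs the sharper estimate \eqref{RZ}, $\delta^l\|RZ^k(\textrm{tr}\check\lambda-E)\|_{L^2}\lesssim\mathfrak t^{-1/2-m}(\int_0^u F_{1,\leq k+2})^{1/2}+\cdots$, which carries the extra $\mathfrak t^{-1/2}$ needed to compensate for $m>\f12$. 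That estimate is not a corollary of \eqref{uZkchi} but the output of its own transport argument (integrating $\mathring L(\varrho^2 R\bar Z^k(\textrm{tr}\check\lambda-E))$ along $\mathring L$ using \eqref{LrchiE}). Finally, in the $E$-piece (your "$\overline{II}$"), the same issue appears: $RZ^kE$ contains $Z^k(\slashed dx^a\cdot\slashed dR\varphi_a)$, whose $L^2$ norm squared, integrated against $\varrho^{2m-1}$, diverges; the paper converts $\slashed dx^a\cdot\slashed dR\varphi_a$ via the structural identity \eqref{dd} into $(\mathring L+\f{1}{2\varrho})R\varphi$ plus lower order, so that it matches the flux $F_1$. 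Without these three devices -- the second IBP via \eqref{Rfh}, the improved decay estimate \eqref{RZ}, and the algebraic conversion \eqref{dd} -- the proof cannot close, so the proposal as written has a genuine gap at its most delicate point.
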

    \begin{proof}
     Noting \eqref{Ltr} for $\textrm{tr}\chi$, one can bound the left hand side of \eqref{RchiuL} by $|\bar I|+|\overline{II}|$ with
     \begin{equation*}
     \begin{split}
     &\bar I=\delta^{2l}\int_{D^{{\mathfrak t},u}}RZ^k(\textrm{tr}\check\chi-E)\cdot\underline {\mathring L}\varphi_\g(\varrho^{2m}\mathring LZ^{k+1}\varphi_\g+\f12\varrho^{2m-1}Z^{k+1}\varphi_\g),\\
     &\overline{II}=\delta^{2l}\int_{D^{{\mathfrak t},u}}(RZ^kE)\underline {\mathring L}\varphi_\g(\varrho^{2m}\mathring LZ^{k+1}\varphi_\g+\f12\varrho^{2m-1}Z^{k+1}\varphi_\g),
     \end{split}
     \end{equation*}
     where $E$ is defined in \eqref{phiE}.

     We start with the easy term $\overline{II}$. It is noted that $RZ^kE$ contains $Z^k(\slashed dx^a\cdot\slashed dR\varphi_a)$ and
     \begin{equation*}
     \slashed dx^a\cdot\slashed dR\varphi_a=\f12\textrm{tr}\leftidx{^{(R)}}{\slashed\pi}(\slashed dx^a\cdot\slashed d\varphi_a)+(Rx^a)\slashed\triangle\varphi_a.
     \end{equation*}
     Replacing $\slashed\triangle\varphi_a$ with $\mu^{-1}(\mathring L\underline{\mathring L}\varphi_a
     +\f1{2\varrho}\underline{\mathring L}\varphi_a-\tilde H_a)$ and applying \eqref{H} yield
     \begin{equation}\label{dd}
     \begin{split}
     &\slashed dx^a\cdot\slashed dR\varphi_a\\
     =&(\mathring{L}+\f1{2\varrho})R\varphi_0+\sum_{a=1}^2(2\varphi_a-\mathring L^a)(\mathring{L}+\f1{2\varrho})R\varphi_a
     +\f12\textrm{tr}\leftidx{^{(R)}}{\slashed\pi}(\slashed dx^a\cdot\slashed d\varphi_a)\\
     &-\sum_{a=1}^2(2\varphi_a-\mathring L^a)\leftidx{^{(R)}}{\slashed\pi}_{\mathring L}\cdot\slashed d\varphi_a-\leftidx{^{(R)}}{\slashed\pi}_{\mathring L}\cdot\slashed d\varphi_0-\f3{2\varrho}R\varphi_0\\
     &-\f1{2\varrho}\sum_{a=1}^2(4\varphi_a-\mathring L^a)R\varphi_a+c^{-1}f(\mathring L^1,\mathring L^2,\varphi)
     \left(
     \begin{array}{ccc}
     \mathring L\varphi\\
     \slashed dx^b\cdot\slashed d\varphi_b\\
     \textrm{tr}\check\chi
     \end{array}
     \right)R\varphi.
     \end{split}
     \end{equation}
     It then follows from \eqref{phiE} and \eqref{dd} that
     \begin{equation*}
     \begin{split}
     |\overline{II}|\lesssim&\delta^{-1}\int_0^uF_{1,k+2}(\mathfrak t,u')du'
     +\delta^{1-2\varepsilon_0}\int_{D^{{\mathfrak t},u}}\varrho^{2m-3}\delta^{2l_3}|Z^{n_3}\varphi|^2\\
     &+\delta^{3-4\varepsilon_0}\int_{D^{{\mathfrak t},u}}\varrho^{2m-6}\delta^{2l_4}|Z^{n_4}x|^2
     +\delta^{2l+1-2\varepsilon_0}\int_{D^{{\mathfrak t},u}}\varrho^{2m-1}|Z^k(\slashed dx^a\cdot\slashed dR\varphi_a)|^2\\
     &+\delta^{3-4\varepsilon_0}\int_{D^{{\mathfrak t},u}}\varrho^{2m-4}\delta^{2l_2}|\slashed{\mathcal L}_Z^{n_2}\leftidx{^{(R)}}{\slashed\pi}_{\mathring L}|^2+\delta^{5-4\varepsilon_0}\int_{D^{{\mathfrak t},u}}\varrho^{2m-6}\delta^{2l_1}|\slashed{\mathcal L}_Z^{n_1}\leftidx{^{(R)}}{\slashed\pi}_{T}|^2\\
     &+\delta^{5-4\varepsilon_0}\int_{D^{{\mathfrak t},u}}\varrho^{2m-4}\delta^{2l_1}|\slashed{\mathcal L}_Z^{n_1}\leftidx{^{(T)}}{\slashed\pi}_{\mathring L}|^2+\delta^{3-4\varepsilon_0}\int_{D^{{\mathfrak t},u}}\varrho^{2m-4}\delta^{2l_3}\sum_{i=1}^2|Z^{n_3}\check L^i|^2\\
      &+\delta^{2l+1-2\varepsilon_0}\int_{D^{{\mathfrak t},u}}\varrho^{2m-1}|\mathring LZ^kR\varphi|^2
           \end{split}
      \end{equation*}
           \begin{equation}\label{IIi}
      \begin{split}
      \lesssim&\delta^{-1}\int_0^uF_{1,k+2}(\mathfrak t,u')du'+\delta^{1-2\varepsilon_0}\int_{D^{{\mathfrak t},u}}\varrho^{2m-1}\delta^{2l}|(\mathring L+\f1{2\varrho})Z^kR\varphi|^2\\
     &+\delta^{3-4\varepsilon_0}\int_{D^{{\mathfrak t},u}}\varrho^{2m-6}\delta^{2l_4}|Z^{n_4}x|^2
     +\delta^{3-4\varepsilon_0}\int_{D^{{\mathfrak t},u}}\varrho^{2m-4}\delta^{2l_3}\sum_{i=1}^2|Z^{n_3}\check L^i|^2\\
     &+\delta^{3-4\varepsilon_0}\int_{D^{{\mathfrak t},u}}\varrho^{2m-4}\delta^{2l_2}|\slashed{\mathcal L}_Z^{n_2}\leftidx{^{(R)}}{\slashed\pi}_{\mathring L}|^2+\delta^{5-4\varepsilon_0}\int_{D^{{\mathfrak t},u}}\varrho^{2m-6}\delta^{2l_1}|\slashed{\mathcal L}_Z^{n_1}\leftidx{^{(R)}}{\slashed\pi}_{T}|^2\\
     &+\delta^{5-4\varepsilon_0}\int_{D^{{\mathfrak t},u}}\varrho^{2m-4}\delta^{2l_1}|\slashed{\mathcal L}_Z^{n_1}\leftidx{^{(T)}}{\slashed\pi}_{\mathring L}|^2+\delta^{3-4\varepsilon_0}\int_{D^{{\mathfrak t},u}}\varrho^{2m-4}\delta^{2l_2}|Z^{n_2}\textrm{tr}\leftidx{^{(R)}}{\slashed\pi}|^2\\
     &+\delta^{1-2\varepsilon_0}\int_{D^{{\mathfrak t},u}}\varrho^{2m-3}\delta^{2l_3}|Z^{n_3}\varphi|^2
     +\delta^{3-4\varepsilon_0}\int_{D^{{\mathfrak t},u}}\varrho^{2m-2}\delta^{2l_2}|Z^{n_2}\textrm{tr}\check\chi|^2\\
     &+\delta^{2l+1-2\varepsilon_0}\int_{D^{{\mathfrak t},u}}\varrho^{2m-1}|\mathring LZ^kR\varphi|^2,
     \end{split}
     \end{equation}
     where $l_i$ is the number of $T$ in $Z^{n_i}$ and $n_i\leq k-2+i$ $(i=1,2,3,4)$.
     Applying \eqref{LZphi} with $\backepsilon=1$ to estimate the last term and using Proposition \ref{Y5} to
     deal with the other corresponding terms, one then obtains
     \begin{equation}\label{iII}
     \begin{split}
     |\overline{II}|\lesssim&\delta^{-1}\int_0^uF_{1,k+2}(\mathfrak t,u')du'+\delta^{4-4\varepsilon_0}
     +\delta^{3-4\varepsilon_0}\int_{t_0}^\mathfrak t\tau^{-2}\tilde E_{1,\leq k+2}(\tau,u)d\tau\\
     &+\delta^{3-2\varepsilon_0}\int_{t_0}^\mathfrak t\tau^{2m-3}\tilde E_{2,\leq k+2}(\tau,u)d\tau.
     \end{split}
     \end{equation}
     We now treat the difficult term $\bar I$. Choose $f_1=\varrho^{2m-2}\underline{\mathring L}\varphi_\g$, $f_2=Z^{k+1}\varphi_\g$ and $f_3=\varrho^2Z^k(\textrm{tr}\check\chi-E)$ in \eqref{fLfRf}, and define
     \begin{equation*}
     \begin{split}
     &\bar I_1=\delta^{2l}\int_{D^{{\mathfrak t},u}}\mathring L\big(\varrho^2Z^k(\textrm{tr}\check\chi-E)\big)(\varrho^{2m-2}\underline{\mathring L}\varphi_\g)(RZ^{k+1}\varphi_\g),\\
     &\bar I_2=\delta^{2l}\int_{\Sigma_{\mathfrak t}^u}(RZ^{k+1}\varphi_\g)(\varrho^{2m-2}\underline{\mathring L}\varphi_\g)\big(\varrho^2Z^k(\textrm{tr}\check\chi-E)\big).
     \end{split}
     \end{equation*}
     It then holds that
     \begin{equation}\label{-bI}
     -\bar I=-\bar I_1+\bar I_2+\delta^{2l}\int_{D^{{\mathfrak t},u}}Er_1+\delta^{2l}\int_{\Sigma_{\mathfrak t}^u}Er_2+\delta^{2l}\int_{\Sigma_{t_0}^u}Er_3.
     \end{equation}
    Note that \eqref{Ltr} implies
     \begin{equation}\label{LrchiE}
     \begin{split}
     \mathring L\big(\varrho^2(\textrm{tr}\check\chi-E)\big)
     =&-2\varrho E+c^{-1}\varrho^2(\mathring L^\al\mathring L\varphi_\al-\mathring Lc)\textrm{tr}\chi-\varrho^2(\textrm{tr}\check\chi)^2+\varrho^2e\\
     =&c^{-1}\varrho(-2\slashed dx^a\cdot\slashed d\varphi_a+2\mathring Lc-\mathring L^\al\mathring L\varphi_\al)
     +c^{-1}\varrho^2(\mathring L^\al\mathring L\varphi_\al-\mathring Lc)\textrm{tr}\check\chi\\&-\varrho^2(\textrm{tr}\check\chi)^2+\varrho^2e.
     \end{split}
     \end{equation}
Then $\bar I_1$ can be rewritten as
     \begin{equation*}
     \begin{split}
     \bar I_1=&\delta^{2l}\int_{D^{{\mathfrak t},u}}[\mathring L,Z^k]\big(\varrho^2(\textrm{tr}\check\chi-E)\big)(\varrho^{2m-2}\underline{\mathring L}\varphi_\g)(RZ^{k+1}\varphi_\g)\\
     &+\delta^{2l}\int_{D^{{\mathfrak t},u}}\mathring L\big([\varrho^2,Z^k](\textrm{tr}\check\chi-E)\big)(\varrho^{2m-2}\underline{\mathring L}\varphi_\g)(RZ^{k+1}\varphi_\g)\\
     &+\delta^{2l}\int_{D^{{\mathfrak t},u}}\varrho^{2m-2}Z^k\mathring L\big(\varrho^2(\textrm{tr}\check\chi-E)\big)\underline{\mathring L}\varphi_\g(RZ^{k+1}\varphi_\g)\\
     =:&\bar I_{11}+\bar I_{12}+\bar I_{13}.
     \end{split}
     \end{equation*}
     $\bar I_{1i}$ ($i=1,2,3$) will be treated separately as follows.
     \begin{enumerate}

     \item
     For $\bar I_{11}$, one can use the facts that $[\mathring L,Z^k]=\ds\sum_{k_1+k_2=k-1}Z^{k_1}[\mathring L, Z]Z^{k_2}$,
     $[\mathring L,\varrho\mathring L]=\mathring L$ and $[\mathring L,\bar Z]
     =\leftidx{^{(\bar Z)}}{\slashed\pi}_{\mathring L}^X\slashed d_X$ with $\bar Z\in\{T,R\}$,
     and Proposition \ref{Y5} to obtain
     \begin{equation}\label{LrL}
\begin{split}
     &\mid\delta^{2l}\int_{D^{{\mathfrak t},u}}Z^{k_1}[\mathring L, \varrho\mathring L]Z^{k_2}\big(\varrho^2(\textrm{tr}\check\chi-E)\big)(\varrho^{2m-2}\underline{\mathring L}\varphi_\g)(RZ^{k_1}(\varrho\mathring L)Z^{k_2}R\varphi_\g)\mid\\
     \lesssim&\mid\delta^{2l}\int_{D^{{\mathfrak t},u}}Z^{k_1}\mathring LZ^{k_2}\big(\varrho^2(\textrm{tr}\check\chi-E)\big)(\varrho^{2m-2}\underline{\mathring L}\varphi_\g)([RZ^{k_1},\varrho\mathring L]Z^{k_2}R\varphi_\g)\mid\\
     &+\mid\delta^{2l}\int_{D^{{\mathfrak t},u}}Z^{k_1}\mathring LZ^{k_2}
     \big(\varrho^2(\textrm{tr}\check\chi-E)\big)(\varrho^{2m-2}\underline{\mathring L}\varphi_\g)(\varrho\mathring LRZ^{k_1}Z^{k_2}R\varphi_\g)\mid\\
          \lesssim&\delta^{-\varepsilon_0}\int_{D^{{\mathfrak t},u}}\varrho^{2m-3/2}\{\underbrace{\delta^{l_2}|Z^{n_2}\textrm{tr}
          \check\chi|}+\delta^{l_2}|Z^{n_2}E|\}\big\{\tau^{-1/2}\delta^{l_3}|Z^{n_3}\varphi|\\
     &\qquad\quad+\delta^{1-\varepsilon_0}\tau^{-1/2}\delta^{l_1}(|\slashed{\mathcal L}_Z^{n_1}\leftidx{^{(R)}}{\slashed\pi}_{\mathring L}|+|\slashed{\mathcal L}_Z^{n_1}\leftidx{^{(T)}}{\slashed\pi}_{\mathring L}|)+\underbrace{\varrho\delta^{l}|\mathring LZ^{k+1}\varphi|}\big\}\\
     \lesssim&\delta^{3-3\varepsilon_0}+\delta^{2-\varepsilon_0}\int_{t_0}^\mathfrak t\tau^{-5/2}\tilde E_{1,\leq k+2}(\tau,u)d\tau\\
     &+\delta^{2-\varepsilon_0}\int_{t_0}^\mathfrak t\tau^{2m-5/2}\tilde E_{2,\leq k+2}(\tau,u)d\tau+\delta^{-\varepsilon_0}\int_0^uF_{1,k+2}(\mathfrak t,u')du',
     \end{split}
     \end{equation}
     where the terms underlined by braces have been estimated by using \eqref{uZkchi}, \eqref{LZphi} with $\backepsilon=\f12$ and \eqref{Em}. Similarly,
     \begin{equation}\label{LbZ}
     \begin{split}
     &\mid\delta^{2l}\int_{D^{{\mathfrak t},u}}Z^{k_1}[\mathring L, \bar Z]Z^{k_2}\big(\varrho^2(\textrm{tr}\check\chi-E)\big)(\varrho^{2m-2}\underline{\mathring L}\varphi_\g)(RZ^{k+1}\varphi_\g)\mid\\
     \lesssim&\delta^{2-4\varepsilon_0}\int_{D^{{\mathfrak t},u}}\varrho^{4m-4}\delta^{2l_1}|\slashed{\mathcal L }_Z^{n_1}\leftidx{^{(\bar Z)}}{\slashed\pi}_{\mathring L}|^2+\delta^{2-4\varepsilon_0}\int_{D^{{\mathfrak t},u}}\varrho^{4m-2}\delta^{2l_2}|Z^{n_2}\textrm{tr}\check\chi|^2\\
     &+\delta^{4-6\varepsilon_0}\int_{D^{{\mathfrak t},u}}\varrho^{4m-7}\delta^{2l_3}|Z^{n_3}x|^2
     +\delta^{2-4\varepsilon_0}\int_{D^{{\mathfrak t},u}}\varrho^{4m-4}\delta^{2l_3}|Z^{n_3}\varphi|^2\\
     &+\delta^{4-6\varepsilon_0}\int_{D^{{\mathfrak t},u}}\varrho^{4m-5}\delta^{2l_2}\sum_{i=1}^2|Z^{n_2}\check L^i|^2
     +\int_{D^{{\mathfrak t},u}}\delta^{2l}|\slashed dZ^{k+1}\varphi|^2\\
     \lesssim&\delta^{5-6\varepsilon_0}+\delta^{4-4\varepsilon_0}\int_{t_0}^\mathfrak t\tau^{2m-4}\tilde E_{1,\leq k+2}(\tau,u)d\tau\\
     &+\delta^{4-4\varepsilon_0}\int_{t_0}^\mathfrak t\tau^{4m-4}\tilde E_{2,\leq k+2}(\tau,u)d\tau
     +\delta^{-1}\int_0^u\delta F_{2,k+2}(\mathfrak t,u')du',
     \end{split}
     \end{equation}
    where $l_i$ is the number of $T$ in $Z^{n_i}$ with $n_i\leq k-2+i$ $(i=1,2,3)$.

     Combining \eqref{LrL} with \eqref{LbZ} yields
     \begin{equation}\label{I11}
     \begin{split}
     |\bar I_{11}|\lesssim&\delta^{3-3\varepsilon_0}+\delta^{2-\varepsilon_0}\int_{t_0}^\mathfrak t\tau^{-5/2}\tilde E_{1,\leq k+2}(\tau,u)d\tau+\delta^{-1}\int_0^u\delta F_{2,k+2}(\mathfrak t,u')du'\\
     &+\delta^{2-\varepsilon_0}\int_{t_0}^\mathfrak t\tau^{2m-5/2}\tilde E_{2,\leq k+2}(\tau,u)d\tau
     +\delta^{-\varepsilon_0}\int_0^uF_{1,k+2}(\mathfrak t,u')du'.
     \end{split}
     \end{equation}

     \item $|\bar I_{12}|$ can be handled similarly as in Case i. Indeed, due to $[\varrho^2,\varrho\mathring L]f=-2\varrho^2 f$ and $[\varrho^2,T]f=2\varrho f$, then
     as for \eqref{LrL}, one can get
     \begin{equation}\label{rrL}
     \begin{split}
     &\mid\delta^{2l}\int_{D^{{\mathfrak t},u}}\mathring L\big(Z^{k_1}[\varrho^2,\varrho\mathring L]Z^{k_2}(\textrm{tr}\check\chi-E)\big)(\varrho^{2m-2}\underline{\mathring L}\varphi_\g)(RZ^{k_1}(\varrho\mathring L)Z^{k_2}R\varphi_\g)\mid\\
     \lesssim&\delta^{3-3\varepsilon_0}+\delta^{2-\varepsilon_0}\int_{t_0}^\mathfrak t\tau^{-5/2}\tilde E_{1,\leq k+2}(\tau,u)d\tau\\
     &+\delta^{2-\varepsilon_0}\int_{t_0}^\mathfrak t\tau^{2m-5/2}\tilde E_{2,\leq k+2}(\tau,u)d\tau
     +\delta^{-\varepsilon_0}\int_0^uF_{1,k+2}(\mathfrak t,u')du'.
     \end{split}
     \end{equation}
    Meanwhile, it follows from \eqref{uZkchi} and Proposition \ref{Y5} that
     \begin{equation}\label{rT}
     \begin{split}
     &\mid\delta^{2l}\int_{D^{{\mathfrak t},u}}\mathring L\big(Z^{k_1}[\varrho^2,T]Z^{k_2}(\textrm{tr}\check\chi-E)\big)(\varrho^{2m-2}\underline{\mathring L}\varphi_\g)(RZ^{k+1}\varphi_\g)\mid\\
     \lesssim&\int_{D^{{\mathfrak t},u}}\delta^{2l}|\slashed dZ^{k+1}\varphi|^2+\delta^{2-2\varepsilon_0}\int_{D^{{\mathfrak t},u}}\tau^{4m-3}\delta^{2l_2}\big(|Z^{n_2}\textrm{tr}\check\chi|^2
     +|Z^{n_2}E|^2\big)\\
     \lesssim&\delta^{-1}\int_0^u\delta F_{2,k+2}(\mathfrak t,u')du'+\delta^{5-4\varepsilon_0}
     +\delta^{4-2\varepsilon_0}\int_{t_0}^\mathfrak t\tau^{2m-5}\tilde E_{1,\leq k+2}(\tau,u)d\tau\\
     &+\delta^{4-2\varepsilon_0}\int_{t_0}^\mathfrak t\tau^{4m-5}\tilde E_{2,\leq k+2}(\tau,u)d\tau.
     \end{split}
     \end{equation}

      Therefore,
     \begin{equation}\label{I12}
     \begin{split}
     |\bar I_{12}|\lesssim&\delta^{3-3\varepsilon_0}+\delta^{2-\varepsilon_0}\int_{t_0}^\mathfrak t\tau^{-5/2}\tilde E_{1,\leq k+2}(\tau,u)d\tau+\delta^{-1}\int_0^u\delta F_{2,k+2}(\mathfrak t,u')du'\\
     &+\delta^{2-\varepsilon_0}\int_{t_0}^\mathfrak t\tau^{2m-5/2}\tilde E_{2,\leq k+2}(\tau,u)d\tau
     +\delta^{-\varepsilon_0}\int_0^uF_{1,k+2}(\mathfrak t,u')du'.
     \end{split}
     \end{equation}

     \item One can use \eqref{LrchiE} to estimate $\bar I_{13}$.
    Indeed, it follows from \eqref{Rfh} and \eqref{dd} that
     \begin{equation*}
     \begin{split}
     &\delta^{2l}\mid\int_{D^{{\mathfrak t},u}}\varrho^{2m-2}Z^k\big(c^{-1}\varrho(2\slashed dx^a\cdot\slashed d\varphi_a-2\mathring Lc+\mathring L^\al\mathring L\varphi_\al)\big)\underline{\mathring L}\varphi_\g(RZ^{k+1}\varphi_\g)\mid\\
     \leq&\delta^{2l}\mid\int_{D^{{\mathfrak t},u}}\varrho^{2m-2}RZ^k\big(c^{-1}\varrho(2\slashed dx^a\cdot\slashed d\varphi_a
     -2\mathring Lc+\mathring L^\al\mathring L\varphi_\al)\big)\underline{\mathring L}\varphi_\g(Z^{k+1}\varphi_\g)\mid\\
     &+\delta^{2l}\mid\int_{D^{{\mathfrak t},u}}\varrho^{2m-2}Z^k\big(c^{-1}\varrho(2\slashed dx^a\cdot\slashed d\varphi_a-2\mathring Lc
     +\mathring L^\al\mathring L\varphi_\al)\big)R\underline{\mathring L}\varphi_\g(Z^{k+1}\varphi_\g)\mid\\
     &+\f12\delta^{2l}\mid\int_{D^{{\mathfrak t},u}}\varrho^{2m-2}Z^k\big(c^{-1}\varrho(2\slashed dx^a\cdot\slashed d\varphi_a
     -2\mathring Lc+\mathring L^\al\mathring L\varphi_\al)\big)\underline{\mathring L}\varphi_\g(Z^{k+1}\varphi_\g)\textrm{tr}\leftidx{^{(R)}}{\slashed\pi}\mid\\
     \lesssim&\int_{D^{{\mathfrak t},u}}\delta^{1-\varepsilon_0}\varrho^{2m-3/2}\big\{\delta^{1-\varepsilon_0+l_4}\tau^{-5/2}|Z^{n_4}x|
     +\tau^{-1}\delta^{l_3}|Z^{n_3}\varphi|+\delta^l|Z^k(\mathring L+\f1{2\varrho})R\varphi|\\
     &+\delta^{1-\varepsilon_0+l_2}\tau^{-3/2}(|Z^{n_2}\textrm{tr}\leftidx{^{(R)}}{\slashed\pi}|
     +|\slashed{\mathcal L}_Z^{n_2}\leftidx{^{(R)}}{\slashed\pi}_{\mathring L}|+\delta|\slashed{\mathcal L}_Z^{n_2}\leftidx{^{(T)}}{\slashed\pi}_{\mathring L}|+\tau|Z^{n_2}\textrm{tr}\check\chi|)\\
     &+\delta^{2-\varepsilon_0+l_1}\tau^{-5/2}|\slashed{\mathcal L}_Z^{n_1}\leftidx{^{(R)}}{\slashed\pi}_T|
     +\underbrace{\delta^l|\mathring LRZ^k\varphi|}+\delta^{1-\varepsilon_0+l_3}\tau^{-3/2}|Z^{n_3}\check L^i|\big\}\delta^l|Z^{k+1}\varphi|,
     \end{split}
     \end{equation*}
     where $l_i$ is the number of $T$ in $Z^{n_i}$ with $n_i\leq k-2+i$ $(i=1,2,3,4)$, and the term underline with brace can be estimated by using \eqref{LZphi} for $\backepsilon=\f12$.

     This, together with Lemma \ref{L2T}, Proposition \ref{Y5} and \eqref{LZphi}, yields
          \begin{equation}\label{dxuL}
     \begin{split}
     &\delta^{2l}\mid\int_{D^{{\mathfrak t},u}}\varrho^{2m-2}Z^k\big(c^{-1}\varrho(2\slashed dx^a\cdot\slashed d\varphi_a-2\mathring Lc+\mathring L^\al\mathring L\varphi_\al)\big)\underline{\mathring L}\varphi_\g(RZ^{k+1}\varphi_\g)\mid\\
     \lesssim&\delta^{4-2\varepsilon_0}+\delta^{3-2\varepsilon_0}\int_{t_0}^\mathfrak t\tau^{-3/2}\tilde E_{1,\leq k+2}(\tau,u)d\tau\\
     &+\delta^{3-2\varepsilon_0}\int_{t_0}^\mathfrak t\tau^{2m-5/2}\tilde E_{2,\leq k+2}(\tau,u)d\tau+\int_0^uF_{1,\leq k+2}(\mathfrak t,u')du'.
     \end{split}
     \end{equation}
     In addition, for $l_i$ and $n_i$ as in \eqref{LbZ}, direct computations yield
     \begin{equation}\label{r2e}
     \begin{split}
     &\delta^{2l}\mid\int_{D^{{\mathfrak t},u}}\varrho^{2m-2}Z^k\big(c^{-1}\varrho^2(\mathring L^\al\mathring L\varphi_\al-\mathring Lc)\textrm{tr}\check\chi-\varrho^2(\textrm{tr}\check\chi)^2+\varrho^2e\big)\underline{\mathring L}\varphi_\g(RZ^{k+1}\varphi_\g)\mid\\
     \lesssim&\int_{D^{{\mathfrak t},u}}\delta^{2l}|\slashed dZ^{k+1}\varphi|^2+\int_{D^{{\mathfrak t},u}}\delta^{2-4\varepsilon_0}\tau^{4m-4}\delta^{2l_3}|Z^{n_3}\varphi|^2+\int_{D^{{\mathfrak t},u}}\delta^{2-4\varepsilon_0}\tau^{4m-2}\delta^{2l_2}|Z^{n_2}\textrm{tr}\check\chi|^2
    \\
     &+\int_{D^{{\mathfrak t},u}}\delta^{4-6\varepsilon_0}\tau^{4m-7}\delta^{2l_3}|Z^{n_3}x|^2
     +\int_{D^{{\mathfrak t},u}}\delta^{4-6\varepsilon_0}\tau^{4m-5}\delta^{2l_2}\sum_{j=1}^2|Z^{n_2}L^j|^2\\
     \lesssim&\delta^{5-6\varepsilon_0}+\delta^{4-4\varepsilon_0}\int_{t_0}^\mathfrak t\tau^{2m-4}\tilde E_{1,\leq k+2}(\tau,u)d\tau+\delta^{4-4\varepsilon_0}\int_{t_0}^\mathfrak t\tau^{4m-4}\tilde E_{2,\leq k+2}(\tau,u)d\tau\\
     &+\delta^{-1}\int_0^u\delta F_{2,k+2}(\mathfrak t,u')du',
     \end{split}
     \end{equation}
     where the last integral in the second inequality has been estimated by \eqref{uZkchi}.
    Collecting \eqref{LrchiE}, \eqref{dxuL} and \eqref{r2e} yields
     \begin{equation}\label{I13}
     \begin{split}
     |\bar I_{13}| \lesssim&\delta^{4-2\varepsilon_0}+\delta^{3-2\varepsilon_0}\int_{t_0}^\mathfrak t\tau^{-3/2}\tilde E_{1,\leq k+2}(\tau,u)d\tau+\delta^{-1}\int_0^u\delta F_{2,k+2}(\mathfrak t,u')du'\\
     &+\delta^{3-2\varepsilon_0}\int_{t_0}^\mathfrak t\tau^{2m-5/2}\tilde E_{2,\leq k+2}(\tau,u)d\tau+\int_0^uF_{1,\leq k+2}(\mathfrak t,u')du'.
     \end{split}
     \end{equation}
     \end{enumerate}
    We conclude from \eqref{I11}, \eqref{I12} and \eqref{I13} that
    \begin{equation}\label{bI}
    \begin{split}
    |\bar I_1|\lesssim&\delta^{3-3\varepsilon_0}+\delta^{2-\varepsilon_0}\int_{t_0}^\mathfrak t\tau^{-3/2}\tilde E_{1,\leq k+2}(\tau,u)d\tau+\delta^{-1}\int_0^u\delta F_{2,k+2}(\mathfrak t,u')du'\\
    &+\delta^{2-\varepsilon_0}\int_{t_0}^\mathfrak t\tau^{2m-5/2}\tilde E_{2,\leq k+2}(\tau,u)d\tau
    +\delta^{-\varepsilon_0}\int_0^uF_{1,\leq k+2}(\mathfrak t,u')du'.
    \end{split}
    \end{equation}

    To deal with $\bar I_2$, one can apply \eqref{LrchiE} to any vectorfield $\bar Z\in\{R, T\}$ to get
    \begin{equation}\label{LRbZ}
    \begin{split}
    &\delta^l|\mathring L\big(\varrho^2R\bar Z^k(\textrm{tr}\check\chi-E)\big)|\\
    \leq&\delta^l|[\mathring L,R\bar Z^k]\big(\varrho^2(\textrm{tr}\check\chi-E)\big)|
    +\delta^l|\mathring LR[\varrho^2,\bar Z^k](\textrm{tr}\check\chi-E)|+\delta^l|R\bar Z^k\mathring L\big(\varrho^2(\textrm{tr}\check\chi-E)\big)|\\
    \lesssim&\delta^{1-\varepsilon_0}\varrho^{1/2}\delta^{l_2}|R\bar Z^{n_2}(\textrm{tr}\check\chi-E)|
    \delta^{1-\varepsilon_0}\mathfrak t^{-1/2}\delta^{l_2}|\slashed{\mathcal L}_{\bar Z}^{n_2}\leftidx{^{(\bar Z)}}{\slashed\pi}_{\mathring L}|+\delta^{1-\varepsilon_0}\mathfrak t^{-3/2}\delta^{l_4}|Z^{n_4}x|\\
    &+\delta^{1-\varepsilon_0}\varrho^{1/2}\delta^{l_3}|\slashed dZ^{n_3}\varphi|+\delta^{1-\varepsilon_0}\mathfrak t^{-1/2}\delta^{l_3}\sum_{i=1}^2|\bar Z^{n_3}\check L^i|+\varrho\delta^{l_2}|\bar Z^{n_2}(\slashed dx^a\cdot\slashed dR\varphi_a)|\\
    &+\delta^{l_3}|Z^{n_3}\varphi|+\mathfrak t\delta^{l_3}|\mathring LZ^{n_3}\varphi|
    +\delta^{2-\varepsilon_0}\mathfrak t^{-3/2}\delta^{l_1}|\slashed{\mathcal L}_Z^{n_1}\leftidx{^{(R)}}{\slashed\pi}_T|
    +\delta^{1-\varepsilon_0}\mathfrak t^{1/2}\delta^{l_2}|Z^{n_2}\textrm{tr}\check\chi|\\
    &+\delta\varrho\delta^{l_2}|\bar Z^{n_2}\mathring L\textrm{tr}\check\chi|.
    \end{split}
    \end{equation}
    This, together with \eqref{Y5} and \eqref{ZTchi}, yields
    \begin{equation}\label{LRbZL2}
    \begin{split}
    &\delta^l\|\mathring L\big(\varrho^2R\bar Z^k(\textrm{tr}\check\chi-E)\big)\|_{L^2(\Sigma_{\mathfrak t}^u)}\\
    \lesssim&\delta^{1-\varepsilon_0}\varrho^{1/2}\delta^{l}\|R\bar Z^{k}(\textrm{tr}\check\chi-E)\|_{L^2(\Sigma_{\mathfrak t}^u)}+\delta^{3/2-\varepsilon_0}
    +\delta^{1-\varepsilon_0}\mathfrak t^{1/2-m}\sqrt{\tilde E_{1,\leq k+2}(\mathfrak t,u)}\\
    &+\delta\sqrt{\tilde E_{2,\leq k+2}(\mathfrak t,u)}+\varrho\delta^{l_2}\|\bar Z^{n_2}(\slashed dx^a\cdot\slashed dR\varphi_a)\|_{L^2(\Sigma_{\mathfrak t}^u)}+t\delta^{l_3}\|\mathring LZ^{n_3}\varphi\|_{L^2(\Sigma_{\mathfrak t}^u)}.
    \end{split}
    \end{equation}

    On the other hand, applying \eqref{Ff} to $F=\varrho^2R\bar Z^k(\textrm{tr}\check\chi-E)(t,u,\vartheta)-\varrho_0^2R\bar Z^k(\textrm{tr}\check\chi-E)(t_0,u,\vartheta)$, one can get from \eqref{LRbZL2} and \eqref{dd} that
    \begin{equation}\label{rR}
    \begin{split}
    &\varrho^{3/2}\delta^l\|R\bar Z^k(\textrm{tr}\check\chi-E)\|_{L^2(\Sigma_{\mathfrak t}^u)}\\
    \lesssim&\delta^{3/2-\varepsilon_0}\mathfrak t^{1/2}+\delta^{1-\varepsilon_0}\mathfrak t^{1-m}\sqrt{\tilde E_{1,\leq k+2}(\mathfrak t,u)}
    +\delta \mathfrak t^{1/2}\sqrt{\tilde E_{2,\leq k+2}(\mathfrak t,u)}\\
    &+\int_{t_0}^\mathfrak t\tau^{1/2}\big\{\delta^{l_2}\|\bar Z^{n_2}(\slashed dx^a\cdot\slashed dR\varphi_a)\|_{L^2(\Sigma_\tau^u)}+\delta^{l_3}\|\mathring LZ^{n_3}\varphi\|_{L^2(\Sigma_\tau^u)}\big\}d\tau\\
    \lesssim&\int_{t_0}^\mathfrak t\tau^{1/2}\delta^{l_3}\|(\mathring L+\f 1{2\varrho})\bar Z^{n_3}\varphi\|_{L^2(\Sigma_\tau^u)}d\tau+\delta^{3/2-\varepsilon_0}\mathfrak t^{1/2}\\
    &+\delta^{1-\varepsilon_0}\mathfrak t^{1-m}\sqrt{\tilde E_{1,\leq k+2}(\mathfrak t,u)}+\delta \mathfrak t^{1/2}\sqrt{\tilde E_{2,\leq k+2}(\mathfrak t,u)}\\
    \lesssim& \mathfrak t^{1-m}\Big(\int_0^u F_{1,\leq k+2}(\mathfrak t,u')du'\Big)^{1/2}+\delta^{3/2-\varepsilon_0}\mathfrak t^{1/2}\\
    &+\delta^{1-\varepsilon_0}\mathfrak t^{1-m}\sqrt{\tilde E_{1,\leq k+2}(\mathfrak t,u)}+\delta \mathfrak t^{1/2}\sqrt{\tilde E_{2,\leq k+2}(\mathfrak t,u)}.\\
    \end{split}
    \end{equation}
    When there is at least one $\varrho\mathring L$ in $Z^k$, that is, $Z^k=Z^{k_1}(\varrho\mathring L)\bar Z^{k_2}$ for $k_1+k_2=k-1$,
    according to \eqref{Ltr}, Proposition \ref{Y5}, \eqref{d} and \eqref{Em}, one can deduce that
    \begin{equation}\label{RZL}
    \begin{split}
    &\delta^l\|RZ^{k_1}(\varrho\mathring L)\bar Z^{k_2}(\textrm{tr}\check\chi-E)\|_{L^2(\Sigma_{\mathfrak t}^u)}\\
    \leq&\delta^l\|RZ^{k_1}[\varrho\mathring L, \bar Z^{k_2}](\textrm{tr}\check\chi-E)\|_{L^2(\Sigma_{\mathfrak t}^u)}
    +\delta^l\|RZ^{k_1}\bar Z^{k_2}(\varrho\mathring L)(\textrm{tr}\check\chi-E)\|_{L^2(\Sigma_{\mathfrak t}^u)}\\
    \lesssim&\delta^{1-\varepsilon_0}\mathfrak t^{-3/2}\delta^{l_1}\|\slashed{\mathcal L}_Z^{n_1}\leftidx{^{(\bar Z)}}{\slashed\pi}_{\mathring L}\|_{L^2(\Sigma_{\mathfrak t}^u)}+\delta^{l_1}\|RZ^{n_1}\textrm{tr}\check\chi\|_{L^2(\Sigma_{\mathfrak t}^u)}
    +\mathfrak t^{-1}\delta^{l_3}\|Z^{n_3}\varphi\|_{L^2(\Sigma_{\mathfrak t}^u)}\\
    &+\delta^{1-\varepsilon_0}\mathfrak t^{-1/2}\delta^{l_1}\|Z^{n_1}\textrm{tr}\check\chi\|_{L^2(\Sigma_{\mathfrak t}^u)}
    +\delta^{1-\varepsilon_0}\mathfrak t^{-3/2}\delta^{l_2}\|Z^{n_2}\check L^i\|_{L^2(\Sigma_{\mathfrak t}^u)}\\
    &+\delta^{1-\varepsilon_0}\mathfrak t^{-5/2}\delta^{l_3}\|Z^{n_3}x^i\|_{L^2(\Sigma_{\mathfrak t}^u)}\\
    \lesssim&\delta^{3/2-\varepsilon_0}\mathfrak t^{-1}+\delta^{1-\varepsilon_0}\mathfrak t^{-1/2-m}\sqrt{\tilde E_{1,\leq k+2}(\mathfrak t,u)}
    +\delta \mathfrak t^{-1}\sqrt{\tilde E_{2,\leq k+2}(\mathfrak t,u)}.
    \end{split}
    \end{equation}

    Thus, it follows from Proposition \ref{LTRh} that
    \begin{equation}\label{RZ}
    \begin{split}
    \delta^l\|RZ^k(\textrm{tr}\check\chi-E)&\|_{L^2(\Sigma_{\mathfrak t}^u)}\lesssim \mathfrak t^{-1/2-m}\Big(\int_0^u F_{1,\leq k+2}(\mathfrak t,u')du'\Big)^{1/2}+\delta^{3/2-\varepsilon_0}\mathfrak t^{-1}\\
    &+\delta^{1-\varepsilon_0}\mathfrak t^{-1/2-m}\sqrt{\tilde E_{1,\leq k+2}(\mathfrak t,u)}+\delta \mathfrak t^{-1}\sqrt{\tilde E_{2,\leq k+2}(\mathfrak t,u)},
    \end{split}
    \end{equation}
    which, together with \eqref{Rfh} and \eqref{uZkchi}, yields
    \begin{equation}\label{bI2}
    \begin{split}
    |\bar I_2|=&\delta^{2l}|\int_{\Sigma_{\mathfrak t}^u}(Z^{k+1}\varphi_\g)\varrho^{2m}R\big\{\underline{\mathring L}\varphi_\g Z^k(\textrm{tr}\check\chi-E)\big\}dx\\
    &\qquad+\f12\int_{\Sigma_{\mathfrak t}^u}\varrho^{2m}(Z^{k+1}\varphi_\g)\underline{\mathring L}\varphi_\g Z^k(\textrm{tr}\check\chi-E)\cdot\textrm{tr}\leftidx{^{(R)}}{\slashed\pi}dx|\\
    \lesssim&\delta^{-\varepsilon_0}\varrho^{2m-1/2}\delta^l\|Z^{k+1}\varphi\|_{L^2(\Sigma_{\mathfrak t}^u)}
    \cdot\delta^l\|RZ^k(\textrm{tr}\check\chi-E)\|_{L^2(\Sigma_{\mathfrak t}^u)}\\
    &+\delta^{-\varepsilon_0}\varrho^{2m-1/2}\delta^l\|Z^{k+1}\varphi\|_{L^2(\Sigma_{\mathfrak t}^u)}
    \cdot\delta^l\|Z^k(\textrm{tr}\check\chi-E)\|_{L^2(\Sigma_{\mathfrak t}^u)}\\
    \lesssim&\delta^{3-3\varepsilon_0}\varrho^{2m-3/2}+\delta^{-\varepsilon_0}\varrho^{-1/2}
    \int_0^uF_{1,\leq k+2}(\mathfrak t,u')du'\\
    &+\delta^{2-3\varepsilon_0}\varrho^{-1/2}\tilde E_{1,\leq k+2}(\mathfrak t,u)
    +\delta^{2-\varepsilon_0}\varrho^{2m-3/2}\tilde E_{2,\leq k+2}(\mathfrak t,u).
    \end{split}
    \end{equation}
   Recall \eqref{-bI}. It remains to estimate terms involving $Er_i$ $(i=1,2,3)$ given in Lemma \ref{f1f2f3}. First, one has
    \begin{equation}\label{Er1}
    \begin{split}
    &\delta^{2l}\mid\int_{D^{{\mathfrak t},u}}Er_1\mid\\
    \lesssim&\delta^{-\varepsilon_0}\int_{t_0}^\mathfrak t\tau^{2m-1}\delta^{l}\|Z^{k+1}\varphi\|_{L^2(\Sigma_\tau^u)}
    \Big\{\delta^{1-\varepsilon_0+l}\|\slashed dZ^k(\textrm{tr}\check\chi-E)\|_{L^2(\Sigma_\tau^u)}+\tau^{-1/2}\delta^{l_2}\|Z^{n_2}\textrm{tr}\check\chi\|_{L^2(\Sigma_\tau^u)}\\
    &
    +\delta^{1-\varepsilon_0}\tau^{-2}\delta^{l_1}\|\slashed{\mathcal L}_Z^{n_1}\leftidx{^{(\bar Z)}}{\slashed\pi}_{\mathring L}\|_{L^2(\Sigma_\tau^u)}+\tau^{-3/2}\delta^{l_3}\|Z^{n_3}\varphi\|_{L^2(\Sigma_\tau^u)}
    +\delta^{1-\varepsilon_0}\tau^{-2}\delta^{l_2}\sum_{i=1}^2\|Z^{n_2}\check L^i\|_{L^2(\Sigma_\tau^u)}\\
    &+\delta^{1-\varepsilon_0}\tau^{-3}\delta^{l_3}\|Z^{n_3}x\|_{L^2(\Sigma_\tau^u)}\Big\}d\tau+\delta^{1-2\varepsilon_0}\int_{t_0}^\mathfrak t\tau^{2m-1}\delta^{l}\|\slashed dZ^{k+1}\varphi\|_{L^2(\Sigma_\tau^u)}\delta^l\|Z^k(\textrm{tr}\check\chi-E)\|_{L^2(\Sigma_\tau^u)}d\tau\\
    &+\int_{t_0}^\mathfrak t\mid\int_{\Sigma_{\mathfrak t}^u}\delta^{2l}(RZ^{k+1}\varphi_\g)\varrho^{2m-1}\underline{\mathring L}\varphi_\g Z^k(\textrm{tr}\check\chi-E)dx\mid d\tau,
    \end{split}
    \end{equation}
    here one has used the identity $(\mathring L+\f1{2\varrho})f_1=(2m-2)\varrho^{2m-3}\underline{\mathring L}\varphi_\g
    +\varrho^{2m-2}(\mathring L+\f1{2\varrho})\underline{\mathring L}\varphi_\g=(2m-2)\varrho^{2m-3}\underline{\mathring L}\varphi_\g+O(\delta^{1-2\varepsilon_0}\varrho^{2m-4})$ due to \eqref{fequation}. Notice that the last term on
    the right hand side of \eqref{Er1} is just $\int_{t_0}^\mathfrak t\varrho^{-1}|\bar I_2|d\tau$ which can be estimated
    by \eqref{bI2}, while the other terms can be estimated by using \eqref{d}, \eqref{uZkchi}, \eqref{SiE} and Proposition \ref{Y5}. Thus it holds that
    \begin{equation}\label{Er}
    \begin{split}
     \delta^{2l}\mid\int_{D^{{\mathfrak t},u}}&Er_1\mid\lesssim\delta^{3-3\varepsilon_0}
     +\delta^{2-3\varepsilon_0}\int_{t_0}^\mathfrak t\tau^{m-2}\tilde E_{1,\leq k+2}(\tau,u)d\tau\\
     &+\delta^{2-\varepsilon_0}\int_{t_0}^\mathfrak t\tau^{2m-5/2}\tilde E_{2,\leq k+2}(\tau,u)d\tau
     +\delta^{-1}\int_0^u\tilde F_{1,\leq k+2}(\mathfrak t,u')du'.
    \end{split}
    \end{equation}
    Next, it follows directly from \eqref{uZkchi} and Proposition \ref{Y5} that
    \begin{equation}\label{Er23}
    \delta^{2l}|\int_{\Sigma_{\mathfrak t}^u}Er_2|+\delta^{2l}|\int_{\Sigma_{t_0}^u}Er_3|\lesssim\delta^{3-3\varepsilon_0}
    +\delta^{2-3\varepsilon_0}\tilde E_{1,\leq k+2}(\mathfrak t,u)+\delta^{2-\varepsilon_0}\tilde E_{2,\leq k+2}(\mathfrak t,u).
    \end{equation}
    Then the estimate for $\bar I$ follows from \eqref{bI}, \eqref{bI2}, \eqref{Er}, \eqref{Er23} and \eqref{-bI}. This and \eqref{iII}
    complete the estimate \eqref{RchiuL} in Proposition \ref{9.1}.
    \end{proof}
Based on \eqref{D-R} and \eqref{RchiuL}, we can end this subsection with
\begin{equation}\label{RD}
\begin{split}
&\delta^{2l}\mid\int_{D^{{\mathfrak t}, u}}\sum_{j=1}^k\big(Z_{k+1}+\leftidx{^{(Z_{k+1})}}\chi\big)\dots\big(Z_{k+2-j}
+\leftidx{^{(Z_{k+2-j})}}\chi\big)\leftidx{^{(R)}}D_{\gamma,2}^{k-j}\cdot(\varrho^{2m}\mathring L\varphi_\gamma^{k+1}\\
&\qquad\qquad\qquad+\f12\varrho^{2m-1}\varphi_\g^{k+1})\mid\\
\lesssim&\mid\delta^{2l}\int_{D^{{\mathfrak t},u}}Z^k\ \leftidx{^{(R)}}D_{\g,2}^0(\varrho^{2m}\mathring L\varphi_\g^{k+1}+\f12\varrho^{2m-1}\varphi_\g^{k+1})\mid+\delta^{-1}\int_0^uF_{1,k+2}(\mathfrak t,u')du'\\
&+\sum_{p=0}^{k-1}\delta^{1+2l}\int_{D^{{\mathfrak t},u}}|Z^p\ \leftidx{^{(R)}}D_{\g,2}^{k-j}|^2\cdot|Z^{\leq k-1-p}\ \leftidx{^{(Z)}}\chi|^2\\
&+\delta^{1+2l}\sum_{j=1}^{k-1}\int_{D^{{\mathfrak t}, u}}\mid\sum_{j=1}^k\big(Z_{k+1}+\leftidx{^{(Z_{k+1})}}\chi\big)\dots\big(Z_{k+2-j}
+\leftidx{^{(Z_{k+2-j})}}\chi\big)\leftidx{^{(R)}}D_{\gamma,2}^{k-j}\mid^2\\
\lesssim&\delta^{3-3\varepsilon_0}+\delta^{-1}\int_0^u\tilde F_{1,\leq k+2}(\mathfrak t,u')du'
+\delta^{-1}\int_0^u\delta F_{2,k+2}(\mathfrak t,u')du'\\
&+\delta^{1-2\varepsilon_0}\int_{t_0}^\mathfrak t\tau^{m-2}\tilde E_{1,\leq k+2}(\tau,u)d\tau
+\delta^{2-\varepsilon_0}\int_{t_0}^\mathfrak t\tau^{2m-5/2}\tilde E_{2,\leq k+2}(\tau,u)d\tau\\
&+\delta^{2-3\varepsilon_0}\tilde E_{1,\leq k+2}(\mathfrak t,u)+\delta^{2-\varepsilon_0}\tilde E_{2,\leq k+2}(\mathfrak t,u).
\end{split}
\end{equation}
\end{enumerate}

\end{enumerate}

\subsection{Estimates for $J_2^k$}\label{l}
Recall that $J_2^k\equiv\mu\textrm{div}\leftidx{^{(Z_{k+1})}}C_\gamma^{k}+\big(Z_{k+1}
+\leftidx{^{(Z_{k+1})}}\chi\big)\dots\big(Z_{1}+\leftidx{^{(Z_1)}}\chi\big)\Phi_\gamma^0$ does not contain
 the top order derivatives of $\textrm{tr}{\chi}$ and $\slashed\triangle\mu$. Therefore, according to
 Proposition \ref{Y5} and the expressions of $\leftidx{^{(Z)}}D_{\g, j}^k$ in \eqref{T1}-\eqref{R3} and \eqref{T2}-\eqref{R2},
one can get
\begin{equation}\label{ZL}
\begin{split}
&\delta^{2l+1}\int_{D^{{\mathfrak t},u}}|\sum_{j=1}^3\leftidx{^{(Z_{k+1})}}D_{\gamma,j}^k\cdot \mathring{\underline L}\varphi_\gamma^{k+1}|\\
\lesssim&\delta^{3-2\varepsilon_0}\int_{t_0}^\mathfrak t\tau^{-2}\tilde E_{1,\leq k+2}(\tau,u)d\tau+\delta\int_{t_0}^\mathfrak t\tau^{-2m}\tilde E_{2,\leq k+2}(\tau,u)d\tau+\delta^{-1}\int_0^u F_{1,\leq k+2}(\mathfrak t,u')du'
\end{split}
\end{equation}
and
\begin{equation}\label{ZrL}
\begin{split}
&\delta^{2l}|\int_{D^{{\mathfrak t},u}}\sum_{j=1}^3\leftidx{^{(Z_{k+1})}}D_{\gamma,j}^k\cdot (\varrho^{2m}\mathring L\varphi_\gamma^{k+1}+\f12\varrho^{2m-1}\varphi_\g^{k+1})|\\
\lesssim&\delta^{3-2\varepsilon_0}\int_{t_0}^\mathfrak t\tau^{-2}\tilde E_{1,\leq k+2}(\tau,u)d\tau
+\delta^{3-2\varepsilon_0}\int_{t_0}^\mathfrak t\tau^{2m-3}\tilde E_{2,\leq k+2}(\tau,u)d\tau\\
&+\delta^{-1}\int_0^u F_{1,\leq k+2}(\mathfrak t,u')du'.
\end{split}
\end{equation}

Note that $\Phi_\g^0=\mu\Box_g\varphi_\g$ is given explicitly in \eqref{ge}.
Then, Proposition \ref{Y5} and \eqref{lamda} lead to
\begin{equation}\label{Phi0}
\begin{split}
&\delta^{2l+1}\int_{D^{{\mathfrak t},u}}|(Z_{k+1}+\leftidx{^{(Z_{k+1})}}\chi)\cdots(Z_{1}
+\leftidx{^{(Z_{1})}}\chi)\Phi_\gamma^0\cdot\mathring{\underline L}\varphi_\gamma^{k+1}|\\
\lesssim&\delta^{3-3\varepsilon_0}+\delta^{2-3\varepsilon_0}\int_{t_0}^\mathfrak t\tau^{-2m-1/2}\tilde E_{1,\leq k+2}(\tau,u)d\tau+\delta^{2-\varepsilon_0}\int_{t_0}^\mathfrak t\tau^{-3/2}\tilde E_{2,\leq k+2}(\tau,u)d\tau\\
&+\delta^{-1}\int_0^uF_{1,\leq k+2}(\mathfrak t,u')du'
\end{split}
\end{equation}
and
\begin{equation}\label{rPhi}
\begin{split}
&\delta^{2l}|\int_{D^{{\mathfrak t},u}}(Z_{k+1}+\leftidx{^{(Z_{k+1})}}\chi)\cdots(Z_{1}
+\leftidx{^{(Z_{1})}}\chi)\Phi_\gamma^0\cdot(\varrho^{2m}\mathring L\varphi_\gamma^{k+1}
+\f12\varrho^{2m-1}\varphi_\g^{k+1})|\\
\lesssim&\delta^{4-4\varepsilon_0}+\delta^{3-4\varepsilon_0}\int_{t_0}^\mathfrak t\tau^{-2}\tilde E_{1,\leq k+2}(\tau,u)d\tau+\delta^{3-2\varepsilon_0}\int_{t_0}^\mathfrak t\tau^{2m-3}\tilde E_{2,\leq k+2}(\tau,u)d\tau\\
&+\delta^{-1}\int_0^uF_{1,\leq n+1}(\mathfrak t,u')du'.
\end{split}
\end{equation}

\section{Global estimates in $A_{2\dl}$}\label{YY}

Based on the estimates in Section \ref{EE}-\ref{ert}, we are now ready to prove the global uniform estimates of the smooth solution $\phi$ to the equation \eqref{1.9} with
initial data \eqref{i0}  and \eqref{i1}-\eqref{Y-0} near $C_0$ by following the framework in \cite{Ding4}. Furthermore, in order to estimate the solution inside $C_0$, we also need to estimate $\phi$ on $\t C_{2\dl}$ as in \cite{Ding4}. But thanks to the special structure of \eqref{1.9}, it is easier to obtain $\mathring L\sim L$, $\mathring L+2T\sim \underline L$ and $R\sim\O$ than that in \cite{Ding4}
 (here the equivalence of two vectorfields means that there are same time decay rates and smallness orders when acting on the solution $\phi$).
 Indeed, substituting \eqref{D11}-\eqref{D13}, \eqref{D22}, \eqref{TD}, \eqref{LD} and \eqref{RD}-\eqref{rPhi} into \eqref{e}, and using the Gronwall's inequality, one can get that under the assumptions $(\star)$ with
small $\delta>0$ for $\f12<m<\f34$,
\begin{equation}\label{E}
\delta\tilde E_{2,\leq 2N-4}(\mathfrak t,u)+\delta\tilde F_{2,\leq 2N-4}(\mathfrak t,u)+\tilde E_{1,\leq 2N-4}(\mathfrak t,u)+\tilde F_{1,\leq 2N-4}(\mathfrak t,u)\lesssim\delta^{2-2\varepsilon_0}.
\end{equation}
Based on \eqref{E}, we can close the bootstrap
assumptions $(\star)$ in Section \ref{BA}. To this end, one needs the following Sobolev type embedding formula (see \cite[Proposition 18.10]{J}).
\begin{lemma}
For any function $f\in H^2(S_{{\mathfrak t},u})$, under the assumptions $(\star)$ for $\delta>0$ small, it holds that
\begin{equation}\label{et}
\|f\|_{L^\infty(S_{{\mathfrak t},u})}\lesssim\f{1}{\sqrt{\mathfrak t}}\sum_{a\leq 1}\|R^a f\|_{L^2(S_{\mathfrak t, u})}.
\end{equation}
\end{lemma}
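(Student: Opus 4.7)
The plan is to reduce the estimate to the standard one-dimensional Sobolev embedding on $\mathbb{S}^1$ with respect to the coordinate $\vartheta$, and then convert the resulting $d\vartheta$-norms into the intrinsic $L^2(S_{\mathfrak{t},u})$ norms by carefully tracking the volume factor $\sqrt{\slashed g_{XX}}$.

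First I would record the size of $\slashed g_{XX}$. From Lemma \ref{gL} together with the definition $\slashed g_{XX}=g(X,X)$ and the identification $X=\partial/\partial\vartheta$, the bootstrap assumptions $(\star)$ give $\sqrt{\slashed g_{XX}}=\mathfrak t\bigl(1+O(\delta^{1-\varepsilon_0}\mathfrak t^{-1/2})\bigr)$ (this is consistent with $|R|=r|\slashed{\mathrm{rot}}|$ via \eqref{1-f} and $|R|\sim r\sim\mathfrak t$). Consequently, for any continuous function $h$ on $S_{\mathfrak t,u}$,
\begin{equation*}
\|h\|_{L^2(S_{\mathfrak t,u})}^2=\int_0^{2\pi}|h|^2\sqrt{\slashed g_{XX}}\,d\vartheta=\mathfrak t\bigl(1+O(\delta^{1-\varepsilon_0}\mathfrak t^{-1/2})\bigr)\int_0^{2\pi}|h|^2\,d\vartheta,
\end{equation*}
so $\int_0^{2\pi}|h|^2\,d\vartheta\lesssim\mathfrak t^{-1}\|h\|_{L^2(S_{\mathfrak t,u})}^2$.

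Next, I would express $\partial_\vartheta$ in terms of $R$. Writing $R=R^X X=R^X\partial_\vartheta$, the identity $|R|^2=\slashed g_{XX}(R^X)^2$ combined with $|R|^2=(1+O(\delta^{1-\varepsilon_0}\mathfrak t^{-1/2}))r^2$ from \eqref{1-f} (with $\kappa=\slashed d\vartheta$, noting $\kappa\cdot R=1$) and $\sqrt{\slashed g_{XX}}\sim\mathfrak t$ yields $R^X=1+O(\delta^{1-\varepsilon_0}\mathfrak t^{-1/2})$. Therefore $|\partial_\vartheta f|\lesssim|Rf|$ pointwise on $S_{\mathfrak t,u}$, and in particular
\begin{equation*}
\int_0^{2\pi}|\partial_\vartheta f|^2\,d\vartheta\lesssim\int_0^{2\pi}|Rf|^2\,d\vartheta\lesssim\mathfrak t^{-1}\|Rf\|_{L^2(S_{\mathfrak t,u})}^2.
\end{equation*}

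Finally, the one-dimensional Sobolev embedding on $\mathbb{S}^1$ (with the $d\vartheta$ measure) gives
\begin{equation*}
\|f\|_{L^\infty(\mathbb{S}^1)}^2\lesssim\int_0^{2\pi}|f|^2\,d\vartheta+\int_0^{2\pi}|\partial_\vartheta f|^2\,d\vartheta.
\end{equation*}
Combining this with the two displayed inequalities above produces the desired bound $\|f\|_{L^\infty(S_{\mathfrak t,u})}\lesssim\mathfrak t^{-1/2}\sum_{a\le 1}\|R^a f\|_{L^2(S_{\mathfrak t,u})}$. The only mild subtlety is justifying $\sqrt{\slashed g_{XX}}\sim\mathfrak t$ and $R^X\sim 1$ uniformly under $(\star)$; both follow from Lemma \ref{gL}, Proposition \ref{chi'} and Lemma \ref{12form}, so no new analytic input is needed—this is essentially a geometric bookkeeping step.
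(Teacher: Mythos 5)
Your proposal is correct and self-contained; the paper itself just cites Proposition 18.10 of Speck's monograph [J], so you are filling in details the paper delegates to a reference. The underlying mechanism — reduce to the flat circle Sobolev embedding in the transported coordinate $\vartheta$, then convert between $d\vartheta$-norms and intrinsic $L^2(S_{\mathfrak t,u})$-norms using $\sqrt{\slashed g_{XX}}\sim\mathfrak t$ and $R^X\sim 1$ — is exactly what one needs and is consistent with the estimates already proved in the paper: the bound $\sqrt{\slashed g_{XX}}\sim\varrho\sim\mathfrak t$ follows by integrating $\mathring L\ln\slashed g_{XX}=2\,\textrm{tr}\lambda=\tfrac{2}{\varrho}+2\,\textrm{tr}\check\lambda$ using Proposition \ref{chi'}, and $R^X\sim 1$ then follows from $|R|\sim r$ (which is \eqref{Ri} or \eqref{1-f}) combined with $|R|^2=\slashed g_{XX}(R^X)^2$. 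One small inaccuracy: in the parenthetical "noting $\kappa\cdot R=1$" for $\kappa=\slashed d\vartheta$, the exact identity is $\kappa\cdot X=X\vartheta=1$ and $\kappa\cdot R=R^X$; the claim $R^X=1+O(\delta^{1-\varepsilon_0}\mathfrak t^{-1/2})$ is what the computation actually produces, not an input to it. This does not affect the validity of the final chain of estimates, so the proof goes through.
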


It follows from \eqref{et}, \eqref{E} and \eqref{SSi} that for $k\leq 2N-6$,
\begin{equation}\label{im}
\delta^l|Z^k\varphi_\gamma|\lesssim\f{\delta^l}{\sqrt{\mathfrak t}} \sum_{a\leq 1}\|R^a Z^k\varphi_\gamma\|_{L^2(S_{\mathfrak t, u})}{\lesssim}\f{\delta^{1/2}}{\sqrt{\mathfrak t}}\big(\sqrt{E_{1,\leq 2N-4}}
+\sqrt{E_{2,\leq 2N-4}}\big)\lesssim\delta^{1-\varepsilon_0} \mathfrak t^{-1/2},
\end{equation}
which is independent of $M$.
This closes the bootstrap assumptions $(\star)$, and hence the uniform estimates and existence of the solution
$\phi$ to \eqref{1.9} with \eqref{i0}  and \eqref{i1}-\eqref{Y-0} in the domain $D^{{\mathfrak t},4\delta}$ can be proved  by
the standard continuity argument (see Figure \ref{pic:p3} in Subsection \ref{p}).

Finally, let $\Gamma\in\{(t+r)L,\underline L,\O\}$ be defined in the end of Section \ref{in}. For any $(t,x)\in\tilde C_{2\delta}$, we will refine the estimates on $\mid\Gamma^\al\phi\mid$, $\O^k\phi$ and $L\O^k\phi$ with better smallness as
\begin{equation}\label{bgp-0}
\begin{split}
|\Gamma^\al\phi(t,x)|\lesssim&\delta^{2-\varepsilon_0} t^{-1/2},\quad |\al|\leq 2N-9,\\
|(\O^k\phi, tL\O^k\phi)(t,x)|\lesssim&\dl^{3-2\ve_0}t^{-1/2},\quad k\leq 2N-9,
\end{split}
\end{equation}
which will be crucial to derive the global estimates and existence of the solution $\phi$ to $\eqref{1.9}$ in $B_{2\dl}$.

First, we improve the estimates on derivatives of $\mathring L^\al\mathring L\varphi_\al$,
$\mathring L^\al\varphi_\al$ and $u-(\mathfrak t-r)$.

Using \eqref{et} again, one can get by \eqref{SSi} that
\begin{equation*}
\begin{split}
&\|\varrho\delta^lZ^\beta(\mathring L^\al\varphi_\al)\|_{L^{\infty}(S_{{\mathfrak t},u})}\\
\lesssim&\delta^{l+1/2}\mathfrak t^{-1/2}\Big(\|\mathring{\underline L}(\varrho R^{\leq 1}Z^\beta(\mathring L^\al\varphi_\al))\|_{L^2(\Sigma_{\mathfrak t}^u)}
+\|\mathring{L}(\varrho R^{\leq 1}Z^\beta(\mathring L^\al\varphi_\al))\|_{L^2(\Sigma_{\mathfrak t}^u)}\Big)\\
\lesssim&\delta^{l+1/2}\mathfrak t^{-1/2}\Big\{\|R^{\leq1}Z^\beta(\mathring L^\al\varphi_\al)\|_{L^2(\Sigma_{\mathfrak t}^u)}
+\varrho\|[\mathring{\underline L},R]Z^\beta(\mathring L^\al\varphi_\al)\|_{L^2(\Sigma_{\mathfrak t}^u)}\\
&\qquad\qquad\quad+\varrho\|R^{\leq1}[\mathring{\underline L},Z^\beta](\mathring L^\al\varphi_\al)\|_{L^2(\Sigma_{\mathfrak t}^u)}
+\varrho\|R^{\leq1}Z^\beta\mathring{\underline L}(\mathring L^\al\varphi_\al)\|_{L^2(\Sigma_{\mathfrak t}^u)}\Big\}.
\end{split}
\end{equation*}
Since $\mathring{\underline L}(\mathring L^\al\varphi_\al)=(\mathring{\underline L}\mathring L^\al)\varphi_\al
+\mathring{\underline L}^\al \mathring L\varphi_\al$ and $\mathring{\underline L}\mathring L^\al$ is a
combination of \eqref{LL} and \eqref{TL}, for $|\beta|\leq 2N-7$,
one then has
\begin{equation}\label{mLphi}
\begin{split}
&\|\varrho\delta^lZ^\beta(\mathring L^\al\varphi_\al)\|_{L^{\infty}(S_{{\mathfrak t},u})}\\
\lesssim&\delta^{1/2}\mathfrak t^{-1/2}\big\{\delta^{l_2}\|Z^{n_2}\varphi\|_{L^2(\Sigma_{\mathfrak t}^u)}
+\delta^{1-\varepsilon_0+l_1}\mathfrak t^{-1/2}\sum_{i=1}^2\|Z^{n_1}\check{L}^i\|_{L^2(\Sigma_{\mathfrak t}^u)}\big\}\\
&+\delta^{3/2-\varepsilon_0+l_2}\mathfrak t^{-1}\big\{\|Z^{n_2}\mu\|_{L^2(\Sigma_{\mathfrak t}^u)}
+\mathfrak t^{-1}\|Z^{n_2}x\|_{L^2(\Sigma_{\mathfrak t}^u)}\big\}\\
&+\delta^{3/2-\varepsilon_0+l_0}\mathfrak t^{-1}\big\{\|\slashed{\mathcal L}_Z^{n_0}\leftidx{^{(\bar Z)}}{\slashed\pi}_{\mathring L}\|_{L^2(\Sigma_\tau^u)}+\|\slashed{\mathcal L}_Z^{n_0}\leftidx{^{(R)}}{\slashed\pi}_{T}\|_{L^2(\Sigma_\tau^u)}\big\}\\
\lesssim&\delta^{2-\varepsilon_0}\mathfrak t^{-1/2}+\delta^{3/2-\varepsilon_0}\mathfrak t^{-1/2}\sqrt{\tilde{E}_{1,\leq2N-4}}
+\delta^{3/2}\mathfrak t^{-1/2}\sqrt{\tilde{E}_{2,\leq2N-4}}\\
\lesssim&\delta^{2-\varepsilon_0}\mathfrak t^{-1/2},
\end{split}
\end{equation}
where $l_i$ is the number of $T$ in $Z^{n_i}$ and $n_i\leq |\beta|+i$ $(i=0,1,2)$. \eqref{mLphi} implies that
in the domain $D^{{\mathfrak t},u}$, $\mathring L^\al\varphi_\al$ can be estimated more precisely as
\begin{equation}\label{mZLphi}
\delta^l|Z^\beta(\mathring L^\al\varphi_\al)|\lesssim\delta^{2-\varepsilon_0}\mathfrak t^{-3/2},\quad|\beta|\leq 2N-7.
\end{equation}
Similarly, it holds that
\begin{equation}\label{mZL}
\delta^l|Z^\beta(\mathring L^\al\mathring L\varphi_\al)|\lesssim\delta^{2-\varepsilon_0}\mathfrak t^{-5/2},\quad|\beta|\leq 2N-8.
\end{equation}

In addition, it follows from \eqref{LeL} and \eqref{mZL} that for $|\beta|\leq 2N-8$,
\begin{equation}\label{ZdL}
\delta^l|Z^\beta\check L^a|\lesssim\delta^{2-\varepsilon_0} \mathfrak t^{-1},
\end{equation}
which leads to
\begin{equation}\label{rrh}
\mid Z^\beta\big( 1-\f{r}{\varrho}\big)\mid\lesssim\delta^{2-\varepsilon_0} \mathfrak t^{-1},\quad |\beta|\leq 2N-8
\end{equation}
by \eqref{rrho} and $\varphi_0+\varphi_i\omega^i=\mathring L^\al\varphi_\al-\varphi_i\check L^i+\varphi_i\omega^i(1-\f{r}{\varrho})=O(\delta^{2-\varepsilon_0}\mathfrak t^{-3/2})+\varphi_i\omega^i(1-\f{r}{\varrho})$.
It follows from \eqref{rrh} that the distance between $C_0$ and $C_{4\delta}$ on the hypersurface $\Sigma_{\mathfrak t}$ is $4\delta
+O(\delta^{2-\varepsilon_0})$ and the characteristic
surface $C_u$ $(0\leq u\leq 4\delta)$ is almost straight with the error $O(\delta^{2-\varepsilon_0})$ from
the corresponding  outgoing conic surface.

On the other hand, note that
\begin{equation}\label{sL}
\begin{split}
&L=\mathring L+\mu^{-1}\big\{(1-\f r\varrho)-g_{ij}\check L^i\tilde T^j+(\f{\varrho}{r}-1)g_{ij}\check T^i\tilde T^j\big\}T+\f{\varrho}{r}g_{ij}\check T^i(\slashed d^Xx^j)X,\\
&\underline L=\mathring L+\mu^{-1}\big\{1+\f{\varrho}{r}-c^{-1}\varphi_i\tilde T^i-\f{\varrho}{r}g_{ij}\check T^i\tilde T^j\big\}T-\f{\varrho}{r}g_{ij}\check T^i(\slashed d^Xx^j)X,\\
&\Omega=R-\mu^{-1}g_{ab}{\epsilon_{i}^a}x^i\check L^b T+\mu^{-1}c^{-1}\epsilon_i^ax^i\varphi_aT.
\end{split}
\end{equation}
When $|\beta|\leq 2N-7$,
\begin{equation*}
\begin{split}
TZ^\beta(\epsilon_i^ax^i\varphi_a)=&[T,Z^\beta](\epsilon_i^ax^i\varphi_a)+Z^\beta T(\epsilon_i^ax^i\varphi_a)\\
=&\sum_{|\beta_1|+|\beta_2|=|\beta|-1}Z^{\beta_1}[T,Z]Z^{\beta_2}(\epsilon_i^ax^i\varphi_a)+Z^\beta(\mu\epsilon_i^a\tilde{T}^i\varphi_a)\\
&+Z^\beta\big(T^\al R\varphi_\al+g_{mn}\epsilon_j^mx^j\check T^n\tilde T^a T\varphi_a\big),
\end{split}
\end{equation*}
and then $|TZ^\beta(\epsilon_i^ax^i\varphi_a)|\lesssim\delta^{1-2\varepsilon_0-l}$ ($l$ is the number of $T$ in $Z^\beta$),
which implies that
\begin{equation}\label{Lxphi}
|\underline LZ^\beta(\epsilon_i^ax^i\varphi_a)|\lesssim\delta^{1-2\varepsilon_0-l},\quad|\beta|\leq2N-7
\end{equation}
due to the second equation in \eqref{sL} and \eqref{im}. Integrate \eqref{Lxphi}
along integral curves of $\underline L$,  and use the
zero boundary value on $C_0$ to get
\begin{equation}\label{Zxphi}
|Z^\beta(\epsilon_i^ax^i\varphi_a)|\lesssim\delta^{2-2\varepsilon_0-l},\quad|\beta|\leq 2N-7.
\end{equation}
Therefore, in $D^{{\mathfrak t},u}$, collecting \eqref{sL}, \eqref{rrh}, \eqref{ZdL}, \eqref{Zxphi} and \eqref{im} yields
\begin{equation}\label{gv}
\mid\Gamma^\al\varphi_\gamma\mid\lesssim\delta^{1-l-\varepsilon_0}\mathfrak t^{-1/2},\quad |\al|\leq 2N-7,
\end{equation}
where $l$ is the number of $\underline L$ in $\Gamma^\al$.

Recall that $\phi$ is the solution of \eqref{1.9} and $\varphi_\gamma=\p_\gamma\phi$. Thus \eqref{gv}
implies that for $|\al|\leq 2N-7$,
\begin{equation}\label{Y-27}
\mid\underline L\Gamma^\al\phi\mid\lesssim\delta^{1-l-\varepsilon_0}\mathfrak t^{-1/2}.
\end{equation}
Hence, as for \eqref{Zxphi}, one can get that in $D^{{\mathfrak t},4\delta}$, for $|\al|\leq 2N-7$,
\begin{equation}\label{gp}
\mid\Gamma^\al\phi\mid\lesssim\delta^{2-l-\varepsilon_0}\mathfrak t^{-1/2}.
\end{equation}

For any point $P(t^0,x^0)\in\tilde C_{2\delta}$,
there is an integral line of
$L$ across this point and the initial point is denoted by $P_0(t_0,x_0)$
on $\Sigma_{t_0}$ with $|x_0|=1$.
It follows from \eqref{me} that
\begin{equation}\label{Y-28}
\mid L(r^{1/2}{\p^\al\underline L}\phi)\mid\lesssim\delta^{2-2\varepsilon_0-|\al|}t^{-3/2},\quad|\al|\leq 1.
\end{equation}
Integrating \eqref{Y-28} along integral curves of $L$ and applying \eqref{local3-3} to show
that on $\tilde C_{2\delta}$,
\begin{equation}\label{PuL}
\mid{\p^\al\underline L}\phi\mid\lesssim\delta^{2-2\varepsilon_0-|\al|} t^{-1/2},\quad|\al|\leq 1.
\end{equation}
Using \eqref{PuL} and \eqref{me} again gives $\mid L(r^{1/2}{\underline L}\phi)\mid\lesssim\delta^{2-\varepsilon_0}t^{-3/2}$,
which implies in turn that
\[
|\underline L\phi|\lesssim\delta^{2-\varepsilon_0}t^{-1/2},
\]
and hence, $\mid L{\underline L}\phi\mid\lesssim\delta^{2-\varepsilon_0} t^{-3/2}$ holds by \eqref{me}.
An induction argument and \eqref{me} show
\begin{equation*}\label{bgp}
\mid\Gamma^\al\phi\mid\lesssim\delta^{2-\varepsilon_0}t^{-1/2},\quad|\al|\leq 2N-9,\  \text{on}\ {\tilde C_{2\dl}}.
\end{equation*}

In addition, it follows from
	\begin{equation*}
	\begin{split}
	&\underline L(\f 1r\phi+2L\phi)=\f 1{r^2}\phi+\f1rL\phi+\f2{r^2}\O^2\phi+ 2(g^{\al\beta,\gamma}\p_\gamma\phi+h^{\al\beta,\nu\g}\p_\nu\phi\p_\g\phi)\p_{\al\beta}^2\phi
	\end{split}
	\end{equation*}
	and \eqref{gv} that on $D^0=\{(t,x): 0\leq t-|x|\leq 2\dl, t+|x|\geq 2+2\dl\}$,
	\begin{equation}\label{LLa}
	|\underline L(\f 1r
	\O^k\phi+2L\O^k\phi)|_{D^0}\lesssim\delta^{2-2\ve_0}t^{-2},\ k\leq 2N-9.
	\end{equation}
	Integrating \eqref{LLa} along integral curves of $\underline L$ in $D^0$ yields
	\begin{equation}\label{La}
	|\f 1r
	\O^k\phi+2L\O^k\phi|_{\tilde C_{2\dl}}\lesssim\delta^{3-2\ve_0}t^{-2},\ k\leq 2N-9,
	\end{equation}
	which means $|L(r^{1/2}\O^k\phi)|_{\tilde C_{2\dl}}\lesssim\delta^{3-2\ve_0}t^{-3/2}$, and then
	\begin{equation}\label{Ophi}
	|\O^k\phi|_{\tilde C_{2\dl}}\lesssim\delta^{3-2\ve_0}t^{-1/2}\ \text{for}\ k\leq 2N-9
	\end{equation}
	thanks to \eqref{local3-2}.
	It follows from \eqref{La}-\eqref{Ophi} directly that
		\begin{equation*}
	|L\O^k\phi|_{\tilde C_{2\dl}}\lesssim\delta^{3-2\ve_0}t^{-3/2}\ \text{for}\ k\leq 2N-9.
	\end{equation*}
Thus, \eqref{bgp-0} is proved.

\section{Global estimates inside $B_{2\dl}$ and the proof of Theorem \ref{main}}\label{inside}

In this section, we derive the global estimates and existence of the solution $\phi$ to \eqref{1.9} inside $B_{2\dl}$.
It is emphasized that different from the problem of 2D or 3D small value solutions inside the cone in \cite{Ding}
and \cite{Godin07}, here the solution $\phi$ to \eqref{1.9} may be large in  $B_{2\dl}$
due to the short pulse initial data on time $t_0$. Nonetheless,
we still intend to make suitable energy assumptions and use Sobolev embedding to estimate $L^\infty$ norm of the solution
in  $B_{2\dl}$. To close the energy assumptions, we will control the power of $\dl$ in $L^\infty$ norm of the solution, and hence
a modified Klainerman-Sobolev Lemma is introduced whose proof is similar to those in \cite{Ding4}.
However, the energies used in \cite{Ding4} are not suitable for \eqref{1.9} due to the slow time decay of the related quantities in 2D.
Making crucial use of the two null conditions for \eqref{1.9}, we can establish some classes of global weighted spacetime energy estimates here.

Define
$$
D_t:=\{(\bar t,x): \bar t-|x|\geq 2\delta, t_0\leq\bar t\leq t\}\subset B_{2\dl}
$$
to be the shaded part in Figure \ref{pic:p4} below. Note that for $\delta >0$ small, the $L^\infty$ norm of $\phi$ and
its first order derivatives are small on the boundary $\tilde C_{2\delta}$ of $B_{2\dl}$ (especially, $\Gamma^{\al}\phi$
admits the better smallness
$O(\dl^{2-\ve_0})$ on $\tilde C_{2\delta}$, see \eqref{bgp-0}).

	\begin{figure}[htbp]
	\centering
	\includegraphics[scale=0.45]{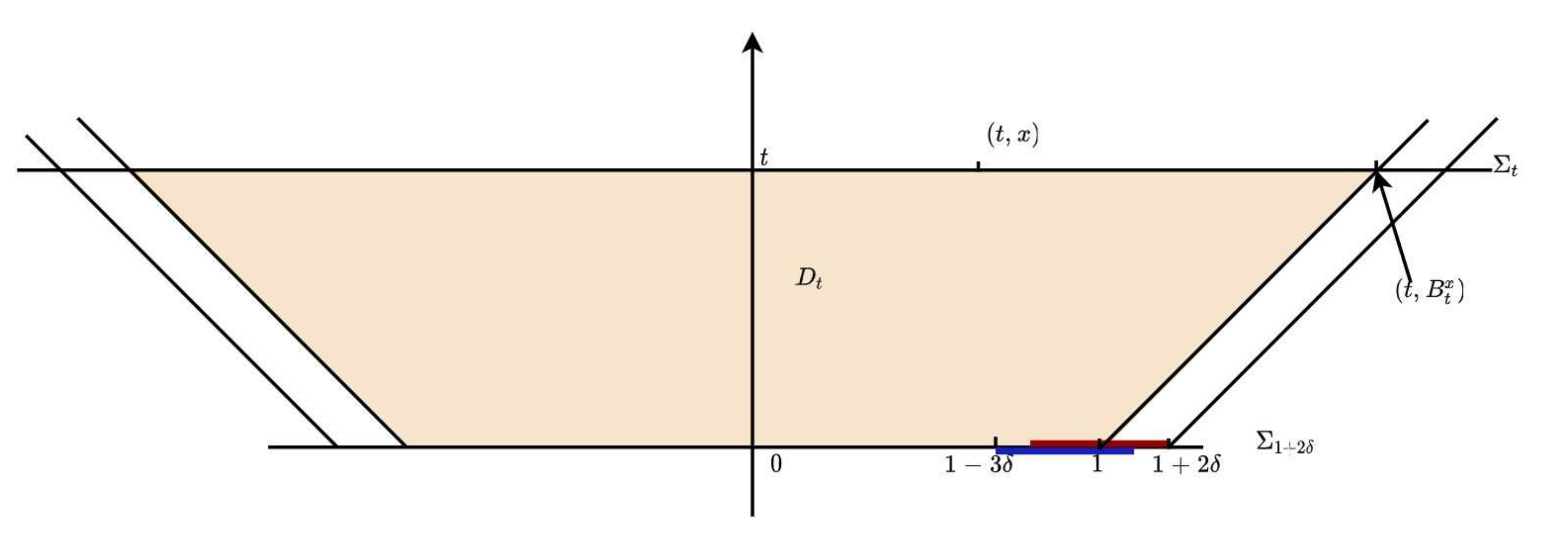}
	\caption{The domain $D_t$ inside $B_{2\dl}$}\label{pic:p4}
\end{figure}

As in \cite[Lemma 12.1]{Ding4} (compared to \cite[Proposition 3.1]{MPY},
the refined inner estimate is new due to the appearance of factor $\delta^{(i-1)s}$),
the following modified Klainerman-Sobolev inequalities hold true.

\begin{lemma}\label{KS}
For any function $f(t,x)\in C^\infty(\mathbb R^{1+2})$, $t\geq 1$,  $(t,x)\in D_{T}=
\{(t,x): t-|x|\geq 2\delta, t_0\leq t\leq T\}$, the following inequalities hold:
\begin{equation}\label{leq}
\mid f(t,x)\mid\lesssim\sum_{i=0}^2 t^{-1}\delta^{(i-1)s}\|\bar{\Gamma}^{i}f(t,\cdot)\|_{L^2(r\leq t/2)},\ |x|\leq\f14t,
\end{equation}
\begin{equation}\label{geq}
\mid f(t,x)\mid\lesssim \mid f(t,B_t^x)\mid+\sum_{a\leq 1,|\beta|\leq 1}t^{-1/2}
\|\Omega^a\p^\beta f(t,\cdot)\|_{L^2(t/4\leq r\leq t-2\delta)},\ |x|\geq\f14t,
\end{equation}
where $\bar{\Gamma}\in\{S,H_1,H_2,\Omega\}$, $(t,B_t^x)$ is the intersection point of
the boundary $\tilde C_{2\delta}$ and the ray crossing $(t,x)$ which emanates from $(t,0)$,
and $s$ is the any nonnegative constant in \eqref{leq}.
\end{lemma}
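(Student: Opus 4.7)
The two inequalities in Lemma~\ref{KS} have distinct characters: \eqref{leq} is a standard Klainerman--Sobolev estimate in the interior region, where the hyperbolic vector fields $\bar\Gamma=\{S,H_i,\Omega\}$ are non-degenerate, whereas \eqref{geq} is genuinely new and must exploit the finite lateral boundary $\tilde C_{2\delta}$ to avoid the usual $|t-r|^{-1/2}$ loss near the outgoing cone. I would prove them separately.

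For \eqref{leq}, in the region $\{r\le t/2\}$ the algebraic relations
\[
\partial_t=t^{-1}(S-x^i\partial_i),\qquad \partial_i=t^{-1}(H_i-x^i\partial_t)
\]
allow each Cartesian derivative to be written as a linear combination of $\bar\Gamma$-derivatives with coefficients of size $O(t^{-1})$. The plan is the classical one: pick any $(t,x)$ with $|x|\le t/4$, rescale a ball of radius $\sim t$ around it to unit scale, apply the 2D Sobolev embedding $H^2(B_1)\hookrightarrow L^\infty(B_1)$, and undo the rescaling using a smooth cutoff supported in $\{r\le t/2\}$. The Jacobian factors combine with the inversion $\partial\sim t^{-1}\bar\Gamma$ to produce the overall factor $t^{-1}$. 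The $s$-dependent weights $\delta^{(i-1)s}$ then come from the elementary interpolation
\[
\|\bar\Gamma f\|_{L^2}\lesssim \|f\|_{L^2}^{1/2}\|\bar\Gamma^2 f\|_{L^2}^{1/2}\lesssim\tfrac12\bigl(\delta^{-s}\|f\|_{L^2}+\delta^s\|\bar\Gamma^2 f\|_{L^2}\bigr),
\]
valid for any $s\ge 0$.

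The more delicate estimate is \eqref{geq}. Writing $x=r\omega$ with $t/4\le r\le t-2\delta$ and denoting $B_t^x=(t-2\delta)\omega$, I would start from the pointwise fundamental-theorem identity
\[
f(t,r\omega)=f(t,B_t^x)-\int_r^{t-2\delta}\partial_\rho f(t,\rho\omega)\,d\rho,
\]
which produces the boundary contribution $|f(t,B_t^x)|$ exactly in the pointwise form required. The remaining radial integral will be controlled by combining an angular Sobolev embedding on each circle $\mathbb{S}^1_\rho$ with a weighted Cauchy--Schwarz in $\rho$. The crucial feature is that the 1D Sobolev embedding on $\mathbb{S}^1_\rho$ gives
\[
\sup_{\omega'}|\partial_\rho f(t,\rho\omega')|\lesssim \rho^{-1/2}\bigl(\|\partial f(t,\rho\cdot)\|_{L^2(\mathbb{S}^1_\rho)}+\|\Omega\partial f(t,\rho\cdot)\|_{L^2(\mathbb{S}^1_\rho)}\bigr),
\]
with $\rho\sim t$ on the annulus; this is the source of the $t^{-1/2}$ factor. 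The integrability $\int_{t/4}^{t-2\delta}\rho^{-1}d\rho\le\log 4$ then allows Cauchy--Schwarz in $\rho$ to convert the circlewise $L^2$ norms into the annular $L^2$ norm via the identity $\int_{t/4}^{t-2\delta}\|\cdot\|_{L^2(\mathbb{S}^1_\rho)}^2\,d\rho=\|\cdot\|_{L^2(\mathrm{annulus})}^2$ for the standard Euclidean measure $\rho\,d\rho\,d\omega$.

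The main technical point I expect is the bookkeeping in \eqref{geq}: the fundamental theorem must be applied first, pointwise in $\omega$, so that the boundary term genuinely remains the pointwise value $|f(t,B_t^x)|$ rather than some angular average, while the angular Sobolev step on $\mathbb{S}^1_\rho$ must be applied only to the integrand $\partial_\rho f$---which is what forces the appearance of $\Omega\partial f$ on the right-hand side---and must be paired with the natural polar measure so that the final norm is the Euclidean $L^2(dx)$ norm claimed in the statement. A lesser but important point for \eqref{leq} is that the proof should be organized as an interpolation permitting any $s\ge 0$, rather than fixing one particular split between the three orders $i=0,1,2$.
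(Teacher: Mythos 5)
Your treatment of the interior estimate \eqref{leq} has a genuine gap. You propose first establishing the standard Klainerman--Sobolev bound $|f|\lesssim t^{-1}\sum_{i=0}^{2}\|\bar\Gamma^i f\|_{L^2(r\le t/2)}$ and then replacing the middle term $\|\bar\Gamma f\|$ by $\delta^{-s}\|f\|+\delta^{s}\|\bar\Gamma^2 f\|$ via a multiplicative interpolation and Young's inequality. But this cannot produce the $\delta^{s}$ weight on $\|\bar\Gamma^2 f\|$: after the substitution you are still left with the bare $\|\bar\Gamma^2 f\|$ from the $i=2$ term of the underlying estimate, and for small $\delta$ one does not have $\|\bar\Gamma^2 f\|\lesssim\delta^{s}\|\bar\Gamma^2 f\|$. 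The $\delta^{(i-1)s}$ weights are not an a posteriori reorganization of the usual Klainerman--Sobolev inequality; they must be built in by an anisotropic rescaling. The paper's proof dilates $x=t\delta^{s}y$ \emph{before} applying the Sobolev embedding on the unit ball, so that the Jacobian factor together with the chain rule yields $(t\delta^{s})^{|\alpha|-1}\|\partial^\alpha f\|_{L^2}$ from the outset, and only then uses $t\partial\lesssim\bar\Gamma$ in $\{r\le t/2\}$ to convert this into $t^{-1}\delta^{(|\alpha|-1)s}\|\bar\Gamma^{|\alpha|}f\|_{L^2}$. (Your proposed interpolation $\|\bar\Gamma f\|_{L^2}\lesssim\|f\|_{L^2}^{1/2}\|\bar\Gamma^2 f\|_{L^2}^{1/2}$ on the bounded domain $\{r\le t/2\}$ is itself questionable, but even if granted, the logic above does not close.)

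There is also a quantitative gap in your plan for the exterior estimate \eqref{geq}. You apply the fundamental theorem of calculus to $f$ itself and then a single angular Sobolev embedding to the integrand $\partial_\rho f$, getting one factor $\rho^{-1/2}$. But that $\rho^{-1/2}$ is exactly used up by the Cauchy--Schwarz in $\rho$ needed to pass from the circlewise $L^2$ norms to the Euclidean annular $L^2$ norm; the net result is
\begin{equation*}
|f(t,x)|\lesssim|f(t,B_t^x)|+\sum_{a\le 1}\|\Omega^a\partial f\|_{L^2(t/4\le r\le t-2\delta)},
\end{equation*}
with no $t^{-1/2}$ gain. The paper avoids this by applying the Newton--Leibniz identity to $f^2$ rather than to $f$: this produces the product $f\cdot\partial_\rho f$, one angular Sobolev is applied to \emph{each} factor, the two $\rho^{-1/2}$'s combine to $\rho^{-1}\lesssim t^{-1}$ (using $\rho\ge t/4$), and after Cauchy--Schwarz one obtains $f^2(t,x)\lesssim f^2(t,B_t^x)+t^{-1}\sum_{a\le 1,|\beta|\le1}\|\Omega^a\partial^\beta f\|^2_{L^2(t/4\le r\le t-2\delta)}$; taking the square root then delivers the $t^{-1/2}$. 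The ``square the function'' step is the ingredient your outline is missing.
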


We also need the following inequality which is similar to Lemma 2.3 for 3D case in \cite{DY}.

\begin{lemma}\label{L2}
For $f(t,x)\in C^\infty(\mathbb R^{1+2})$ and $t\geq 1$, it holds that for $1\leq\bar t\leq t-2\delta$,
\begin{equation}\label{L2L2}
\|\f{f(t,\cdot)}{1+t-|\cdot|}\|_{L^2(\bar t\leq |x|\leq t-2\delta)}\lesssim t^{1/2}
\|f(t,B_t^{\cdot})\|_{L^\infty(\bar t\leq |x|\leq t-2\delta)}+\|\p f(t,\cdot)\|_{L^2(\bar t\leq |x|\leq t-2\delta)}.
\end{equation}
\end{lemma}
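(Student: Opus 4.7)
\noindent\textbf{Proof plan for Lemma \ref{L2}.} The inequality is a Hardy--type bound on a half--bounded radial interval with an inhomogeneous boundary value at the outer endpoint; the idea is to pass to polar coordinates and integrate by parts in the radial variable, exploiting the identity $\frac{d}{d\rho}\bigl(\frac{1}{1+t-\rho}\bigr)=\frac{1}{(1+t-\rho)^2}$ so that the singular weight is converted into a boundary contribution (at $\rho=t-2\delta$) plus a gradient contribution absorbed by $\|\p f\|_{L^2}$. The fact that the weight is regularized by the ``$+1$'' together with $|x|\le t-2\delta$ (so that $1+t-|x|\ge 1+2\delta\ge 1$ at the outer boundary) is what makes the boundary term controllable by $t^{1/2}\|f(t,B_t^{\cdot})\|_{L^\infty}$.

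Concretely, writing $x=\rho\omega$ with $\omega\in\mathbb S^1$ and squaring the left hand side, I would first rewrite the radial integral as
\[
\int_{\bar t}^{t-2\delta}\frac{|f(t,\rho\omega)|^2\rho}{(1+t-\rho)^2}\,d\rho
=\int_{\bar t}^{t-2\delta}|f|^2\rho\,\frac{d}{d\rho}\Bigl(\frac{1}{1+t-\rho}\Bigr)d\rho,
\]
integrate by parts, and note that (i) the boundary term at $\rho=t-2\delta$ equals $\tfrac{(t-2\delta)|f(t,B_t^x)|^2}{1+2\delta}\le t|f(t,B_t^x)|^2$, (ii) the boundary contribution at $\rho=\bar t$ has a favorable sign and can be dropped, and (iii) the ``diagonal'' integral $-\int\frac{|f|^2}{1+t-\rho}d\rho$ is nonpositive and is also dropped. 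This reduces matters to controlling the cross term $-2\int_{\bar t}^{t-2\delta}\frac{\rho f\,\p_\rho f}{1+t-\rho}\,d\rho$, which by Cauchy--Schwarz with parameter $\varepsilon=1/2$ is bounded by $\tfrac12\int\frac{|f|^2\rho}{(1+t-\rho)^2}d\rho+2\int \rho|\p_\rho f|^2\,d\rho$. The first piece is absorbed by the left hand side, leaving
\[
\int_{\bar t}^{t-2\delta}\frac{|f|^2\rho}{(1+t-\rho)^2}d\rho\;\lesssim\; t\,|f(t,B_t^x)|^2+\int_{\bar t}^{t-2\delta}\rho|\p_\rho f|^2\,d\rho.
\]
Integrating in $\omega\in\mathbb S^1$ and using $|\p_\rho f|\le|\p f|$, together with $\sup_x|f(t,B_t^x)|^2=\|f(t,B_t^\cdot)\|_{L^\infty}^2$ (since $B_t^x$ depends only on $\omega=x/|x|$), then taking square roots yields the desired inequality.

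There is no serious obstacle: the only subtle point is book--keeping the boundary factor at $\rho=t-2\delta$, where the denominator $1+t-\rho=1+2\delta$ gives an $O(1)$ constant and $\rho\le t$ supplies the precise $t^{1/2}$ power on the right hand side; the $+1$ in the weight $1+t-|x|$ is essential to avoid logarithmic losses. Since the argument is a one--dimensional computation in the radial variable for each fixed angle, it applies to general $f\in C^\infty(\mathbb R^{1+2})$ with no further assumptions beyond those stated.
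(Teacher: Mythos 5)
Your proof is correct and is exactly the standard Hardy--type integration--by--parts argument that the paper (which gives no proof, citing only the 3D analogue in \cite{DY}) has in mind. The key points --- passing to polar coordinates, writing $(1+t-\rho)^{-2}=\frac{d}{d\rho}(1+t-\rho)^{-1}$, identifying the outer boundary term at $\rho=t-2\delta$ with the $t^{1/2}\|f(t,B_t^{\cdot})\|_{L^\infty}$ contribution, dropping the two nonpositive pieces, and absorbing the cross term via Cauchy--Schwarz with a small parameter --- are all in order, and the arithmetic checks out (the outer boundary term is $\frac{(t-2\delta)|f(t,B_t^x)|^2}{1+2\delta}\le t|f(t,B_t^x)|^2$, and $B_t^x=(t-2\delta)\tfrac{x}{|x|}$ depends only on the angle, so the integral over $\mathbb S^1$ of the boundary term is controlled by $t\,\|f(t,B_t^{\cdot})\|_{L^\infty}^2$). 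Integrating in $\omega$ and using $|\p_\rho f|\le|\p f|$ completes the proof.
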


We now apply the energy method to derive the global estimates and the existence of solution $\phi$ to \eqref{1.9} in $B_{2\dl}$.
To this end, motivated by the works on global solutions with small data to 3D nonlinear wave equations
satisfying the first null condition in \cite{Lin2,Ding1}, one can define the energy as
\begin{equation}\label{Linfty}
E_{k,l}(t)=\|\p\tilde\Gamma^k\Omega^l\phi(t,\cdot)\|_{L^2(\Sigma_t\cap D_t)}^2+\iint_{D_t}\f{|\tilde Z\tilde\Gamma^k\Omega^l\phi|^2(t',x)}{1+(t'-|x|)^{3/2}}dxdt',
\end{equation}
where $\tilde Z\in\{\tilde Z_i=\omega^i\p_t+\p_i, i=1,2\}$, $\tilde\Gamma\in\{\p,S,H_1,H_2\}$. Based on the
estimate \eqref{local3-2} on $\Sigma_{t_0}$, one can assume that for $t\ge t_0$, there exists a uniform constant $M_0$ such that
\begin{equation}\label{EA}
\begin{split}
E_{k,l}(t)\leq {M_0}^2\delta^{2a_k},\quad k+l\leq 5
\end{split}
\end{equation}
with $a_k=\f52-\ve_0-k$.

According to Lemma \ref{KS}, \ref{L2}, and \eqref{EA}, we can obtain
the following $L^\infty$ estimates.

\begin{proposition}\label{P4.1}
Under the assumptions \eqref{EA}, when $\delta>0$ is small, it holds that in $B_{2\dl}$,
\begin{equation}\label{tZ}
\begin{split}
&|\tilde Z\tilde\G^k\Omega^{l}\phi|\lesssim M_0\delta^{3/2-\varepsilon_0-k}t^{-3/2}(1+t-r),\quad k+l\leq 3
\end{split}
\end{equation}
and
\begin{equation}\label{pphi}
\begin{split}
&|\p\tilde\G^k\Omega^{l}\phi|\lesssim M_0\delta^{3/2-\varepsilon_0-k}t^{-1/2},\quad k+l\leq 3.
\end{split}
\end{equation}
\end{proposition}
\begin{proof}
First, for $|x|\leq\f t4$, one gets from \eqref{leq} that
\begin{equation}\label{Y-30}
\begin{split}
&|\tilde Z\tilde\G^k\Omega^{l}\phi|+|\p\tilde\G^k\Omega^{l}\phi|\\\lesssim
&t^{-1}\Big\{\delta^{-s}\|\p\tilde\G^k\Omega^{l}\phi\|_{L^2(\Sigma_t\cap D_t)}
+\|\bar{\Gamma}\p\tilde\G^k\Omega^{l}\phi\|_{L^2(\Sigma_t\cap D_t)}+\delta^{s}\|\bar{\Gamma}^2\p\tilde\G^k\Omega^{l}\phi\|_{L^2(\Sigma_t\cap D_t)}\big\}.
\end{split}
\end{equation}
Choosing $s=1$ in \eqref{Y-30} and utilizing \eqref{EA} yield
\begin{equation}\label{1s}
|\tilde Z\tilde\G^k\Omega^{l}\phi|+|\p\tilde\G^k\Omega^{l}\phi|\lesssim M_0\delta^{3/2-\varepsilon_0-k} t^{-1},\ k+l\leq 3.
\end{equation}

 Next, for $\f t4\leq |x|\leq t-2\delta$, choosing $f(t,x)=(1+t-|x|)^{-1}|\tilde Z\tilde\G^k\Omega^{l}\phi(t,x)|$ in \eqref{geq},
and applying \eqref{L2L2} to $\O^{\leq1}\tilde Z\tilde\G^k\Omega^{l}\phi(t,x)$, one can get
\begin{equation}\label{Y-31}
\begin{split}
&\f{|\tilde Z\tilde\G^k\Omega^{l}\phi(t,x)|}{1+t-|x|}
\lesssim|\tilde Z\tilde\G^k\Omega^{l}\phi(t,B_t^x)|+t^{-1/2}\|\f{\Omega^{\leq 1}\p^{\leq 1}\tilde Z\tilde\G^k\Omega^{l}\phi}{1+t-r}
\|_{L^2(t/4\leq r\leq t-2\delta)}\\
\lesssim&\|\Omega^{\leq 1}\tilde Z\tilde\G^k\Omega^{l}\phi(t,B_t^{\cdot})\|_{L^\infty(t/4\leq r\leq t-2\delta)}
+t^{-1/2}\|\p\Omega^{\leq 1}\tilde Z\tilde\G^k\Omega^{l}\phi\|_{L^2(t/4\leq r\leq t-2\delta)}.
\end{split}
\end{equation}
Since $\o^i\p_t+\p_i=\o^iL+\f{1}{r}\o^i_\perp\O$, $|\Omega^{\leq 1}\tilde Z\tilde\G^k\Omega^{l}\phi(t,B_t^{x})|
\lesssim\delta^{2-\varepsilon_0} t^{-3/2}$ due to \eqref{bgp-0}, then it follows from \eqref{Y-31} and \eqref{EA} that
\begin{equation}\label{1l}
\f{|\tilde Z\tilde\G^k\Omega^{l}\phi(t,x)|}{1+t-|x|}\lesssim M_0\delta^{3/2-\varepsilon_0-k}t^{-3/2},\ k+l\leq 3.
\end{equation}
In addition, \eqref{geq} implies directly that
\begin{equation}\label{1lp}
|\p\tilde\G^k\Omega^{l}\phi(t,x)|\lesssim M_0\delta^{3/2-\varepsilon_0-k}t^{-1/2},\ k+l\leq 3.
\end{equation}
Thus it follows from \eqref{1s}, \eqref{1l} and \eqref{1lp} that
\begin{align*}
&|\tilde Z\t\G^k\Omega^{l}\phi|\lesssim M_0\delta^{3/2-\varepsilon_0-k}t^{-3/2}(1+t-r),\ k+l\leq 3,\\
&|\p\t\G^k\Omega^{l}\phi|\lesssim M_0\delta^{3/2-\varepsilon_0-k}t^{-1/2},\ k+l\leq 3.
\end{align*}
\end{proof}

\begin{corollary}\label{C4.1}
Under the same conditions in Proposition \ref{P4.1}, it holds that in $B_{2\dl}$,
\begin{equation}
|\p^2\tilde\G^k\Omega^{l}\phi|\lesssim M_0\delta^{1/2-\varepsilon_0-k}t^{-1/2}(1+t-r)^{-1},\ k+l\leq 2
\end{equation}
and
\begin{equation}
|\tilde Z\p\tilde\G^k\Omega^{l}\phi|\lesssim M_0\delta^{1/2-\varepsilon_0-k}t^{-3/2},\ k+l\leq 2.
\end{equation}
\end{corollary}
\begin{proof}
These results follow from
\begin{equation}\label{11.4}
\p_{i}=\f1{1+t-r}\p_i-\f{1}{1+t-r}\big(\f{x^i}{t+r}S-\f{t}{t+r}H_i+\f{x_\perp^i}{t+r}\O\big)\quad\text{with}\  x_\perp=(-x^2,x^1)
\end{equation}
and \eqref{tZ}-\eqref{pphi} directly.
\end{proof}

One can now carry out the energy estimates in $D_t$ by using the ghost weight in \cite{A}.
Indeed, choosing a multiplier $W\p_tv$ with $W=e^{2(1+t-r)^{-1/2}}$ and integrating
$W\p_tv g^{\al\beta}(\p\phi)\p_{\al\beta}^2v$ over $D_t$ yield
\begin{equation}\label{integrte}
\begin{split}
&\int_{\Sigma_t\cap D_t}\f12\Big\{(\p_tv)^2+(1+2\p_t\phi+|\nabla\phi|^2)|\nabla v|^2-\sum_{i,j}\p_i\phi\p_j\phi\p_iv\p_jv\Big\}W\\
&+\iint_{D_t}\f12W\sum_i\f{|\tilde Z_iv|^2}{(1+\tau-r)^{3/2}}\\
=&\int_{\Sigma_{t_0}\cap D_t}\f12\Big\{(\p_tv)^2+(1+2\p_t\phi+|\nabla\phi|^2)|\nabla v|^2-\sum_{i,j}\p_i\phi\p_j\phi\p_iv\p_jv\Big\}W\\
&+\f{\sqrt 2}{2}\int_{\tilde C_{2\delta}\cap D_t}\Big\{\f12(1+2\p_t\phi+|\nabla\phi|^2)\big((Lv)^2
+\f{(\O v)^2}{r^2}\big)-\big(L\phi+\f{(\O\phi)^2}{2r^2}\big)(\p_tv)^2\\
&\qquad-\f12\sum_{i,j}\p_i\phi\p_j\phi\tilde Z_iv\tilde Z_jv\Big\}W\\
&+\iint_{D_t}\f12\Big\{\sum_{i,j}(-2\p_t\phi\delta^{ij}+\p_i\phi\p_j\phi
-|\nabla\phi|^2\delta^{ij})\tilde Z_iv\tilde Z_jv-(\omega^i\omega^j\\
&\qquad\qquad-\delta^{ij})\tilde Z_i\phi\tilde Z_j\phi(\p_tv)^2+2\o^i(\tilde Z_i\phi)(\p_t v)^2\Big\}\f{W}{(1+\tau-r)^{3/2}}\\
&-\iint_{D_t}W\p_tvg^{\al\beta}\p_{\al\beta}^2 v
+\iint_{D_t}\Big\{O(\tilde Z\p\phi\cdot\p v\cdot\p v)+O(\p\phi\cdot\tilde Z\p\phi\cdot\p v\cdot\p v)\Big\}W\\
&+\iint_{D_t}\p_t^2\phi\Big\{(1-\omega^k\p_k\phi)\sum_i(\tilde Z_iv)^2+\sum_{i,j}\omega^i\p_j\phi\tilde Z_iv\tilde Z_jv\Big\}W.
\end{split}
\end{equation}
Note that due to \eqref{pphi}, the integrand of $\int_{\Sigma_t\cap D_t}$ in \eqref{integrte} is equivalent to
\begin{equation}\label{SD}
\big((\p_tv)^2+|\nabla v|^2\big)W,
\end{equation}
while the integrand of $\int_{\tilde C_{2\delta}\cap D_t}$ in \eqref{integrte} can be controlled by
\begin{equation}\label{C2}
\big\{(Lv)^2+\f{1}{r^2}(\O v)^2+\delta^{2-\varepsilon_0}\tau^{-3/2}(\p_tv)^2\big\}W
\end{equation}
with the help of \eqref{bgp-0} and $\tilde Z_i=\omega^iL+\f{\omega^i_\perp}{r}\O$, where the constant coefficients have been neglected.

Inserting \eqref{SD} and \eqref{C2} into \eqref{integrte}, and using
Proposition \ref{P4.1} and Corollary \ref{C4.1},
one then can get by  Gronwall's inequality that for small $\delta>0$,
\begin{equation}\label{W}
\begin{split}
&\int_{\Sigma_t\cap D_t}\big((\p_tv)^2+|\nabla v|^2\big)W+\iint_{D_t}\sum_i\f{|\tilde Z_iv|^2}{(1+\tau-r)^{3/2}}W\\
\lesssim&\int_{\Sigma_{t_0}\cap D_t}\big((\p_tv)^2+|\nabla v|^2\big)W+\iint_{D_t}\mid W\p_tvg^{\al\beta}\p_{\al\beta}^2v\mid\\
&+\int_{\tilde C_{2\delta}\cap D_t}\big\{(Lv)^2+\f{1}{r^2}(\O v)^2+\delta^{2-\varepsilon_0}\tau^{-3/2}(\p_tv)^2\big\}W.
\end{split}
\end{equation}

To close the bootstrap assumptions \eqref{EA}, one will apply \eqref{W} to $v=\tilde\Gamma^k\Omega^l\phi$ $(k+l\leq 6)$.
By \eqref{bgp-0},
\begin{equation*}
|(L\tilde\Gamma^k\Omega^l\phi)^2+\f{1}{r^2}(\O\tilde\Gamma^k\Omega^l\phi)^2
+\delta^{2-\varepsilon_0}\tau^{-3/2}(\p_t\tilde\Gamma^k\Omega^l\phi)^2|\lesssim
\left\{
\begin{aligned}
\dl^{6-4\ve_0}\tau^{-5/2},\ k=0,\\
\delta^{4-2\varepsilon_0}\tau^{-5/2},\ k\geq 1
\end{aligned}
\right.
\end{equation*}
hold on $\tilde C_{2\delta}$. Therefore,
\begin{equation}\label{phit0}
\int_{\tilde C_{2\delta}\cap D_t}\big\{(Lv)^2+\f{1}{r^2}(\Omega v)^2+\delta^{2-\varepsilon_0}\tau ^{-3/2}(\p_tv)^2\big\}W\lesssim\left\{
\begin{aligned}
\dl^{6-4\ve_0},&\ k=0,\ l\leq 6,\\
\delta^{4-2\varepsilon_0},&\ k\geq 1,\ k+l\leq 6.
\end{aligned}
\right.
\end{equation}
In addition, on the initial hypersurface $\Sigma_{t_0}\cap D_t$,
it holds that $|\p\tilde\Gamma^k\Omega^l\phi|\lesssim\delta^{2-k-\varepsilon_0}$
for $k+l\leq 6$ by \eqref{local3-2}. Hence, \eqref{W} gives that for $\ve_0\in(0,\f12)$,
\begin{align}
E_{k,l}(t)\lesssim\delta^{5-2\varepsilon_0-2k}+\iint_{D_t}\mid (\p_t\tilde\Gamma^k\Omega^l\phi)(g^{\al\beta}\p_{\al\beta}^2\tilde\Gamma^k\Omega^l\phi)\mid,\quad k+l\leq 6.\label{E2-6}
\end{align}
It remains to estimate  $\iint_{D_t}\mid(\p_t\tilde\Gamma^k\Omega^l\phi)(g^{\al\beta}\p_{\al\beta}^2\tilde\Gamma^k\Omega^l\phi)\mid$
in \eqref{E2-6}.

\begin{theorem}\label{T4.1}
Under the assumptions \eqref{EA} with $\delta>0$ small, it holds that
\begin{equation}\label{Eklt}
E_{k,l}(t)\lesssim\delta^{2a_k}t^{2\iota},\quad k+l\leq 6,
\end{equation}
where $a_k$ $(k=0,1,\cdots,5)$ are those constants defined in \eqref{EA}, $a_6=-\f72-\varepsilon_0$, and $\iota$
is some constant multiple of $\delta^{\varsigma}$ with $0<\varsigma<1-2\varepsilon_0$.
\end{theorem}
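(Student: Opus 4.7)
The plan is to upgrade the preliminary energy identities \eqref{E01}--\eqref{E2-6} into a closed Gr\"onwall-type inequality for $E_{k,l}(t)$ by carefully exploiting the first and second null conditions satisfied by \eqref{1.9}. Since $\phi$ solves $g^{\alpha\beta}(\partial\phi)\partial^2_{\alpha\beta}\phi=0$, we have
\[
g^{\alpha\beta}(\partial\phi)\partial^2_{\alpha\beta}\tilde\Gamma^k\Omega^l\phi=[g^{\alpha\beta}(\partial\phi)\partial^2_{\alpha\beta},\tilde\Gamma^k\Omega^l]\phi,
\]
and the right side is a sum of commutators that split schematically into a ``cubic'' part $C_1^{k,l}=\partial\phi\cdot\partial\tilde\Gamma^{k_1}\Omega^{l_1}\phi\cdot\partial^2\tilde\Gamma^{k_2}\Omega^{l_2}\phi$ with $k_1+k_2\leq k$, $l_1+l_2\leq l$, and an analogous ``quartic'' part, together with boundary contributions from the generators. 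The first step is to use the explicit form of $g^{\alpha\beta}$ in \eqref{1.9} and reorganize each such factor so that whenever possible a ``bad'' derivative is replaced by a good one $\tilde Z$; this is what the first and second null conditions furnish. Concretely, after pairing each $\partial^2$ appearing on the highest-order factor with $\partial\phi$ on a low-order factor, one obtains terms of the form (lower-order $\tilde Z$-derivative)$\times$(top-order $\partial^2$) plus (top-order $\tilde Z$-derivative)$\times$(lower-order $\partial^2$), gaining an extra factor of $(1+t-r)/t$ on at least one factor.

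With this structural rewriting in hand, the second step is to bound each term in $\iint_{D_t}|(\partial_t\tilde\Gamma^k\Omega^l\phi)\,C_i^{k,l}|$. For the low-order derivatives appearing as coefficients, I would apply the $L^\infty$ bounds of Proposition \ref{P4.1} and Corollary \ref{C4.1}; the $\tilde Z$-derivatives gain both the factor $(1+t-r)$ and an additional power of $\delta$, which is exactly what matches the ghost-weight flux
\[
\iint_{D_t}\frac{|\tilde Z\tilde\Gamma^k\Omega^l\phi|^2}{(1+t'-|x|)^{3/2}}\,dx\,dt'
\]
already present on the left of \eqref{E01}--\eqref{E2-6}. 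For the top-order $\partial^2$ factors whose $L^\infty$ control is weak (e.g.\ $\partial^2\tilde\Gamma^k\Omega^l\phi$ with $k+l$ near $6$), I would use $\partial^2 = t^{-1}\partial\tilde\Gamma + \mathrm{(lower)}$ from \eqref{11.4} to move an extra $\tilde\Gamma$ onto them, then apply Lemma \ref{L2} together with the boundary bound \eqref{bgp-0} on $\tilde C_{2\delta}$ to absorb the weighted $L^2$-norm $\|(1+t-|x|)^{-1}\partial\tilde\Gamma^{k+1}\Omega^l\phi(t,\cdot)\|_{L^2}$ into the energies $E_{\leq k+1,l}(t)$ plus a $\delta^{2-\varepsilon_0}$ boundary contribution.

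After these manipulations every integrand in $\iint_{D_t}|(\partial_t\tilde\Gamma^k\Omega^l\phi)\,g^{\alpha\beta}\partial^2_{\alpha\beta}\tilde\Gamma^k\Omega^l\phi|$ is either controlled by the ghost-weight flux times a factor of size $\delta^{\varsigma} t^{-1}$ (for some small positive $\varsigma<\tfrac58-2\varepsilon_0$, due to the surplus of $\delta$ coming from the null pairing and Proposition \ref{P4.1}) or absorbed into $E_{k,l}(\tau)$ times the same factor. Inserting into \eqref{E01}--\eqref{E2-6} and iterating over $k+l\leq 6$ with descending order in $k$ then yields an inequality of the form
\[
E_{k,l}(t)\lesssim \delta^{2a_k}+C\delta^{\varsigma}\int_{t_0}^{t}\tau^{-1}E_{\leq k,l}(\tau)\,d\tau,
\]
and Gr\"onwall produces the claimed bound $E_{k,l}(t)\lesssim \delta^{2a_k}t^{2\iota}$ with $\iota\sim C\delta^{\varsigma}$.

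The main obstacle is organizing the commutators at the top order $k+l=6$: here $\partial^2\tilde\Gamma^6\phi$ has no pointwise control (only $L^2$), and a naive estimate loses either a full power of $t$ or the smallness in $\delta$. The decisive point is to verify that after using the first and second null conditions every cubic/quartic commutator term can be written so that the top-order factor is hit by at least one $\tilde Z$, or alternatively so that the accompanying coefficient decays as $\delta^{\varsigma}t^{-1}$; this is exactly why the $a_k$'s are tuned with the decrements $7/8,3/4,7/8,1,1$ rather than a uniform drop, and why the ghost weight $W=e^{2(1+t-r)^{-1/2}}$ is introduced. Once this bookkeeping is carried out, the energy hierarchy closes and the time growth is confined to $t^{2\iota}$.
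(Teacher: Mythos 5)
Your high-level sketch captures the main ingredients correctly: the commutator decomposition into quadratic/cubic forms that inherit the null structure, the systematic use of the $L^\infty$ bounds of Proposition~\ref{P4.1}/Corollary~\ref{C4.1} on the lower-order factor, the ghost-weight flux $\iint_{D_t}\frac{|\tilde Z\cdots|^2}{(1+\tau-r)^{3/2}}$ to soak up the good derivative, and the Gr\"onwall step. This is the same overall route as the paper's.

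However, there is a genuine gap in the closing step. You assert that "iterating over $k+l\le 6$ with descending order in $k$" produces an inequality of the triangular form
\[
E_{k,l}(t)\lesssim \delta^{2a_k}+C\delta^{\varsigma}\int_{t_0}^{t}\tau^{-1}E_{\le k,l}(\tau)\,d\tau,
\]
but this is not what the case analysis actually yields. Inspection of the commutator terms (in the paper's notation, \eqref{Ekl} and \eqref{E60}) shows that the inequality for $\sum_{l}E_{k,l}(t)$ necessarily carries contributions from $E_{k+1,a}$ (with coefficient $M_0^2\delta^{25/8-2\varepsilon_0}$), from $E_{k-1,a}$ (with coefficient $M_0^2\delta^{5/8-2\varepsilon_0}$, both time-integrated and un-integrated), and from $E_{a,b}$ with $a\le k-2$ (with the large coefficient $M_0^2\delta^{-9/8-2\varepsilon_0-\varsigma}$). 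The coupling is thus \emph{tridiagonal} in $k$, not triangular, and the top row $E_{6,0}$ needs $E_{5,a}$, while $E_{5,a}$ in turn needs $E_{6,0}$; you cannot descend in $k$ without already knowing the higher-order energies. Moreover, the un-integrated terms $M_0^2\delta^{\cdots}E_{k\pm 1,a}(t)$ and $M_0^2\delta^{-9/8-\cdots}E_{a,b}(t)$ carry explicit $M_0$-factors from the $L^\infty$ estimates, so a direct substitution of the bootstrap bound would re-introduce $M_0$-dependence into a conclusion that is supposed to be $M_0$-independent.

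The resolution the paper uses, and which your proposal misses, is to form the single weighted sum $\sum_{k+l\le 6}\delta^{b_k}E_{k,l}(t)$ with carefully chosen exponents $b_0=b_1=0$, $b_2=\tfrac74$, $b_3=\tfrac{13}{4}$, $b_k=2k-3$ for $k=4,5,6$. These weights are calibrated so that every cross-coupling $\delta^{b_k}\cdot M_0^2\delta^{\text{coeff}}\,E_{k',a}$ (for $k'=k\pm1$ or $k'\le k-2$) sits beneath $\delta^{b_{k'}}E_{k',a}$ by a strictly positive power of $\delta$ (using also $0<\varsigma<\tfrac58-2\varepsilon_0$ and $\varepsilon_0\le\tfrac18$), and hence can be absorbed into the left-hand side once $\delta$ is small; only then does a single Gr\"onwall give $\sum\delta^{b_k}E_{k,l}(t)\lesssim\delta^{4-2\varepsilon_0}t^{2\iota}$, from which \eqref{Eklt} follows upon dividing out. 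The identification of the right weight sequence $b_k$, and checking the arithmetic inequalities $b_{k+1}-b_k<\tfrac{25}{8}-2\varepsilon_0$, $b_k-b_{k-2}>\tfrac98+2\varepsilon_0+\varsigma$, is precisely where the decrements of the $a_k$'s matter and cannot be waved away as ``bookkeeping''.

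One smaller inaccuracy: you propose applying Lemma~\ref{L2} directly inside the energy integral to control $\|(1+t-|x|)^{-1}\partial\tilde\Gamma^{k_2+1}\Omega^{l_2}\phi\|_{L^2}$. The paper only uses Lemma~\ref{L2} inside the proof of Proposition~\ref{P4.1} (to get the pointwise bound $|\tilde Z\cdots|\lesssim\delta^{\cdots}t^{-3/2}(1+t-r)$) and later in \eqref{Zp}; in the energy estimate itself it converts $(1+\tau-r)\partial^2\Omega^{l_2}\phi$ into $\partial\bar\Gamma\Omega^{\le l_2+1}\phi$ via \eqref{11.4} and bounds that directly by $E_{1,\le 5}$. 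Also your notation $E_{\le k+1,l}$ in that step should be $E_{\le k_2+1,l_2}$ with $k_2+l_2\le k+l-1$; as written it would overshoot the top order when $k+l=6$.
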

\begin{proof}
Acting the operator $\tilde\Gamma^k\Omega^l$ on \eqref{1.9} and commuting it with $g^{\al\beta}\p_{\al\beta}^2$ yield
\begin{equation*}
\begin{split}
g^{\al\beta}\p_{\al\beta}^2\tilde\Gamma^k\Omega^l\phi=&\sum_{\mbox{\tiny$\begin{array}{cc}k_1+k_2\leq k,l_1+l_2\leq l\\k_2+l_2<k+l\end{array}$}}G_1(\p\tilde\Gamma^{k_1}\Omega^{l_1}\phi,\p^2\tilde\Gamma^{k_2}\Omega^{l_2}\phi)\\
&+\sum_{\mbox{\tiny$\begin{array}{cc}k_1+k_2+k_3\leq k\\l_1+l_2+l_3\leq l\\k_3+l_3<k+l\end{array}$}}G_2(\p\tilde\Gamma^{k_1}\Omega^{l_1}\phi,
\p\tilde\Gamma^{k_2}\Omega^{l_2}\phi,\p^2\tilde\Gamma^{k_3}\Omega^{l_3}\phi),
\end{split}
\end{equation*}
where $G_1$ is a generic quadratic form satisfying the first null condition, and $G_2$ is a generic cubic form
satisfying the second null condition. Hence it follows from Lemma 2.2 in \cite{Hou-Yin-1} that
\begin{equation}\label{Y-33}
\begin{split}
&\iint_{D_t}\mid(\p_t\tilde\Gamma^k\Omega^l\phi)(g^{\al\beta}\p_{\al\beta}^2\tilde\Gamma^k\Omega^l\phi)\mid\\
\lesssim&\iint_{D_t}|\p\tilde\Gamma^k\Omega^l\phi|\Big\{\sum_{\mbox{\tiny$\begin{array}{cc}k_1+k_2\leq k,l_1+l_2\leq l\\k_2+l_2<k+l\end{array}$}}(|\tilde Z\tilde\Gamma^{k_1}\Omega^{l_1}\phi|\cdot|\p^2\tilde\Gamma^{k_2}\Omega^{l_2}\phi|+|\p\tilde\Gamma^{k_1}\Omega^{l_1}\phi|\cdot|\tilde Z\p\tilde\Gamma^{k_2}\Omega^{l_2}\phi|)\\
&+\sum_{\mbox{\tiny$\begin{array}{cc}k_1+k_2+k_3\leq k\\l_1+l_2+l_3\leq l\\k_3+l_3<k+l\end{array}$}}(|\tilde Z\tilde\Gamma^{k_1}\Omega^{l_1}\phi|\cdot|\p\tilde\Gamma^{k_2}\Omega^{l_2}\phi|\cdot|\p^2\tilde\Gamma^{k_3}
\Omega^{l_3}\phi|+|\p\tilde\Gamma^{k_1}\Omega^{l_1}\phi|\cdot|\p\tilde\Gamma^{k_2}\Omega^{l_2}\phi|\cdot|\tilde Z\p\tilde\Gamma^{k_3}\Omega^{l_3}\phi|)\Big\}.
\end{split}
\end{equation}
To estimate the right hand side of \eqref{Y-33}, one can check easily that in \eqref{Y-33}, the
last summation  has better smallness and time decay, so one has only to deal with
the first summation.

\vskip 0.1 true cm

If $k_1+l_1\leq k_2+l_2$, then $k_1+l_1\leq 3$. Thus Proposition \ref{P4.1} implies that
\[
|\tilde Z\tilde\Gamma^{k_1}\Omega^{l_1}\phi|\lesssim M_0\delta^{3/2-\varepsilon_0-k_1}t^{-3/2}(1+t-r),\quad|\p\tilde\Gamma^{k_1}\Omega^{l_1}\phi|\lesssim M_0\delta^{3/2-\varepsilon_0-k_1}t^{-1/2}.
\]
Then, \eqref{11.4} and \eqref{EA} imply that since $\ve_0\in(0,\f12)$ and $k_2+l_2\leq 5$, thus
\begin{equation}\label{k1}
\begin{split}
&\iint_{D_t}|\p\tilde\Gamma^k\Omega^l\phi|\big\{|\tilde Z\tilde\Gamma^{k_1}\Omega^{l_1}\phi|\cdot|\p^2\tilde\Gamma^{k_2}\Omega^{l_2}\phi|
+|\p\tilde\Gamma^{k_1}\Omega^{l_1}\phi|\cdot|\tilde Z\p\tilde\Gamma^{k_2}\Omega^{l_2}\phi|\big\}\\
\lesssim&\iint_{D_t}|\p\tilde\Gamma^k\Omega^l\phi|\dl^{3/2-\ve_0-k_1}M_0\big\{\tau^{-3/2}|(1+\tau-r)\p^2\t\G^{k_2}\O^{l_2}\phi|+\tau^{-1/2}|\tilde Z\p\tilde\Gamma^{k_2}\Omega^{l_2}\phi|\big\}\\
\lesssim&\sum_{a=0}^{6-k}\int_{t_0}^t\tau^{-3/2}E_{k,a}(\tau)d\tau
+\sum_{k_1+k_2\leq k}\delta^{3-2\varepsilon_0-2k_1}M_0^2\int_{t_0}^t\tau^{-3/2}\dl^{5-2\ve_0-2(k_2+1)}M_0^2d\tau\\
\lesssim&\dl^{5-2\ve_0-2k}+\sum_{a=0}^{6-k}\int_{t_0}^t\tau^{-3/2}E_{k,a}(\tau)d\tau.
\end{split}
\end{equation}

If $k_1+l_1>k_2+l_2$, then $k_2+l_2\leq 2$. It follows from Corollary \ref{C4.1} that
\[|\p^2\t\Gamma^{k_2}\Omega^{l_2}\phi|\lesssim M_0\delta^{1/2-\varepsilon_0-k_2}t^{-1/2}(1+t-r)^{-1},\quad|\tilde Z\p\t\Gamma^{k_2}\Omega^{l_2}\phi|\lesssim M_0\delta^{1/2-\varepsilon_0-k_2}t^{-3/2},\]
which imply that there exists a $\varsigma\in(0, 1-2\varepsilon_0)$ such that
\begin{equation}\label{k2}
\begin{split}
&\iint_{D_t}|\p\tilde\Gamma^k\Omega^l\phi|\big\{|\tilde Z\tilde\Gamma^{k_1}\Omega^{l_1}\phi|\cdot|\p^2\tilde\Gamma^{k_2}\Omega^{l_2}\phi|+|\p\tilde\Gamma^{k_1}\Omega^{l_1}\phi|\cdot|\tilde Z\p\tilde\Gamma^{k_2}\Omega^{l_2}\phi|\big\}\\
\lesssim&\iint_{D_t}|\p\tilde\Gamma^k\Omega^l\phi|\dl^{1/2-\ve_0-k_2}\tau^{-1/2}M_0\big\{\f{|\tilde Z\tilde\Gamma^{k_1}\Omega^{l_1}\phi|}{1+\tau-r}+\tau^{-1}|\p\tilde\Gamma^{k_1}\Omega^{l_1}\phi|\big\}\\
\lesssim&\delta^{\varsigma}\int_{t_0}^t\tau^{-1}E_{k,l}(\tau)d\tau
+\sum_{k_1+k_2\leq k}M_0^2\delta^{1-2\varepsilon_0-2k_2-\varsigma}E_{k_1,l_1}(t)+\int_{t_0}^t\tau^{-3/2}E_{k,l}(\tau)d\tau\\
&+\sum_{k_1+k_2\leq k}M_0^2\delta^{1-2\varepsilon_0-2k_2}\int_{t_0}^t\tau^{-3/2}E_{k_1,l_1}(\tau)d\tau\\
\lesssim&\dl^{5-2\ve_0-2k}+\sum_{a=0}^{6-k}\int_{t_0}^t\tau^{-3/2}E_{k,a}(\tau)d\tau+\delta^{\varsigma}\int_{t_0}^t\tau^{-1}E_{k,l}(\tau)d\tau.
\end{split}
\end{equation}

Substituting \eqref{k1} and \eqref{k2} into \eqref{E2-6} yields
\begin{equation}\label{Ekl}
\begin{split}
&\sum_{l=0}^{6-k}E_{k,l}(t)\lesssim\dl^{5-2\ve_0-2k}t^{2\iota},
\end{split}
\end{equation}
which proves \eqref{Eklt}.

\end{proof}

To prove the global estimates of the solution in $B_{2\dl}$, one needs to get $E_{k,l}(t)\lesssim\dl^{2a_k},\ k+l\leq 5$ which are independent of $M_0$ for all time. Since \eqref{Eklt} has been proved, then it holds that
\begin{equation}\label{st}
E_{k,l}(t)\lesssim\dl^{2a_k},\ k+l\leq 5\ \text{and}\ t_0\leq t\leq e^{1/\iota}.
\end{equation}
It remains to show that the inequalities in \eqref{st} for $t\geq e^{1/\iota}$.

As for the $M_0$-independent energy estimate in \eqref{Eklt}, one can also obtain the $M_0$ independent $L^\infty$ estimates
of $\phi$ and its derivatives corresponding to Proposition \ref{P4.1} and Corollary \ref{C4.1}.
\begin{corollary}\label{C11.2}
When $\delta>0$ is small, it holds that in the domain $B_{2\dl}$,
\begin{equation}\label{tZi}
|\tilde Z\tilde\G^k\Omega^{l}\phi|\lesssim\delta^{3/2-\varepsilon_0-k}t^{-3/2+\iota}(1+t-r),\ |\p\tilde\G^k\Omega^{l}\phi|\lesssim\delta^{3/2-\varepsilon_0-k}t^{-1/2+\iota}\quad\text{for}\ k+l\leq 3\\
\end{equation}
and
\begin{equation}\label{t}
\begin{split}
|\p^2\tilde\G^k\Omega^{l}\phi|\lesssim\delta^{1/2-\varepsilon_0-k}t^{-1/2+\iota}(1+t-r)^{-1},\ |\tilde Z\p\tilde\G^k\Omega^{l}\phi|\lesssim\delta^{1/2-\varepsilon_0-k}t^{-3/2+\iota},\quad\text{for}\  k+l\leq 2.
\end{split}
\end{equation}
\end{corollary}

Compared with \eqref{EA}, Proposition \ref{P4.1} and Corollary \ref{C4.1}, though the estimates in \eqref{Eklt}
and Corollary \ref{C11.2} do not depend on $M_0$, yet they contain increasing time factors $t^{2\iota}$ or $t^\iota$.
To overcome this difficulty and close the assumptions \eqref{EA}, we now study the equation on the difference
between $\phi$ and $\phi_a$, where $\phi_a$ satisfies
\begin{equation}\label{phia}
\left\{
\begin{aligned}
&-\p_t^2\phi_a+\triangle\phi_a=0,\\
&\phi_a(1,x)=\phi(1,x),\\
&\p_t\phi_a(1,x)=\p_t\phi(1,x).
\end{aligned}
\right.
\end{equation}

\begin{proposition}\label{as}
$\phi_a$ defined by \eqref{phia} satisfies the following estimate on the hypersurface $\Sigma_t\cap D_t$:
\begin{equation}\label{sd}
\begin{split}
\int_{\Sigma_t\cap D_t}|\p\tilde\Gamma^k\O^l\phi_a|^2dx&+\iint_{D_t}\f{|\tilde Z\tilde\Gamma^k\O^l\phi_a(\tau,x)|^2}{(1+\tau-|x|)^{3/2}}dxd\tau\\
&+\int_{\tilde C_{2\delta}\cap D_t}\big(|L\tilde\Gamma^k\O^l\phi_a|^2+\f1{r^2}|\O\tilde\Gamma^k\O^l\phi_a|^2\big)\lesssim\dl^{5-2\ve_0-2k}.
\end{split}
\end{equation}
\end{proposition}

\begin{proof}
	Exactly similar to Theorem \ref{Th2.1}, \eqref{phia} has a local smooth solution $\phi_a$ on $[1,t_0]$ satisfying
	\begin{align}
	&|L^s\p^q\O^k\phi_a(t_0,x)|\lesssim\delta^{2-|q|-\varepsilon_0},\quad |x| \in[1-2\delta, 1+2\delta],\label{Lphia}\\
	&|\p^q\O^k\phi_a(t_0,x)|\lesssim\dl^{3-\ve_0-|q|},
	\quad |x|\in [1-3\delta, 1+\delta].\label{pphia}
	\end{align}
Recall that
\[
D^0=\{(t,x)|0\leq t-|x|\leq 2\dl, t+|x|\geq 2+2\dl\}.
\]
When $t\geq t_0$, one can estimate $\phi_a(t,x)$ in $D^0$ by the standard energy method. Indeed, it holds that
\begin{equation}\label{energy}
\int_{\Sigma_t\cap D^0}|\p\hat Z^\al \phi_a|^2+\f{\sqrt 2} 2\int_{\tilde C_{2\delta}\cap D^0}\big(|L\hat Z^\al\phi_a|^2+\f1{r^2}|\O\hat Z^\al\phi_a|^2\big)=\int_{\Sigma_{t_0}\cap D^0}|\p\hat Z^\al\phi_a|^2,
\end{equation}
where $\hat Z\in\{\p,\O,S,H_1,H_2\}$. If follows from \eqref{Lphia} that $|\p\hat Z^\al\phi_a|_{\Sigma_{t_0}\cap D^0}\lesssim\delta^{1-\varepsilon_0-l}$ with $l$ being the number of $\p$ in $\hat Z^\al$. Therefore, \eqref{energy} implies
\begin{equation}\label{oen}
\int_{\Sigma_t\cap D^0}|\p\hat Z^\al\phi_a|^2+\f{\sqrt 2} 2\int_{\tilde C_{2\delta}\cap D^0}\big(|L\hat Z^\al\phi_a|^2+\f1{r^2}|\O\hat Z^\al\phi_a|^2\big)\lesssim\delta^{3-2\varepsilon_0-2l}.
\end{equation}
By the following Sobolev's imbedding theorem on the circle $\mathbb S^1_r$
(with center at the origin and radius $r$)
$$
|w(t,x)|\lesssim \f{1}{\sqrt{r}}\|\O^{\leq 1}w\|_{L^2(\mathbb S^1_r)},
$$
one can get that for any point $(t,x)$ in $D^0$,
\begin{equation}\label{hatz}
|\hat Z^\al\phi_a(t,x)|\lesssim t^{-1/2}\|\O^{\leq 1}\hat Z^\al\phi_a\|_{L^2(\mathbb S_r)}\lesssim
t^{-1/2}\delta^{1/2}\|\p\O^{\leq 1}\hat Z^\al\phi_a\|_{L^2(\Sigma_{\mathfrak t}\cap D^0)}\lesssim
\delta^{2-\varepsilon_0-l} t^{-1/2}.
\end{equation}
In addition, $\bar\Gamma^\al\phi_a$ solves
\[
L\big(r^{1/2}\underline L\bar\Gamma^\al\phi_a\big)=\f12r^{-1/2}L\bar\Gamma^\al\phi_a,
\]
which implies $|L\big(r^{1/2}\underline L\bar\Gamma^\al\phi_a\big)|\lesssim\delta^{2-\varepsilon_0}t^{-2}$ by \eqref{hatz} since $\bar\Gamma\in\{S,H_1,H_2,\O\}$. And hence, on the surface $\tilde C_{2\delta}$,
$|\underline L\bar\Gamma^\al\phi_a|\lesssim\delta^{2-\varepsilon_0}t^{-1/2}$ holds
after integrating $L\big(r^{1/2}\underline L\bar\Gamma^\al\phi_a\big)$ along integrate curves of $L$ on $\tilde C_{2\delta}$. Then,
\[
|L\tilde\Gamma\O^l\phi_a|+\f1r|\O\tilde\Gamma\O^l\phi_a|\lesssim\delta^{2-\varepsilon_0}t^{-3/2}\quad \text{on}\ \tilde C_{2\delta}.
\]
By an induction argument, one can get
\begin{equation*}
|L\tilde\Gamma^k\O^l\phi_a|+\f1r|\O\tilde\Gamma^k\O^l\phi_a|\lesssim\delta^{2-\varepsilon_0}t^{-3/2}\quad \text{on}\ \tilde C_{2\delta}.
\end{equation*}
Following the proof for \eqref{LLa}-\eqref{Ophi}, one has
\[
|\O^k\phi_a|_{\t C_{2\dl}}\lesssim\dl^{3-\ve_0}t^{-1/2},\ |L\O^k\phi_a|_{\t C_{2\dl}}\lesssim\dl^{3-\ve_0}t^{-3/2}.
\]
Therefore,
\begin{equation}\label{eLO}
\int_{\tilde C_{2\delta}\cap D_t}\big(|L\tilde\Gamma^k\O^l\phi_a|^2+\f1{r^2}|\O\tilde\Gamma^k\O^l\phi_a|^2\big)
\lesssim\left\{
\begin{aligned}
\dl^{6-2\ve_0},&\ k=0,\\
\delta^{4-2\varepsilon_0},&\ k\geq 1.
\end{aligned}
\right.
\end{equation}

It follows from the property of the weight function $W$ in \eqref{integrte} and \eqref{phia} that
\begin{equation}\label{sdt}
\begin{split}
&\int_{\Sigma_t\cap D_t}W|\p\tilde\Gamma^k\O^l\phi_a|^2+\iint_{D_t}W\f{|\tilde Z\tilde\Gamma^k\O^l\phi_a|^2}{(1+\tau-r)^{3/2}}\\
=&\int_{\Sigma_{t_0}\cap D_t}W|\p\tilde\Gamma^k\O^l\phi_a|^2+\f{\sqrt 2}2\int_{\tilde C_{2\delta}\cap D_t}W\big(|L\tilde\Gamma^k\O^l\phi_a|^2+\f1{r^2}|\O\tilde\Gamma^k\O^l\phi_a|^2\big).
\end{split}
\end{equation}
Then the estimate \eqref{sd} is a direct consequence of \eqref{pphia} and \eqref{eLO}-\eqref{sdt}.
\end{proof}

Similarly as for Proposition \ref{P4.1}, one can use Lemma \ref{KS}, \ref{L2} and Proposition \ref{as} to
get the $L^\infty$ estimate of
$\phi-\phi_a$ in $$\tilde D_t:=\{(\bar t,x): \bar t-|x|\geq 2\delta, T_0\leq\bar t\leq t\}$$ with $t\geq T_0$ and $T_0=e^{1/\iota}$.
Indeed, let $\dot\phi=\phi-\phi_a$. Then $\dot\phi$ solves
\begin{equation}\label{dphi}
\left\{
\begin{aligned}
&-\p_t^2\dot\phi+\triangle\dot\phi=2\sum_{i=1}^2\p_i\phi\p_t\p_i\phi-2\p_t\phi\triangle\phi
+\sum_{i,j=1}^2\p_i\phi\p_j\phi\p_i\p_j\phi-|\nabla\phi|^2\triangle\phi,\\
&\dot\phi(T_0,x)=\p_t\dot\phi(T_0,x)=0.
\end{aligned}
\right.
\end{equation}

\begin{proposition}\label{dpe}
If $\delta>0$ is small and $k+l\leq 5$, then for $t\geq T_0$,
\begin{equation}\label{dphiL2}
\|\p\tilde\Gamma^k\Omega^l\dot\phi\|_{L^2(\Sigma_t\cap \t D_t)}^2+\iint_{\t D_t}\f{|\tilde Z\tilde\Gamma^k\Omega^l\dot\phi|^2(t',x)}{1+(t'-|x|)^{3/2}}dxdt'\lesssim\delta^{2a_k}.
\end{equation}
\end{proposition}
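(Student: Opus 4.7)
The plan is to close the uniform bounds on $\dot\phi$ by running an energy argument parallel to Theorem \ref{T4.1}, but exploiting the fact that $\phi_a$ is a \emph{free} wave whose $L^{2}$ and $L^{\infty}$ bounds carry no $t^{2\iota}$ growth. First I will apply Lemmas \ref{KS}--\ref{L2} to the uniform-in-time $L^{2}$ flux estimates of Proposition \ref{as} to derive uniform-in-$t$ pointwise bounds on $\tilde\Gamma^{k}\Omega^{l}\phi_{a}$ and its first two derivatives, directly analogous to Proposition \ref{P4.1} and Corollary \ref{C4.1} but \emph{without} the factors $t^{\iota}$ and $t^{2\iota}$. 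Because $\phi_{a}$ inherits the short-pulse smallness at $t=t_{0}$, these bounds come with the same powers of $\delta$ that appear in the $a_{k}$ hierarchy.

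Next I will set up the ghost-weight energy identity \eqref{integrte} applied to $v=\tilde\Gamma^{k}\Omega^{l}\dot\phi$, with $k+l\le 5$, using that $\dot\phi$ vanishes together with $\partial_{t}\dot\phi$ on $\Sigma_{t_{0}}$. The contribution from $\tilde C_{2\delta}\cap D_{t}$ is controlled by \eqref{bgp-0} applied to both $\phi$ and $\phi_{a}$ (the latter via \eqref{LO} in the proof of Proposition \ref{as}), so $\dot\phi$ inherits the $\delta^{4-2\varepsilon_{0}}$ boundary smallness on $\tilde C_{2\delta}$. The substitution $\phi=\phi_{a}+\dot\phi$ splits the commuted right-hand side
\[
g^{\alpha\beta}(\partial\phi)\partial^{2}_{\alpha\beta}\tilde\Gamma^{k}\Omega^{l}\dot\phi
=\;\text{(terms purely in }\phi_{a}\text{)}+\text{(terms with at least one }\dot\phi\text{ factor)}.
\]
For the purely $\phi_{a}$ piece, the null-form structure together with the $t$-independent bounds from the first paragraph gives a source that is \emph{integrable in $\tau$} with no $E[\dot\phi]$ on the right-hand side; this contributes only a uniform-in-$t$ quantity of order $\delta^{2a_{k}}$, so Gronwall produces no temporal growth. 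For the pieces containing $\dot\phi$, the estimates follow the Case 1--Case 4 case-analysis of Theorem \ref{T4.1} verbatim with $\phi$ replaced by $\dot\phi$ on the outer factor and by $\phi_{a}+\dot\phi$ in the coefficients, using the $\phi_{a}$ bounds to absorb the coefficient contribution without incurring the critical $\delta^{\varsigma}\int\tau^{-1}E$ term that forced the $t^{2\iota}$ growth.

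The main obstacle will be the top-order balance $k=5$, $l=0$, where in Theorem \ref{T4.1} the residual $\delta^{\varsigma}\tau^{-1}E_{6,0}(\tau)$ integral generated the $t^{\iota}$ factor through Gronwall. In the $\dot\phi$ equation, the analogous term is schematically $\int\tau^{-1}|\partial\tilde\Gamma^{5}\dot\phi|\cdot|\partial^{2}\tilde\Gamma^{3}\phi|\cdot|\partial\tilde\Gamma^{3}\phi|$: here I plan to always place the \emph{top} commutator on $\dot\phi$ (whose bootstrap is being closed) and the lower-order factors on $\phi_{a}+\dot\phi$, estimating the $\phi_{a}$ part by the uniform bounds from Step~1 and the $\dot\phi$ part by the bootstrap at intermediate order $k\le 4$, where the constants $a_{k}$ from Theorem \ref{T4.1} still give a net smallness factor $\delta^{2\eta}$ for some $\eta>0$. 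This extra smallness converts the critical $\int\tau^{-1}E$ contribution into $\delta^{2\eta}\int\tau^{-1}E$, which can be absorbed into the left-hand side (or handled by a Gronwall with exponent $\delta^{2\eta}$ that is \emph{smaller} than any fixed polynomial bound on $[1,\infty)$ once combined with the quantitative smallness obtained on $\tilde C_{2\delta}$), thereby producing the desired bound \eqref{dphiL2} uniformly in $t$ and closing the bootstrap \eqref{EA} for $\phi=\phi_{a}+\dot\phi$.
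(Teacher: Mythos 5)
Your strategy diverges from the paper's in a way that reopens exactly the difficulty you are trying to close. The paper does \emph{not} apply the quasilinear ghost-weight identity \eqref{integrte} with $g^{\al\beta}(\p\phi)\p^2_{\al\beta}$ to $\tilde\Gamma^k\O^l\dot\phi$. Instead, because $\dot\phi$ solves the \emph{flat} equation $\Box\dot\phi = 2\p_i\phi\p_t\p_i\phi - 2\p_t\phi\triangle\phi + \cdots$ by \eqref{dphi}, the proof multiplies by $W\p_t\tilde\Gamma^k\O^l\dot\phi$ and exploits that $\tilde\Gamma$ and $\O$ commute with $\Box$ (up to scalars). The entire right-hand side is then $\tilde\Gamma^k\O^l$ of a nonlinear expression in $\phi$ \emph{alone} — there is no $\dot\phi$ on the right, no commutators $[\tilde\Gamma^k\O^l, g^{\al\beta}\p^2_{\al\beta}]\dot\phi$, and hence no split into ``purely $\phi_a$'' and ``$\dot\phi$-containing'' pieces. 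Using the $t^\iota$-growing bounds from Theorem \ref{T4.1} and Corollary \ref{C11.2}, together with Lemma \ref{L2} and the null structure, one obtains $\|\Box\tilde\Gamma^k\O^l\dot\phi\|_{L^2(\Sigma_t\cap D_t)}\lesssim\delta^{a_k+1/8-\varepsilon_0}t^{-3/2+2\iota}$ in \eqref{b}. The crucial point is the $\tau^{-3/2}$ rate: the resulting Gronwall kernel $\tau^{-3/2+2\iota}$ is \emph{integrable}, so the bound closes uniformly in $t$ with no growth at all.

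By contrast, if you apply $g^{\al\beta}(\p\phi)\p^2_{\al\beta}$ to $\tilde\Gamma^k\O^l\dot\phi$, the commutator $[\tilde\Gamma^k\O^l, g^{\al\beta}\p^2_{\al\beta}]\dot\phi$ produces exactly the terms $\p^2\tilde\Gamma^{k_1}\O^{l_1}\phi\cdot\p^2\tilde\Gamma^{k_2}\O^{l_2}\dot\phi$ with $k_2+l_2<k+l$ whose null structure, after Young's inequality, yields a Gronwall kernel of order $\tau^{-1}$ — the same term that produced $t^{2\iota}$ growth in Theorem \ref{T4.1}. Your remedy of putting the top commutator on $\dot\phi$ and extracting an extra $\delta^{2\eta}$ does not rescue this: a Gronwall kernel $\delta^{2\eta}\tau^{-1}$ still produces $t^{\delta^{2\eta}}$ growth, which is unbounded in $t$ and so cannot deliver the uniform estimate \eqref{dphiL2}. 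Absorption into the left-hand side is also unavailable, since the left-hand side has no matching $\iint\tau^{-1}|\p\tilde\Gamma^k\O^l\dot\phi|^2$ term. The idea that $\phi_a$ enjoys $t$-uniform bounds is correct and is indeed used (Proposition \ref{as}), but its role in the paper is only to give $t$-uniform initial and lateral boundary data for the $\dot\phi$ energy identity — not to absorb quasilinear commutator terms. The missing idea in your proposal is that one must work with the \emph{flat} wave operator so that no $\dot\phi$ appears on the right-hand side at all, leaving a source whose time decay $\tau^{-3/2+2\iota}$ is integrable.
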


\begin{proof}
By commuting the operator $\tilde\Gamma^k\O^l$ with $-\p_t^2+\triangle$,
and noting that  the right hand side of \eqref{dphi} satisfies the first and second null conditions,
one gets
from \eqref{tZi} and \eqref{t} that
\begin{equation}\label{bdphi}
\begin{split}
&|(-\p_t^2+\triangle)\tilde\Gamma^k\O^l\dot\phi|\\
\lesssim&\sum_{\mbox{\tiny$\begin{array}{cc}k_1+k_2\leq k\\l_1+l_2\leq l\end{array}$}}\big(|\tilde Z\tilde\Gamma^{k_1}\Omega^{l_1}\phi|\cdot|\p^2\tilde\Gamma^{k_2}\Omega^{l_2}\phi|+|\p\tilde\Gamma^{k_1}\Omega^{l_1}\phi|\cdot|\tilde Z\p\tilde\Gamma^{k_2}\Omega^{l_2}\phi|\big)\\
&+\sum_{\mbox{\tiny$\begin{array}{cc}k_1+k_2+k_3\leq k\\l_1+l_2+l_3\leq l\end{array}$}}\big(|\tilde Z\tilde\Gamma^{k_1}\Omega^{l_1}\phi|\cdot|\p\tilde\Gamma^{k_2}\Omega^{l_2}\phi|\cdot|\p^2\tilde\Gamma^{k_3}\Omega^{l_3}\phi|\\
&\qquad\qquad\qquad\qquad+|\p\tilde\Gamma^{k_1}\Omega^{l_1}\phi|\cdot|\p\tilde\Gamma^{k_2}\Omega^{l_2}\phi|\cdot|\tilde Z\p\tilde\Gamma^{k_3}\Omega^{l_3}\phi|\big)\\
\lesssim&\sum_{\mbox{\tiny$\begin{array}{cc}k_1+k_2\leq k\\l_1+l_2\leq l\\k_1+l_1\leq2\end{array}$}}\big\{\delta^{\varpi_{k_1}}t^{-3/2+\iota}(1+t-r)|\p^2\tilde\Gamma^{k_2}\O^{l_2}\phi|
+\delta^{\varpi_{k_1}}t^{-1/2+\iota}|\tilde Z\p\tilde\Gamma^{k_2}\O^{l_2}\phi|\big\}\\
&+\sum_{\mbox{\tiny$\begin{array}{cc}k_1+k_2\leq k\\l_1+l_2\leq l\\k_2+l_2\leq2\end{array}$}}\big\{\delta^{\varpi_{k_2+1}}t^{-1/2+\iota}(1+t-r)^{-1}|\tilde Z\Gamma^{k_1}\O^{l_1}\phi|+\delta^{\varpi_{k_2+1}}t^{-3/2+\iota}|\p\tilde\Gamma^{k_1}\O^{l_1}\phi|\big\},
\end{split}
\end{equation}
where $\varpi_k=\f{3}{2}-\varepsilon_0-k$, $k=0,1,2,3$. And hence,
\begin{equation}\label{bpL2}
\begin{split}
&\|(-\p_t^2+\triangle)\tilde\Gamma^k\O^l\dot\phi\|_{L^2(\Sigma_t\cap D_t)}\\
\lesssim&\sum_{s=0}^{\min\{2,k\}}\delta^{\varpi_s}t^{-3/2+\iota}\sum_{\mbox{\tiny$\begin{array}{cc}p+q\leq 6\\p\leq k-s+1\end{array}$}}\sqrt{E_{p,q}(t)}\\
&+\sum_{s=1}^{\min\{k+1,3\}}\delta^{\varpi_s}t^{-1/2+\iota}\sum_{\mbox{\tiny$\begin{array}{cc}p+q\leq 5\\p\leq k-s+1\end{array}$}}\Big\{\|\f{\tilde Z\tilde\Gamma^p\O^q\phi}{1+t-r}\|_{L^2(\Sigma_t\cap D_t)}+t^{-1}\sqrt{E_{p,q}(t)}\Big\}.
\end{split}
\end{equation}
On the other hand, Lemma \ref{L2} and Theorem \ref{T4.1} imply that
\begin{equation}\label{Zp}
\|\f{\tilde Z\tilde\Gamma^p\O^q\phi}{1+t-r}\|_{L^2(\Sigma_t\cap \t D_t)}\lesssim\delta^{2-\varepsilon_0}t^{-1}+\delta^{a_{p+1}}t^{-1+\iota}.
\end{equation}
Substituting  \eqref{Zp} and \eqref{Eklt} into \eqref{bpL2} yields
\begin{equation}\label{b}
\begin{split}
\|(-\p_t^2+\triangle)\tilde\Gamma^k\O^l\dot\phi\|_{L^2(\Sigma_t\cap \t D_t)}\lesssim&\sum_{s=0}^{\min\{2,k\}}\delta^{\varpi_s}\delta^{a_{k-s+1}}t^{-3/2+2\iota}
+\sum_{s=1}^{\min\{k+1,3\}}\delta^{\varpi_s}\delta^{a_{k-s+2}}t^{-3/2+2\iota}\\
\lesssim&\delta^{2-2\ve_0-k}t^{-3/2+2\iota}\lesssim \dl^{3-2\ve_0-k}t^{-7/6+2\iota}
\end{split}
\end{equation}
since $t\geq \dl^{-3}$ for $t\geq T_0$.
Integrate $W(\p_t\tilde\Gamma^k\O^l\dot\phi)\big((-\p_t^2+\triangle)\tilde\Gamma^k\O^l\dot\phi\big)$ over
domain $\t D_t$ with $W=e^{2(1+t-r)^{-1/2}}$ as in \eqref{integrte}. Then it follows from \eqref{phit0}, \eqref{sd} and \eqref{b} that
for $\varepsilon_0<\f12$,
\begin{equation*}
\begin{split}
&\int_{\Sigma_t\cap\t D_t}W|\p\tilde\Gamma^k\O^l\dot\phi|^2+\iint_{\t D_t}\f W{(1+\tau-r)^{3/2}}|\tilde Z\tilde\Gamma^k\O^l\dot\phi|^2\\
\lesssim&\int_{\Sigma_{T_0}\cap\t D_t}|\p\tilde\Gamma^k\O^l\dot\phi|^2+\iint_{\t D_t}W|\p_t\tilde\Gamma^k\O^l\dot\phi|\cdot|(-\p_t^2+\triangle)\tilde\Gamma^k\O^l\dot\phi|
\end{split}
\end{equation*}
\begin{equation}\label{edphi}
\begin{split}
&+\int_{\tilde C_{2\delta}\cap\t D_t}\big(|L\tilde\Gamma^k\O^l\dot{\phi}|^2+\f{1}{r^2}|\O\tilde\Gamma^k\O^l\dot{\phi}|^2\big)\\
\lesssim&\int_{T_0}^t\tau^{-7/6+2\iota}\|\p\tilde\Gamma^k\O^l\dot\phi\|_{L^2(\Sigma_\tau\cap\t D_t)}^2d\tau+\delta^{5-2\varepsilon_0+2k}.
\end{split}
\end{equation}

Therefore, \eqref{dphiL2} follows from \eqref{edphi} directly since $\iota>0$ is small enough.
\end{proof}

\begin{theorem}\label{11.2}
When $\delta>0$ is small, there exists a smooth solution $\phi$ to \eqref{1.9} in $B_{2\dl}$ for $t\geq t_0$.
\end{theorem}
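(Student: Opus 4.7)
The plan is to close the bootstrap assumptions \eqref{EA} by exploiting the decomposition $\phi = \phi_a + \dot\phi$, where $\phi_a$ solves the free $2$D wave equation \eqref{phia} with the same Cauchy data at $t = t_0$ and $\dot\phi = \phi - \phi_a$ satisfies \eqref{dphi}. Once \eqref{EA} is improved to a strict inequality on the maximal interval of existence, a standard continuity argument extends the solution to all $t \geq t_0$ inside $B_{2\delta}$.

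First I would combine Proposition \ref{as} with Proposition \ref{dpe}. By the triangle inequality, for every $k+l\leq 5$,
\begin{equation*}
\|\p\tilde\Gamma^k\Omega^l\phi(t,\cdot)\|_{L^2(\Sigma_t\cap D_t)}^2 + \iint_{D_t}\frac{|\tilde Z\tilde\Gamma^k\Omega^l\phi|^2}{(1+\tau-|x|)^{3/2}}\,dxd\tau \lesssim \delta^{2a_k},
\end{equation*}
where the crucial point is that the right-hand side is independent of $t$: the free-wave part $\phi_a$ is controlled on $\Sigma_{t_0}\cap D_t$ via \eqref{local3-2} and propagated by \eqref{energy}--\eqref{eLO}, while the nonlinear error $\dot\phi$ is bounded by \eqref{dphiL2}. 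Choosing the universal constant $M_0$ to be twice the implicit constant in this combined bound strictly improves \eqref{EA}.

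Next, I would remove the growth $t^{2\iota}$ that was present in Theorem \ref{T4.1} and Corollary \ref{C11.2}. The growth originated from the Gronwall factor $\int_{t_0}^t \tau^{-1}d\tau$ in the energy inequalities \eqref{E0}, \eqref{E1l}, \eqref{Ekl}, \eqref{E60}; it is precisely this factor that must be prevented once we close the bootstrap. Since the improved bound above is uniform in $t$, the $L^\infty$ estimates in Proposition \ref{P4.1} and Corollary \ref{C4.1} (which only used Lemma \ref{KS}, Lemma \ref{L2}, and the boundary trace \eqref{bgp-0}) upgrade accordingly to the $\iota$-free versions
\begin{equation*}
|\p\tilde\Gamma^k\Omega^l\phi| \lesssim \delta^{a_k}t^{-1/2},\qquad k+l\leq 5,
\end{equation*}
together with the analogous pointwise bounds for $\tilde Z\tilde\Gamma^k\Omega^l\phi$ and $\p^2\tilde\Gamma^k\Omega^l\phi$. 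In particular the decay \eqref{np} stated in the main theorem holds with constant independent of $\delta$.

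With the bootstrap strictly improved, the standard local existence theory together with a continuity argument yields a smooth solution on $B_{2\delta}\cap\{t\leq T\}$ for every $T<\infty$, hence on all of $B_{2\delta}$. The main obstacle in executing this plan is the interplay between the two regimes: the free part $\phi_a$ only sees the short-pulse smallness $\delta^{2-\varepsilon_0}$ on the good directions $L,\Omega$ near $\tilde C_{2\delta}$, and it is essential that the ghost-weight identity \eqref{sdt} transfers the $L^2$ control on $\tilde C_{2\delta}$ (via \eqref{LO}--\eqref{eLO}) without losing the hierarchy of smallness indexed by $a_k$. The nonlinear error $\dot\phi$ then inherits the same hierarchy through \eqref{b}, where the quadratic and cubic null forms produce an extra decay factor $\delta^{1/8-\varepsilon_0}\tau^{-3/2}$ just sufficient, after the Gronwall application in \eqref{edphi}, to absorb the $t^{2\iota}$ growth of the energies of $\phi$ appearing on the right. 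This delicate matching between the short-pulse scaling of the data and the decay produced by the null structure is the technical heart of the argument, and having established it, Theorem \ref{main} is a direct combination of Theorem \ref{11.2} and the global solvability in $A_{2\delta}$ proved in Section \ref{YY}.
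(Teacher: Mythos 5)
Your proposal is correct and takes essentially the same route as the paper: decompose $\phi=\phi_a+\dot\phi$, add the bounds from Propositions \ref{as} and \ref{dpe}, obtain an $M_0$-independent improvement of \eqref{EA}, and conclude by continuity. The extra commentary about removing the $t^{2\iota}$ growth and upgrading the pointwise bounds is harmless (and accurate in spirit), but is not needed once the bootstrap is closed; the paper's own proof stops at the combined energy bound.
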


\begin{proof}
Since $\phi=\phi_a+\dot\phi$, then \eqref{st}, Proposition \ref{as} and \ref{dpe} imply that
\begin{equation}\label{C-0}
E_{k,l}(t)\lesssim\delta^{2a_k}\quad\text{for}\ k+l\leq 5,
\end{equation}
which is independent of $M_0$, then the assumptions \eqref{EA} are then improved.
\end{proof}

Finally, we prove Theorem \ref{main}.
\begin{proof}
Theorem \ref{Th2.1} gives the local existence of smooth solution $\phi$ to \eqref{1.9} with \eqref{i0}
and \eqref{i1}-\eqref{Y-0}.
On the other hand, the a priori global uniform estimates of the solution in $A_{2\dl}$
and in $B_{2\dl}$  have been established in Section \ref{YY} and Theorem \ref{11.2} respectively.
Then it follows from the existence of the smooth solution to \eqref{1.9} and the continuous induction
argument that the proof of $\phi\in C^\infty([1,+\infty)\times\mathbb R^2)$  is finished.
In addition, $|\na\phi|\lesssim\delta^{1-\varepsilon_0}t^{-1/2}$ follows from  \eqref{local1-2}, \eqref{local2-2},
\eqref{im}, and the first inequality in \eqref{pphi}. Furthermore,  $|\phi|\lesssim\delta^{3/2-\varepsilon_0}t^{1/2}$ follows from \eqref{gp}, \eqref{bgp-0}, \eqref{pphi} and the Newton-Leibnitz formula. Thus Theorem \ref{main}
is proved.

\end{proof}

\appendix
\setcounter{equation}{1}
\section{Derivative estimate for the 2D wave equation with short pulse data}

In this Appendix, we derive the derivative estimate for the solution to the 2D homogeneous wave equation $\square\phi=0$ 
with the short pulse data \eqref{i0} under the condition \eqref{i1}. Set
\begin{align*}
&{\tilde\Sigma}_t:=\{(t,x): t\geq 1, x\in\mathbb R^2\},\quad {\tilde\Sigma}_{t}^{u}:=\{(t,x): 0\leq\f{t-r}{2}\leq  u\},\ u\in [0, 2\delta],\\
&{\tilde C}_{u}^t:=\{(t',x): 1+2\dl\le t'\le t, \f{t-r}{2}=u\},\quad {\tilde S}_{t, u}:={\tilde\Sigma}_{t}\cap {\tilde C}_{u}^t,\\
&I:=\{(t,x):1\leq t\leq 1+2\dl, 2-\dl-t\leq r\leq t\},\quad II:=\{(t,x):t\geq 1+2\dl, t-4\dl\leq r\leq t\},\\
&III:=\{(t,x):t\geq 1+2\dl, r\leq t-4\dl\},\quad IV=\{(t,x):1\leq t\leq 1+2\dl, r\le 2-\dl-t\}.
\end{align*}

\begin{figure}[htbp]
	\centering
	\includegraphics[scale=0.8]{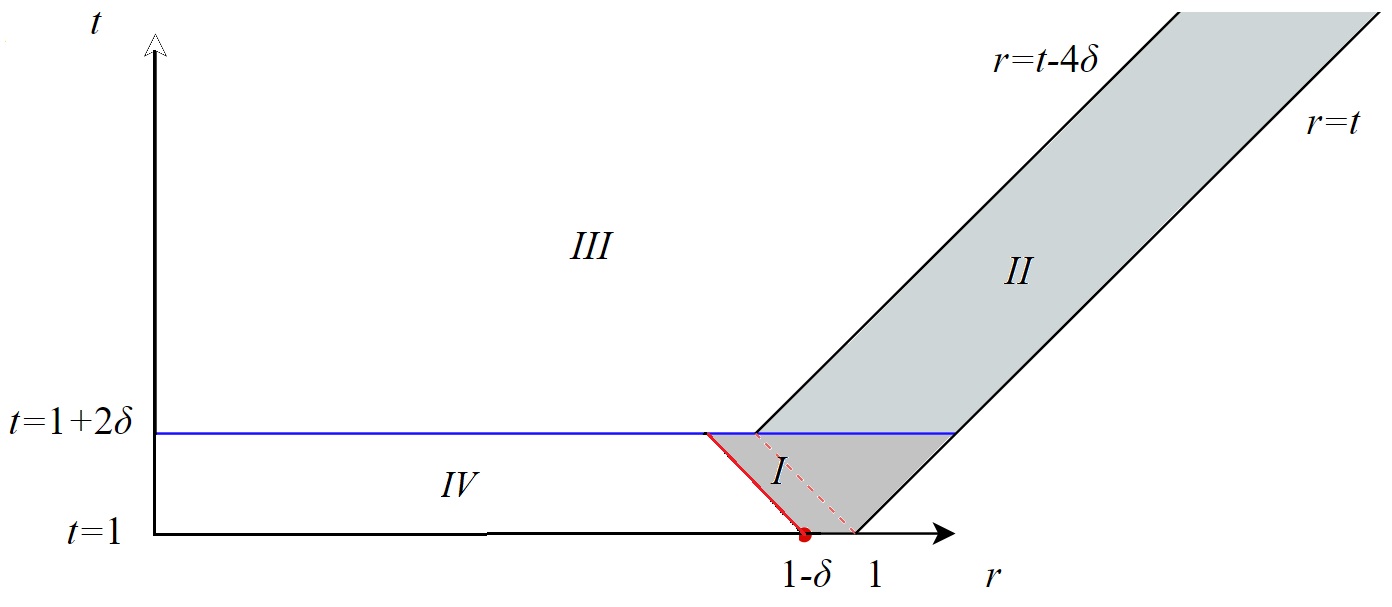}
	\caption{The Domains I, II, III, IV}\label{pic:p5}
\end{figure}

Let $\phi$ be the global smooth solution of $\square\phi=0$ with initial data \eqref{i0}. Then it holds
\begin{equation}\label{int}
\int_{{\tilde\Sigma}_t}|\p \G^\al\O^c Z^{\beta} \phi(t, x)|^2dx=\int_{{\tilde\Sigma}_1}|\p \G^\al\O^c Z^{\beta}\phi(1, x)|^2dx\lesssim\dl^{3-2\ve_0-2|\al|},\ |\beta|\leq 1,
\end{equation}
where $\G\in\{\p,S,H_1,H_2\}$, $Z\in\{S,H_1,H_2\}$, and condition \eqref{i1} has been used.



\vskip 0.2 true cm

${\bf \bullet}$
In domain $I$, it follows from the embedding theorem on $\mathbb S^1$ and \eqref{int} that when $|\beta|\leq 1$,
	\begin{equation}\label{lE}
	|\G^\al\O^c Z^\b \phi(t,x)|\lesssim \|\O^{\leq 1}\G^\al\O^c Z^\b \phi\|_{L^2({\tilde S}_{t,r})}\lesssim\delta^{1/2}\|\p\O^{\leq 1}\G^\al\O^c Z^\b \phi\|_{L^2({\tilde\Sigma}_{t})}\lesssim\delta^{2-|\al|-\ve_0}.
	\end{equation}
	This implies that for $a=0,1$, due to $({t+r})L={S+\o^iH_i}$, it holds that
	\begin{equation}\label{LLEa}
	|L^a\G^\al\O^c\phi(t,x)|\lesssim\sum_{|\b|\leq 1}|Z^\b\G^\al\O^c\phi(t,x)|\lesssim\delta^{2-\ve_0-|\al|}.
	\end{equation}

In addition, in the domain $\tilde D=\{(t,x): 1\le t\le 1+2\dl, 2-\dl-t\le r\le t-\dl\}$, for $(t,x)\in \tilde D$, by
	\begin{equation*}
	L(r^{1/2}\underline L\G^\al\O^c \phi)=\f12r^{-1/2}L\G^\al\O^c \phi+r^{-3/2}\O^2\G^\al\O^c \phi,
	\end{equation*}
	together with \eqref{LLEa}, one has $|L(r^{1/2}\underline L\G^\al\O^c \phi)|_{\tilde D}\lesssim\dl^{2-\ve_0-|\al|}$. It follows from this and integration along integral curves of $L$ that
	\begin{equation}\label{uL}
	|\underline L\G^\al\O^c \phi|_{\tilde D}\lesssim\dl^{3-\ve_0-|\al|}.
	\end{equation}

	Collecting \eqref{LLEa} and \eqref{uL} yields $|\p\G^\al\O^c \phi|_{\tilde D}\lesssim\dl^{2-\ve_0-|\al|}$ and
		\begin{equation}\label{P}
	\|\p\G^\al\O^c \phi\|_{L^2({\tilde\Sigma}_t\cap {\tilde D})}\lesssim\dl^{5/2-\ve_0-|\al|}.
	\end{equation}

	Similarly to \eqref{uL}, for $a=1$, integrating \eqref{LLEa} along integral curves of $L$ yields
	\begin{equation}\label{D}
	|\G^\al\O^c \phi|_{\tilde D}\lesssim\dl^{3-\ve_0-|\al|}.
	\end{equation}
	
${\bf \bullet}$	
In domain $II$, similar to \eqref{LLEa}, one can obtain
	\begin{equation}\label{iI}
	\begin{split}
	&|\p \phi(t,x)|\lesssim t^{-1/2}\|\p\O^{\leq 1} \phi\|_{L^2({\tilde S}_{t,r})}
	\lesssim\dl^{1/2}t^{-1/2}\|\p^2\O^{\leq 1} \phi\|_{L^2({\tilde \Sigma}_t^{2\dl})}\lesssim\dl^{1-\ve_0}t^{-1/2},\\
	&|L^a\G^\al\O^c\phi(t,x)|\lesssim\delta^{2-\ve_0-|\al|}t^{-1/2-a},\ a=0,1.
	\end{split}
	\end{equation}

	On the other hand, it follows from
	\begin{equation*}
	\begin{split}
	&\underline L(\f 1r\G^\al\O^c \phi+2L\G^\al\O^c \phi)=\f 1{r^2}\G^\al\O^c \phi+\f1rL\G^\al\O^c \phi+\f2{r^2}\O^2\G^\al\O^c \phi
	\end{split}
	\end{equation*}
	and \eqref{iI} that $$|\underline L(\f 1r\G^\al\O^c \phi+2L\G^\al\O^c \phi)|\lesssim\dl^{2-\ve_0-|\al|}t^{-5/2}.$$
This implies
	\begin{equation}\label{LA}
		|\f 1r\G^\al\O^c \phi+2L\G^\al\O^c \phi|\lesssim\dl^{3-\ve_0-|\al|}t^{-5/2}
	\end{equation}
and
	$$|L(r^{1/2}\G^\al\O^c \phi)|_{{\tilde C}_{2\dl}^t}\lesssim\delta^{3-\ve_0-|\al|}t^{-2}.$$
Therefore, thanks to \eqref{D}, one has
	\begin{equation}\label{Ov}
	|\G^\al\O^c \phi|_{{\tilde C}_{2\dl}^t}\lesssim\delta^{3-\ve_0-|\al|}t^{-1/2}.
	\end{equation}
	Combining \eqref{LA} with \eqref{Ov} shows that
	\begin{equation}\label{Lv}
	|L\G^\al\O^c \phi|_{{\tilde C}_{2\dl}^t}\lesssim\delta^{3-\ve_0-|\al|}t^{-3/2}.
	\end{equation}

${\bf \bullet}$		
In domain $III$,  one has that
\begin{equation}\label{e3}
\begin{split}
&\int_{{\tilde \Sigma}_t\cap III}|\p\G^\al\O^c \phi(t,x)|^2dx\\
=&\int_{{\tilde \Sigma}_{1+2\dl}\cap{\tilde  D}}|\p\G^\al\O^c \phi(1+2\dl,x)|^2dx+\int_{{\tilde C}_{2\dl}^t}\big(|L\G^\al\O^c \phi|^2+\f1{r^2}|\O\G^\al\O^c \phi|^2\big)dS\\
\lesssim&\dl^{5-2\ve_0-2|\al|},
\end{split}
\end{equation}
where \eqref{P}, \eqref{Ov} and \eqref{Lv} are used.

When $|x|\geq\f14t$, it follows from \eqref{geq} and \eqref{e3} that
\begin{equation}\label{Ge}
|\p \phi(t,x)|\leq\dl^{1-\ve_0}t^{-1/2}.
\end{equation}

When $|x|\leq \f14t$, choose $s=1$ in \eqref{leq} and use \eqref{e3} to obtain
\begin{equation}\label{14}
|\p \phi(t,x)|\leq\dl^{3/2-\ve_0}t^{-1}.
\end{equation}

${\bf \bullet}$		
In domain $IV$, by the finite propagation speed
of the wave equation
and the compact support of $(\phi_0,\phi_1)$, then for $(t,x)\in IV$, one has
\begin{equation}\label{14-00}
\phi(t,x)\equiv0.
\end{equation}

Therefore, for any $(t,x)\in[1,\infty)\times \mathbb R^2$, by \eqref{lE}, \eqref{iI}, \eqref{Ge}-\eqref{14} and \eqref{14-00}
we get
\begin{equation*}
|\p \phi(t,x)|\leq\dl^{1-\ve_0}t^{-1/2}.
\end{equation*}

\end{document}